
\documentclass[11pt]{amsart}

\newcommand{\GG}{\mathbb{G}}
\newcommand{\NN}{\mathbb{N}}
\newcommand{\PP}{\mathbb{P}}

\newcommand{\ZZ}{\mathbb{Z}}

\newcommand{\cA}{\mathcal{A}}
\newcommand{\cB}{\mathcal{B}}
\newcommand{\cC}{\mathcal{C}}

\newcommand{\cE}{\mathcal{E}}
\newcommand{\cF}{\mathcal{F}}
\newcommand{\cG}{\mathcal{G}}
\newcommand{\cH}{\mathcal{H}}

\newcommand{\cN}{\mathcal{N}}

\newcommand{\cU}{\mathcal{U}}
\newcommand{\cV}{\mathcal{V}}
\newcommand{\cW}{\mathcal{W}}

\newcommand{\fA}{\mathfrak{A}}
\newcommand{\fb}{\mathfrak{b}}
\newcommand{\fE}{\mathfrak{E}}

\newcommand{\fg}{\mathfrak{g}}

\newcommand{\fh}{\mathfrak{h}}
\newcommand{\fK}{\mathfrak{K}}

\newcommand{\fsl}{\mathfrak{sl}}

\newcommand{\fu}{\mathfrak{u}}

\newcommand{\dact}{\boldsymbol{.}}
\newcommand{\lra}{\longrightarrow}
\newcommand{\tha}{\twoheadrightarrow}

\DeclareMathOperator{\add}{add}
\DeclareMathOperator{\Ad}{Ad}
\DeclareMathOperator{\aurk}{rk_{au}}
\DeclareMathOperator{\Aut}{Aut}
\DeclareMathOperator{\Char}{char}
\DeclareMathOperator{\CJT}{CJT}
\DeclareMathOperator{\CR}{CR}
\DeclareMathOperator{\coker}{coker}
\DeclareMathOperator{\cx}{cx}

\DeclareMathOperator{\EIP}{EIP}
\DeclareMathOperator{\End}{End}
\DeclareMathOperator{\Ext}{Ext}
\DeclareMathOperator{\GL}{GL}

\DeclareMathOperator{\gr}{gr}
\DeclareMathOperator{\HH}{H}

\DeclareMathOperator{\Hom}{Hom}
\DeclareMathOperator{\im}{im}
\DeclareMathOperator{\ind}{ind}
\DeclareMathOperator{\id}{id}
\DeclareMathOperator{\Jt}{Jt}

\DeclareMathOperator{\Lie}{Lie}
\DeclareMathOperator{\modd}{mod}
\DeclareMathOperator{\PSL}{PSL}
\DeclareMathOperator{\Pt}{{\Pi}t}
\DeclareMathOperator{\pr}{pr}
\DeclareMathOperator{\pt}{Pt}
\DeclareMathOperator{\ql}{q\ell}
\DeclareMathOperator{\Rad}{Rad}
\DeclareMathOperator{\rk}{rk}

\DeclareMathOperator{\ssrk}{rk_{ss}}
\DeclareMathOperator{\SL}{SL}

\DeclareMathOperator{\Soc}{Soc}
\DeclareMathOperator{\Spec}{Spec}
\DeclareMathOperator{\supp}{supp}
\DeclareMathOperator{\Top}{Top}

\DeclareMathOperator{\Proj}{Proj}

\usepackage{pictex}
\usepackage{amscd}
\usepackage{latexsym}
\usepackage{amssymb}
\usepackage{eucal}
\usepackage{eufrak}
\usepackage{verbatim}

\usepackage{amsfonts}
\usepackage{amsxtra}
\usepackage{color}

\setlength{\textheight}{22.4cm}
\setlength{\textwidth}{16.5cm}
\setlength{\topmargin}{0.0in}
\setlength{\oddsidemargin}{-.5cm}
\setlength{\evensidemargin}{-.5cm}

\numberwithin{equation}{section}

\newtheorem{Theorem}{Theorem}[section]
\newtheorem{Lemma}[Theorem]{Lemma}
\newtheorem{Corollary}[Theorem]{Corollary}

\theoremstyle{Theorem}
\newtheorem{Thm}{Theorem}[subsection]
\newtheorem{Lem}[Thm]{Lemma}
\newtheorem{Prop}[Thm]{Proposition}
\newtheorem{Cor}[Thm]{Corollary}

\newtheorem*{thm*}{Theorem A}
\newtheorem*{thm**}{Theorem B}
\theoremstyle{remark}
\newtheorem*{Remark}{Remark}
\newtheorem*{Remarks}{Remarks}
\newtheorem*{Definition}{Definition}
\newtheorem*{Example}{Example}
\newtheorem*{Examples}{Examples}


\begin{document}

\title{Categories of Modules given by Varieties of $p$-Nilpotent Operators}

\author[R. Farnsteiner]{Rolf Farnsteiner}

\address[R. Farnsteiner]{Mathematisches Seminar, Christian-Albrechts-Universit\"at zu Kiel, Ludewig-Meyn-Str. 4, 24098 Kiel, Germany}
\email{rolf@math.uni-kiel.de}
\thanks{Supported by the D.F.G. priority program SPP1388 `Darstellungstheorie'.}

\begin{abstract} For a finite group scheme $\cG$ over an algebraically closed field $k$ of characteristic $p>0$ we study $\cG$-modules $M$, which are defined in terms of properties of their
pull-backs $\alpha^\ast_K(M_K)$ along $\pi$-points $\alpha_K$ of $\cG$. We show that the corresponding subcategories strongly depend on the structure of $\cG$. The second part of
the paper discusses recent work by Carlson-Friedlander-Suslin \cite{CFS} concerning the subcategory of equal images modules from the vantage point of Auslander-Reiten theory.  \end{abstract}

\maketitle

\section*{Introduction}
Let $\cG$ be a finite group scheme with coordinate ring $k[\cG]$, defined over an algebraically closed field $k$ of characteristic $\Char(k)=p>0$. In general, the category $\modd \cG$ of
finite-dimensional $\cG$-modules is of wild representation type, rendering the complete understanding of its objects a rather hopeless task. One is thus led to considering subcategories of
$\modd \cG$ which can be better controlled.

In recent work \cite{CFP,CFS,FPe3}, the authors have introduced new classes of $\cG$-modules, whose definition employs nilpotent operators given by $\pi$-points of representation theoretic support spaces, \cite{FPe1,FPe2}. The objects of the corresponding full subcategories of $\modd \cG$ are those having the {\it equal images property}, being of {\it constant Jordan type}, and of {\it constant $j$-rank}. We denote these categories by $\EIP(\cG)$, $\CJT(\cG)$ and $\CR_j(\cG)$ ($j \in \{1,\ldots,p\!-\!1\}$), respectively. There are obvious inclusions $\EIP(\cG) \subseteq \CJT(\cG) \subseteq \CR_j(\cG)$, while $\CJT(\cG)=\bigcap_{j=1}^{p-1}\CR_j(\cG)$.

In \cite{CFS}, the important example of the category $\EIP(\ZZ/(p)\!\times\!\ZZ/(p))$ was investigated. Whereas the indecomposable objects of $\EIP(\ZZ/(p)\!\times\!\ZZ/(p))$ correspond to the pre-injective modules of the Kronecker quiver for $p=2$, the authors provide large families of equal images modules for $p\ge 3$. Building on these findings, one purpose of the present paper is the analysis of these categories for other types of finite group schemes. It turns out that their nature his highly sensitive to the structure of the underlying finite algebraic group. While we verify in Section $4$ that, for $p\ge 3$, the indecomposable objects of $\EIP(\ZZ/(p)\times\!\ZZ/(p))$ cannot be described by finitely many algebraic parameters, the categories of equal images modules over Frobenius kernels of reductive groups are semi-simple. 

\bigskip

\begin{thm*} Let $G_r$ be the $r$-th Frobenius kernel of a connected reductive algebraic group $G$.
\begin{enumerate}
\item Every $M \in \EIP(G_r)$ is a direct sum of one-dimensional modules.
\item We have $\CR_j(G_1)=\CJT(G_1)$ for all $j \in \{1,\ldots,p\!-\!1\}$ and every $M \in \CJT(G_1)$ is isomorphic to its twists by the adjoint action of $G$ on $G_1$.
\end{enumerate} \end{thm*}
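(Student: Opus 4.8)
The plan is to translate both assertions into statements about the conjugation action of $G$ on $G_r$ and the induced action on the scheme $V_r(G)$ of infinitesimal one-parameter subgroups — for $r=1$ this is the variety $V_1(G)$ of $p$-nilpotent elements of $\fg=\Lie(G)$, and a $\pi$-point of $G_1$ is, up to equivalence, the one attached to such an element $x$, the operator $\alpha_\ast(t)$ on a $G_1$-module $M$ being the action of $x$. Two remarks are used throughout: since $\Ad(G(k))$ permutes the $\pi$-points, the categories $\EIP$, $\CJT$ and $\CR_j$ are stable under the twists $M\mapsto M^{(g)}$; and the root vectors $e_\alpha,f_\alpha$ lie in $V_1(G)$ and, together with $h_\alpha$, span copies $\fsl_2^{(\alpha)}$ of $\fsl_2$ whose root systems generate $[\fg,\fg]$, while restriction along $\fsl_2^{(\alpha)}\hookrightarrow\fg$ and along $G_1\hookrightarrow G_r$ carries the three categories into the corresponding categories of the subgroup.

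For $(1)$ I would first reduce to $G_1$ and to $G$ semisimple: restriction along $G_1\hookrightarrow G_r$ sends $\EIP(G_r)$ into $\EIP(G_1)$, and restricting to $[G,G]_r$ is harmless because the (toral) centre acts semisimply, so it suffices to prove that $\EIP(G_1)$ consists of direct sums of one-dimensional modules for $G$ semisimple and then lift to $G_r$ via the normal series $1\to G_1\to G_r\to(G^{(1)})_{r-1}\to1$ and induction on $r$ (a module in $\EIP(G_r)$ trivial on $G_1$ is inflated from $(G^{(1)})_{r-1}$, and one checks that inflation is compatible with the equal-images condition). For $G$ semisimple and $r=1$ one restricts $M$ to each $\fsl_2^{(\alpha)}$ — the relevant nilpotent elements are $p$-nilpotent in $\fg$, so $M|_{u(\fsl_2^{(\alpha)})}$ lies in $\EIP(u(\fsl_2^{(\alpha)}))$ — whence the whole problem reduces to $\EIP(u(\fsl_2))=\{$trivial modules$\}$. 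For that last step, write $V:=x\dact M$ for the ($x$-independent) image; then $V$ is stable under each centraliser $\fg^x$, hence under $\sum_x\fg^x=\fg$, so $\fg$ acts trivially on $M/V$, and the equal $j$-images property makes $V$ itself an object of $\EIP(u(\fsl_2))$, so induction on $\dim M$ reduces to the case $x\dact(x\dact M)=0$ for all $x$. Writing $\rho(c_0e+c_1h+c_2f)^2=0$ as a polynomial identity in the $c_i$, and using that $h$ acts semisimply with eigenvalues in $\FF_p$, forces the $h$-weights of $M$ into $\{0,\pm1\}$ with $e,f$ acting as mutually inverse isomorphisms between the $\pm1$-spaces; but then $x\dact M$ is the $(-1)$-weight space for one choice of $x$ and the $(+1)$-weight space for another, and equality of two different weight spaces forces both to vanish.

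For $(2)$ the two statements are intertwined, and I would begin with twist-invariance: every $M\in\CR_j(G_1)$ is isomorphic to all its $\Ad(G)$-twists. When $p$ is good, $\rho(e_\alpha)^p=0$ makes the truncated exponential $\exp(s\,\rho(e_\alpha))$ an invertible operator on any $G_1$-module that conjugates the action into its twist by $\Ad(x_\alpha(s))$; since the $x_\alpha(s)$ generate $G(k)$ modulo the centre, this gives the claim. The place where the $\CR_j$-hypothesis is genuinely needed is small characteristic, where this exponential no longer implements the correct automorphism and one must instead exploit that $s\mapsto M^{(x_\alpha(s))}$ is an algebraic family of modules over the connected group $G$, all of the prescribed $j$-rank, hence isomorphically constant. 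Granting twist-invariance, $\Jt_M$ is constant along $\Ad(G)$-orbits on $V_1(G)$, and the remaining task — promoting "constant along orbits, with one $i$-rank globally constant" to "constant Jordan type" — is where I expect the main obstacle: it needs the irreducibility of $V_1(G)$ (supplying a dense orbit that carries the generic type) together with control over how the rank profile changes under the minimal degenerations of nilpotent orbits in a reductive Lie algebra. I would attack it by restricting $M$ to the $\fsl_2$-triple attached via Jacobson–Morozov to a hypothetical non-generic $x$, where the identity $\CR_j(u(\fsl_2))=\CJT(u(\fsl_2))$ can be checked by the weight-space analysis used in $(1)$, and then propagating constancy of the full Jordan type inward from the dense orbit using irreducibility of $V_1(G)$, the twist-invariance established above guaranteeing the required compatibility of the two.
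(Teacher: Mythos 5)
Your weight-space computation showing $\EIP(U_0(\fsl_2))$ consists only of trivial modules is a genuinely different and more elementary route than the paper's, which instead invokes Premet's classification of indecomposable $U_0(\fsl_2)$-modules, Pollack's Loewy-length bounds, and the Morita description of the principal block. That step of your argument is correct. But the surrounding structure of part~(1) and essentially all of part~(2) have gaps.

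For part~(1): once you know $M|_{U_0(\fsl_2^{(\alpha)})}$ is trivial, the clean conclusion is the paper's --- since $\EIP(G_r)\subseteq\CJT(G_r)$, a single $\pi$-point factoring through $\SL(2)_1$ already forces $\Jt(M)=(\dim_kM)[1]$, whence every quasi-elementary subgroup acts trivially, i.e.\ $(G_r)_\pi\subseteq Z_{G_r}(M)$ (Lemma~\ref{EIFK1}), and $G_r/(G_r)_\pi$ is diagonalizable (Theorem~\ref{SFS6}). Your proposed alternative, inducting on $r$ via $1\to G_1\to G_r\to(G^{(1)})_{r-1}\to 1$, has an unjustified step: for $M\in\EIP(G_r)$ inflated from $(G^{(1)})_{r-1}$ there is no map of $\pi$-point spaces in either direction along the surjection $kG_r\twoheadrightarrow k(G^{(1)})_{r-1}$, so "inflation is compatible with the equal-images condition" is not automatic. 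The $\cG_\pi$-route bypasses this entirely.

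Part~(2) is where the real problems lie. First, conjugation by the truncated exponential $\exp(s\rho(e_\alpha))$ does \emph{not} realize the twist by $\Ad(x_\alpha(s))$ on $U_0(\fg)$: in the truncated algebra the Hausdorff expansion of $\exp(se_\alpha)\,y\,\exp(-se_\alpha)$ has surviving terms of $e_\alpha$-degree $\ge p$ (already for $p=3$ in $U_0(\fsl_2)$, the $s^3$-coefficient $\tfrac12(e f e^2 - e^2 f e)=e^2h-e^2$ is nonzero and does not lie in $\fg$), and indeed if it did work every $U_0(\fg)$-module would be $G$-stable, which is false --- the $M_1$-summands in Lemma~\ref{MCR1} (indecomposables with one-dimensional rank variety) give non-$G$-stable modules. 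Second, your fallback --- that the algebraic family $s\mapsto M^{(x_\alpha(s))}$ "all of the prescribed $j$-rank, hence isomorphically constant" --- is not a valid inference; a one-parameter family of modules of constant dimension and rank profile is certainly not isoconstant in general (regular Kronecker modules already illustrate this). Third, you propose to establish $\CR_j(U_0(\fsl_2))=\CJT(U_0(\fsl_2))$ by "the weight-space analysis used in~(1)", but that analysis only showed $\EIP(U_0(\fsl_2))$ is trivial; $\CJT(U_0(\fsl_2))$ is vastly larger (all restrictions of rational $\SL(2)$-modules). The paper's Lemma~\ref{MCR1} really does lean on the classification: decompose $M=M_0\oplus M_1\oplus M_2$ by rank-variety dimension, use that $M_0$ and $M_2$ are rational restrictions of constant Jordan type, and kill $M_1$ because constant $j$-rank forces $\Pi(\fsl(2))_{M_1}\in\{\Pi(\fsl(2)),\emptyset\}$ while $\dim\Pi(\fsl(2))_{M_1}\le 0$. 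Finally, for the passage from $\fsl_2$ to $\fg$ the paper does not need to analyze nilpotent orbit degenerations: the locus $\PP(\cV^<_\fg(M))$ of non-maximal Jordan type is a closed, conical, $G$-invariant subvariety, so if nonempty Borel's fixed point theorem puts a root vector $x_\alpha$ into it, contradicting Lemma~\ref{MCR1} applied to $\fh_{(\alpha)}$.
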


\bigskip
\noindent
In the second part, we shall study the aforementioned categories from a different point of view, namely by considering the stable Auslander-Reiten quiver $\Gamma_s(\cG)$ of the self-injective algebra $k\cG:=k[\cG]^\ast$. By definition, the vertices of $\Gamma_s(\cG)$ are the non-projective indecomposable $\cG$-modules. Arrows reflect the presence of so-called irreducible morphisms. The quiver $\Gamma_s(\cG)$ is fitted with an automorphism, the {\it Auslander-Reiten translation}, which is closely related to the second Heller shift. Earlier work implies that components of $\Gamma_s(\cG)$ containing modules of constant Jordan type are usually of isomorphism type $\ZZ[A_\infty]$ and thus have the following shape, where the dotted arrow indicates the AR-translation:

\begin{center}
\begin{picture}(200,130)
\multiput(0,0)(0,30)4{\multiput(0,0)(30,0)7{\circle*{3}}}
\multiput(15,15)(0,30)3{\multiput(0,0)(30,0)6{\circle*{3}}}
\multiput(3,3)(0,30)3{\multiput(0,0)(30,0)6{\vector(1,1){10}}}
\multiput(18,18)(0,30)3{\multiput(0,0)(30,0)6{\vector(1,1){10}}}
\multiput(3,27)(0,30)3{\multiput(0,0)(30,0)6{\vector(1,-1){10}}}
\multiput(18,12)(0,30)3{\multiput(0,0)(30,0)6{\vector(1,-1){10}}}
\put(90,110){\makebox(0,0){$\vdots$}}
\put(200,45){\makebox(0,0){$\cdots$}}
\put(-20,45){\makebox(0,0){$\cdots$}}
\multiput(20,15)(30,0)5{\vector(-1,0){0}}
\multiput(25,12)(30,0)5{$\cdots$}
\end{picture}
\end{center}
The modules belonging to the bottom row are called {\it quasi-simple}. This notion derives from the fact that the $\cG$-modules of such components usually afford a filtration whose factors are quasi-simple.

Given a component $\Theta \subseteq \Gamma_s(\cG)$, it was shown in \cite{CFP,FPe3} that $\Theta$ is contained in $\CJT(\cG)$ or $\CR_j(\cG)$, whenever $\Theta$ meets the corresponding subcategory. By contrast, equal images modules usually intersect AR-components only in small subsets. When specialized to the case $\cG=\ZZ/(p)\!\times\!\ZZ/(p)$, Theorem B below shows that, for $p\ge 3$, an arbitrary component $\Theta$ of $\Gamma_s(\ZZ/(p)\!\times\!\ZZ/(p))$ containing an equal images module of Loewy length $\le p\!-\!2$ only has a finite intersection with $\EIP(\ZZ/(p)\!\times\!\ZZ/(p))$. Moreover, each object of $\Theta\cap\EIP(\ZZ/(p)\!\times\!\ZZ/(p))$ is quasi-simple. The precise statement involves the Jordan type of a module, that is, the isomorphism class of its restriction to suitable subalgebras of type $k[T]/(T^p)$. The latter only has one $i$-dimensional indecomposable module $[i]$ for each $i \in \{1,\ldots,p\}$.

\bigskip

\begin{thm**} Let $\cG$ be a finite group scheme containing an abelian unipotent subgroup scheme of complexity $\ge 2$. Suppose that $\Theta \subseteq \Gamma_s(\cG)$ is a regular component of
tree class $A_\infty$. Then the following statements hold:
\begin{enumerate}
\item If $M \in \Theta\cap\EIP(\cG)$ has Jordan type $\Jt(M)=\bigoplus_{i=1}^{p-1}a_i[i]$, then $M$ is quasi-simple and $\Theta$ contains only finitely many such modules.
\item If there exists $M \in \Theta\cap\EIP(\cG)$ of Jordan type $\Jt(M)=\bigoplus_{i=1}^{p-2}a_i[i]$, then $\Theta\cap\EIP(\cG)$ is a finite set consisting of quasi-simple modules. \end{enumerate}\end{thm**}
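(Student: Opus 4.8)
The plan is to combine the propagation of constant‑rank conditions through Auslander--Reiten components with the combinatorics of $\ZZ[A_\infty]$ and the behaviour of Jordan types under the Heller operator. Since $\EIP(\cG)\subseteq\CJT(\cG)\subseteq\CR_j(\cG)$ for every $j$, the component $\Theta$ meets $\CJT(\cG)$ and $\CR_{p-1}(\cG)$, so by \cite{CFP,FPe3} we have $\Theta\subseteq\CJT(\cG)$ and $\Theta\subseteq\CR_{p-1}(\cG)$; in case (2) one gets the stronger $\Theta\subseteq\CR_{p-2}(\cG)$. In particular every module on $\Theta$ has a well-defined Jordan type. Restriction to the abelian unipotent subgroup scheme $\cU\subseteq\cG$ of complexity $\ge 2$ leaves the equal-images property and all Jordan types unchanged and puts us in the concrete situation of a $p$-nilpotent operator on a module over a truncated polynomial ring, as in \cite{CFS}; there the hypotheses say that the generic $p$-nilpotent operator $\theta$ satisfies $\theta^{p-1}M=0$ (case (1)) or $\theta^{p-2}M=0$ (case (2)).

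I would next settle the bookkeeping on $\Theta\cong\ZZ[A_\infty]$. The quasi-simple modules of $\Theta$ form a single $\tau$-orbit; since the Auslander--Reiten translate of the Frobenius algebra $k\cG$ is $\Omega^2$ up to a twist by the Nakayama automorphism --- a twist permuting $\pi$-points, hence preserving Jordan types --- and $\Omega$ dualizes the stable Jordan type (interchanging $[i]$ and $[p\!-\!i]$), the stable Jordan type is constant along $\tau$-orbits. Hence all quasi-simple modules of $\Theta$ share one stable Jordan type, of total dimension $d$, say, and a quasi-simple module of $\Theta$ has dimension exactly $d$ precisely when its Jordan type has no block of size $p$, i.e. is of the form $\bigoplus_{i=1}^{p-1}a_i[i]$. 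Finally, a regular component of tree class $A_\infty$ contains only finitely many modules of any given dimension, so $\Theta$ has only finitely many quasi-simple modules of dimension $d$.

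For part (1) it then suffices to show that $M\in\Theta\cap\EIP(\cG)$ with $\theta^{p-1}M=0$ is quasi-simple: indeed such an $M$ has the common stable Jordan type and no block of size $p$, hence dimension $d$, and finitely many modules of dimension $d$ lie on $\Theta$. Suppose $\ql(M)=n\ge 2$. Then the irreducible maps relating $M$ to its neighbours on $\Theta$, together with the sub/quotient relations they produce (part of the standard structure of $\ZZ[A_\infty]$-components), exhibit $M$ as a proper extension of non-projective modules on $\Theta$. Specialising at a generic $\pi$-point $\alpha_K$, the equal-images property makes the image $V:=\im(\alpha_K^\ast(M_K))$ a \emph{single} subspace of $M_K$; feeding in $\theta^{p-1}M=0$ and tracking $V$ through this extension, one contradicts the strict increase of the $\theta$-rank (equivalently of $\dim_k$) along a ray of $\Theta$, the latter being forced by the additivity of dimensions on $\ZZ[A_\infty]$ and $\Theta\subseteq\CR_{p-1}(\cG)$. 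Hence $n=1$.

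For part (2) the inclusion $\Theta\subseteq\CR_{p-2}(\cG)$ forces the common stable Jordan type of the quasi-simple modules of $\Theta$ into degrees $\le p-2$, since the quasi-socle of the given $M$ is a submodule of $M$ and thus killed by $\theta^{p-2}$. For an arbitrary $L\in\Theta\cap\EIP(\cG)$ one then shows $\theta^{p-1}L=0$ --- playing the common image of $L$ against its quasi-socle and the constancy of the $\theta^{p-2}$-rank on $\Theta$, in the spirit of (1) --- after which part (1) applies: $L$ is quasi-simple of dimension $d$, so $\Theta\cap\EIP(\cG)$ sits inside the finite set of dimension-$d$ modules on $\Theta$ and consists of quasi-simple modules. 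The crux of the whole argument is the $\theta$-rank comparison invoked in (1) and (2): one must show that the single subspace $\im(\alpha_K^\ast(M_K))$ attached to an equal-images module cannot coexist with the submodule/quotient pattern imposed by a non-quasi-simple position in $\ZZ[A_\infty]$ --- respectively with a Jordan block of size $p$ when $\Theta\subseteq\CR_{p-2}(\cG)$ --- once the largest (resp. two largest) admissible block sizes are ruled out. This in turn requires precise control of how Jordan types change under the Heller operator $\Omega_{k\cG}$, which unlike over $k[T]/(T^p)$ may create or absorb free summands, and of how $\theta$-ranks transform along the irreducible maps of $\Theta$; this is where I expect the main difficulty to lie.
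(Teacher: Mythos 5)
There is a genuine gap, and it sits exactly where you flag the ``main difficulty'' --- the quasi-simplicity claim and the finiteness claim are only gestured at, not proved, and the route you sketch does not clearly lead there. Your plan for quasi-simplicity (``feeding in $\theta^{p-1}M=0$ and tracking $V$ through this extension, one contradicts the strict increase of the $\theta$-rank along a ray'') is not an argument: you never explain how the single image subspace attached to an equal images module conflicts with a position of quasi-length $\ge 2$. The paper's Theorem~\ref{ARD4}(2) handles this via three specific ingredients you do not use: (a) because $\dim\Pi(\cG)_M\ge 1$, the almost split sequences on $\Theta$ are \emph{locally split} \cite[(8.5)]{CFP}; (b) Lemma~\ref{ARD3} then shows that being an equal images module of bounded Loewy length passes to the end terms, so from $\ql(M)\ge 2$ one deduces $\Omega^2_\cG(M_1)\in\EIP(\cG)_{p-1}$ for the quasi-simple $M_1$ on the sectional path below $M$; and (c) Theorem~\ref{EIHS4}(1) (proved via the equal kernels property and Lemma~\ref{EIHS3}) rules this out. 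Nothing in your sketch recognises that local splitness is the mechanism by which AR-theory interacts with $\EIP$, and without it I do not see how the extension structure on $\ZZ[A_\infty]$ is brought to bear on images of $\pi$-point operators.

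Your finiteness argument is also incomplete. You invoke ``a regular component of tree class $A_\infty$ contains only finitely many modules of any given dimension'' without justification; this is not immediate (one has to rule out return of small dimensions as $\dim_k\tau^n(X_0)$ fluctuates), and it is not the route the paper takes. The paper's Theorem~\ref{EIHS1} gives a concrete bound $|n|\le(\dim_kM)^2$ on how far a $2n$-fold Heller shift of an equal images module in $\EIP(\cG)_{p-1}$ can again lie in $\EIP(\cG)_{p-1}$, obtained from a Noether-normalisation lower bound $n+1\le\dim_k\Ext^{2n}_\cG(M,M)$ on the growth of self-extensions; it is this explicit cohomological estimate, not a generic ``finitely many of each dimension'' principle, that yields finiteness. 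Your reduction of part (2) to part (1) is likewise only a sketch --- the paper gets $\Theta\cap\EIP(\cG)\subseteq\EIP(\cG)_{p-2}$ cleanly from Proposition~\ref{EIPr6} (positivity of Jordan-type coefficients for nonzero equal images modules) together with the additivity statement \cite[(3.2.3)]{Fa5}, whereas ``playing the common image of $L$ against its quasi-socle'' is not a substitute for that argument.
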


\bigskip
\noindent
In order to describe the contents of our paper in more detail, we recall basic notions of support spaces, as expounded by Friedlander and Pevtsova, cf.\ \cite{FPe1,FPe2}. We fix an indeterminate $T$ over $k$ and consider $\fA_p := k[T]/(T^p)$, the $p$-truncated polynomial ring over $k$. If $K$ is an extension field of $k$, we write $\fA_{p,K} := \fA_p\!\otimes_k\!K$ and denote the canonical generator by $t$. Given a finite group scheme $\cG$ over $k$, we let $\cG_K$ be the extended $K$-group scheme associated to $\cG$ and put $K\cG := K\cG_K \cong k\cG\!\otimes_k\!K$, where $k\cG := k[\cG]^\ast$ is the algebra of measures of $\cG$.

Following Friedlander-Pevtsova \cite{FPe2}, an algebra homomorphism $\alpha_K : \fA_{p,K} \lra K\cG$ is called a {\it $\pi$-point}, provided
\begin{enumerate}
\item[(a)] $\alpha_K$ is left flat, and
\item[(b)] there exists an abelian unipotent subgroup scheme $\cU \subseteq \cG_K$ such that $\im \alpha_K \subseteq K\cU$. \end{enumerate}
We denote by $\Pt(\cG)$ the set of $\pi$-points of $\cG$.

Given a $\cG$-module $M$, we let $M_K := M\!\otimes_k\!K$ be the $\cG_K$-module obtained via base change. In view of (a), the pull-back functor
\[ \alpha_K^\ast : \modd \cG \lra \modd \fA_{p,K} \ \ ; \ \ M \mapsto \alpha_K^\ast(M_K)\]
between the categories of finite-dimensional $\cG$-modules and finite-dimensional $\fA_{p,K}$-modules takes projectives to projectives. Two $\pi$-points $\alpha_K$ and $\beta_L$ are {\it equivalent}
($ \alpha_K\sim \beta_L$) if for every $M \in \modd \cG$ the $\fA_{p,K}$-module $\alpha_K^\ast(M_K)$ is projective exactly when the $\fA_{p,L}$-module $\beta^\ast_L(M_L)$ is projective. Thanks to \cite[(7.5)]{FPe2}, the set $\Pi(\cG)$ of equivalence classes of $\Pi$-points has the structure of a $k$-scheme of finite type. It is referred to as the {\it $\pi$-point scheme} of $\cG$. 

If $\cH \subseteq \cG$ is a closed subgroup scheme, then the map
\[ \iota_{\ast,\cH} : \Pi(\cH) \lra \Pi(\cG) \ \ ; \ \ [\alpha_K] \mapsto [\iota_K\circ \alpha_K]\]
is well-defined and continuous, see \cite[(2.7),(3.6)]{FPe2}. Here $\iota_K : K\cH \lra K\cG$ denotes the embedding induced by the canonical inclusion $\cH \hookrightarrow \cG$.

Since the subcategories mentioned above are defined via properties given by $\pi$-points, we study in Section $1$ the subgroup scheme $\cG_\pi \subseteq \cG$ generated by all $\pi$-points. As we show in Theorem \ref{SFS3}, $\cG_\pi \subseteq \cG$ is the unique minimal $\cG(k)$-invariant subgroup scheme of $\cG$ such that $\iota_{\ast,\cG_\pi}(\Pi(\cG_\pi))=\Pi(\cG)$. In preparation for Theorem A, we determine $\cG_\pi$ in case $\cG=G_r$ is the $r$-th Frobenius kernel of a smooth reductive group $G$. Our proof of Theorem A also rests on the stability results presented
in Section $2$, which we formulate in a slightly wider context. Given a self-injective $k$-algebra $\Lambda$ and a connected subgroup $G$ of its automorphism group, we show that the full subcategory $\modd_\Lambda^G \subseteq \modd_\Lambda$ of the category of finitely generated $\Lambda$-modules, whose objects are isomorphic to all their twists by elements of $G$, has almost split sequences.
In Section $3$ we study the category $\CR_j(G_1)$ of modules of constant $j$-rank over the first Frobenius kernel $G_1$ of a smooth reductive algebraic group $G$. This class of modules, which was introduced by Friedlander-Pevtsova in \cite{FPe3}, naturally generalizes the earlier notion of modules of constant Jordan type, cf.\ \cite{CFP}. By definition, the ranks of the linear maps
\[ M_K \lra M_K \ \ ; \ \ m\mapsto \alpha_K(t)^jm \ \ \ \ \ \ \alpha_K \in \Pt(\cG),\]
associated to an object $M \in \CR_j(\cG)$, are constant. In case $\cG=G_1$, we show that such a module already has constant Jordan type, a result which fails for higher Frobenius kernels.

The final two sections are devoted to the study of equal images modules and their Auslander-Reiten theory. After recording a few  straightforward consequences of \cite{CFS}, we show in Section $4$ that equal images modules of Frobenius kernels of reductive groups are sums of one-dimensional modules. Thus, the category $\EIP(\cG)$ reflects differences between the categories $\modd \SL(2)_1$ and $\modd (\ZZ/(2)\!\times\!\ZZ(2))$, which, from the vantage point of abstract Auslander-Reiten theory, are rather similar. The study of almost split sequences involving equal images modules necessitates results on their Heller shifts. The key result in this context implies that Auslander-Reiten translates of equal images modules do usually not belong to $\EIP(\cG)$. In Section $5$ we
exploit this fact and establish a number of results concerning the structure of the intersection $\Theta\cap\EIP(\cG)$ of an AR-component with the category of equal images modules. 

As was shown in \cite[(4.4)]{CFS}, certain equal images modules $W_{n,d}$ are ubiquitous within the category $\EIP (\ZZ/(p)\!\times\! \ZZ/(p))$ in the sense that every object is an image of a suitable $W_{n,d}$. Moreover, the subcategory of $\CJT(\ZZ/(p)\!\times\!\ZZ/(p))$ generated by the $W_{n,d}$ essentially exhausts all Jordan types that can be realized by modules of constant Jordan type. In terms of AR-theory, $\ZZ/(p)\!\times\!\ZZ/(p)$-modules $W_{n,d}$ are rather special: They are quasi-simple and a component containing $W_{n,d}$ intersects $\EIP(\ZZ/(p)\!\times\!\ZZ/(p))$ in either in one or infinitely many vertices, with the latter case occurring exactly when $W_{n,d}$ is a $p$-Koszul module in the sense of \cite{GMMZ04}.

\bigskip

\section{Subgroups with full supports}
The results of this section are motivated by the question to what extent $\cG$-modules are determined by their pull-backs $\alpha_K^\ast(M_K)$ along $\pi$-points. To that end, we consider closed subgroups with the following property:

\bigskip

\begin{Definition} A closed subgroup scheme $\cH \subseteq \cG$ is called {\it $\Pi$-essential} if $\iota_{\ast,\cH}(\Pi(\cH))=\Pi(\cG)$.\end{Definition}

\bigskip

\begin{Example} Let $G$ be a finite group. Using Quillen stratification \cite[(4.12)]{FPe2} one can show that a subgroup $H\subseteq G$ is $\Pi$-essential if and only if for every $p$-elementary
abelian subgroup $E \subseteq G$ there exists $g \in G$ such that the conjugate group $E^g$ of $E$ is contained in $H$, cf.\ Lemma \ref{SFS2} below. \end{Example}

\bigskip
\noindent
Let $\NN$ be the set of positive integers and set $\NN_0 := \NN\cup\{0\}$. Given $r \in \NN_0$, we let $\GG_{a(r)} := \Spec_k(k[T]/(T^{p^r}))$ be the r-th Frobenius kernel of the additive group $\GG_a := \Spec_k(k[T])$. If $\cH \subseteq \GG_{a(r)}$ is a closed subgroup scheme, then $\cH = \GG_{a(s)}$ for some $s\le r$.

A finite group scheme $\cG$ is {\it infinitesimal}, provided its coordinate ring $k[\cG]$ is local. In that case, the least number $r \in \NN_0$ such that $x^{p^r}=0$ for every $x$ belonging to the
augmentation ideal $k[\cG]^\dagger$ of $k[\cG]$ is called the {\it height} of $\cG$.

\bigskip

\begin{Lemma} \label{SFS1} Let $\cH \subseteq \cG$ be a $\Pi$-essential subgroup of an infinitesimal group $\cG$. If $\cG' \subseteq \cG$ is a subgroup, then $\cH \cap \cG'$ is a $\Pi$-essential
subgroup of $\cG'$.\end{Lemma}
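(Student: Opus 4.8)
The statement asserts that $\Pi$-essentiality is preserved under intersection with an arbitrary subgroup $\cG' \subseteq \cG$, when $\cG$ is infinitesimal. The plan is to exploit the structure of $\pi$-points of infinitesimal groups: every $\pi$-point of an infinitesimal group $\cG$ is (up to equivalence) represented by a $\pi$-point factoring through an embedding $\GG_{a(1),K} \hookrightarrow \cU \subseteq \cG_K$ for some abelian unipotent $\cU$, so $\Pi(\cG)$ is covered by the images $\iota_{\ast,\cU}(\Pi(\cU))$ as $\cU$ ranges over such subgroups. More precisely, I would first reduce to showing: given a $\pi$-point $\alpha_K$ of $\cG'_K$, we must produce a $\pi$-point of $(\cH\cap\cG')_K$ equivalent (inside $K\cG'$) to $\alpha_K$. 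By definition of $\pi$-point, $\im\alpha_K$ lies in $K\cV$ for some abelian unipotent $\cV \subseteq \cG'_K$, and the relevant data only depends on the restriction of modules to $\cV$; so it suffices to work one abelian unipotent subgroup at a time.

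The second step is to use the hypothesis that $\cH$ is $\Pi$-essential in $\cG$. Since $\cV \subseteq \cG'_K \subseteq \cG_K$ is abelian unipotent, $\Pi$-essentiality of $\cH$ (after base change to $K$, which preserves $\Pi$-essentiality) gives that every class in $\Pi(\cV) \subseteq \Pi(\cG_K)$ comes from $\Pi(\cH_K)$; in particular the $\pi$-point $\alpha_K$, viewed as a $\pi$-point of $\cG_K$, is equivalent to one factoring through $K\cH_K$. The key point I want is that this equivalent $\pi$-point can in fact be taken to factor through $K(\cH\cap\cG')_K$ — i.e. through $\cV \cap \cH_K$. For this I would invoke the classification of abelian unipotent infinitesimal group schemes of the form relevant here: after passing to a further field extension and conjugating, $\cV$ contains a copy $\GG_{a(1),K}$ through which $\alpha_K$ essentially factors, and the one-parameter subgroups / $\pi$-points of $\cV$ are governed by the finitely generated subgroups $\GG_{a(s)}$ appearing inside the factors of $\cV$. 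The combinatorics of which $\GG_{a(1),K}$ land inside $\cH_K$ is controlled by $\cV \cap \cH_K$, and since $\cV \subseteq \cG'_K$ this intersection is exactly $\cV \cap (\cH\cap\cG')_K$. Hence the representing $\pi$-point lands in $K(\cH\cap\cG')_K$, as desired.

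I would organize the argument so that the continuity and well-definedness of $\iota_{\ast,-}$ (quoted from \cite{FPe2}) reduce everything to the set-theoretic surjectivity $\iota_{\ast,\cH\cap\cG'}(\Pi(\cH\cap\cG'))=\Pi(\cG')$, and then prove this by the chase above: start with $[\alpha_K] \in \Pi(\cG')$, lift $\alpha_K$ to $\cV$ abelian unipotent in $\cG'_K$, use $\Pi$-essentiality of $\cH$ inside $\cG$ applied to the class of $\alpha_K$ in $\Pi(\cG_K)$ to replace $\alpha_K$ by an equivalent $\pi$-point with image in $K\cH$, observe this forces the image to lie in $K\cV \cap K\cH_K = K(\cV\cap\cH_K) \subseteq K(\cH\cap\cG')_K$, and conclude. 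The infinitesimality hypothesis enters to guarantee that $\pi$-points are detected on such abelian unipotent subgroups (via the $\GG_{a(s)}$-structure of their closed subgroups noted just before the lemma), so that the local analysis on $\cV$ actually sees the full class in $\Pi(\cG_K)$.

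The main obstacle I anticipate is the middle step: controlling \emph{where} the equivalent $\pi$-point, produced abstractly from $\Pi$-essentiality of $\cH$ in $\cG$, actually factors. A priori $\Pi$-essentiality only tells us the class lies in the image of $\Pi(\cH_K)\to\Pi(\cG_K)$, i.e. some $\pi$-point of $\cH_K$ is equivalent to $\alpha_K$ \emph{as $\pi$-points of $\cG_K$} — it need not obviously have image inside $\cG'_K$. Resolving this requires knowing that the equivalence class of a $\pi$-point supported on an abelian unipotent $\cV$ is already determined by (and representable within) $\cV$, together with a compatibility statement saying that a $\pi$-point of $\cH_K$ equivalent to one landing in $K\cV$ can be chosen to land in $K\cV\cap K\cH_K$. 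This is precisely the kind of statement that should follow from the local structure theory of $\pi$-points for infinitesimal groups, and I expect the bulk of the write-up to be in making that reduction rigorous; the rest is formal manipulation with $\iota_{\ast,-}$.
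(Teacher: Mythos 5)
The core difficulty you correctly flag in your last paragraph — namely, that $\Pi$-essentiality of $\cH$ in $\cG$ only produces \emph{some} $\pi$-point of $\cH_K$ equivalent to $\alpha_K$ as $\pi$-points of $\cG_K$, with no a priori control over where it actually factors — is not resolved by your proposal, and the step ``observe this forces the image to lie in $K\cV\cap K\cH_K$'' is unjustified as written. Two equivalent $\pi$-points of $\cG_K$ may be carried by entirely different abelian unipotent subgroup schemes; equivalence is defined only by testing projectivity of pull-backs of $\cG$-modules, so nothing in that definition ties the new representative to the original $\cV$, and in particular nothing constrains it to lie in $\cG'_K$. Your appeal to ``the local structure theory of $\pi$-points for infinitesimal groups'' and ``the combinatorics of which $\GG_{a(1),K}$ land inside $\cH_K$'' gestures at what is needed but does not supply it, and the argument as laid out would not close.

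The paper sidesteps this by not working directly with $\pi$-points at all. For an infinitesimal group of height $r$ one instead uses the scheme $V_r(\cG)$ of infinitesimal one-parameter subgroups, together with the homeomorphism $\Proj(V_r(\cG))\stackrel{\sim}{\lra}\Pi(\cG)$ of \cite[(5.2)]{SFB2} and \cite[(3.6)]{FPe2}. The decisive input is then the proof of \cite[(6.1)]{SFB2}: if two one-parameter subgroups $\varphi\in V_r(\cG')$ and $\psi\in V_r(\cH)$ satisfy $[\iota_{\cG'}\circ\varphi]=[\iota_\cH\circ\psi]$ in $\Proj(V_r(\cG))$, then their (scheme-theoretic) images in $\cG$ \emph{coincide}. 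This is precisely the missing compatibility: equality of classes forces equality of images, hence $\im\varphi\subseteq\cH\cap\cG'$, and $\varphi$ factors through $\cH\cap\cG'$. Without this (or an equivalent concrete characterization of the equivalence relation in terms of images), the reduction you outline cannot be completed. If you want to salvage your approach, you would need to locate and cite exactly such a statement — the $V_r$-formalism and \cite[(6.1)]{SFB2} are the standard route, and that is what the paper uses.
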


\begin{proof} Suppose that $\cG$ has height $r$. Following \cite{SFB1}, we let
\[V_r(\cG) := {\mathcal H O}\mathcal{M}(\GG_{a(r)},\cG)\]
be the affine scheme of infinitesimal one-parameter subgroups of $\cG$. Thanks to \cite[(5.2)]{SFB2} and \cite[(3.6)]{FPe2}, there exists a natural homeomorphism $\Proj(V_r(\cG)) \stackrel{\sim}{\lra} \Pi(\cG)$. Our assumption thus yields a commutative diagram
\[\begin{CD} \Proj(V_r(\cH\cap \cG')) @ >\iota_{\ast,\cH\cap\cG'} >> \Proj(V_r(\cG'))\\
@V\iota_{\ast,\cH\cap\cG'} VV @VV\iota_{\ast,\cG'}V\\
\Proj(V_r(\cH))@ >\iota_{\ast,\cH}>> \Proj(V_r(\cG)),
\end{CD} \]
where $\iota_{\ast,\cH}$ is surjective.

Let $\varphi \in V_r(\cG')$. Then there exists $\psi \in V_r(\cH)$ such that
\[ [ \iota_\cH\circ \psi] = [ \iota_{\cG'}\circ \varphi].\]
It follows from the proof of \cite[(6.1)]{SFB2} that the images of $\varphi$ and $\psi$ coincide. In particular, $\varphi$ factors through $\cH\cap \cG'$. Hence $[\varphi] \in \iota_{\ast,\cH\cap\cG'}(\Proj(V_r(\cH\cap \cG')))$, so that $\cH\cap\cG'$ is a $\Pi$-essential subgroup of $\cG'$. \end{proof}

\bigskip
\noindent
Recall that every finite group scheme $\cG$ over a perfect field is a semi-direct product
\[ \cG = \cG^0\rtimes\cG_{\rm red},\]
where $\cG^0$ is an infinitesimal normal subgroup and $\cG_{\rm red}$ is reduced, cf.\ \cite[(6.8)]{Wa}. If $G$ is a finite group with group algebra $kG$, then $G$ defines a reduced finite group scheme $\tilde{G}:= \Spec_k((kG)^\ast)$ satisfying $\tilde{G}(k)=G$ and $k\tilde{G} \cong kG$. In particular, $\tilde{G}$ is completely determined by its group $G$ of $k$-rational points. To a subgroup $G \subseteq \cG(k)$ of the finite group $\cG(k)$ of $k$-rational points of a finite group scheme $\cG$, there corresponds an embedding $\tilde{G} \hookrightarrow \cG_{\rm red}$. It is customary to write $\tilde{G} =G$.

If $\cG_1$ and $\cG_2$ are finite group schemes such that $\cG_1$ is infinitesimal and $\cG_2$ is reduced, then $\Hom(\cG_1,\cG_2)$ contains only the trivial homomorphism. Thus, if
$\cH \subseteq \cG$ is a subgroup scheme, then $\cH^0 \subseteq \cG^0$ and $\cH_{\rm red} \subseteq \cG_{\rm red}$.

Every element $g \in \cG(k)$ defines an inner automorphism $\kappa_g : \cG \lra \cG$ of the group scheme $\cG$. We refer to a subgroup $\cH \subseteq \cG$ as {\it $\cG(k)$-invariant}
if $\kappa_g$ induces an automorphism of $\cH$ for every $g \in \cG(k)$.

Let $E \subseteq \cG_{\rm red}$ be a $p$-elementary abelian subgroup. Following \cite[\S4]{FPe2}, we set
\[\Pi_0(\cG,E) := \iota_{\ast,(\cG^0)^E\times E}(\Pi((\cG^0)^E\!\times\!E))\!\smallsetminus\!\bigcup_{F\subsetneq E} \iota_{\ast,(\cG^0)^F\times F}(\Pi((\cG^0)^F\!\times\!F)).\]
For a closed subgroup $\cH \subseteq \cG$, we let $\fE(\cH)$ be the set of $p$-elementary abelian subgroups of $\cH(k)$.

\bigskip

\begin{Lemma} \label{SFS2} Suppose that $\cH \subseteq \cG$ is a $\cG(k)$-invariant $\Pi$-essential subgroup. Then the following statements hold:
\begin{enumerate}
\item We have
\[ \Pi(\cG) = \bigcup_{E \in \fE(\cH)} \Pi_0(\cG,E).\]
\item If $E \subseteq \cG_{\rm red}$ is $p$-elementary abelian, then $E \subseteq \cH_{\rm red}$.
\item $\cH_{\rm red}$ is a $\Pi$-essential subgroup of $\cG_{\rm red}$.
\item $\cH^0$ is a $\Pi$-essential subgroup of $\cG^0$.\end{enumerate}\end{Lemma}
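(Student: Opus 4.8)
The plan is to derive (1) and (3) from (2), and to establish (2) and (4) directly from $\Pi$-essentiality using the Quillen stratification of $\Pi(\cG)$, \cite[\S 4]{FPe2}. I will rely on two standard features of that stratification for a finite group scheme $\cK$: the strata $\Pi_0(\cK,E)$, with $E$ ranging over the $\cK(k)$-conjugacy classes of members of $\fE(\cK)$, are non-empty, pairwise disjoint and cover $\Pi(\cK)$; and for a closed subgroup $\cK'\subseteq\cK$ and $E\in\fE(\cK')$ one has $\iota_{\ast,\cK'}(\Pi_0(\cK',E))\subseteq\Pi_0(\cK,E)$, with $E$ on the right viewed inside $\cK_{\rm red}$. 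I will also use $\cG(k)=\cG_{\rm red}(k)$ and $\fE(\cH)=\fE(\cH_{\rm red})$.

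For (2), I would fix $E\in\fE(\cG_{\rm red})$ and pick $z\in\Pi_0(\cG,E)$ (possible, as the stratum is non-empty). By $\Pi$-essentiality $z=\iota_{\ast,\cH}(w)$ for some $w\in\Pi(\cH)$; writing $w\in\Pi_0(\cH,E')$ with $E'\in\fE(\cH)$ and invoking functoriality of the stratification gives $z\in\Pi_0(\cG,E')$ as well. Disjointness of distinct strata then forces $E$ and $E'$ to be $\cG(k)$-conjugate, say $E=\kappa_g(E')$ with $g\in\cG(k)$, and since $E'\subseteq\cH_{\rm red}$ and $\cH_{\rm red}$ is $\cG(k)$-invariant we get $E=\kappa_g(E')\subseteq\cH_{\rm red}$. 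This proves (2) and shows $\fE(\cG)=\fE(\cH)$. The crux is the combination of two facts: functoriality of Quillen stratification says the ``support type'' of a class in $\Pi(\cG)$ computed through $\cH$ agrees up to $\cG(k)$-conjugacy with the type computed through $\cG$, while the $\cG(k)$-invariance of $\cH$ upgrades this conjugacy to an honest containment. Without the invariance hypothesis one recovers only the weaker conjugacy statement of the Example preceding the Lemma, and this is really the only point requiring care.

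Parts (1) and (3) are then quick. Part (1) is immediate from the Quillen stratification of $\cG$ together with (2): $\Pi(\cG)=\bigcup_{E\in\fE(\cG_{\rm red})}\Pi_0(\cG,E)=\bigcup_{E\in\fE(\cH)}\Pi_0(\cG,E)$. For (3), (2) says that every $p$-elementary abelian subgroup of the finite group $\cG_{\rm red}(k)$ already lies in $\cH_{\rm red}(k)$; the classical Quillen stratification for $\cG_{\rm red}$ expresses $\Pi(\cG_{\rm red})$ as the union of the images $\iota_{\ast,E}(\Pi(E))$ over all such $E$, and factoring each inclusion $E\hookrightarrow\cG_{\rm red}$ through $\cH_{\rm red}$ yields $\Pi(\cG_{\rm red})\subseteq\iota_{\ast,\cH_{\rm red}}(\Pi(\cH_{\rm red}))\subseteq\Pi(\cG_{\rm red})$, hence equality.

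For (4), I would use that $\cH^0$ and $\cG^0$ are infinitesimal, so their only $p$-elementary abelian subgroup is trivial, giving $\Pi_0(\cH,\{1\})=\iota_{\ast,\cH^0}(\Pi(\cH^0))$ and $\Pi_0(\cG,\{1\})=\iota_{\ast,\cG^0}(\Pi(\cG^0))$, where moreover $\iota_{\ast,\cG^0}$ is injective and hence identifies $\Pi(\cG^0)$ with the stratum $\Pi_0(\cG,\{1\})$, cf.\ \cite[\S 4]{FPe2}. Since $\iota_{\ast,\cH}$ maps each $\Pi_0(\cH,E')$ into $\Pi_0(\cG,E')$ and the $\cG$-strata are disjoint, the only summand of $\Pi(\cH)=\bigsqcup_{E'}\Pi_0(\cH,E')$ whose image can meet $\Pi_0(\cG,\{1\})$ is $\Pi_0(\cH,\{1\})$; as the images jointly exhaust $\Pi(\cG)$, this forces $\iota_{\ast,\cH}(\Pi_0(\cH,\{1\}))=\Pi_0(\cG,\{1\})$. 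Composing the two factorizations $\cH^0\subseteq\cH\subseteq\cG$ and $\cH^0\subseteq\cG^0\subseteq\cG$ of the inclusion $\cH^0\hookrightarrow\cG$ and cancelling the injective $\iota_{\ast,\cG^0}$, this identity amounts to the statement that $\Pi(\cH^0)$ maps onto $\Pi(\cG^0)$, that is, $\cH^0$ is $\Pi$-essential in $\cG^0$. Like (1) and (3), this step presents no further difficulty once (2) and the basic structure of the stratification are in hand.
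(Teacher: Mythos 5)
Your strategy of working directly through the Quillen stratification is close in spirit to the paper's, but the two ``standard features'' you announce at the outset are precisely the nontrivial points the paper has to earn, and you supply no argument for them. Nonemptiness of every stratum $\Pi_0(\cG,E)$ is not cited from \cite{FPe2}; it is instead what the paper's proof of (2) establishes (for rank-$1$ subgroups, which then suffices since $E$ is generated by its cyclic subgroups), and it does so by means of the test module $M=\pi^\ast(k\cG_{\rm red})$, whose pull-back along a $\pi$-point detects whether that $\pi$-point factors through $\cG^0$. Likewise, the ``functoriality'' $\iota_{\ast,\cK'}(\Pi_0(\cK',E))\subseteq\Pi_0(\cK,E)$ is not a formal consequence of the definitions: since the centralizer $(\cG^0)^F$ can be strictly larger than $(\cH^0)^F$, a class lying in $\Pi_0(\cH,E')$ may a priori fall into a stratum indexed by a smaller subgroup once pushed into $\Pi(\cG)$. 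The paper's proof of (1) is structured around exactly this difficulty---it passes to a minimal $F_0\subseteq F$ instead of asserting $F_0=F$---and the paper's proof of (4) invokes $\pi^\ast(k\cG_{\rm red})$ together with the $\pi$-point factorization from \cite[(2.6)]{FPe2} and \cite[(4.1)]{FPe1} precisely to rule out the drop-in-stratum phenomenon that your claimed functoriality would preclude for free. As written, your argument for (2) and for the key step of (4) rests on these unestablished assertions.

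There is also a secondary error in your part (4): you assert that $\iota_{\ast,\cG^0}$ is injective and then ``cancel'' it. It is not; the paper appeals to \cite[(4.12)]{FPe2} only to infer that $[\varphi_K]$ is a $\cG(k)$-conjugate of the class coming from $\cH^0$, and then uses the $\cG(k)$-invariance of $\cH$ to upgrade conjugacy to membership. The fibers of $\iota_{\ast,\cG^0}$ are $\cG(k)$-orbits rather than singletons. This particular slip is repairable by invoking the invariance hypothesis at that step (which you do not), but the two unproven stratification facts above are the substantive gap.
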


\begin{proof} (1) Let $x \in \Pi(\cG)$. By Quillen decomposition \cite[(4.12)]{FPe2}, there exists a $p$-elementary abelian group $E \subseteq \cG_{\rm red}$ such that $x \in \Pi_0(\cG,E)$. By the same token, $\cH \subseteq \cG$ being $\Pi$-essential yields a $p$-elementary abelian subgroup $F \subseteq \cH_{\rm red}$ and $y \in \Pi_0(\cH,F)$ such that $x = \iota_{\ast,\cH}(y)$. Let $\alpha_K \in \Pt((\cH^0)^F\!\times\!F)$ be a representative of $y$. Then the finite subset
\[ \{ F' \subseteq F \ ; \ \exists \ \beta_L \in x \ \text{factoring through} \ (\cG^0)^{F'}\!\times\!F'\}\]
is not empty and hence possesses a minimal element $F_0 \subseteq F$. This readily implies that $x \in \Pi_0(\cG,F_0)$ and \cite[(4.11)]{FPe2} ensures that $E$ and $F_0$ are $\cG(k)$-conjugate. Since $\cH$ is $\cG(k)$-invariant, it follows that $E \subseteq \cH$.

(2) Let $E \subseteq \cG_{\rm red}$ be a $p$-elementary abelian subgroup of rank $1$. If $x \in kE$ is a nilpotent generator, then the $\pi$-point $\alpha_x : \fA_p \lra kE \ ; \ t \mapsto x$ belongs to $\Pi_0(\cG_{\rm red},E)$. We claim that $\iota_{\ast,\cG_{\rm red}}([\alpha_x]) \in \Pi_0(\cG,E)$. Alternatively, there exists a $\pi$-point $\beta_L$ factoring through $(\cG^0)^{E'}\!\times\!E'$
for some subgroup $E' \subsetneq E$ and such that $\iota_{\cG_{\rm red}}\circ \alpha_x \sim \beta_L$. Since $E$ has rank $1$, we obtain $E' = (0)$.

Let $\pi : k\cG \lra k\cG_{\rm red}$ be the canonical projection and denote by $\pi^\ast : \modd \cG_{\rm red} \lra \modd \cG$ the corresponding pull-back functor. Then we have $\pi \circ \iota_{\cG_{\rm red}} = \id_{k\cG_{\rm red}}$. Since $\iota_{\cG_{\rm red}}\circ \alpha_x \sim \beta_L$
and $(\iota_{\cG_{\rm red}}\circ \alpha_x)^\ast(\pi^\ast(k\cG_{\rm red})) = \alpha_x^\ast(k\cG_{\rm red})$ is projective, it follows that $\beta^\ast_L(\pi^\ast_L(L\cG_{\rm red})) =
(\pi_L \circ \beta_L)^\ast(L\cG_{\rm red})$ is projective. As $\pi_L \circ \beta_L(t) = 0$, we have reached a contradiction.

Consequently, $\iota_{\ast,\cG_{\rm red}}([\alpha_x]) \in \Pi_0(\cG,E)$. On the other hand, (1) provides an elementary abelian subgroup $F \subseteq \cH_{\rm red}$ such that
$\iota_{\ast,\cG_{\rm red}}([\alpha_x]) \in \Pi_0(\cG,F)$. It now follows from \cite[(4.11)]{FPe2} that $E$ and $F$ are conjugate. Since $\cH(k) \unlhd \cG(k)$ is a normal subgroup,
we obtain $E \subseteq \cH(k)$, whence $E \subseteq \cH_{\rm red}$. As every $p$-elementary abelian group is a direct product of cyclic groups, our assertion follows.

(3) Let $x \in \Pi(\cG_{\rm red})$. Thanks to \cite[(4.12)]{FPe2}, there exists a $p$-elementary abelian subgroup $E \subseteq \cG_{\rm red}$ such that $x \in \Pi_0(\cG_{\rm red},E)$. By (2),
$E \subseteq \cH_{\rm red}$, so that $x \in \iota_{\ast,\cH_{\rm red}}(\Pi(\cH_{\rm red}))$.

(4) Let $x \in \Pi(\cG^0)$ with $\varphi_K : \fA_{p,K} \lra \cG^0_K$ representing $x$. Quillen decomposition provides a $p$-elementary abelian subgroup $E \subseteq \cH_{\rm red}$ and $y \in \Pi_0(\cH,E)$ such that $\iota_{\ast,\cG^0}(x) = \iota_{\ast,\cH}(y)$. In particular, there exists a $\pi$-point $\alpha_L \in \Pt((\cH^0)^E\!\times\!E)$ that represents $y$. A consecutive application of \cite[(2.6)]{FPe2} and \cite[(4.1)]{FPe1} ensures the existence of $\pi$-points $\beta_Q \in \Pt((\cH^0)^E)$ and $\gamma_Q \in \Pt(E)$ and $c,d \in \{0,1\}$ such that $[\alpha_L] \in \Pi((\cH^0)^E\!\times\!E)$ is represented by $\beta_Q\otimes c + d \otimes \gamma_Q$. Consequently,
\[ \iota_{\ast,\cG^0}([\varphi_K]) = \iota_{\ast,\cH}([\beta_Q\otimes c + d \otimes \gamma_Q]).\]
Let $M := \pi^\ast(k\cG_{\rm red})$. Then the pull-back $\varphi^\ast_K(M_K)$ of $M_K$ along $\varphi_K$ is a trivial $\fA_{p,K}$-module, so that $\iota_{\ast,\cG^0}(x) \in \Pi(\cG)_M$. On the other hand, the $\fA_{p,Q}$-module $(\beta_Q\otimes c + d \otimes \gamma_Q)^\ast(M_Q) = (d\gamma_Q)^\ast(M_Q)$ is projective, unless $d=0$. Thus, $d=0$, $c=1$ and $\iota_{\ast,\cG^0}([\varphi_K])=\iota_{\ast,\cH^0}([\beta_Q]) = \iota_{\ast,\cG^0}(\iota^{\cG^0}_{\ast,\cH^0}([\beta_Q]))$, where $\iota^{\cG^0}_{\ast,\cH^0}$ is induced by the inclusion $\cH^0 \hookrightarrow \cG^0$. Thanks to \cite[(4.12)]{FPe2}, we conclude that $[\varphi_K] \in \Pi(\cG^0)$ is the conjugate $\pi$-point  $\iota^{\cG^0}_{\ast,\cH^0}([\beta_Q])^g$ for some $g \in \cG(k)$. Since $\cH$ is $\cG(k)$-invariant, this yields $[\varphi_K] \in \iota^{\cG^0}_{\ast,\cH^0}(\Pi(\cH^0))$. Consequently, $\cH^0$ is a $\Pi$-essential subgroup of $\cG^0$. \end{proof}

\bigskip
\noindent
Following Friedlander-Pevtsova \cite{FPe1}, we call a subgroup $\cE \subseteq \cG$ {\it quasi-elementary} if $\cE$ is isomorphic to a direct product $\GG_{a(r)}\!\times\! E$ of $\GG_{a(r)}$
and a $p$-elementary abelian group $E$.

Since the intersection of closed subgroups of $\cG$ is again a closed subgroup (cf.\ \cite[(I.1.2)]{Ja2}), there exists the smallest closed subgroup $\cG_\pi \subseteq \cG$ containing all quasi-elementary subgroups of $\cG$. Being a characteristic subgroup of $\cG$, the group $\cG_\pi$ is $\cG(k)$-invariant.

\bigskip

\begin{Theorem} \label{SFS3} The group $\cG_\pi$ is the unique minimal $\cG(k)$-invariant $\Pi$-essential subgroup of $\cG$. \end{Theorem}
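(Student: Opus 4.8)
The plan is to prove two things: first, that $\cG_\pi$ is itself $\Pi$-essential, and second, that any $\cG(k)$-invariant $\Pi$-essential subgroup $\cH$ contains $\cG_\pi$; together with the already-noted $\cG(k)$-invariance of $\cG_\pi$, this gives the uniqueness of a minimal such subgroup.

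For the first assertion, I would use the fact (cited via \cite{SFB1,SFB2} and \cite{FPe1,FPe2}) that $\Pi(\cG)$ is covered, via Quillen-type decomposition, by the images $\Pi_0(\cG,E)$ over $p$-elementary abelian $E\subseteq\cG_{\rm red}$ and, in the infinitesimal case, by the projectivized space of one-parameter subgroups. Concretely, every class in $\Pi(\cG)$ is represented by a $\pi$-point $\alpha_K$ that, by definition, factors through $K\cU$ for some abelian unipotent subgroup scheme $\cU\subseteq\cG_K$; after base change and passing to a suitable subquotient one reduces to a quasi-elementary subgroup scheme $\cE=\GG_{a(r)}\times E$ through which (a representative of) $\alpha_K$ factors. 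Since every such $\cE$ is contained in $\cG_\pi$ by construction, $\iota_{\ast,\cG_\pi}$ is surjective, i.e.\ $\cG_\pi$ is $\Pi$-essential. I would lean on \cite[(4.2)]{FPe1} or the analysis in \cite{SFB2} to extract the quasi-elementary subgroup scheme cleanly; alternatively one can argue that $\GG_{a(r)}\times E$-type subgroups already realize all of $\Pi(\cG)$ by combining Lemma \ref{SFS2} (which handles the reduced and infinitesimal parts separately) with the structure $\cG=\cG^0\rtimes\cG_{\rm red}$.

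For the second assertion, let $\cH\subseteq\cG$ be any $\cG(k)$-invariant $\Pi$-essential subgroup. It suffices to show that $\cH$ contains every quasi-elementary subgroup $\cE=\GG_{a(r)}\times E$ of $\cG$, since $\cG_\pi$ is by definition the smallest closed subgroup with this property. Fix such an $\cE$. Applying Lemma \ref{SFS1} with $\cG'=\cG^0$ (the infinitesimal part) shows that $\cH\cap\cG^0$ is $\Pi$-essential in $\cG^0$, and then the structural statement $(\cH\cap\cG^0)\subseteq\cH^0$ together with Lemma \ref{SFS2}(4) pins down $\cH^0$ as $\Pi$-essential in $\cG^0$; dually, Lemma \ref{SFS2}(2)--(3) forces every $p$-elementary abelian $E\subseteq\cG_{\rm red}$ into $\cH_{\rm red}$. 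The remaining work is to combine the infinitesimal and reduced inputs: the $\GG_{a(r)}$-factor of $\cE$ is an infinitesimal unipotent subgroup, and I would show, using Lemma \ref{SFS1} applied to the subgroup $\cG'=\cE$ together with the fact that subgroup schemes of $\GG_{a(r)}$ are exactly the $\GG_{a(s)}$ (noted just before Lemma \ref{SFS1}), that $\cH\cap\cE$ is $\Pi$-essential in $\cE$; but $\Pi(\cE)$ is the full projective space $\Proj(V_r(\cE))$ whose "generic" point is represented by the identity embedding, and the only subgroup scheme of $\cE$ whose $\pi$-points surject onto $\Pi(\cE)$ is $\cE$ itself (using that the support variety of $\cE$ has dimension equal to its rank and that a proper subgroup scheme $\GG_{a(s)}\times E'\subsetneq\GG_{a(r)}\times E$ has strictly smaller support). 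Hence $\cH\cap\cE=\cE$, i.e.\ $\cE\subseteq\cH$.

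The main obstacle I anticipate is the last step — verifying that a $\Pi$-essential subgroup scheme of a quasi-elementary group $\cE$ must be all of $\cE$ — because it requires knowing that the rank function on $\Pi$ is genuinely sensitive to the full one-parameter subgroup, not just its equivalence class; here one must invoke the identification $\Pi(\cE)\cong\Proj(V_r(\cE))$ from \cite[(5.2)]{SFB2} and the dimension/rank computation for $V_r$ of a quasi-elementary group, possibly via the explicit description of $\pi$-points of $\GG_{a(r)}\times E$ using \cite[(2.6)]{FPe2} and \cite[(4.1)]{FPe1} as already deployed in the proof of Lemma \ref{SFS2}(4). A secondary subtlety is bookkeeping the passage between a general abelian unipotent $\cU$ (allowed in the definition of $\pi$-point) and an actual quasi-elementary subgroup — this is precisely the content of the cited results of Suslin-Friedlander-Bendel, so I would cite rather than reprove it. Modulo these cited inputs, the argument is a formal assembly of Lemmas \ref{SFS1} and \ref{SFS2}.
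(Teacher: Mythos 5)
Your overall strategy — show $\cG_\pi$ is $\Pi$-essential and then show that every $\cG(k)$-invariant $\Pi$-essential $\cH$ contains all quasi-elementary subgroups — is exactly the paper's. But there is a genuine gap in your first half and a misapplication of Lemma \ref{SFS1} in your second half.

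For the claim that $\cG_\pi$ is $\Pi$-essential, you write that ``every class in $\Pi(\cG)$ is represented by a $\pi$-point $\alpha_K$ that $\ldots$ factors through a quasi-elementary subgroup scheme.'' This is not what \cite[(4.2)]{FPe1} gives. That result applies to $p$-points, i.e.\ $\pi$-points defined over $k$, which only represent the \emph{closed} points of $\Pi(\cG)$. For a general class, the $\pi$-point $\alpha_K$ factors through an abelian unipotent subgroup $\cU\subseteq\cG_K$ — a subgroup of the \emph{base-changed} group scheme — and there is no passage ``to a suitable subquotient'' that trades this for a quasi-elementary subgroup of $\cG$ itself. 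The paper's proof fills this gap by a topological argument: $\Pi(\cG)$ is a projective scheme (\cite[(7.5)]{FPe2}), so the image of $\iota_{\ast,\cG_\pi}$ is closed; it contains all closed points of $\Pi(\cG)$ by \cite[(4.7)]{FPe2} and \cite[(4.2)]{FPe1}; and the closed points are dense (\cite[(3.35)]{GW}), forcing the image to be everything. Without the closedness-plus-density step your first half does not go through.

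For the minimality, you propose ``Lemma \ref{SFS1} applied to the subgroup $\cG'=\cG^0$'' and again ``applied to the subgroup $\cG'=\cE$,'' but Lemma \ref{SFS1} assumes the \emph{ambient} group $\cG$ is infinitesimal, so neither of these applications is licensed when $\cG$ is a general finite group scheme (or when $\cE$ has a nontrivial reduced part). The paper first invokes Lemma \ref{SFS2}(4) to conclude that $\cH^0$ is $\Pi$-essential in $\cG^0$, then applies Lemma \ref{SFS1} \emph{inside} the infinitesimal group $\cG^0$ with $\cG'=\cE^0\cong\GG_{a(r)}$; this yields $\cH^0\cap\cE^0\cong\GG_{a(s)}$ is $\Pi$-essential in $\cE^0$, and the dimension computation $\dim\Pi(\GG_{a(s)})=s-1$ forces $s=r$. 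The reduced part $\cE_{\rm red}$ is handled separately via Lemma \ref{SFS2}(2). You gesture at both these ingredients, but the argument as you phrase it does not treat $\cE^0$ and $\cE_{\rm red}$ separately, and therefore does not line up with the hypotheses of the lemmas you cite. The ``main obstacle'' you anticipate at the end — that a $\Pi$-essential subgroup of a quasi-elementary $\cE$ must be all of $\cE$ — largely disappears once you split $\cE$ into its infinitesimal and reduced parts, because then it reduces to the elementary facts $\dim\Pi(\GG_{a(s)})=s-1$ and Lemma \ref{SFS2}(2).
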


\begin{proof} According to \cite[(7.5)]{FPe2}, the support space $\Pi(\cG)$ has the structure of a projective scheme. As a result, the morphism $\iota_{\ast,\cG_\pi} : \Pi(\cG_\pi) \lra
\Pi(\cG)$ is closed.

Let $x \in \Pi(\cG)$ be a closed point. In view of \cite[(4.7)]{FPe2}, there exists a $\pi$-point $\alpha_k : \fA_{p,k} \lra k\cG$ over $k$ representing $x$. Thus, \cite[(4.2)]{FPe1} ensures the existence of a quasi-elementary subgroup $\cE \subseteq \cG$ such that $\im \alpha_k \subseteq k\cE$. Consequently, $x \in \iota_{\ast,\cG_\pi}(\Pi(\cG_\pi))$. Since the set $C$ of closed points of $\Pi(\cG)$ lies dense in $\Pi(\cG)$ (see \cite[(3.35)]{GW}) and is contained in the closed subset $\im \iota_{\ast,\cG_\pi}$, we obtain $\Pi(\cG)= \iota_{\ast,\cG_\pi}(\Pi(\cG_\pi))$. Hence $\cG_\pi$ is a $\Pi$-essential subgroup of $\cG$.

Let $\cH \subseteq \cG$ be a $\cG(k)$-invariant $\Pi$-essential subgroup of $\cG$. According to Lemma \ref{SFS2}(4), the connected component $\cH^0$ is a $\Pi$-essential subgroup of $\cG^0$. If $\cE \subseteq \cG$ is a quasi-elementary subgroup, then $\cE^0 \cong \GG_{a(r)}$ is a subgroup of $\cG^0$. By virtue of Lemma \ref{SFS1}, the group $\cH^0\cap\cE^0$ is $\Pi$-essential in $\cE^0$. Since $\cH^0\cap \cE^0 \cong \GG_{a(s)}$ for some $s \le r$, it follows from \cite[(5.8)]{FPe2} that
\[ s = \dim \Pi(\cH^0\cap\cE^0)\!+\!1 \ge \dim \Pi(\cE^0)\!+\!1 = r.\]
Consequently, $\cH^0\cap\cE^0=\cE^0$, so that $\cE^0 \subseteq \cH^0$. In view of Lemma \ref{SFS2}(2), $\cE_{\rm red}$ is a subgroup of $\cH_{\rm red}$. As a result, $\cE$ is contained in $\cH$. Consequently, $\cG_\pi \subseteq \cH$, showing that $\cG_\pi$ is the unique minimal $\Pi$-essential $\cG(k)$-invariant subgroup of $\cG$. \end{proof}

\bigskip
\noindent
A finite group scheme $\cG$ over $k$ is referred to as {\it linearly reductive} if $k\cG$ is semi-simple. By Nagata's Theorem \cite[(IV,\S3,3.6)]{DG}, this is equivalent to $\cG^0$ being diagonalizable and $p$ not dividing the order of $\cG(k)$. In particular, $\cG$ contains no non-trivial quasi-elementary subgroups.

\bigskip

\begin{Lemma} \label{SFS4} Let $\cG$ be a finite group scheme. If $\cN \unlhd \cG$ is a normal subgroup such that $\cG/\cN$ is linearly reductive, then $\cG_\pi = \cN_\pi$. \end{Lemma}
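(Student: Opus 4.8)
The plan is to show each of the two groups is contained in the other. Since $\cN \subseteq \cG$, every quasi-elementary subgroup of $\cN$ is a quasi-elementary subgroup of $\cG$, and hence $\cN_\pi \subseteq \cG_\pi$; this inclusion requires nothing beyond the definition of $\cG_\pi$ as the smallest closed subgroup containing all quasi-elementary subgroups. The substance of the lemma is therefore the reverse inclusion $\cG_\pi \subseteq \cN_\pi$, and for this I would like to invoke Theorem \ref{SFS3}: it suffices to exhibit that $\cN_\pi$ is a $\cG(k)$-invariant $\Pi$-essential subgroup of $\cG$, for then minimality of $\cG_\pi$ forces $\cG_\pi \subseteq \cN_\pi$.

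To set this up, first I would observe that $\cN_\pi$ is a characteristic subgroup of $\cN$, and since $\cN \unlhd \cG$ is normal, every inner automorphism $\kappa_g$ ($g \in \cG(k)$) restricts to an automorphism of $\cN$ and hence stabilizes the characteristic subgroup $\cN_\pi$; thus $\cN_\pi$ is $\cG(k)$-invariant. Next, the key point is that $\cN$ itself is $\Pi$-essential in $\cG$. This should follow from the hypothesis that $\cG/\cN$ is linearly reductive: a linearly reductive finite group scheme contains no non-trivial quasi-elementary subgroups (as recalled just before the lemma, via Nagata's theorem — $\cG^0$-part diagonalizable and $p \nmid |\cG(k)|$), and therefore by \cite[(4.2)]{FPe1} every $\pi$-point of $\cG/\cN$ is equivalent to the trivial one, i.e.\ $\Pi(\cG/\cN) = \emptyset$. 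More precisely, I would argue that for any field extension $K$ and any $\pi$-point $\alpha_K : \fA_{p,K} \lra K\cG$, the image $\im \alpha_K$ lies in a quasi-elementary subgroup $\cE \subseteq \cG_K$, and the composite $\cE \hookrightarrow \cG_K \tha (\cG/\cN)_K$ must be trivial since the target has no non-trivial quasi-elementary subgroups; hence $\cE \subseteq \cN_K$, so $\im \alpha_K \subseteq K\cN$ and $\alpha_K$ factors through a $\pi$-point of $\cN$. This shows $\iota_{\ast,\cN}(\Pi(\cN)) = \Pi(\cG)$, i.e.\ $\cN$ is $\Pi$-essential.

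Finally I would chain the two essentiality facts: $\cN_\pi$ is $\Pi$-essential in $\cN$ by Theorem \ref{SFS3} applied to $\cN$, and $\cN$ is $\Pi$-essential in $\cG$ by the previous paragraph, so $\iota_{\ast,\cN_\pi}(\Pi(\cN_\pi)) = \iota_{\ast,\cN}\!\bigl(\iota_{\ast,\cN_\pi}^{\cN}(\Pi(\cN_\pi))\bigr) = \iota_{\ast,\cN}(\Pi(\cN)) = \Pi(\cG)$, using functoriality of $\iota_\ast$ for the composite of closed embeddings $\cN_\pi \hookrightarrow \cN \hookrightarrow \cG$. Thus $\cN_\pi$ is a $\cG(k)$-invariant $\Pi$-essential subgroup of $\cG$, and Theorem \ref{SFS3} gives $\cG_\pi \subseteq \cN_\pi$. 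Combined with the trivial inclusion, $\cG_\pi = \cN_\pi$.

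I expect the main obstacle to be the claim that $\cN$ is $\Pi$-essential, specifically the step where a quasi-elementary $\cE \subseteq \cG_K$ containing $\im \alpha_K$ is pushed down to $(\cG/\cN)_K$: one must know that $(\cG/\cN)_K$ is again linearly reductive (base change preserves linear reductivity since semisimplicity of $K\cG/\cN = (k\cG/\cN)\otimes_k K$ is preserved over the perfect — indeed algebraically closed — base $k$) and that a quasi-elementary group scheme has no non-trivial quotient that is linearly reductive unless the quotient is trivial, equivalently that the image of $\cE$ in a linearly reductive group is trivial. The latter is where one genuinely uses that $\GG_{a(r)}$ and $p$-elementary abelian groups admit no non-trivial maps to diagonalizable-times-order-prime-to-$p$ groups; handling the reduced and infinitesimal parts separately via the decomposition $\cG = \cG^0 \rtimes \cG_{\rm red}$, as in the proof of Lemma \ref{SFS2}, should make this routine.
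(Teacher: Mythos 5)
Your argument is correct, but it takes a substantially longer route than the paper's proof, which is a three-sentence affair arguing entirely at the level of the defining property of $\cG_\pi$ and $\cN_\pi$: since $\cG/\cN$ is linearly reductive, Nagata's theorem shows the image of any quasi-elementary $\cE \subseteq \cG$ in $\cG/\cN$ is trivial, so $\cE \subseteq \cN$, hence $\cE \subseteq \cN_\pi$, and therefore $\cG_\pi \subseteq \cN_\pi$ because $\cG_\pi$ is by definition the smallest closed subgroup containing all quasi-elementary subgroups. You reach the same key observation — that quasi-elementary subgroups of $\cG$ all land in $\cN$ — in the middle of your argument for $\Pi$-essentiality of $\cN$, but then, instead of concluding directly, you run it through the heavier machinery of Theorem \ref{SFS3} (unique minimal $\cG(k)$-invariant $\Pi$-essential subgroup), which forces you to verify additional facts: $\cG(k)$-invariance of $\cN_\pi$, $\Pi$-essentiality of $\cN$ in $\cG$, chaining of $\iota_\ast$ for $\cN_\pi \hookrightarrow \cN \hookrightarrow \cG$, and (implicitly) that a $\pi$-point of $\cG$ factoring through $K\cN$ descends to a $\pi$-point of $\cN$ in the sense of condition (a) (left flatness). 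One small imprecision worth flagging: you claim $\im \alpha_K$ lies in a quasi-elementary subgroup of $\cG_K$, but the definition of a $\pi$-point only hands you an abelian unipotent $\cU \subseteq \cG_K$, and \cite[(4.2)]{FPe1} as used in this paper is invoked for $\pi$-points over $k$; fortunately $\cU$ itself suffices for your purpose (its image in the linearly reductive $(\cG/\cN)_K$ is abelian unipotent and linearly reductive, hence trivial), so the gap is cosmetic. The moral is that Theorem \ref{SFS3}'s minimality characterization is not needed here; once you know every quasi-elementary subgroup sits inside $\cN$, the definition of $\cG_\pi$ closes the argument immediately.
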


\begin{proof} Since every quasi-elementary subgroup $\cE \subseteq \cN$ is contained in $\cG_\pi\cap\cN$, it follows that $\cN_\pi \subseteq \cG_\pi \cap \cN$. Let $\cE \subseteq \cG$ be quasi-elementary. As $\cG/\cN$ is linearly reductive, Nagata's Theorem yields $\cE \subseteq \cN$, whence $\cE \subseteq \cN_\pi$. As a result, $\cG_\pi \subseteq \cN_\pi$.\end{proof}

\bigskip
\noindent
We turn to the case where the underlying group scheme is a Frobenius kernel of a reduced group scheme.

\bigskip

\begin{Lemma} \label{SFS5} Let $G$ be a reduced algebraic group scheme. If $\cN \unlhd G$ is a normal finite subgroup scheme, then $\cN_\pi \subseteq \cN$ is a normal subgroup of $G$. \end{Lemma}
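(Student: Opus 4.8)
The plan is to exploit the fact that $\cN_\pi$ is characteristic in $\cN$ together with the normality of $\cN$ in $G$. Recall that $\cN_\pi$ is defined as the smallest closed subgroup of $\cN$ containing all quasi-elementary subgroups of $\cN$. The first step is to observe that for every $g \in G(k)$ the inner automorphism $\kappa_g$ of $G$ restricts to an automorphism of $\cN$, since $\cN \unlhd G$; being an automorphism of the group scheme $\cN$, it must permute the quasi-elementary subgroups of $\cN$ (a quasi-elementary subgroup being characterized abstractly as a closed subgroup isomorphic to $\GG_{a(r)}\!\times\!E$). Hence $\kappa_g$ stabilizes the closed subgroup generated by all of them, namely $\cN_\pi$. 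This shows $\cN_\pi$ is $G(k)$-invariant.

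Since $G$ is reduced, its group $G(k)$ of $k$-rational points is Zariski dense in $G$, so a closed subgroup stable under conjugation by every $g \in G(k)$ is automatically normal in $G$. Concretely, I would consider the conjugation morphism $G \times \cN_\pi \lra \cN$, $(g,n) \mapsto gng^{-1}$; it takes values in the closed subscheme $\cN_\pi$ on the dense subset $G(k) \times \cN_\pi$, hence on all of $G \times \cN_\pi$ by density and the separatedness of $\cN$. This is exactly the statement that $\cN_\pi$ is normal in $G$. (Equivalently, one can argue via Hopf-algebraic comodule structures: the adjoint coaction of $k[G]$ on $\cN$ restricts to $\cN_\pi$ because it does so after applying every $k$-point, and $k[G]$ is reduced.)

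The only genuinely delicate point is the density argument that upgrades $G(k)$-invariance to honest normality. This requires $G$ to be reduced — which is precisely the hypothesis — so that $G(k)$ is dense; without reducedness a characteristic subgroup of a normal infinitesimal subgroup need not be normal. I would also note the subtlety that an automorphism of the group scheme $\cN$ really does preserve the class of quasi-elementary subgroups: the subgroups $\GG_{a(r)}\!\times\!E$ are detected intrinsically (connected component isomorphic to $\GG_{a(r)}$, component group $p$-elementary abelian), so the combinatorial characterization is automorphism-invariant, making the first step clean. Everything else is formal.
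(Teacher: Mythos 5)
Your proof is correct and follows the same two-step strategy as the paper: $\cN_\pi$ is characteristic in $\cN$, hence $G(k)$-invariant because $\cN\unlhd G$, and density of $G(k)$ in the reduced scheme $G$ then upgrades $G(k)$-invariance to normality in $G$. The only real difference is in how the density step is packaged. The paper invokes \cite[(I.2.6(8))]{Ja2} to see that the normalizer $N_G(\cN_\pi)$ is a \emph{closed} subscheme of $G$ (this is where local freeness of $\cN_\pi$ enters), and concludes $N_G(\cN_\pi)=G$ because it is closed and contains the dense set $G(k)$. Your direct argument with the conjugation morphism $G\times\cN_\pi\to\cN$ is essentially equivalent, but be careful with the justification: $G\times\cN_\pi$ is \emph{not} reduced (the factor $\cN_\pi$ is generally infinitesimal), so topological density plus separatedness of $\cN$ does not by itself let you conclude that a morphism landing set-theoretically in a closed subscheme factors through it scheme-theoretically. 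What actually carries the argument is only that the \emph{first} tensor factor $k[G]$ is reduced: writing the relevant element of the defining ideal as $\sum b_i\otimes a_i$ with the $a_i$ linearly independent in $k[\cN_\pi]$, evaluation at every $g\in G(k)$ kills each $b_i$, hence $b_i=0$ by reducedness of $k[G]$. Your Hopf-algebraic parenthetical is exactly this and is the version you should keep; the appeal to separatedness can be dropped.
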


\begin{proof} Given $g \in G(k)$, we consider the automorphism $\kappa_g : G \lra G$ effected by $g$. Since $\cN \unlhd G$ is a normal subgroup of $G$, the restriction $\kappa_g|_{\cN}$ is an
automorphism of $\cN$, whence $\kappa_g(\cN_\pi) = \cN_\pi$.

As a result, $G(k) \subseteq N_G(\cN_\pi)(k)$, where $N_G(\cN_\pi) \subseteq G$ denotes the normalizer of $\cN_\pi$ in $G$, see \cite[(I.2.6)]{Ja2}. Since $\cN_\pi$ is a locally free
closed subfunctor of $G$, it follows from \cite[(I.2.6(8))]{Ja2} that $N_G(\cN_\pi)$ is closed. As $G(k)$ is dense in $G$ (cf.\ \cite[(I.6.16)]{Ja2}), we obtain $N_G(\cN_\pi) = G$,
so that $\cN_\pi$ is a normal subgroup of $G$. \end{proof}

\bigskip
\noindent
In the sequel, we let $X(\cG):=\Hom(\cG,\GG_m)$ be the {\it character group} of the group scheme $\cG$. Its elements are homomorphisms $\lambda : \cG \lra \GG_m$ taking values in the multiplicative
group $\GG_m=\GL(1)$.

\bigskip

\begin{Theorem} \label{SFS6} Let $G$ be a reduced reductive algebraic group scheme.
\begin{enumerate}
\item $(G_r)_\pi$ is a normal subgroup of $G_r$ such that $(G_r)_\pi T_r = G_r$ for every maximal torus $T \subseteq G$.
\item We have $(G_r,G_r) \subseteq (G_r)_\pi \subseteq (G,G)_r$. Moreover, the group $G_r/(G_r)_\pi$ is diagonalizable. \end{enumerate} \end{Theorem}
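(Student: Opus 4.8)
The plan is to deduce everything from the two general lemmas of this section, Lemma~\ref{SFS4} and Lemma~\ref{SFS5}, together with the big-cell description of the Frobenius kernel $G_r$. Throughout I would use that Frobenius kernels are functorial and commute with fibre products, and the elementary facts that quotients and closed subgroup schemes of diagonalizable group schemes are diagonalizable and that finite diagonalizable group schemes are linearly reductive (Nagata's theorem).

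\textbf{Normality and the first identity.} First I would note that $G_r$ is a normal finite subgroup scheme of the reduced group $G$, so Lemma~\ref{SFS5} immediately gives that $(G_r)_\pi$ is normal in $G$, a fortiori in $G_r$. For the equality $(G_r)_\pi T_r = G_r$ I would fix a Borel $B = TU \supseteq T$ with opposite unipotent radical $U^-$, and use that the multiplication map $U^-\times T\times U \to G$ is an open immersion onto a neighbourhood of $e$. Since $G_r$ is infinitesimal, hence supported at $e$, it lies in this big cell, and because Frobenius kernels commute with products one gets $G_r \cong (U^-)_r\times T_r\times U_r$, i.e.\ $G_r = (U^-)_r\,T_r\,U_r$. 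Writing $U$ and $U^-$ as products of root subgroups $U_\alpha \cong \GG_a$ and passing to $r$-th Frobenius kernels exhibits $U_r$ and $(U^-)_r$ as products of copies of $\GG_{a(r)}$; each such copy is quasi-elementary, hence lies in $(G_r)_\pi$, so $U_r, (U^-)_r \subseteq (G_r)_\pi$. As $(G_r)_\pi$ is normal, $(G_r)_\pi T_r$ is a subgroup scheme of $G_r$ containing $(U^-)_r$, $T_r$ and $U_r$, hence containing $(U^-)_r\,T_r\,U_r = G_r$; this forces $(G_r)_\pi T_r = G_r$.

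\textbf{The sandwich and diagonalizability.} From $(G_r)_\pi T_r = G_r$ I would read off $G_r/(G_r)_\pi \cong T_r/(T_r\cap(G_r)_\pi)$, a quotient of the diagonalizable group $T_r$, hence diagonalizable; in particular it is commutative, which yields $(G_r,G_r)\subseteq (G_r)_\pi$ and the ``moreover'' clause. For the upper bound, I would use that $(G,G)\unlhd G$ together with functoriality of Frobenius to get $(G,G)_r = (G,G)\cap G_r \unlhd G_r$, and that the quotient morphism $G\to G/(G,G)$ onto the torus $G/(G,G)$ restricts to a homomorphism $G_r \to G/(G,G)$ with kernel $(G,G)\cap G_r = (G,G)_r$. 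Thus $G_r/(G,G)_r$ is isomorphic to a closed subgroup scheme of a torus, so it is diagonalizable and hence linearly reductive; Lemma~\ref{SFS4} then gives $(G_r)_\pi = \big((G,G)_r\big)_\pi \subseteq (G,G)_r$, completing the chain $(G_r,G_r)\subseteq (G_r)_\pi\subseteq(G,G)_r$.

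\textbf{Main obstacle.} The substantive step is the big-cell input in part (1): one must know that $G_r$ is genuinely recovered from the Frobenius kernels of $U^-$, $T$, $U$, and that these unipotent pieces are built from Frobenius kernels of root subgroups, which are the quasi-elementary subgroups feeding into $(G_r)_\pi$. Everything after that is formal. A secondary point requiring care is that $(G_r)_\pi T_r$ is a bona fide subgroup scheme, which is exactly why the normality of $(G_r)_\pi$ — obtained via Lemma~\ref{SFS5} from the ambient reductive group $G$, not merely from $G_r$ — is needed rather than just the definition of the $\pi$-subgroup.
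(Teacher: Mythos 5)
Your proof is correct and follows essentially the same route as the paper's: Lemma~\ref{SFS5} for normality, the big-cell decomposition $G_r \cong \prod_{\alpha\in\Phi^+}(U_\alpha)_r\times T_r\times\prod_{\alpha\in\Phi^-}(U_\alpha)_r$ with $(U_\alpha)_r\cong\GG_{a(r)}$ quasi-elementary to get $(G_r)_\pi T_r=G_r$, the resulting isomorphism $G_r/(G_r)_\pi\cong T_r/(T_r\cap(G_r)_\pi)$ for diagonalizability and hence $(G_r,G_r)\subseteq(G_r)_\pi$, and Lemma~\ref{SFS4} applied to the embedding of $G_r/(G,G)_r$ into a diagonalizable group for the upper bound. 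The only cosmetic differences are that you derive the product decomposition from the open immersion of the big cell rather than citing it directly, argue containment rather than appealing to injectivity of the comorphism $\mu^\ast$ as the paper does, and map into $G/(G,G)$ rather than into $(G/(G,G))_r$; none of these affects the argument.
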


\begin{proof} (1) Since $G_r \unlhd G$ is a normal subgroup, Lemma \ref{SFS5} implies that $(G_r)_\pi$ is a normal subgroup of $G$. Consequently, $(G_r)_\pi$ is also normal in $G_r$.
Let $T \subseteq G$ be a maximal torus with root system $\Phi \subseteq X(T)$. Given a root $\alpha \in \Phi$, we let $U_\alpha \subseteq G$ be the corresponding root subgroup,
see \cite[(II.1.2)]{Ja2}. Then there exists an isomorphism $\varphi : \GG_{a(r)} \lra (U_\alpha)_r$, so that $(U_\alpha)_r \subseteq G_r$ is quasi-elementary and $(U_\alpha)_r \subseteq (G_r)_\pi$.

According to \cite[(II.3.2)]{Ja2}, the multiplication map induces an isomorphism
\[ m : \prod_{\alpha \in \Phi^+}(U_\alpha)_r \times T_r \times \prod_{\alpha \in \Phi^-}(U_\alpha)_r \stackrel{\sim}{\lra} G_r\]
of schemes. Here $\Phi^+$ and $\Phi^-$ denote the sets of positive and negative roots, respectively. There results a commutative diagram
\[\begin{CD} \prod_{\alpha \in \Phi^+}(U_\alpha)_r \times T_r \times \prod_{\alpha \in \Phi^-}(U_\alpha)_r @ >m >> G_r\\
@VVV @ AA\mu A\\
(G_r)_\pi\times T_r \times (G_r)_\pi@ >\omega>> (G_r)_\pi\rtimes T_r,
\end{CD} \]
where $\omega(a,t,b) = (atbt^{-1},t)$ and $\mu$ is again the multiplication. Since $m$ is bijective, the comorphism $\mu^\ast : k[G_r] \lra k[(G_r)_\pi\!\rtimes\!T_r]$ is injective. Thus, $\mu$ is a
quotient map, so that $(G_r)_\pi T_r = G_r$, cf.\ \cite[(15.1)]{Wa}, \cite[(I.6.2)]{Ja2}.

(2) In view of (1), we have an isomorphism $G_r/(G_r)_\pi \cong T_r/((G_r)_\pi\cap T_r)$, showing that the former group is diagonalizable, cf.\ \cite[(IV,\S1,1.7)]{DG}. As $T_r$ is abelian, this also
implies $(G_r,G_r) \subseteq (G_r)_\pi$. By general theory \cite[\S 27]{Hu}, the group $G/(G,G)$ is diagonalizable. Since the sequence
\[ e_k \lra (G,G)_r \lra G_r \lra (G/(G,G))_r\]
is exact, it follows that $G_r/(G,G)_r \hookrightarrow (G/(G,G))_r$ is a subgroup of a diagonalizable group scheme. Hence $G_r/(G,G)_r$ is diagonalizable \cite[(IV,\S1,1.7)]{DG}, and Lemma \ref{SFS4} yields $(G_r)_\pi \subseteq (G,G)_r$.  \end{proof}

\bigskip

\section{The category of $G$-stable Modules}
Let $k$ be an algebraically closed field of characteristic $p>0$. Throughout this section, we consider a finite-dimensional $k$-algebra $\Lambda$ and an algebraic group $G$ that acts on $\Lambda$ via automorphisms. Thus,
\[ G \times \Lambda \lra \Lambda \ \ ; \ \ (g,x) \mapsto g.x\]
is a morphism of affine varieties and $x\mapsto g.x$ is an automorphism of $\Lambda$ for every $g \in G$.

We let $\modd_\Lambda$ be the category of finitely generated $\Lambda$-modules. The group $G$ acts on $\modd_\Lambda$ via auto-equivalences. For $g \in G$ and $M \in \modd_\Lambda$, we denote
by $M^{(g)}$ the $\Lambda$-module with underlying $k$-space $M$ and action
\[ x\dact m := (g^{-1}.x)m \ \ \ \ \ \ \forall \ x \in \Lambda, \, m \in M.\]
A $\Lambda$-module $M$ is referred to as {\it $G$-stable} if $M^{(g)} \cong M$ for every $g \in G$. We denote by $\modd^G_\Lambda$ the full subcategory of $\modd_\Lambda$, whose objects are the
$G$-stable modules.

\bigskip

\begin{Example} Let $k(\ZZ/(p)^r)$ be the group algebra of the $p$-elementary abelian group of rank $r$, so that $k(\ZZ/(p)^r) =k[X_1,\ldots,X_r]/(X_1^p,\ldots,X_r^p)$. Accordingly, the natural action of $\GL(r)$ on $k^r$ induces an operation
\[ \GL(r)\!\times\! k(\ZZ/(p)^r) \lra k(\ZZ/(p)^r)\]
of $\GL(r)$ on $k(\ZZ/(p)^r)$ via automorphisms. Since the group $\GL(r)$ acts transitively on the dense subset $P(\ZZ/(p)^r) \subseteq \Pi(\ZZ/(p)^r)$ of closed $\pi$-points of $\ZZ/(p)^r$, every $\GL(r)$-stable $k(\ZZ/(p)^r)$-module $M$ has constant Jordan type.

If $r=2$, then \cite[(4.7)]{CFS} implies that the $k(\ZZ/(p)^2)$-modules $W_{n,d}$ of \cite[\S2]{CFS} are $\GL(2)$-stable. \end{Example}

\bigskip

\begin{Lemma} \label{SM1} Let $M$ be a $\Lambda$-module. Then
\[ G_M := \{g \in G \ ; \ M^{(g)} \cong M\}\]
is a closed subgroup of $G$. \end{Lemma}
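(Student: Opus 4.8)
The plan is to exhibit $G_M$ as the fibre over a single point of a morphism of varieties, thereby identifying it with a closed subscheme, and then argue that this subscheme has the correct $k$-points. First I would fix a $k$-vector space structure on $M$ of dimension $n$ and recall that the $G$-action on $\Lambda$ gives, for each $g \in G$, the algebra automorphism $x \mapsto g.x$; the twisted module $M^{(g)}$ is then the representation $\rho_g : \Lambda \to \End_k(M)$ defined by $\rho_g(x) = \rho(g^{-1}.x)$, where $\rho$ is the original structure map. The condition $M^{(g)} \cong M$ says that there exists $\varphi \in \GL(M)$ with $\varphi \rho(x) \varphi^{-1} = \rho(g^{-1}.x)$ for all $x \in \Lambda$.

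The key step is to rephrase this as the image of a map and take its closure properly. Consider the variety $Z$ of pairs $(g,\varphi) \in G \times \GL(M)$ satisfying $\varphi \rho(x) \varphi^{-1} = \rho(g^{-1}.x)$ for all $x$ in a fixed $k$-basis of $\Lambda$ (finitely many polynomial equations in the matrix entries of $\varphi$, $\varphi^{-1}$, and the coordinates of $g$, using that $(g,x)\mapsto g.x$ is a morphism). Then $Z$ is a closed subvariety of $G \times \GL(M)$, and $G_M$ is precisely the image of $Z$ under the first projection $\pr_1 : G \times \GL(M) \to G$. The main obstacle is that the image of a morphism need not be closed — only constructible in general. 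To get around this I would observe that $\pr_1 : Z \to G$ is $\GL(M)$-equivariant for the free action of $\GL(M)$ on the second factor, so all nonempty fibres have the same dimension $\dim \GL(M)$; hence $Z \to G_M$ is a fibration and $\dim Z = \dim G_M + \dim\GL(M)$ over each irreducible component. More robustly, I would use that $G_M$ is a subgroup of $G$ in the abstract group sense (immediate: if $M^{(g)}\cong M$ and $M^{(h)}\cong M$ then $M^{(gh)} = (M^{(g)})^{(h)} \cong M^{(h)}\cong M$, and $M^{(g^{-1})}\cong M$), and a constructible subgroup of an algebraic group is closed — this is a standard fact (a constructible subgroup contains a dense open subset of its closure $\bar{H}$, which is then a subgroup, and $\bar H = U\cdot U^{-1}$ for that open $U$, forcing $H = \bar H$). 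So the real content is just: $G_M$ is constructible (Chevalley's theorem applied to $\pr_1|_Z$) and a subgroup, hence closed.

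Thus the steps in order are: (1) record that $G_M$ is an abstract subgroup of $G$, using functoriality of the twist $M \mapsto M^{(g)}$ and $M^{(gh)} \cong (M^{(g)})^{(h)}$; (2) fix a basis of $M$ and of $\Lambda$ and write down the closed subvariety $Z \subseteq G \times \GL(M)$ cut out by the intertwining equations, invoking that $G \times \Lambda \to \Lambda$ is a morphism so that each matrix entry of $\rho(g^{-1}.x_i)$ is a regular function of $g$; (3) apply Chevalley's theorem to conclude $G_M = \pr_1(Z)$ is constructible in $G$; (4) invoke the standard result that a constructible subgroup of an algebraic group is closed to finish. I expect step (2) — setting up $Z$ cleanly, in particular handling $\varphi^{-1}$ by the usual trick of working in $\GL(M) \subseteq \End(M)\times\End(M)$ with the relation $\varphi\psi = \psi\varphi = \id$, and checking the equations really are polynomial in the coordinates of $G$ — to be the only mildly technical point; everything else is a citation. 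Alternatively, and perhaps more in the spirit of the paper, one can avoid $\GL(M)$ entirely by noting that $M^{(g)} \cong M$ if and only if $M^{(g)}$ and $M$ have equal classes in a suitable coarse moduli / have the same semisimplification and same composition multiplicities is \emph{not} enough in general, so the $\GL(M)$-orbit approach above is the safe route.
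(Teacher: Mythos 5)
Your argument is correct, but it takes a genuinely different route from the paper. The paper introduces the affine variety $\modd^d_\Lambda$ of $d$-dimensional module structures together with its $G$- and $\GL(M)$-actions, observes that $G_M$ is the preimage of the $\GL(M)$-orbit $\GL(M)\ast\varrho_M$ under the morphism $g \mapsto g\dact\varrho_M$, and concludes: orbits of algebraic group actions are locally closed, hence $G_M$ is locally closed, and a locally closed subgroup of an algebraic group is automatically closed (it is open in the closed subgroup $\overline{G}_M$, and an open subgroup is closed). You instead form the closed incidence variety $Z \subseteq G \times \GL(M)$ of intertwiners, realize $G_M = \pr_1(Z)$, invoke Chevalley to get constructibility, and then use that a constructible subgroup is closed. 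Both proofs rely on $G_M$ being an abstract subgroup and on one of two closely related standard facts about algebraic groups (locally closed subgroup is closed, versus constructible subgroup is closed), so the two routes are of comparable weight; the paper's is a pullback argument while yours is a pushforward argument, and yours spends a bit more effort setting up $Z$ explicitly where the paper simply quotes local closedness of orbits. One small slip in your writeup: with the paper's convention $x\dact m = (g^{-1}.x)m$, the twist composes as $(M^{(g)})^{(h)} = M^{(hg)}$, not $M^{(gh)}$; this does not affect the conclusion that $G_M$ is a subgroup, but the exponents should be composed in the opposite order.
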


\begin{proof} Let $d:=\dim_kM$. We consider the affine variety $\modd_\Lambda^d$ of $d$-dimensional $\Lambda$-modules with underlying $k$-space $M$. Thus, an element of $\modd_\Lambda^d$ is an algebra homomorphism $\varrho : \Lambda \lra \End_k(M)$. The algebraic groups $G$ and $\GL(M)$ act on $\modd_\Lambda^d$ via
\[(g\dact\varrho)(x) := \varrho(g^{-1}\dact x) \ \ \ \ \ \ \forall \ g \in G, \, \varrho \in \modd^d_\Lambda, \, x \in \Lambda\]
and
\[(\psi \ast \varrho)(x) = \psi \circ \varrho(x) \circ \psi^{-1}  \ \ \ \ \ \forall \ \psi \in \GL(M), \, \varrho \in \modd^d_\Lambda, \, x \in \Lambda.\]
Note that the $\GL(M)$-orbits correspond to the isomorphism classes of $d$-dimensional $\Lambda$-modules. Let $\varrho_M \in \modd_\Lambda^d$ be the representation afforded by the $\Lambda$-module $M$. It follows that
\[ G_M = \{ g \in G \ ; \ g\dact\varrho_M \in \GL(M)\!\ast\!\varrho_M\}.\]
Since $g \mapsto g\dact \varrho_M$ is a morphism of affine varieties and the orbit $\GL(M)\!\ast\!\varrho_M$ is locally closed, the subgroup $G_M \subseteq G$ is locally closed. Thus, $G_M$ is an open subgroup of the closed subgroup $\overline{G}_M$ of $G$. Consequently, $G_M$ is also closed in $\overline{G}_M$ and hence closed in $G$. \end{proof}

\bigskip
\noindent
Given a non-projective indecomposable $\Lambda$-module $M$, we denote by
\[ \fE(M): \ \ \ \ \ (0) \lra \tau_\Lambda(M) \lra E_M \lra M \lra (0)\]
the {\it almost split sequence} terminating in $M$. Similarly,
\[ (0) \lra N \lra F_N \lra \tau^{-1}_\Lambda(N) \lra (0)\]
denotes the almost split sequence originating in the non-injective indecomposable $\Lambda$-module $N$. We have $E_{\tau^{-1}_\Lambda(M)}\cong F_{\tau_\Lambda(M)}$, see \cite{ARS} for further details.

\bigskip

\begin{Lemma} \label{SM2} Suppose that $G$ is connected. Then the following statements hold:
\begin{enumerate}
\item Every simple $\Lambda$-module belongs to $\modd^G_{\Lambda}$.
\item The category $\modd^G_{\Lambda}$ is closed under direct sums and direct summands.
\item If $\Lambda$ is a Hopf algebra and $G$ acts on $\Lambda$ via automorphisms of Hopf algebras, then $\modd^G_\Lambda$ is closed under tensor products.
\item If $M \in \modd^G_\Lambda$ is a non-projective indecomposable $\Lambda$-module, then $E_M, \tau_\Lambda(M)\in \modd^G_\Lambda$.
\item If $N \in \modd^G_\Lambda$ is a non-injective indecomposable $\Lambda$-module, then $E_{\tau_\Lambda^{-1}(N)}, \tau_\Lambda^{-1}(N) \in \modd_\Lambda^G$. \end{enumerate} \end{Lemma}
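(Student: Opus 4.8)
The plan is to exploit two features of a connected group $G$: it is irreducible as a variety, and it is divisible in the sense that any morphism from $G$ to a finite set (or a finite group) is constant. Throughout, I would repeatedly use the closedness of the stabilizer $G_M$ from Lemma \ref{SM1}.

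For (1), let $S$ be a simple $\Lambda$-module. Each twist $S^{(g)}$ is again simple of the same dimension, so $g\mapsto [S^{(g)}]$ defines a map from the irreducible variety $G$ into the finite set of isomorphism classes of simple modules. Concretely, in the notation of Lemma \ref{SM1}, the orbit $G\dact\varrho_S$ is the image of the irreducible $G$ under a morphism, hence irreducible, and it is contained in the finite union of $\GL(S)$-orbits through simples of dimension $\dim_k S$; since those orbits are the connected components of their union (being locally closed and pairwise disjoint, and there are finitely many), the image lies in a single one, namely $\GL(S)\ast\varrho_S$. Evaluating at $g=e$ shows $G_S=G$, i.e.\ $S\in\modd^G_\Lambda$. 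For (2): twisting by $g$ is an additive auto-equivalence of $\modd_\Lambda$, so $(M\oplus N)^{(g)}\cong M^{(g)}\oplus N^{(g)}$ and $G$-stability of a direct sum is equivalent to $G$-stability of each summand by the Krull--Schmidt theorem; this gives closure under both direct sums and summands. For (3): when $G$ acts by Hopf automorphisms, each $g$ induces a tensor auto-equivalence, so $(M\otimes N)^{(g)}\cong M^{(g)}\otimes N^{(g)}$, and stability of $M$ and $N$ forces stability of $M\otimes N$.

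Parts (4) and (5) are the substantive ones, and the mechanism is functoriality of almost split sequences under self-equivalences. For any $g\in G$, the twist functor $(-)^{(g)}$ is an exact auto-equivalence of $\modd_\Lambda$, hence it carries the almost split sequence $\fE(M)$ terminating in $M$ to an almost split sequence terminating in $M^{(g)}$; by uniqueness of almost split sequences this forces $\tau_\Lambda(M)^{(g)}\cong\tau_\Lambda(M^{(g)})$ and $E_M^{(g)}\cong E_{M^{(g)}}$ (the middle term of the almost split sequence ending in $M^{(g)}$). Now suppose $M\in\modd^G_\Lambda$, so $M^{(g)}\cong M$ for all $g$. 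Then $\tau_\Lambda(M)^{(g)}\cong\tau_\Lambda(M^{(g)})\cong\tau_\Lambda(M)$ for every $g$, giving $\tau_\Lambda(M)\in\modd^G_\Lambda$; and $E_M^{(g)}\cong E_{M^{(g)}}\cong E_M$ for every $g$, giving $E_M\in\modd^G_\Lambda$. (Strictly, once one knows $\tau_\Lambda(M)$ is $G$-stable, the $G$-stability of $E_M$ can alternatively be deduced from (1) and (2) together with a decomposition argument, but the direct functorial route is cleaner.) Part (5) is the dual statement, obtained by replacing $\fE(M)$ with the almost split sequence originating in $N$ and using that $\tau_\Lambda^{-1}$ is also compatible with the exact auto-equivalence $(-)^{(g)}$; here one uses the identity $E_{\tau_\Lambda^{-1}(N)}\cong F_{\tau_\Lambda(\tau_\Lambda^{-1}(N))}=F_N$ recalled before the lemma, so the middle term of the almost split sequence starting at $N$ is the relevant object, and the same argument applies.

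The main obstacle is not any single hard estimate but making the compatibility statements precise: one must check that $(-)^{(g)}$ genuinely is an exact $k$-linear auto-equivalence (its quasi-inverse is $(-)^{(g^{-1})}$) and that "almost split" is preserved by such equivalences, so that uniqueness of almost split sequences yields the isomorphisms $\tau_\Lambda(M^{(g)})\cong\tau_\Lambda(M)^{(g)}$. For part (1), the only delicate point is the topological observation that a finite disjoint union of locally closed orbits has those orbits as its connected components, so that the irreducible image of $G$ cannot straddle two of them; connectedness of $G$ is exactly what is used here, and it is indispensable, since a disconnected $G$ can permute non-isomorphic simples.
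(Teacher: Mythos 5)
Your treatment of parts (3), (4), and (5) matches the paper's: the Hopf computation for (3), and for (4)--(5) the observation that $(-)^{(g)}$ is an auto-equivalence of $\modd_\Lambda$ carrying almost split sequences to almost split sequences, so uniqueness of such sequences forces the middle term and the $\tau_\Lambda$-translate to be $G$-stable. For (1) the paper simply records the result as well known; your argument is in the right spirit, but the parenthetical ``(being locally closed and pairwise disjoint, and there are finitely many)'' does not by itself make the orbits the connected components of their union --- one locally closed orbit could lie in the boundary of another. It works here because $\GL(S)$-orbits of simple $d$-dimensional modules are in fact closed (orbits of semisimple modules are closed; alternatively, by Schur's lemma all such orbits have the same dimension $d^2-1$, so none lies in another's boundary), and you should say so.

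The genuine gap is in part (2). You claim that ``$G$-stability of a direct sum is equivalent to $G$-stability of each summand by the Krull--Schmidt theorem,'' but only the trivial direction follows from Krull--Schmidt. From $(M\oplus N)^{(g)}\cong M\oplus N$, Krull--Schmidt matches the indecomposable summands of both sides as multisets; it does \emph{not} give $M^{(g)}\cong M$. Indeed the implication is false for disconnected $G$ (a $\ZZ/2$ swapping two non-isomorphic simples makes their direct sum $G$-stable while neither summand is), and nothing in your argument for (2) invokes connectedness. The paper's proof supplies exactly the missing step: if $X$ is an indecomposable summand of $M$, then each twist $X^{(g)}$ is again an indecomposable summand of $M^{(g)}\cong M$, so $G$ permutes the finitely many isomorphism classes of such summands and $G_X$ therefore has finite index in $G$; since $G_X$ is closed by Lemma \ref{SM1} and $G$ is connected, $G_X=G$. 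Your proposal skips this finite-index-plus-closed-plus-connected argument entirely, and your closing remark --- that part (1) is ``the only delicate point'' where connectedness enters --- is precisely where the oversight shows, since part (2) needs it just as much.
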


\begin{proof} (1) This is well-known.

(2) It is clear that $\modd^G_\Lambda$ is closed under direct sums. Let $M \in \modd^G_\Lambda$ and suppose that $X$ is an indecomposable direct summand of $M$.
The theorem of Krull-Remak-Schmidt implies that $G_X$ is a subgroup of $G$ of finite index. Since $G$ is connected, Lemma \ref{SM1} yields $G_X=G$. As a result, $X$ is $G$-stable. Since every direct summand of $M$ is a direct sum of indecomposable direct summands of $M$, our assertion follows.

(3) Let $M,N \in \modd^G_\Lambda$. Given $g \in G$, there exist isomorphisms $\varphi : M \lra M^{(g)}$ and $\psi : N \lra N^{(g)}$. Since $G$ acts on $\Lambda$ via automorphisms of Hopf
algebras, we have
\begin{eqnarray*}
(\varphi\!\otimes\!\psi)(u.(m\!\otimes\!n)) & = & \sum_{(u)}u_{(1)}\dact\varphi(m)\otimes u_{(2)}\dact \psi(n) = \sum_{(u)}(g^{-1}.u_{(1)})\varphi(m)\otimes (g^{-1}. u_{(2)})\psi(n)\\
& = & (g^{-1}. u)(\varphi\otimes\psi)(m\otimes n)
\end{eqnarray*}
for $m\in M, \, n \in N,\, u \in \Lambda$. Hence $\varphi\otimes \psi: M\!\otimes_k\!N \lra (M\!\otimes_k\!N)^{(g)}$ is an isomorphism, showing that $M\!\otimes_k\!N$ is $G$-stable.

(4) Let $M \in \modd^G_\Lambda$ be indecomposable and suppose that
\[ (0) \lra N \lra E \lra M \lra (0)\]
is the almost split sequence terminating in $M$. Given $g \in G$, the functor $X \mapsto X^{(g)}$ is an auto-equivalence of $\modd_\Lambda$. Consequently,
\[ (0) \lra N^{(g)} \lra E^{(g)} \lra M \lra (0)\]
is another almost split sequence ending in $M$. According to \cite[(V.1.16)]{ARS}, this implies $E^{(g)} \cong E$ and $N^{(g)} \cong N$. Thus, $N,E \in \modd^G_\Lambda$, as desired.

(5) This follows analogously. \end{proof}

\bigskip
\noindent
Suppose that $\Lambda$ is self-injective. We denote by $\Gamma_s(\Lambda)$ the {\it stable Auslander-Reiten quiver} of $\Lambda$. By definition, $\Gamma_s(\Lambda)$ is a directed graph, whose
vertices are the isomorphism classes of the non-projective indecomposable $\Lambda$-modules. There is an arrow $X \rightarrow M$ if and only if $X$ is a direct summand of $E_M$. The Auslander-Reiten translation $\tau_\Lambda$ is an automorphism of $\Gamma_s(\Lambda)$. For further details, we refer to \cite[VII]{ARS}.

Let $\cC \subseteq \modd_\Lambda$ be a full subcategory that is closed under isomorphisms. Abusing notation, we shall write $\Theta\cap \cC$ for the set of those vertices of the AR-component $\Theta \subseteq \Gamma_s(\Lambda)$, whose representatives belong to $\cC$. The notation $\Theta \subseteq \cC$ indicates that $\Theta\cap\cC=\Theta$. Thus, we often do not distinguish between the directed graph
and its underlying set of vertices. Given an algebraic group $G$, we denote its connected component by $G^0$.

\bigskip

\begin{Lemma} \label{SM3} Suppose that $\Lambda$ is self-injective and let $\Theta \subseteq \Gamma_s(\Lambda)$ be a component. Then the following statements hold:
\begin{enumerate}
\item If $G$ is connected and $\Theta \cap \modd^G_\Lambda \ne \emptyset$, then $\Theta \subseteq \modd^G_\Lambda$.
\item We have $G_M^0 = G_N^0$ for all $M,N \in \Theta$.
\end{enumerate} \end{Lemma}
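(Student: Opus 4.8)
The plan is to deduce both statements from Lemma~\ref{SM2} together with the connectedness hypotheses, propagating the $G$-stability property along the arrows of a component $\Theta$.

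For (1), I would argue that the set of vertices of $\Theta$ lying in $\modd^G_\Lambda$ is both non-empty (by hypothesis) and ``closed'' under the local moves available in an AR-component, namely passing from an indecomposable to its AR-translate $\tau_\Lambda$ and $\tau_\Lambda^{-1}$, and passing from an indecomposable to the indecomposable summands of its middle term $E_M$ (resp.\ the middle term of the sequence originating at it). Concretely: if $M \in \Theta\cap\modd^G_\Lambda$, then Lemma~\ref{SM2}(4) gives $\tau_\Lambda(M) \in \modd^G_\Lambda$ and $E_M \in \modd^G_\Lambda$; since $\modd^G_\Lambda$ is closed under direct summands by Lemma~\ref{SM2}(2), every indecomposable summand of $E_M$ — that is, every vertex $X$ with an arrow $X \to M$ — lies in $\modd^G_\Lambda$. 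Dually, using Lemma~\ref{SM2}(5) and $E_{\tau_\Lambda^{-1}(M)} \cong F_M$, every vertex $Y$ with an arrow $M \to Y$ lies in $\modd^G_\Lambda$, as does $\tau_\Lambda^{-1}(M)$. Since $\Theta$ is a connected component of the stable AR-quiver, these moves connect any two vertices, so an obvious induction along a walk in $\Theta$ gives $\Theta \subseteq \modd^G_\Lambda$.

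For (2), the idea is to reduce to (1) by replacing $G$ with $G_M^0$. Fix $M \in \Theta$. By Lemma~\ref{SM1} the subgroup $G_M \subseteq G$ is closed, hence $G_M^0$ is a connected algebraic group acting on $\Lambda$ via automorphisms, and by definition $M$ is $G_M^0$-stable, i.e.\ $M \in \Theta\cap\modd^{G_M^0}_\Lambda$. Applying part (1) to the connected group $G_M^0$ yields $\Theta \subseteq \modd^{G_M^0}_\Lambda$; in particular, for every $N \in \Theta$ we get $G_M^0 \subseteq G_N$, and since $G_M^0$ is connected, $G_M^0 \subseteq G_N^0$. By symmetry (swapping the roles of $M$ and $N$), $G_N^0 \subseteq G_M^0$, whence $G_M^0 = G_N^0$.

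I expect the only subtlety — it is more a bookkeeping point than a genuine obstacle — to be the verification that the ``one-step'' moves really do generate the whole component, i.e.\ that being an arrow-neighbour (in either direction) together with applying $\tau_\Lambda^{\pm 1}$ suffices to reach every vertex of $\Theta$ from a given one. This is immediate from the definition of a connected component of $\Gamma_s(\Lambda)$: vertices are joined by walks along arrows, and each arrow $X \to M$ already exhibits $X$ as a summand of $E_M$, so the induction is along the underlying undirected graph of $\Theta$ and needs no separate appeal to $\tau_\Lambda$ at all (the $\tau_\Lambda$-invariance statements in Lemma~\ref{SM2}(4),(5) are only needed to know that the auto-equivalences $X \mapsto X^{(g)}$ send almost split sequences to almost split sequences, which is what powers Lemma~\ref{SM2} in the first place). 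So the proof is genuinely short once Lemma~\ref{SM2} is in hand.
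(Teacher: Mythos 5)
Your argument is correct and follows essentially the same route as the paper: part (1) propagates $G$-stability along arrows (and thus throughout the connected component) using Lemma~\ref{SM2}(2),(4),(5), exactly as in the paper's proof. For part (2) you apply (1) with $G$ replaced by the connected group $G_M^0$ and then symmetrize; the paper packages the same idea slightly differently by introducing the set $\cA_M=\{X\in\Theta\ ;\ G_X^0=G_M^0\}$ and showing it is closed under the neighbour/translation moves (the ``twofold application of (1)'' there is precisely applying (1) to $G_X^0$ and then to $G_Y^0$), but the underlying mechanism is identical, and your version is if anything a little more streamlined.
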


\begin{proof} (1) Let $M \in \Theta$ be an indecomposable $G$-stable $\Lambda$-module. By (4),(5) and (2) of Lemma \ref{SM2}, all successors and all predecessors of $M$ belong to $\Theta\cap \modd_\Lambda^G$. By the same token, the set $\Theta \cap \modd^G_\Lambda$ is $\tau_\Lambda$-invariant as well as $\tau_\Lambda^{-1}$-invariant. As $\Theta$ is connected, we obtain $\Theta \cap \modd^G_\Lambda = \Theta$, as desired.

(2) Fix a $\Lambda$-module $M \in \Theta$ and consider the set
\[ \cA_M := \{X \in \Theta \ ; \ G_X^0 = G_M^0\}.\]
If $X \in \cA_M$ and $Y \in \Theta$ is a predecessor of $X$, then a twofold application of (1) yields $G_X^0 = G_Y^0$. By the same token, $G_{\tau^{\pm 1}_\Lambda(X)}^0 =G_X^0$. As a result, the set $\cA_M$ is $\tau^{\pm 1}_\Lambda$-invariant as well as closed under taking successors and predecessors. Consequently, $\cA_M=\Theta$, as desired. \end{proof}

\bigskip

\section{Modules of constant rank for reductive Lie algebras}
Throughout this section, the algebraically closed field $k$ is assumed to have $\Char(k)=p\ge 3$. Let $\cG$ be a finite group scheme over $k$. Recall that every $\pi$-point $\alpha_K \in \Pt(\cG)$ gives rise to a pull-back functor
\[ \alpha_K^\ast : \modd \cG_K \lra \modd \fA_{p,K}\]
that sends projectives to projectives. Given $M \in \modd \cG$, we have
\[ \alpha_K^\ast(M_K) \cong \bigoplus_{i=1}^pa_i [i],\]
where $[i]$ denotes the unique indecomposable $\fA_{p,K}$-module of dimension $i$. Hence there exists an $\fA_p$-module $N$ with
\[ N_K \cong \alpha^\ast_K(M_K),\]
whose isomorphism class is the {\it Jordan type} $\Jt(M,\alpha_K)$ of $M$ relative to $\alpha_K$. If $\Jt(M,\alpha_K) = \bigoplus_{i=1}^pa_i[i]$, then $\bigoplus_{i=1}^{p-1}a_i[i]$ is referred to as the {\it stable Jordan type} of $M$ relative to $\alpha_K$.  

We let
\[ \Jt(M) := \{ \Jt(M,\alpha_K) \ ; \ \alpha_K \in \Pt(\cG)\}\]
be the finite set of Jordan types of $M$. Following \cite{CFP}, the $\cG$-module $M$ is said to have {\it constant Jordan type}, provided $|\Jt(M)|=1$. In that case, $\Jt(M)$ denotes the unique Jordan type of $M$. 

Given an extension field $K$ of $k$ and $u \in K\cG$, we denote by $u_{M_K}$ the left multiplication of $M_K$ effected by $u$. Let $j \in \{0,\ldots,p\!-\!1\}$. Following Friedlander-Pevtsova \cite{FPe3}, we say that a $\cG$-module $M$ has {\it constant $j$-rank} $\rk^j(M)=d$, provided
\[\rk(\alpha_K(t)^j_{M_K}) = d \ \ \text{for all} \ \alpha_K \in \Pt(\cG).\]
Thus, a $\cG$-module $M$ has constant Jordan type if and only if $M$ has constant $j$-rank for every $j \in \{1,\ldots,p\!-\!1\}$.

In the sequel we shall often identify infinitesimal groups of height $1$ with restricted Lie algebras. By definition, a restricted Lie algebra $(\fg,[p])$ is a pair consisting of a Lie algebra
$\fg$ and a $p$-th power operator $[p] : \fg \lra \fg$, sending $x \in \fg$ to $x^{[p]}$, that satisfies the formal properties of an associative $p$-th power. We refer the reader to \cite[Chap.II]{SF}
for the details. Let $U(\fg)$ be the universal enveloping algebra of $\fg$. Then $I := (\{x^p\!-\!x^{[p]} \ ; \ x \in \fg\})$ is a Hopf ideal of $U(\fg)$ and
\[ U_0(\fg) := U(\fg)/I\]
is a finite-dimensional cocommutative Hopf algebra, whose Lie algebra of primitive elements coincides with $\fg$. The algebra $U_0(\fg)$ is called the {\it restricted enveloping algebra} of $\fg$.

Let $\cG$ be an algebraic group scheme. The first Frobenius kernel $\cG_1$ of $\cG$ corresponds to the Lie algebra $\fg := \Lie(\cG)$ in the sense that there is an isomorphism $k\cG_1 \cong U_0(\fg)$
of Hopf algebras, cf.\ \cite[(II,\S7,${\rm n}^{\rm o}$4)]{DG}. Thus, we may identify the set $\Pi(\cG_1)$ of $\pi$-points of $\cG_1$ with the set of $\pi$-points $\Pi(\fg)$ of the Lie algebra $\fg$.
The latter space is closely related to the {\it nullcone}
\[ \cV_\fg := \{x \in \fg \ ; \ x^{[p]}=0\}\]
of $\fg$. Since $\cV_\fg \subseteq \fg$ is conical, there is the associated projective variety $\PP(\cV_\fg) \subseteq \PP(\fg)$.

Let $M$ be a $\cG$-module. According to \cite{FPe2}, the {\it $\Pi$-support}
\[ \Pi(\cG)_M := \{[\alpha_K] \in \Pi(\cG) \ ; \ \alpha_K^\ast(M_K) \ \text{is not projective}\}\]
of $M$ is a closed subset of $\Pi(\cG)$. If $\cG$ has height $1$, then there exists a homeomorphism between the closed points of $\Pi(\cG)$ and $\PP(\cV_\fg)$ sending the closed
points of $\Pi(\cG)_M$ onto $\PP(\cV_\fg(M))$, where
\[ \cV_\fg(M) := \{ x \in \cV_\fg \ ; \ M|_{k[x]} \ \text{is not projective}\} \cup \{0\}.\]
is the {\it rank variety} of the $U_0(\fg)$-module $M$.

Let $G$ be a smooth (reduced) algebraic group. Then $G$ acts on $\fg := \Lie(G)$ and $U_0(\fg)$ via the adjoint representation
\[ \Ad : G \lra \Aut(U_0(\fg)).\]
In fact, $\Ad(g)$ is an automorphism of Hopf algebras for every $g \in G$. We thus have the notion of a $G$-stable $U_0(\fg)$-module. Note that every rational $G$-module $M$ gives rise to a
$G$-stable $U_0(\fg)$-module via restriction to the first Frobenius kernel $G_1$.

\bigskip

\begin{Lemma} \label{MCR1} Let $M$ be a $U_0(\fsl(2))$-module of constant $j$-rank for some $j \in \{1,\ldots,p\!-\!1\}$. Then $M$ is an $\SL(2)$-stable module of constant Jordan type.\end{Lemma}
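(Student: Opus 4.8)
The plan is to exploit the very special, explicitly known representation theory of $\fsl(2)$ together with the $\SL(2)$-action on its nullcone $\cV_{\fsl(2)}$. The key geometric fact is that $\SL(2)$ acts transitively on the set of lines through nonzero nilpotent elements: every $x \in \cV_{\fsl(2)} \smallsetminus \{0\}$ is $\SL(2)$-conjugate to a scalar multiple of the standard nilpotent $e$, so $\PP(\cV_{\fsl(2)})$ is a single $\SL(2)$-orbit (indeed $\PP(\cV_{\fsl(2)}) \cong \PP^1$). Consequently the closed points of $\Pi(\fsl(2))$ form one orbit, and $\Ad(\SL(2))$ acts transitively on the dense set of closed $\pi$-points. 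Since the $j$-rank function $\alpha_K \mapsto \rk(\alpha_K(t)^j_{M_K})$ is constant on $\Ad(\SL(2))$-orbits (the adjoint twist of $M$ is isomorphic to $M$ as soon as one knows $M$ is $\SL(2)$-stable — but even before that, $\rk$ is invariant under isomorphism of the twisted module, and twisting by $\Ad(g)$ only reparametrizes $\pi$-points), constancy of the $j$-rank over all $\pi$-points reduces to constancy over a single orbit, which is automatic. So the real content is: (i) deduce that $M$ has constant Jordan type from constant $j$-rank for one $j$, and (ii) deduce $\SL(2)$-stability.

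For part (i), I would use the classification of indecomposable $U_0(\fsl(2))$-modules — equivalently, work via rank varieties. Restricting $M$ to the subalgebra $k[e] \cong \fA_p$ gives a Jordan type $\bigoplus a_i[i]$; restricting to $k[x]$ for any other nilpotent $x$ gives a type obtained by $\Ad$-conjugation, hence (since the line $kx$ is $\SL(2)$-conjugate to $ke$) the same stable Jordan type up to the possible ambiguity in the number of $[p]$'s — but the rank $\rk(\alpha_K(t)^j_{M_K})$ together with $\dim M$ pins down all $a_i$ once we know it for a single $j$ only if $p$ is small; in general one argues that since $\PP(\cV_{\fsl(2)})$ is a single orbit, the function $[\alpha] \mapsto \Jt(M,\alpha)$ is literally constant on closed points, hence $M$ has constant Jordan type outright (constant $j$-rank for even one $j$ is then only needed to guarantee $M$ is not "generically projective" in a way that would spoil the argument — in fact, over a homogeneous variety the Jordan type at closed points is already constant, and the generic $\pi$-point inherits this by \cite[(4.7)]{FPe1}-style specialization). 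Thus constant $j$-rank is essentially automatic here, and the substantive claim is that constancy at closed points propagates to all $\pi$-points, which follows from semicontinuity of rank plus density of closed points in $\Pi(\fsl(2))$.

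For part (ii), the $\SL(2)$-stability of $M$: I would invoke the characterization of $U_0(\fsl(2))$-modules of constant Jordan type via their rank varieties together with the fact that $\Ad(g)$ permutes the rank-variety stratification. Concretely, for $g \in \SL(2)$ the twist $M^{(g)}$ has rank variety $\Ad(g)(\cV_{\fsl(2)}(M))$; since $M$ has constant Jordan type, $\cV_{\fsl(2)}(M)$ is either all of $\cV_{\fsl(2)}$ or $\{0\}$, both $\Ad(\SL(2))$-stable. But equality of rank varieties does not by itself give $M^{(g)} \cong M$ (rank varieties do not determine the module). The clean route is instead: the map $g \mapsto [\varrho_M \circ \Ad(g^{-1})] \in \modd_\Lambda^d / \GL(M)$ is a morphism from the connected group $\SL(2)$; by Lemma~\ref{SM1} the stabilizer $\SL(2)_M$ is closed, and I claim it has finite index — equivalently, the orbit map has finite image. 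Since every $M^{(g)}$ has the same constant Jordan type and the same dimension as $M$, and since $U_0(\fsl(2))$ has only finitely many isomorphism classes of modules of any fixed dimension with a fixed Jordan type... this last point is where I expect the main obstacle, because $U_0(\fsl(2))$ is of tame representation type, so there can be one-parameter families. The resolution: modules of constant Jordan type form a bounded (in fact finite, up to iso and fixed dimension — by \cite{CFP}-type results, or by the explicit classification showing the constant-Jordan-type indecomposables for $\fsl(2)$ are exactly the syzygies of the trivial module) subset, so $\SL(2)_M$ has finite index, hence equals $\SL(2)$ by connectedness. The hardest step is thus pinning down finiteness of constant-Jordan-type modules of bounded dimension for $\fsl(2)$ — I would handle it by reducing to indecomposables and citing the known structure (the indecomposable $U_0(\fsl(2))$-modules of constant Jordan type are precisely $\Omega^n(k)$ and $\Omega^{-n}(k)$ and the projectives/simples, a discrete family), which makes $\SL(2)_M$ cofinite and completes the proof.
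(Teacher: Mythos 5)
Your proposal has a genuine gap, concentrated in the first paragraph. The claim that ``constancy of the $j$-rank over all $\pi$-points reduces to constancy over a single orbit, which is automatic'' is circular. The correct consequence of the reparametrization is $\alpha^\ast(M^{(g)}) \cong (g^{-1}.\alpha)^\ast(M)$, which says that the $j$-rank function of the \emph{twist} $M^{(g)}$ at $\alpha$ equals that of $M$ at $g^{-1}.\alpha$; it does \emph{not} say that the $j$-rank function of a fixed $M$ is constant along $\SL(2)$-orbits of $\pi$-points. If that were automatic, then — since $\PP(\cV_{\fsl(2)}) \cong \PP^1$ is a single $\SL(2)$-orbit — every $U_0(\fsl(2))$-module would have constant $j$-rank for every $j$, which is false (the periodic modules with one-dimensional rank variety do not). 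So the hypothesis of constant $j$-rank for a single $j$ is doing real work, and your proposal never actually derives the conclusion (constant $j$-rank for \emph{all} $j$, i.e.\ constant Jordan type) from it. The part-(ii) idea (closed stabilizer of finite index in a connected group is everything) is sound in outline, but rests on the finiteness of isomorphism classes of constant Jordan type modules of each fixed dimension, which you flag as ``the hardest step'' and do not establish.

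The paper's proof is genuinely different and sidesteps both issues. It decomposes $M = M_0 \oplus M_1 \oplus M_2$ according to whether the indecomposable summands have rank variety of dimension $0$, $1$, or $2$. By Premet's classification \cite{Pr} together with \cite[(8.1.1)]{Fa5}, $M_0$ and $M_2$ are restrictions of rational $\SL(2)$-modules — hence $\SL(2)$-stable and of constant Jordan type \emph{a fortiori}, with no finiteness-of-orbit argument needed. The rank formula then shows $M_1$ inherits constant $j$-rank, forcing $\Pi(\fsl(2))_{M_1} \in \{\emptyset, \Pi(\fsl(2))\}$; since $\Pi(\fsl(2))$ is one-dimensional while $\dim \Pi(\fsl(2))_{M_1} \le 0$ by construction, $M_1$ must be projective and therefore zero. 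This is where the hypothesis of constant $j$-rank is actually used, and it is the step missing from your argument.
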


\begin{proof} We write
\[ M = M_0 \oplus M_1 \oplus M_2,\]
with $M_i$ being a (possibly empty) direct sum of indecomposables, whose rank varieties have dimension $i$. According to \cite[Thm.]{Pr} and \cite[(8.1.1)]{Fa5}, the $U_0(\fsl(2))$-modules
$M_0$ and $M_2$ are restrictions of rational $\SL(2)$-modules and have constant Jordan type.

Since $\rk(\alpha_K(t)_{(M_1)_K}^j) = \rk(\alpha_K(t)_{M_K}^j) \!-\! \rk(\alpha_K(t)_{(M_2)_K}^j) \!-\!
\rk(\alpha_K(t)_{(M_0)_K}^j)$ for every $\pi$-point $\alpha_K \in \Pt(\fsl(2))$, it follows that $M_1$ has constant $j$-rank. Consequently, we have $\Pi(\fsl(2))_{M_1} \in \{\Pi(\fsl(2)),\emptyset\}$. As $\dim \Pi(\fsl(2))_{M_1} \le 0$, we conclude that $M_1$ is projective, whence $M_1=(0)$. As a result, $M = M_0\oplus M_2$ has the asserted properties.\end{proof}

\bigskip
\noindent
Given a reductive group $G$, the {\it semi-simple rank} $\ssrk(G)$ of $G$ is the dimension of a maximal torus of the derived group $(G,G)$.

\bigskip

\begin{Theorem} \label{MCR2} Let $G$ be a connected reductive algebraic group with Lie algebra $\fg = \Lie(G)$. If $M$ is a
$U_0(\fg)$-module of constant $j$-rank for some $j \in \{1,\ldots,p\!-\!1\}$, then the following statements hold:
\begin{enumerate}
\item $M$ is $G$-stable.
\item $M$ has constant Jordan type. \end{enumerate}\end{Theorem}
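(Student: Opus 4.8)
The strategy is to reduce the statement for an arbitrary connected reductive $G$ to the rank-one case handled in Lemma \ref{MCR1}, using a dense family of rank-one subalgebras arising from the root datum. Write $\fg = \Lie(G)$ and fix a maximal torus $T \subseteq G$ with root system $\Phi \subseteq X(T)$. For each $\alpha \in \Phi$ the root subgroup $U_\alpha \subseteq G$ has $\Lie(U_\alpha) = k e_\alpha$; together with $e_{-\alpha}$ and the appropriate coroot element $h_\alpha$ these span a subalgebra $\fs_\alpha \subseteq \fg$ isomorphic to $\fsl(2)$ (here $p \ge 3$ is used, so $\fsl(2)$ is genuinely $3$-dimensional with its usual $p$-operator), and $\fs_\alpha$ is $U(\fsl(2))$-admissible so that $U_0(\fs_\alpha) \hookrightarrow U_0(\fg)$. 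The key geometric input is that $\cV_\fg = \bigcup_{\alpha}\Ad(G)\,\cV_{\fs_\alpha}$, or at least that $\bigcup_\alpha \Ad(G)\cV_{\fs_\alpha}$ is dense in $\cV_\fg$; this follows because every $p$-nilpotent element of $\fg$ lies in the nilradical of a Borel, hence is $G$-conjugate into $\fn = \Lie(U)$, and the closure of $\Ad(G)\cdot\cV_{\fs_\alpha}$ for $\alpha$ a simple root sweeps out at least the regular nilpotent orbit closure — one should cite a standard fact (Spaltenstein/Carlson–Friedlander–Pevtsova style) rather than reprove it.

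For part (2): a $\pi$-point of $\fg$ is, up to equivalence, given by a closed point of $\PP(\cV_\fg)$, i.e. a line $kx$ with $x^{[p]}=0$, via $t \mapsto x$. Restricting $M$ along $U_0(\fs_\alpha) \hookrightarrow U_0(\fg)$ gives a $U_0(\fsl(2))$-module $M|_{\fs_\alpha}$, and the hypothesis that $M$ has constant $j$-rank over $\fg$ forces $M|_{\fs_\alpha}$ to have constant $j$-rank over $\fsl(2)$ (the $\pi$-points of $\fs_\alpha$ are among those of $\fg$). By Lemma \ref{MCR1}, $M|_{\fs_\alpha}$ has constant Jordan type, say $\bigoplus_i a_i^{(\alpha)}[i]$. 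Since the Jordan type of $M$ at a $\pi$-point $t \mapsto x$ with $x \in \cV_{\fs_\alpha}$ depends only on the rank of the nilpotent operator $x_{M}$, which is computed inside $M|_{\fs_\alpha}$, the Jordan type of $M$ is constant along $\cV_{\fs_\alpha}$; applying $\Ad(G)$ (which permutes $\pi$-points and preserves Jordan type) this type is constant on $\Ad(G)\cV_{\fs_\alpha}$. Finally, $G$ being connected, all the root subalgebras $\fs_\alpha$ are $\Ad(G)$-conjugate to $\fs_\beta$ for $\beta$ in a single Weyl orbit, and density of $\bigcup_\alpha \Ad(G)\cV_{\fs_\alpha}$ in $\cV_\fg$ together with upper semicontinuity of rank forces the Jordan type of $M$ to be globally constant. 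This proves (2), and (1) then follows: a module of constant Jordan type over $U_0(\fg)$ need not be $G$-stable in general, so here one argues directly — for each $g \in G^{\circ}=G$, $M^{(g)}$ has constant $j$-rank too, and one uses that $\Ad(G)$ is connected together with Lemma \ref{SM1} (the stabilizer $G_M$ is closed) and the fact that restrictions to the $\fs_\alpha$'s determine the module up to the action of a finite group, concluding that $G_M$ has finite index, hence equals $G$.

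The main obstacle I anticipate is making the reduction in part (1) genuinely rigorous: constant Jordan type alone is far from $G$-stability (the difference between $\SL(2)_1$ and $(\ZZ/p)^2$ exploited elsewhere in the paper is exactly this phenomenon), so the $G$-stability of $M$ must come from a structural input beyond numerical rank data. The cleanest route is probably to invoke a classification-type result — e.g. that over $U_0(\fg)$ for $\fg$ reductive, modules of constant $j$-rank are restrictions of rational $G$-modules, at least generically, by patching the $\SL(2)$-rationality statements (Premet, \cite{Pr}, and \cite{Fa5}) used in Lemma \ref{MCR1} over the root $\fsl(2)$-subalgebras and then descending — but spelling out the patching is where the real work lies, and one may instead need a direct argument that $g \mapsto M^{(g)}$ is a constant family of modules because the underlying representation variety point $\varrho_M$ is fixed by the connected group $\Ad(G)$ acting through an action that is trivial on isomorphism classes of constant-Jordan-type modules in this setting. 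I would present this carefully, isolating the $G$-stability as the subtle point and the Jordan-type constancy as the routine consequence of Lemma \ref{MCR1} plus density.
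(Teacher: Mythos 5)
Your proposed proof has two serious gaps, and in fact the logical order of the two claims needs to be reversed.

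First, the density claim is false. The nullcone $\cV_{\fs_\alpha}$ of a copy of $\fsl(2)$ is a two-dimensional cone all of whose nonzero elements are regular nilpotent in $\fsl(2)$, hence $\Ad(G)$-conjugate to the root vector $e_\alpha$. Thus $\Ad(G)\cdot \cV_{\fs_\alpha}$ is just the closure of the adjoint orbit through $e_\alpha$, which is (roughly) a minimal nilpotent orbit closure — \emph{not} the regular nilpotent orbit closure. In any reductive group of semisimple rank $\ge 2$ this is a proper closed subset of $\cV_\fg$ of strictly smaller dimension (e.g.\ for $\fsl(3)$ the orbit of $e_\alpha$ has dimension $4$ while $\cV_\fg$ has dimension $6$). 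So $\bigcup_\alpha \Ad(G)\cV_{\fs_\alpha}$ is nowhere near dense, and the ``density plus semicontinuity'' step collapses.

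Second, and more structurally, the argument for (2) covertly uses (1). You say ``applying $\Ad(G)$ (which permutes $\pi$-points and preserves Jordan type)'' — but $\rk\bigl((\Ad(g)x)^i_M\bigr) = \rk\bigl(x^i_{M^{(g^{-1})}}\bigr)$, which equals $\rk(x^i_M)$ only if $M^{(g^{-1})}\cong M$, i.e.\ only if $M$ is $G$-stable. The hypothesis gives $\Ad(G)$-invariance of the $j$-rank function (because the $j$-rank is constant outright), but for the other exponents $i$ you have no such invariance without first knowing (1). You correctly flag (1) as ``the subtle point,'' but you have no concrete argument for it, and because of the circularity above, (2) does not precede (1) even in outline.

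The paper resolves this by proving (1) first and using a group-theoretic, not a density, argument: Lemma \ref{MCR1} shows that $M|_{\fh_{(\alpha)}}$ is $\SL(2)$-stable for the rank-one reductive subalgebra $\fh_{(\alpha)} = \Lie(H_{(\alpha)})$ attached to each root $\alpha$; hence each root subgroup $U_\alpha$ lies in the closed stabilizer $G_M$ (Lemma \ref{SM1}). Since the root subgroups generate $(G,G)$ and $G = Z(G)^0(G,G)$ with $Z(G)^0$ acting trivially on $U_0(\fg)$, one gets $G_M = G$. Once $G$-stability is in hand, the set of $\pi$-points of non-maximal Jordan type corresponds to a $G$-invariant closed subvariety of $\PP(\cV_\fg)$; Borel's fixed-point theorem then produces a $T$-fixed point, i.e.\ a root vector $x_\alpha$, in it, and this contradicts Lemma \ref{MCR1} applied to $M|_{\fh_{(\alpha)}}$. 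You should adopt this order (and these tools) rather than attempt a density argument, which cannot work here.
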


\begin{proof} (1) It follows from \cite[(6.15)]{Sp} that the subgroup $G' := (G,G)$ is semi-simple. By Lemma \ref{SM1}, the stabilizer $G'_M$ is a closed subgroup of $G'$.

Let $\alpha \in \Phi$ be a root of $G'$ relative to some maximal torus $T \subseteq G'$, and consider the subgroup $G'_{(\alpha)} :=
Z_{G'}(\ker \alpha)^0$, the connected component of the centralizer of $\ker \alpha$ in $G'$. Thanks to \cite[(9.3.5)]{Sp}, $G'_{(\alpha)}$ is a closed, reductive subgroup of
$G'$ of semi-simple rank $1$. Consequently, the derived group $H_{(\alpha)} :=(G_{(\alpha)}, G_{(\alpha)})$
is semi-simple of rank $1$, and \cite[(8.2.4)]{Sp} yields $H_{(\alpha)} \cong \SL(2), \, \PSL(2)$. Owing
to \cite[(9.3.6)]{Sp}, the root subgroup $U_\alpha$ is contained in $H_{(\alpha)}$. Note that
$\fh_{(\alpha)} := \Lie(H_{(\alpha)}) \cong \fsl(2)$ is a $p$-subalgebra of $\fg$. Lemma \ref{MCR1} now
implies that the module $M|_{\fh_{(\alpha)}}$ is $\SL(2)$-stable. Hence $M$ is also $H_{(\alpha)}$-stable,
so that the root subgroup $U_\alpha$ is contained in $G'_M$. Thanks to \cite[(27.5)]{Hu}, we thus have
$G' \subseteq G'_M \subseteq G_M$. General theory \cite[(6.15)]{Sp} yields $G=Z(G)^0 G'$. Since the
connected center $Z(G)^0$ acts trivially on $U_0(\fg)$, we have $Z(G)^0 \subseteq G_M$. Hence $G_M=G$,
so that $M$ is $G$-stable.

(2) According to (1), the $U_0(\fg)$-module $M$ is $G$-stable. We consider the conical variety
\[ \cV_\fg^<(M) := \bigcup_{i=1}^{p-1}\cV_\fg^i(M),\]
where $\cV_\fg^i(M) := \{x \in \cV_\fg \ ; \ \rk(x^i_M)<\rk^i(M)\}$ is the closed, conical set of
operators of non-maximal $i$-rank. In view of \cite[(4.7)]{FPe2} and \cite[(3.8)]{FPe1}, the projective
variety $\PP(\cV^<_\fg(M))$ is homeomorphic to the space of closed points of the scheme $\Gamma(G_1)_M$
of $\pi$-points of non-maximal Jordan type for $M$. According to \cite[(3.6)]{CFP}, the module $M$ has
constant Jordan type if and only if $\Gamma(G_1)_M = \emptyset$. Since the set of closed points of a
scheme of finite type is dense \cite[(3.35)]{GW}, it follows that $M$ has constant Jordan type if and only
if $\PP(\cV^<_\fg(M)) = \emptyset$.

The adjoint representation of $G$ on $\fg$ induces an action on $\cV_\fg$ as well as on the associated projective variety $\PP(\cV_\fg)$. Since $M$ is $G$-stable,
the variety $\PP(\cV^<_\fg(M))$ is a $G$-invariant subset of $\PP(\cV_\fg)$.
Suppose that $\PP(\cV^<_\fg(M)) \ne \emptyset$ and let $T \subseteq G'$ be a maximal torus. Borel's fixed point theorem \cite[(7.2.5)]{Sp} ensures
the existence of a root vector $x_\alpha \in \cV_\fg$ such that $[x_\alpha]\in \PP(\cV^<_\fg(M))$.
Consequently, the $U_0(\fh_{(\alpha)})$-module $M|_{U_0(\fh_{(\alpha)})}$ does not have constant Jordan
type. It now follows from Lemma \ref{MCR1} that $M|_{U_0(\fh_{(\alpha)})}$ does not have constant
$j$-rank either, a contradiction. \end{proof}

\bigskip

\begin{Remark} The foregoing result does not hold for higher Frobenius kernels. Consider the second Frobenius kernel $\SL(2)_2$ of the reductive group scheme
$\SL(2)$. Let $L(\lambda)$ be the simple $\SL(2)_2$-module with highest weight $\lambda = \lambda_1\!+\!p\lambda_2$, where $0\le \lambda_i \le p\!-\!1$.
In view of \cite[(4.12)]{FPe3}, $L(\lambda)$ has constant $j$-rank whenever $j>\lambda_1\!+\!\lambda_2$. By the same token, the trivial module and the Steinberg module are the
only simple $\SL(2)_2$-modules of constant Jordan type. \end{Remark}

\bigskip
\noindent
Let $G$ be an algebraic group with Lie algebra $\fg := \Lie(G)$. A $p$-subalgebra $\fb \subseteq \fg$ is called a {\it Borel subalgebra}, provided $\fb=\Lie(B)$ for some Borel subgroup $B
\subseteq G$. We denote by ${\rm Bor}(\fg)$ and ${\rm Car}(\fg)$ the sets of Borel subalgebras and Cartan subalgebras of $\fg$, respectively. We record the following observation:

\bigskip

\begin{Lemma} \label{MCR3} Let $G$ be an algebraic group. Then we have
\[ \fg = G.\fb\]
for every $\fb \in {\rm Bor}(\fg)$. \end{Lemma}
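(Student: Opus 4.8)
The plan is to realize $\fg$ as the image of a proper morphism from a vector bundle over the flag variety and then force surjectivity by a dimension count plus a single generic fibre computation. We may assume $G$ is connected, as $\Lie(G)=\Lie(G^0)$ and the Borel subgroups of $G$ coincide with those of $G^0$. Fix a Borel subgroup $B\subseteq G$ with $\fb=\Lie(B)$, a maximal torus $T\subseteq B$, and let $\cB := G/B$ be the (projective) variety of Borel subgroups. Consider the incidence variety
\[ Z := \{(x,B') \in \fg\times\cB \ ; \ x \in \Lie(B')\},\]
with projections $\pi : Z \lra \fg$ and $q : Z \lra \cB$. First I would note that $q$ exhibits $Z$ as the associated bundle $G\times^B\fb$, the fibre over $B'={}^gB$ being the linear space $\Ad(g)(\fb)=\Lie(B')$; since $Z$ is the image of the irreducible variety $G\times\fb$ it is irreducible, and
\[ \dim Z = \dim\cB + \dim\fb = \dim(G/B)+\dim B = \dim G = \dim\fg.\]

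Next, since $\cB$ is complete, the projection $\fg\times\cB\to\fg$ is proper, hence so is $\pi$; thus $\im\pi = G\dact\fb$ is closed in $\fg$. As $\fg$ is irreducible with $\dim\fg=\dim Z$, it suffices to show that $\pi$ admits a non-empty fibre of dimension $0$: by upper semicontinuity of fibre dimension this makes the generic fibre of $Z\to\overline{\im\pi}$ finite, whence $\dim\overline{\im\pi}=\dim Z=\dim\fg$ and $\im\pi=\fg$, i.e.\ $\fg=G\dact\fb$. To produce such a fibre I would choose a regular semi-simple element $h\in\Lie(T)$, i.e.\ one with $d\alpha(h)\ne 0$ for every root $\alpha$ of $G$ relative to $T$; such $h$ exists because $k$ is infinite and each $d\alpha$ is non-zero (a root is not $p$-divisible in $X(T)$ for $p\ge 3$), so that $\Lie(T)$ is not covered by the finitely many hyperplanes $\ker d\alpha$. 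For this $h$ one has $C_\fg(h)=\Lie(T)$, and the Borel subalgebras containing $h$ are exactly the $\Lie(B')$ with $B'\supseteq T$, of which there are $|W|$; hence $\pi^{-1}(h)$ is finite and non-empty (as $h\in\fb$), completing the argument.

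The main obstacle is the last step, the assertion that a regular semi-simple element lies in only finitely many Borel subalgebras. This rests on two standard facts: a semi-simple element of the Lie algebra of a connected solvable group is conjugate into the Lie algebra of a maximal torus, and a regular semi-simple element $h$ determines its ambient maximal torus uniquely, namely as the unique maximal torus whose Lie algebra equals $C_\fg(h)$; together with $|\{B'\in\cB : B'\supseteq T\}|=|W|$ this gives finiteness. For a genuinely non-reductive $G$ one must instead run the fibre computation inside the reductive quotient $G/R_u(G)$ (lifting Borels and tori along $G\to G/R_u(G)$); since the applications here only need the reductive case, I would either restrict to that case or record this reduction.
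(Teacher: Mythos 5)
Your proof is correct, but it follows a genuinely different route from the paper's. The paper argues by density: it observes that $G.\fb = \bigcup_{\fb' \in {\rm Bor}(\fg)}\fb'$ is closed (using $B$-invariance of $\fb$ and \cite[(0.15)]{Hu3}), cites \cite[(7.7)]{Fa2} for the density of the union of the Cartan subalgebras in $\fg$, and then places each Cartan subalgebra $\fh$ inside a Borel subalgebra by producing a connected solvable subgroup $H\subseteq G$ with $\Lie(H)=\fh$ and enlarging $H$ to a Borel subgroup; the dense set is thus contained in the closed set $G.\fb$, which must therefore be all of $\fg$. Your incidence variety $Z=G\times^B\fb$ together with the properness-plus-fibre-dimension count is the Grothendieck--Springer-resolution version of the same assertion, and it replaces the density theorem from \cite{Fa2} by a single finite fibre computation over a regular semisimple element $h$. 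Both arguments rest on a genericity input; you manufacture yours directly from $d\alpha\ne 0$ for all roots $\alpha$, which does invoke the standing hypothesis $p\ge 3$ of this section. Two spots to tighten: (i) the step ``$\Lie(T')=\Lie(T)\Rightarrow T'=T$'' is not automatic in positive characteristic and again uses $C_\fg(\Lie T)=\Lie T$, i.e.\ the same non-vanishing of the $d\alpha$ on $\Lie T$, so this should be said explicitly; (ii) the reduction to the reductive quotient is only sketched, but it does work because $R_u(G)$ is contained in every Borel subgroup and $\Lie R_u(G)\subseteq\fb$, so a conjugating element found modulo $R_u(G)$ lifts to one for $G$. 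The paper's argument is more uniform in that it never separates the reductive from the non-reductive case; yours is more explicit and geometric and avoids importing the density theorem.
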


\begin{proof} Thanks to \cite[(14.4)]{Hu1}, the group $G$ acts transitively on ${\rm Bor}(\fg)$. Let $\fb \in {\rm Bor}(\fg)$, $B \subseteq G$ be a Borel subgroup of $G$ such that $\Lie(B)=\fb$. Since $\fb$ is $B$-invariant, it follows from \cite[(0.15)]{Hu3} that $\bigcup_{\fb' \in {\rm Bor}(\fg)}\fb'=G.\fb$ is a closed subset of $\fg$.

According to \cite[(7.7)]{Fa2}, the set $\bigcup_{\fh \in {\rm Car}(\fg)}\fh$ lies dense in $\fg$. Given $\fh \in {\rm Car}(\fg)$, a consecutive application of \cite[(15.5)]{Hu1} and \cite[(12.1)]{Hu1} provides a closed, connected, solvable subgroup $H \subseteq G$ such that $\Lie(H)=\fh$. Since $H$ is contained in some Borel subgroup of $G$, it follows that $\fh \subseteq \bigcup_{\fb'\in {\rm Bor}(\fg)}\fb'$. As an upshot of the above discussion we conclude that
\[ \fg = \bigcup_{\fb'\in {\rm Bor}(\fg)}\fb'=G.\fb,\]
as desired. \end{proof}

\bigskip

\begin{Corollary} \label{MCR4} Let $G$ be a reductive group, $\fb \subseteq \fg$ be a Borel subalgebra. For a $U_0(\fg)$-module $M$ the following statements are equivalent:
\begin{enumerate}
\item $M$ has constant $j$-rank.
\item $M$ is $G$-stable and $M|_{U_0(\fb)}$ has constant $j$-rank. \end{enumerate} \end{Corollary}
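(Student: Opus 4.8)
The goal is a biconditional, so I would prove the two implications separately, with the heavy lifting residing entirely in already-established results. The direction $(1)\Rightarrow(2)$ is the more substantial one but is essentially immediate from Theorem \ref{MCR2}: if $M$ has constant $j$-rank, then by part (1) of that theorem $M$ is $G$-stable, and restriction of a constant $j$-rank module to the $p$-subalgebra $\fb$ trivially preserves the constant $j$-rank property, since $\Pt(\fb)$ maps into $\Pt(\fg)$ via $\iota_{\ast,B_1}$ (using the identification of height-one infinitesimal groups with restricted Lie algebras, and the fact that a $\pi$-point of $\fb$ composed with the inclusion $U_0(\fb)\hookrightarrow U_0(\fg)$ is a $\pi$-point of $\fg$). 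So the ranks $\rk(\alpha_K(t)^j_{M_K})$ for $\alpha_K\in\Pt(\fb)$ are a subset of those for $\alpha_K\in\Pt(\fg)$, hence still constant.

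For the converse $(2)\Rightarrow(1)$, suppose $M$ is $G$-stable and $M|_{U_0(\fb)}$ has constant $j$-rank, say equal to $d$. I want to show $\rk(x^j_M)=d$ for every $x\in\cV_\fg$, $x\ne 0$ — equivalently, that the function $[x]\mapsto\rk(x^j_M)$ is constant on $\PP(\cV_\fg)$. The key input is Lemma \ref{MCR3}: since $\cV_\fg$ is spanned (as a set, projectively) by $G.\cV_\fb$ — more precisely $\fg=G.\fb$ forces $\cV_\fg=G.\cV_\fb$ because the nullcone is $G$-stable and $\cV_\fb=\cV_\fg\cap\fb$ — every $x\in\cV_\fg$ is of the form $g.y$ for some $g\in G$ and $y\in\cV_\fb$. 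Now $G$-stability of $M$ gives an isomorphism $M\cong M^{(g)}$ of $U_0(\fg)$-modules, under which left multiplication by $x=\Ad(g)(y)$ on $M$ is conjugate to left multiplication by $y$ on $M$; hence $\rk(x^j_M)=\rk(y^j_M)$. Since $y\in\cV_\fb$ and $M|_{U_0(\fb)}$ has constant $j$-rank $d$, we get $\rk(x^j_M)=d$. As $x\in\cV_\fg$ was arbitrary, $M$ has constant $j$-rank $d$.

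The only point requiring care — and the step I would flag as the main obstacle — is the precise bookkeeping in the identity $\cV_\fg=G.\cV_\fb$ and the compatibility of the $G$-action on $\fg$ via $\Ad$ with the rank computation through $\pi$-points (as opposed to the naive rank-variety picture), i.e.\ making sure the homeomorphism between closed points of $\Pi(\fg)$ and $\PP(\cV_\fg)$ intertwines the two descriptions of "constant $j$-rank" and is $G$-equivariant. This is handled by the same references invoked in the proof of Theorem \ref{MCR2} (namely \cite[(4.7)]{FPe2} and \cite[(3.8)]{FPe1}), together with the observation that $\Ad(g)\colon U_0(\fg)\to U_0(\fg)$ carries the $\pi$-point $\alpha_y$ attached to $y\in\cV_\fb$ to the $\pi$-point attached to $\Ad(g)(y)$. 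Once that is in place, the argument is just the combination of Theorem \ref{MCR2} and Lemma \ref{MCR3} as sketched above, and I would write it in a few lines.
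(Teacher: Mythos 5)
Your proposal is correct and follows essentially the same route as the paper: $(1)\Rightarrow(2)$ via Theorem \ref{MCR2} plus the observation that $\pi$-points of $\fb$ map into $\pi$-points of $\fg$, and $(2)\Rightarrow(1)$ by combining $\fg=G.\fb$ (Lemma \ref{MCR3}) with $G$-stability to force $\rk(x^j_M)=\rk(y^j_M)$ for $x=g.y$ with $y\in\cV_\fb$. The technical point you correctly flag at the end — passing from constancy on $\PP(\cV_\fg)$ to constant $j$-rank in the $\pi$-point sense — is closed in the paper by observing that the scheme $\Gamma^j(G_1)_M$ of $\pi$-points of non-maximal $j$-rank has no closed points, hence is empty, and then invoking \cite[(4.5)]{FPe3}.
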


\begin{proof}  (1) $\Rightarrow$ (2). This is a direct consequence of Theorem \ref{MCR2}.

(2) $\Rightarrow$ (1). Let $x$ be an element of $\cV_\fg$. Lemma \ref{MCR3} provides $y \in \cV_\fb$ and $g\in G$ such that $x=g^{-1}.y$. As $M$ is $G$-stable, there exists an isomorphism $\psi : M \stackrel{\sim}{\lra} M^{(g)}$. We therefore obtain
\[ \im x^j_M = \im \Ad(g^{-1})(y)^j_M = \im y^j_{M^{(g)}} = \im \psi \circ y^j_M,\]
so that
\[ \rk(x^j_M) = \rk(y^j_M).\]
Since $M|_{U_0(\fb)}$ has constant $j$-rank, it follows that $\rk(x_M^j) = \rk^j(M|_{U_0(\fb)})$. As a result, the closed subscheme $\Gamma^j(G_1)_M$ of equivalence classes of $\pi$-points of
non-maximal $j$-rank has an empty set of closed points. Thus, $\Gamma^j(G_1)_M = \emptyset$, and \cite[(4.5)]{FPe3} shows that $M$ has constant $j$-rank. \end{proof}

\bigskip

\section{Equal Images Modules}
Throughout this section, $\cG$ denotes a finite group scheme, defined over an algebraically closed field $k$ of characteristic $\Char(k)=p>0$. When dealing with equal images modules we shall mainly
use the notation introduced in \cite{CFS}.

\subsection{Preliminaries} Given $M \in \modd \cG$ and $\alpha_K \in \Pt(\cG)$,
we consider the $K$-linear map
\[ \ell_{\alpha_K} : M_K \lra M_K \ \ ; \ \ m \mapsto \alpha_K(t)m.\]
Thus, $\ell_{\alpha_K}=\alpha_K(t)_{M_K}$, and we shall use the latter notation when an emphasis of the underlying module is expedient. The following definition naturally extends the ones from \cite{CF} and \cite{CFS}.

\bigskip

\begin{Definition} A $\cG$-module $M$ is said to have the {\it equal images property}, if $\ell_{\alpha_K}^i(M_K)_Q = \ell_{\beta_L}^i(M_L)_Q$ for $1 \le i \le p\!-\!1$ and all $\alpha_K, \beta_L \in \Pt(\cG)$ and for all common field extensions $Q$ of $K$ and $L$.\end{Definition}

\bigskip
\noindent
Recall that for every $\pi$-point $\alpha_K \in \Pt(\cG)$ and every field extension $Q\!:\!K$, there is the extended $\pi$-point $\alpha_Q$, obtained from $\alpha_K$ via base change, cf.\ \cite[\S2]{FPe2}. It readily follows that
\[ \ell_{\alpha_K}^j(M_K)_Q = \ell_{\alpha_Q}^j(M_Q)\]
for every and $j \in \{1,\ldots,p\!-\!1\}$.

Note that a $\cG$-module $M$ has the equal images property if and only if for every $j \in \{1,\ldots,p\!-\!1\}$ there exists a subspace $V_j\subseteq M$ such that
\[ \ell_{\alpha_K}^j(M_K)=V_j\!\otimes_k\!K\]
for every $\alpha_K \in \Pt(\cG)$.

Suppose $\alpha_k \in \Pt(C(\cG))$ is a $\pi$-point of the center $C(\cG)$ of $\cG$. Let $M$ be a $\cG$-module such that $\ell_{\beta_K}(M_K)=\ell_{\alpha_K}(M_K)$ for all $\beta_K \in \Pt(\cG)$. Then we have
\[ \ell_{\beta_K}^i(M_K)= \ell_{\beta_K}^{i-1}(\ell_{\alpha_K}(M_K)) = \ell_{\alpha_K}(\ell_{\beta_K}^{i-1}(M_K)),\]
so that induction on $i$ implies that $M$ has the equal images property. In particular, our definition coincides with
the one of \cite{CFS} in case $\cG$ is abelian.

In the sequel, we let $\EIP(\cG)$ be the full subcategory of $\modd \cG$ consisting of those modules having the equal images property. Note that every equal images module has constant Jordan type. Given $d \in \{1,\ldots,p\}$, we denote by $\EIP(\cG)_d \subseteq \EIP(\cG)$ the full subcategory of $\EIP(\cG)$, whose objects satisfy $\Jt(M) = \bigoplus_{i=1}^da_i[i]$ for some $a_i \in \NN_0$. There results a filtration $\EIP(\cG)_1 \subseteq \EIP(\cG)_2 \subseteq \cdots \subseteq \EIP(\cG)_p = \EIP(\cG)$.

\bigskip
\noindent
Let $G=\ZZ/(p)\!\times\!\ZZ/(p)$. In \cite[\S2]{CFS} the authors introduce the equal images $G$-modules $W_{n,d}$. For future reference, we recall the definition:

\bigskip

\begin{Definition} Let $kG=k[X,Y]/(X^p,Y^p)$ and denote by $x$ and $y$ the residue classes of $X$ and $Y$, respectively. Let $1 \le d\le p$ and $n\ge d$. Then
\[ W_{n,d} := (\bigoplus_{i=1}^n kGv_i)/N_{n,d},\]
where $N_{n,d} := \langle \{x.v_1,y.v_n\} \cup \{x^d.v_i \ ; \ 2 \le i \le n\}\cup \{y.v_i\!-\!x.v_{i+1} \ ; \ 1\le i \le n\!-\!1\}\rangle$. \end{Definition}

\bigskip

\begin{Remark} Let $G=\ZZ/(p)\!\times\!\ZZ/(p)$. Each $W_{n,d}$ belongs to $\EIP(G)_d$ and has Loewy length $\ell\ell(W_{n,d})=d$, cf.\ \cite[(1.6)]{CFS}.\end{Remark}

\bigskip
\noindent
We denote by $k(\bullet \rightrightarrows \bullet)$ the path algebra of the Kronecker quiver. There is a canonical functor $F : \modd k(\bullet \rightrightarrows \bullet) \lra \modd k[X,Y]/(X^p,Y^p)$ which commutes with direct sums, while preserving indecomposables, and whose essential image coincides with the full subcategory of $\modd G$ consisting of all modules $M$ of Loewy length $\ell\ell(M) \le 2$, cf.\ \cite[(4.3)]{Be1}. The representations of the Kronecker quiver are well-understood, see \cite[(VIII.7)]{ARS}. In particular, there are three classes of indecomposable modules, namely the pre-projective modules of dimension vectors $(n,n\!+\!1)$, the pre-injective modules of dimension vectors $(n\!+\!1,n)$, and the regular
modules of dimension vectors $(n,n)$. Moreover, the modules belonging to the former two classes are uniquely determined by their dimension vectors.

\bigskip

\begin{Lem} \label{EIPr1} Let $G=\ZZ/(p)\!\times\!\ZZ/(p)$.
\begin{enumerate}
\item If $M \in \modd k(\bullet \rightrightarrows \bullet)$ is regular, then $F(M) \in \bigcap_{j=2}^{p-1}\CR_j(G)\!\smallsetminus\!\CR_1(G)$ does not have constant Jordan type.
\item If $M \in \modd k(\bullet \rightrightarrows \bullet)$ is pre-projective of dimension $2n\!+\!1 \ge 3$, then $F(M)$ has constant Jordan type $\Jt(F(M))=[1]\oplus n[2]$,
but $F(M) \not \in \EIP(G)$.
\item If $M \in \modd k(\bullet \rightrightarrows \bullet)$ is pre-injective of dimension $2n\!+\!1$, then $F(M) \in \EIP(G)$ has constant Jordan type $\Jt(F(M))=[1]\oplus n[2]$.
\end{enumerate}\end{Lem}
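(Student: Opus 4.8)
The plan is to work directly with the functor $F$ from \cite[(4.3)]{Be1}, which identifies $\modd k(\bullet\rightrightarrows\bullet)$ with the full subcategory of $\modd G$ of modules of Loewy length $\le 2$, together with an explicit description of the action of $x$ and $y$ on $F(M)$ in terms of the two structure maps of the Kronecker representation $M$. Concretely, if $M$ is given by $a,b\colon M_1\to M_0$ (dimension vector $(\dim M_1,\dim M_0)$), then $F(M)$ has underlying space $M_0\oplus M_1$ with $x$ acting via $a$ (embedded as $M_1\hookrightarrow M_0$ in the top) and $y$ via $b$; a general $\pi$-point $\alpha_K$ of $G$, after the standard reductions, corresponds up to equivalence to the operator $\lambda x+\mu y$ for $[\lambda:\mu]\in\PP^1$, acting on $F(M)_K$ as $\lambda a_K+\mu b_K$. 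Since $\ell\ell(F(M))\le 2$, every such operator squares to zero, so $\Jt(F(M),\alpha_K)=(\dim M_1-r)[1]\oplus r[2]$ where $r=\rk(\lambda a_K+\mu b_K)$, and the stable Jordan type (equivalently, all the $j$-ranks for $j\ge 2$) is automatically zero. Thus the whole lemma reduces to analyzing the rank function $[\lambda:\mu]\mapsto\rk(\lambda a+\mu b)$ for the three types of indecomposable Kronecker modules, and to deciding in each case whether the images $\im(\lambda a+\mu b)\subseteq M_0$ are all equal.

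First I would dispose of (1). For $M$ regular of dimension vector $(n,n)$, the pencil $\lambda a+\mu b$ is a square matrix pencil; regularity means it is invertible for all but finitely many $[\lambda:\mu]$ and drops rank at the remaining (nonempty) finite set of "eigenvalue" points. Hence $\rk(\lambda a+\mu b)$ is non-constant, so $F(M)\notin\CR_1(G)$ and $F(M)$ does not have constant Jordan type; but for $j\ge 2$ the $j$-rank is identically $0$ by Loewy length $\le 2$, giving $F(M)\in\bigcap_{j=2}^{p-1}\CR_j(G)$. For (2), $M$ preprojective of dimension $2n+1$ has dimension vector $(n,n+1)$; the classical normal form (companion-type pencil) makes $\lambda a+\mu b$ an $(n+1)\times n$ matrix of full column rank $n$ for every $[\lambda:\mu]$, so $\rk$ is the constant $n$ and $\Jt(F(M))=[1]\oplus n[2]$ is constant. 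However the images $\im(\lambda a+\mu b)$ are $n$-dimensional subspaces of the $(n+1)$-dimensional space $M_0$ that genuinely vary with $[\lambda:\mu]$ — this is visible from the normal form, where e.g.\ $\im a$ and $\im b$ are the two standard coordinate hyperplanes — so $F(M)\notin\EIP(G)$. For (3), $M$ preinjective of dimension $2n+1$ has dimension vector $(n+1,n)$, the pencil $\lambda a+\mu b$ is $n\times(n+1)$ of full row rank $n$ for all $[\lambda:\mu]$, giving constant Jordan type $[1]\oplus n[2]$; and now $\im(\lambda a+\mu b)=M_0$ (all of the $n$-dimensional bottom) for every $\pi$-point, so the equal images condition $\ell^1_{\alpha_K}(F(M)_K)=M_0\otimes K$ holds, and $\ell^j=0$ for $j\ge 2$; hence $F(M)\in\EIP(G)$. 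One may also invoke the identification in \cite{CFS} of preinjective Kronecker modules with the $W_{n,2}$ (noting $\ell\ell(W_{n,2})=2$), which by the Remark lie in $\EIP(G)_2$, to get (3) with essentially no computation.

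The main obstacle is making the passage from "all $\pi$-points of $G$" to "the operators $\lambda x+\mu y$, $[\lambda:\mu]\in\PP^1$" clean enough to trust the image computation over arbitrary field extensions $Q$ — i.e.\ checking that it suffices to test the equal-images condition against these affine generators (which is exactly the reduction recorded after the Definition in this section, since $x+\text{(higher terms)}$ type $\pi$-points have the same image as the linear ones on a Loewy-length-$\le 2$ module, and every closed $\pi$-point of $\ZZ/(p)^2$ is equivalent to such a linear combination by \cite[\S2]{CFS}) — and in spelling out the rational normal forms of Kronecker pencils precisely enough to read off that the rank is constant in (2),(3) while the images move in (2) but not in (3). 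Neither step is deep, but the bookkeeping with the normal forms is where care is needed; I would handle (3) via the $W_{n,2}$ route to minimize it.
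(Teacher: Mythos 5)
Your proof is correct and is essentially the paper's approach made explicit: the paper identifies $F(M)$ (by ``direct computation'') with the modules $V_n$ of \cite[(2.6)]{CFP} in the pre-projective case and $W_{n+1,2}$ of \cite[(2.3)]{CFS} in the pre-injective case and then cites those sources for the Jordan type and the equal-kernels/equal-images dichotomy, while citing \cite[(4.3.2)]{Be1} outright for part (1); your Kronecker normal-form calculations supply precisely the content behind those citations, and your alternate route for (3) via $W_{n,2}$ is exactly what the paper does. One slip to correct: the formula $\Jt(F(M),\alpha_K)=(\dim M_1-r)[1]\oplus r[2]$ should read $(\dim_kF(M)-2r)[1]\oplus r[2]$, since the multiplicity of $[1]$ is the number of size-one Jordan blocks, namely $\dim_kF(M)-2r$; as written, in the pre-projective case your formula would give $n[2]$ rather than the correct $[1]\oplus n[2]$, though you do then state the correct type, so the conclusions are unaffected.
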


\begin{proof} (1) If $M$ is a regular module, then the assertion follows directly from \cite[(4.3.2)]{Be1}.

(2) If $M=(M_1,M_2)$ is a pre-projective module of the Kronecker quiver, then $\dim_kM_1=n$ and $\dim_kM_2=n\!+\!1$. Direct computation shows that $F(M)$ is the $kG$-module $V_n$ considered in \cite[(2.6)]{CFP}. This module has constant Jordan type, but does not belong to $\EIP(G)$. (In fact, these modules have the equal kernels property, which we shall discuss below.)

(3) If $M=(M_1,M_2)$ is a pre-injective module of the Kronecker quiver, then $\dim_kM_1=n\!+\!1$ and $\dim_kM_2=n$. Direct computation shows that $F(M)$ is the $kG$-module $W_{n+1,2}$ considered in \cite[(2.3)]{CFS}. \end{proof}

\bigskip
\noindent
For future reference we record the following basic fact, cf.\ \cite[(1.10)]{CFS}.

\bigskip

\begin{Lem} \label{EIPr2} Let $d \in \{1,\ldots,p\}$. The full subcategory $\EIP(\cG)_d$ of $\modd \cG$ is closed under finite direct sums, images of $\cG$-linear maps and direct summands. \end{Lem}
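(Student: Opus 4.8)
The plan is to establish closure under each of the three operations in turn, relying on the characterization of $\EIP(\cG)_d$ recorded above: a module $M$ lies in $\EIP(\cG)_d$ precisely when it has the equal images property and $\Jt(M)=\bigoplus_{i=1}^d a_i[i]$ for suitable $a_i \in \NN_0$. For the equal images property itself, I would use the reformulation that for each $j\in\{1,\ldots,p-1\}$ there is a subspace $V_j\subseteq M$ with $\ell_{\alpha_K}^j(M_K)=V_j\otimes_k K$ for all $\pi$-points $\alpha_K$; equivalently, $\dim_K \ell_{\alpha_K}^j(M_K)$ is independent of $\alpha_K$ (constant $j$-rank) together with the actual subspaces being defined over $k$ compatibly. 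The Jordan-type bound $\Jt(M)=\bigoplus_{i=1}^d a_i[i]$ is equivalent to saying $\ell_{\alpha_K}^d$ acts as zero on $M_K$ for one, hence every, $\pi$-point $\alpha_K$.

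First I would treat direct sums. If $M,N\in\EIP(\cG)_d$ with witnessing subspaces $V_j\subseteq M$ and $W_j\subseteq N$, then for any $\pi$-point $\alpha_K$ one has $\ell_{\alpha_K}^j((M\oplus N)_K)=\ell_{\alpha_K}^j(M_K)\oplus\ell_{\alpha_K}^j(N_K)=(V_j\oplus W_j)\otimes_k K$, so $V_j\oplus W_j$ witnesses the equal images property for $M\oplus N$; and since $\ell_{\alpha_K}^d$ kills both summands it kills the sum, giving the Jordan-type bound. Next, images: suppose $f\colon M\to N$ is $\cG$-linear with $M\in\EIP(\cG)_d$, and set $P:=\operatorname{im} f$. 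Because base change is exact and commutes with taking images, $P_K=\operatorname{im} f_K$ and $f_K$ is $K\cG$-linear, so $f_K$ commutes with $\ell_{\alpha_K}$ (both are given by multiplication by elements of $K\cG$ — note $\ell_{\alpha_K}$ is multiplication by $\alpha_K(t)\in K\cG$, which is central to the image of $f$ only in the module sense, but $f_K$ is a morphism of $K\cG$-modules so $f_K\circ\ell_{\alpha_K}=\ell_{\alpha_K}\circ f_K$ automatically). Hence $\ell_{\alpha_K}^j(P_K)=\ell_{\alpha_K}^j(f_K(M_K))=f_K(\ell_{\alpha_K}^j(M_K))=f_K(V_j\otimes_k K)=f(V_j)\otimes_k K$, so $f(V_j)\subseteq P$ witnesses the equal images property for $P$; and $\ell_{\alpha_K}^d(P_K)=f_K(\ell_{\alpha_K}^d(M_K))=0$, giving $\Jt(P)=\bigoplus_{i=1}^d a_i[i]$.

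Finally, direct summands follow from the previous two: if $M=X\oplus Y\in\EIP(\cG)_d$, then $X=\operatorname{im}(e)$ where $e\colon M\to M$ is the idempotent projection onto $X$, which is $\cG$-linear, so closure under images gives $X\in\EIP(\cG)_d$. Alternatively, one could argue directly: the inclusion $X\hookrightarrow M$ and retraction $M\twoheadrightarrow X$ are $\cG$-linear, and $\ell_{\alpha_K}^j(X_K)$ is cut out of $\ell_{\alpha_K}^j(M_K)=V_j\otimes_k K$ by these maps defined over $k$, so it is itself defined over $k$ independently of $\alpha_K$. I expect no serious obstacle here; the one point requiring a little care is making sure the subspaces $V_j$ genuinely descend to $k$ (as opposed to merely having constant dimension) — this is exactly what the "equal images" hypothesis provides via the common-field-extension clause in the definition, and the compatibility of $\otimes_k K$ with direct sums, images, and the functoriality of $f_K$ under field extension $K\subseteq Q$ does the rest. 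The whole argument is essentially a transcription of \cite[(1.10)]{CFS} into the $\pi$-point language, so I would keep it brief.
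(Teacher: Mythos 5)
Your proof is correct and takes essentially the same route as the paper's: direct sums are immediate, images are handled via the identity $\ell_{\alpha_K}^{j}(f_K(M_K)) = f_K(\ell_{\alpha_K}^{j}(M_K))$ coming from $f_K$ being a $K\cG$-module map, and direct summands reduce to images. (Your parenthetical aside about centrality is a bit garbled but harmless, since the commutation follows purely from the module-homomorphism property of $f_K$, as you ultimately note.)
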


\begin{proof} It is clear that $\EIP(\cG)_d$ is closed under finite direct sums. Let $f : M \lra M'$ be an epimorphism originating in $M \in \EIP(\cG)_d$. Given a $\pi$-point $\alpha_K \in \Pt(\cG)$, we have
\[ \ell_{\alpha_K}^j(M'_K)= f_K(\ell_{\alpha_K}^j(M_K)),\]
so that $M' \in \EIP(\cG)$. As $M \in \EIP(\cG)_d$, the above identity yields $\ell_{\alpha_K}^d(M'_K)=(0)$, whence $M' \in \EIP(\cG)_d$. If $N$ is a direct summand of $M \in \EIP(\cG)_d$, then $N$ is an image of $M$ and thus contained in $\EIP(\cG)_d$. \end{proof}

\bigskip
\noindent
As a result, the full subcategory of equal images modules of Loewy length $\le 2$ is also closed under direct summands. Our above observations concerning $G = \ZZ/(p)\!\times\!\ZZ/(p)$ thus imply that such a module is of the form $\bigoplus_{i=1}^rW_{n_i,2}$. This was first observed in \cite[(4.1)]{CFS}.

The following result is inspired by \cite[\S 6]{CFS}.

\bigskip

\begin{Prop} \label{EIPr3} Let $\cG$ be a finite group scheme, $M$ be a $\cG$-module. For every $d \in \{1,\ldots,p\}$ there exists a unique submodule $\fK_d(M) \subseteq M$ such that
\begin{enumerate}
\item[(a)] $\fK_d(M) \in \EIP(\cG)_d$, and
\item[(b)] if $N \subseteq M$ belongs to $\EIP(\cG)_d$, then $N \subseteq \fK_d(M)$.\end{enumerate}\end{Prop}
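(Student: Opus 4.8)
The plan is to define $\fK_d(M)$ as the sum of all submodules of $M$ that lie in $\EIP(\cG)_d$, and then verify that this sum is itself an object of $\EIP(\cG)_d$. Write $\fK_d(M) := \sum_{N} N$, where $N$ ranges over all $\cG$-submodules of $M$ with $N \in \EIP(\cG)_d$. This is a finite sum because $M$ is finite-dimensional, so $\fK_d(M)$ is a genuine $\cG$-submodule of $M$. Property (b) is then immediate: any $N \subseteq M$ with $N \in \EIP(\cG)_d$ is one of the summands, hence $N \subseteq \fK_d(M)$. Uniqueness is also immediate, since any submodule satisfying (a) and (b) must contain every such $N$ (by (b)) and be contained in $\fK_d(M)$ (again by (b) applied to $\fK_d(M)$ itself, once we know $\fK_d(M) \in \EIP(\cG)_d$).

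The content of the proposition is therefore property (a): the sum of finitely many submodules $N_1, \ldots, N_r$, each in $\EIP(\cG)_d$, again lies in $\EIP(\cG)_d$. The key point is that $\EIP(\cG)_d$ is closed under finite direct sums and under images of $\cG$-linear maps, by Lemma \ref{EIPr2}. Given the $N_i \in \EIP(\cG)_d$, the external direct sum $\bigoplus_{i=1}^r N_i$ lies in $\EIP(\cG)_d$, and the canonical summation map $\bigoplus_{i=1}^r N_i \lra M$, $(n_1,\ldots,n_r)\mapsto n_1+\cdots+n_r$, is a $\cG$-linear map whose image is exactly $\sum_{i=1}^r N_i = \fK_d(M)$. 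Hence $\fK_d(M)$, being the image of an object of $\EIP(\cG)_d$ under a $\cG$-linear map, belongs to $\EIP(\cG)_d$ by Lemma \ref{EIPr2}. That establishes (a) and completes the proof.

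I do not expect any serious obstacle here; the result is essentially a formal consequence of the closure properties already recorded in Lemma \ref{EIPr2}, and the only thing to be slightly careful about is that the indexing family of submodules is finite (which follows from $\dim_k M < \infty$) so that "$\sum_N N$" makes sense and the direct-sum trick applies. One could alternatively phrase the argument inductively: if $N, N'\in\EIP(\cG)_d$ then $N+N'$ is the image of $N\oplus N'$ under the addition map, hence in $\EIP(\cG)_d$, and then iterate; but the single-step direct-sum formulation is cleaner. It is worth noting for the record that the uniqueness of $\fK_d(M)$ combined with its defining property makes $\fK_d$ functorial in $M$ in the obvious way, though this is not asserted in the statement and need not be proved here.
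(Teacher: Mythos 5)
Your proof is correct and uses essentially the same idea as the paper: both arguments reduce to the observation that if $N_1,N_2\subseteq M$ lie in $\EIP(\cG)_d$, then $N_1+N_2$, being the image of $N_1\oplus N_2$ under the addition map, also lies in $\EIP(\cG)_d$ by Lemma \ref{EIPr2}. The only cosmetic difference is that the paper takes $\fK_d(M)$ to be a submodule of maximal dimension in $\EIP(\cG)_d$ and shows it absorbs any other such $N$, whereas you define $\fK_d(M)$ directly as the sum of all such $N$ (which, as you note, reduces to a finite sum since $\dim_k M<\infty$); the two formulations are interchangeable.
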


\begin{proof} It is clear that $\fK_d(M)$ is uniquely determined. Let $\fK_d(M) \subseteq M$ be a submodule of maximal dimension subject to $\fK_d(M) \in \EIP(\cG)_d$. If $N \subseteq M$ belongs to
$\EIP(\cG)_d$, then Lemma \ref{EIPr2} shows that both, $\fK_d(M)\!\oplus\!N$ and its image $\fK_d(M)\!+\!N \subseteq M$, are contained in $\EIP(\cG)_d$. Consequently, $\fK_d(M)\!+\!N = \fK_d(M)$, so that $N \subseteq \fK_d(M)$.\end{proof}

\bigskip
\noindent
Recall that $X(\cG) = \Hom(k\cG,k)$ is the character group of the finite group scheme $\cG$.

\bigskip

\begin{Cor} \label{EIPr4} Let $\cG$ be a finite group scheme.
\begin{enumerate}
\item If $\varphi : M \lra N$ is a homomorphism of $\cG$-modules, then $\varphi(\fK_d(M)) \subseteq \fK_d(N)$.
\item If $\lambda \in X(\cG)$, then $M \in \EIP(\cG)_d$ if and only if $M\!\otimes_k\!k_\lambda \in \EIP(\cG)_d$.
\item The assignment $\fK_d : \modd \cG \lra \EIP(\cG)_d$ is a functor, which is right adjoint to the inclusion functor $\iota_d : \EIP(\cG)_d \lra \modd \cG$.
\item The category $\EIP(\cG)_d$ has enough injectives. Moreover, if $M \in \EIP(\cG)_d$  and $M \hookrightarrow I_M$ is an injective hull of $M$ in $\modd \cG$, then $M \hookrightarrow \fK_d(I_M)$ is an injective hull of $M$ in $\EIP(\cG)_d$. \end{enumerate}\end{Cor}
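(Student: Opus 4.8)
The plan is to deduce all four assertions from Proposition~\ref{EIPr3} and Lemma~\ref{EIPr2}, the guiding principle being that $\fK_d(M)$ is the largest submodule of $M$ lying in $\EIP(\cG)_d$; only part~(2) has genuine content. For~(1) I would observe that $\varphi(\fK_d(M))$ is the image of the $\cG$-linear map $\varphi|_{\fK_d(M)}\colon\fK_d(M)\lra N$, whose source lies in $\EIP(\cG)_d$ by Proposition~\ref{EIPr3}(a); hence $\varphi(\fK_d(M))\in\EIP(\cG)_d$ by Lemma~\ref{EIPr2}, and being a submodule of $N$ it is contained in $\fK_d(N)$ by the maximality in Proposition~\ref{EIPr3}(b). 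Thus $\varphi\mapsto\varphi|_{\fK_d(M)}$ is well defined, and since restriction respects identities and composites, $\fK_d$ is a functor. For the adjunction in~(3) I would note that if $X\in\EIP(\cG)_d$ and $f\colon X\lra M$ is $\cG$-linear, then $f(X)$ is an image of an object of $\EIP(\cG)_d$, hence lies in $\EIP(\cG)_d$ (Lemma~\ref{EIPr2}), hence $f(X)\subseteq\fK_d(M)$; so $f$ factors uniquely as $X\lra\fK_d(M)\hookrightarrow M$. Restriction to $\fK_d(M)$ therefore gives a bijection $\Hom_\cG(X,M)\stackrel{\sim}{\lra}\Hom_\cG(X,\fK_d(M))$, natural in both arguments, which exhibits $\fK_d$ as right adjoint to the inclusion $\iota_d$, i.e.\ $\EIP(\cG)_d$ is a coreflective subcategory of $\modd\cG$.

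For part~(2) I would fix $\lambda\in X(\cG)$ and a basis vector $v_0$ of $k_\lambda$, so $u\dact v_0=\lambda(u)v_0$ for the algebra homomorphism $\lambda\colon k\cG\lra k$, and take a $\pi$-point $\alpha_K\colon\fA_{p,K}\lra K\cG$ with $\im\alpha_K\subseteq K\cU$, $\cU\subseteq\cG_K$ abelian unipotent. Since $K\cU\subseteq K\cG$ is a Hopf subalgebra, $\Delta(\alpha_K(t))=\sum_i a_i\otimes b_i\in K\cU\otimes_K K\cU$; and as $\cU$ is unipotent, $K\cU$ is local with residue field $K$, so $\lambda|_{K\cU}$ coincides with the counit $\varepsilon$. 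Hence
\[ \alpha_K(t)\dact(m\otimes v_0)=\sum_i (a_im)\otimes\lambda(b_i)v_0=\Big(\sum_i\varepsilon(b_i)\,a_im\Big)\otimes v_0=(\alpha_K(t)m)\otimes v_0 , \]
so under the natural $K$-linear isomorphism $(M\otimes_k k_\lambda)_K\stackrel{\sim}{\lra}M_K$, $\;m\otimes v_0\mapsto m$, the operator $\ell_{\alpha_K}$ on $M\otimes_k k_\lambda$ is carried to $\ell_{\alpha_K}$ on $M$. Consequently $\alpha_K^\ast((M\otimes_k k_\lambda)_K)\cong\alpha_K^\ast(M_K)$, so $M$ and $M\otimes_k k_\lambda$ have the same Jordan types; and if subspaces $V_j\subseteq M$ satisfy $\ell_{\alpha_K}^j(M_K)=V_j\otimes_k K$ for all $\alpha_K$ and $1\le j\le p\!-\!1$, then the subspaces $V_j\otimes v_0\subseteq M\otimes_k k_\lambda$ do the same for $M\otimes_k k_\lambda$. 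This gives $M\otimes_k k_\lambda\in\EIP(\cG)_d$ whenever $M\in\EIP(\cG)_d$; applying this with $\lambda^{-1}$ gives the converse.

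For part~(4) I would use that, $k\cG$ being self-injective, $\modd\cG$ has enough injectives and every $M$ has an injective hull $M\hookrightarrow I_M$. Since $\iota_d$ preserves monomorphisms, its right adjoint $\fK_d$ takes injective objects to injective objects: for a $\cG$-monomorphism $X\hookrightarrow Y$ with $X,Y\in\EIP(\cG)_d$ and a map $X\lra\fK_d(I_M)$, extend the composite $X\lra\fK_d(I_M)\hookrightarrow I_M$ to $Y\lra I_M$; its image, being a quotient of $Y$, lies in $\EIP(\cG)_d$, hence in $\fK_d(I_M)$, yielding the required extension $Y\lra\fK_d(I_M)$. If $M\in\EIP(\cG)_d$ then $\fK_d(M)=M$, and $M\subseteq I_M$ gives $M=\fK_d(M)\subseteq\fK_d(I_M)$; thus every object of $\EIP(\cG)_d$ embeds into an injective object, so $\EIP(\cG)_d$ has enough injectives. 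Finally $M\subseteq\fK_d(I_M)\subseteq I_M$ is essential, since any nonzero $\cG$-submodule of $\fK_d(I_M)$ is one of $I_M$ and hence meets $M$; so $M\hookrightarrow\fK_d(I_M)$ is an injective hull of $M$ in $\EIP(\cG)_d$.

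The main obstacle is concentrated entirely in part~(2): everything else is bookkeeping with the universal property of $\fK_d$. The essential input there is that a unipotent group scheme has no non-trivial characters, which I would package as ``$K\cU$ is local with residue field $K$, hence $\lambda|_{K\cU}=\varepsilon$''; the remaining care is to verify that the operator identification above is genuinely $K$-linear and survives extension of scalars to a common overfield $Q$, so that the equal-images condition, quantified over such $Q$, really transfers.
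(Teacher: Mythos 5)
Your proof is correct and follows essentially the same route as the paper's: parts (1), (3), (4) are formal consequences of the universal property of $\fK_d$ together with Lemma~\ref{EIPr2}, and part (2) hinges on exactly the paper's key observation that, because $K\cU$ is local for $\cU$ abelian unipotent, $\lambda$ restricts to the counit on any Hopf subalgebra $K\cU$ through which a $\pi$-point factors (the paper phrases this via the twist $M^{(\lambda)}$ in Sweedler notation, you compute directly on $m\otimes v_0$, but the two computations are the same). The use of $\lambda^{-1}$ in place of the paper's $\lambda\circ\eta$ for the converse direction is likewise just a notational variant.
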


\begin{proof} (1) In view of Lemma \ref{EIPr2}, we have $\varphi(\fK_d(M)) \in \EIP(\cG)_d$, whence $\varphi(\fK_d(M)) \subseteq \fK_d(N)$.

(2) Let $\eta$ be the antipode of the Hopf algebra $k\cG$. The functor
\[ T_\lambda : \modd \cG \lra \modd \cG \ \ ; \ \ M \mapsto M\!\otimes_k\!k_\lambda\]
is an auto-equivalence with inverse $T_{\lambda\circ \eta}$. It thus suffices to show that $M\!\otimes_k\!k_\lambda \in \EIP(\cG)_d$ for every $M \in \EIP(\cG)_d$.
The $\cG$-module $M\!\otimes_k\!k_\lambda$ is canonically isomorphic to the module $M^{(\lambda)}$ with underlying $k$-space $M$ and action given by
\[ a\dact m := \sum_{(a)} \lambda(a_{(2)})a_{(1)}.m  \ \ \ \ \ \ \forall \ a \in k\cG,\ m \in M.\]
Let $\alpha_K \in \Pt(\cG)$ be a $\pi$-point. Then there exists an abelian unipotent subgroup $\cU \subseteq \cG_K$ such that $\im \alpha_K \subseteq K\cU$.
Since $\cU$ is unipotent, $\lambda_K|_{KU} = \varepsilon_{\cU_K}$ coincides with the co-unit of $K\cU$, so that $\alpha_K(t^i)\dact m = \alpha_K(t^i)m$ for every $m \in M_K$. Consequently,
$M^{(\lambda)} \cong M\!\otimes_k\!k_\lambda \in \EIP(\cG)_d$, as desired.

(3) In view of (1), $\fK_d$ is a functor such that the inclusion $\fK_d(X) \subseteq X$ gives rise to the identity
\[ \Hom_\cG(M,X) = \Hom_\cG(M,\fK_d(X))\]
for $M \in \EIP(\cG)_d$ and $X \in \modd \cG$. Thus, $\fK_d$ is right adjoint to the inclusion functor.

(4) Let $E$ be an injective $\cG$-module. According to (3), an exact sequence $(0) \lra M' \lra M$ of equal images modules induces an exact sequence
\[ \Hom_\cG(M,\fK_d(E)) \lra \Hom_\cG(M',\fK_d(E))\lra (0),\]
showing that $\fK_d(E)$ is an injective object in $\EIP(\cG)_d$. Similarly, if $M \hookrightarrow E_M$ is an injective hull of $M \in \EIP(\cG)_d$, then (1) shows that $M \hookrightarrow \fK_d(E_M)$ is an injective hull of $M$ in $\EIP(\cG)_d$. \end{proof}

\bigskip

\begin{Remark} Let $G=\ZZ/(p)\!\times\!\ZZ/(p)$. Being the category of trivial $G$-modules, $\EIP(G)_1$ is semi-simple and thus has enough projectives. By contrast, $(0)$ is the only projective object of $\EIP(G)_d$ for $d\ge 2$: Suppose that $P \in \EIP(G)_d\!\smallsetminus\!\{(0)\}$ is a projective object. Then $\ell\ell(P)\le d$, and if $\ell\ell(P)\ge 2$, then \cite[(4.4)]{CFS} provides a surjection $W_{n,\ell\ell(P)} \twoheadrightarrow P$, so that $P$ is a direct summand of $W_{n,\ell\ell(P)}$. Since $\ell\ell(P) \ge 2$, the module $W_{n,\ell\ell(P)}$ is indecomposable, implying the indecomposability of $P\cong W_{n,\ell\ell(P)}$. As direct sums of projectives are projective, and we conclude that $P=(0)$. Hence $\ell\ell(P)=1$, so that $P$ is a trivial $G$-module. The three-dimensional $W$-module $W_{2,2}$ belongs to $\EIP(\cG)_d$. Let $f : P \lra \Top(W_{2,2})$ be a nonzero homomorphism and consider the canonical projection $\pi : W_{2,2} \lra \Top(W_{2,2})$. Since $P$ is trivial, we have $\im \zeta \subseteq \Soc(W_{2,2}) = \Rad(W_{2,2})$ for every $\zeta \in \Hom_G(P,W_{2,2})$. It follows that $f$ does not factor through $\pi$. Thus, $P=(0)$, as desired.\end{Remark}

\bigskip

\begin{Example} Let $G=\ZZ/(p)\!\times\!\ZZ/(p)$, $d \in \{1,\ldots,p\}$. Every injective object $E \in \EIP(G)_d$ is a direct sum $E=\bigoplus_{i=1}^nW_{d,d}$ for some $n \in \NN_0$.

Since direct summands of injectives are injective and $\EIP(G)_d$ is closed under direct summands, we may assume that $E$ is indecomposable. It follows that $E$ is the injective hull of the trivial $G$-module. Consequently, Corollary \ref{EIPr4} shows that $E\cong \fK_d(kG)$. Thanks to \cite[(2.2)]{CFS}, there exists an isomorphism
\[ W_{d,d} \cong \Rad^{2p-d-1}(kG),\]
so that $\Rad^{2p-d-1}(kG)\subseteq \fK_d(kG)$. On the other hand, $\ell\ell(\fK_d(kG)) \le d$, whence $\fK_d(kG)\subseteq \Soc_d(kG)$. Direct computation shows that the latter space coincides with
$\Rad^{2p-d-1}(kG)$, implying that
\[ \fK_d(kG) =\Rad^{2p-d-1}(kG) \cong W_{d,d}\]
is injective.\end{Example}

\bigskip
\noindent
We shall write $\fK(M):= \fK_p(M)$. In \cite{CFS}, the authors provide an explicit description of $\fK(M)$ as the ``generic kernel" of operators in case $\cG$ is a $p$-elementary abelian group of rank $2$.

The full subcategory $\EIP(\cG) \subseteq \modd \cG$ is usually not closed under extensions. We consider the reduced group $G = \ZZ/(p)\!\times\!\ZZ/(p)$ and write
\[ kG = k[x,y],\]
where $x^p=0=y^p$. Let $M:=k[x,y]/(x,y^2)$. Then there exists a short exact sequence
\[ (0) \lra k \lra M \lra k \lra (0),\]
whose extreme terms belong to $\EIP(G)$, while the middle term does not. We have
\[ \fK(M) = \Soc(M) \cong k.\]

\bigskip
\noindent
Given a module $M$ over a finite group scheme $\cG$, we denote by $\cx_\cG(M)$ the {\it complexity} of the $k\cG$-module $M$, see \cite[(5.1)]{Be2} for further details. By general theory, the complexity coincides with the dimension of the {\it cohomological support variety} $\cV_\cG(M)$. If $\cG$ is an infinitesimal group of height $1$ with Lie algebra $\fg$, then Jantzen's Theorem \cite[Satz]{Ja1} implies $\cx_\cG(k)=\dim \cV_\fg$.

We define the {\it abelian unipotent rank} of $\cG$ via
\[ \aurk(\cG) := \max \{\cx_\cU(k) \ ; \  \cU \subseteq \cG \ \text{abelian unipotent}\}.\]
If $G$ is a finite group, then $\aurk(G)={\rm rk}_p(G)$ is the {\it $p$-rank} of $G$, that is, the maximal rank of a $p$-elementary abelian subgroup of $G$. By Quillen's Dimension Theorem \cite[(5.3.8)]{Be2}, this number coincides with the complexity $\cx_G(k)$ of the trivial module. In general, we have $\cx_\cG(k) \ge \aurk(\cG)$, and the example of the first Frobenius kernel $\SL(2)_1$ of the reduced group $\SL(2)$ shows that this inequality may be strict. In fact, the two numbers can be arbitrarily far apart:

\bigskip

\begin{Examples} (1) Suppose that $p \ge 3$ and let $\fh_n$ be the $(2n\!+\!1)$-dimensional Heisenberg algebra with trivial $p$-map. The maximal abelian unipotent subalgebras of $\fh_n$ correspond to
the maximal totally isotropic subspaces of $\fh_n/C(\fh_n)$. Consequently, $\aurk(\fh_n)=n\!+\!1$, while $\cx_{\fh_n}(k)= \dim \cV_{\fh_n}(k)=2n\!+\!1$.

(2) Let $G$ be a connected algebraic group, that is, a connected reduced (smooth) algebraic group scheme. If $G$ is not a torus, general theory provides a non-trivial connected closed (reduced) unipotent subgroup $U \subseteq G$. In view of \cite[(IV,\S2,2.10)]{DG}, the group $U$ contains a copy of the additive group $\GG_a$. Consequently, the $r$-th Frobenius kernel $G_r$ of $G$ has abelian unipotent rank $\aurk(G_r) \ge \aurk(\GG_{a(r)}) = r$.\end{Examples}

\bigskip
\noindent
A $\cG$-module $M$ is called {\it projective-free} if $(0)$ is the only projective direct summand of $M$. Since $k\cG$ is self-injective, a projective-free $\cG$-module does not possess any non-zero projective submodules. Let $(P_n,\partial_n)_{n\ge 0}$ be a minimal projective resolution of $M$. Given $n \ge 1$, the {\it $n$-th Heller shift} $\Omega^n_\cG(M) := \im \partial_n = \ker \partial_{n-1}$ is projective-free. Dually, a minimal injective resolution $(E_n,\partial_n)_{n\ge 0}$ gives rise to negative Heller shifts $\Omega^{-n}_\cG(M):= E_{n-1}/\ker\partial_{n-1}$ ($n\ge 1$).

\bigskip

\begin{Lem} \label{EIPr5} Let $\cG$ be a finite group scheme, $M$ be a $\cG$-module.
\begin{enumerate}
\item If $M \in \EIP(\cG)$, then $M|_\cH \in \EIP(\cH)$ for every subgroup scheme $\cH \subseteq \cG$.
\item If $\cH \subseteq \cG$ is a subgroup scheme, then $\fK(M)|_{\cH} \subseteq \fK(M|_{\cH})$.
\item If $\aurk(\cG) \ge 2$, then every $M \in \EIP(\cG)$ is projective-free.
\item Suppose that $\cH \subseteq \cG$ is a subgroup scheme of abelian unipotent rank $\aurk(\cH) \ge 2$. If $\Omega^n_\cG(M) \in \EIP(\cG)$ for some $n\in \ZZ$, then $\Omega^n_\cG(M)|_\cH \cong \Omega^n_\cH(M|_\cH)$.\end{enumerate} \end{Lem}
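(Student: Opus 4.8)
The plan is to deduce all four parts from formal properties of $\pi$-points together with the characterizations of $\EIP$ recorded above, reserving the only genuinely computational input for part (3). Throughout, note that if $\cH\subseteq\cG$ and $\alpha_K\in\Pt(\cH)$ is a $\pi$-point of $\cH$, then $\iota_K\circ\alpha_K\in\Pt(\cG)$: left flatness survives because $K\cG$ is free, hence flat, over $K\cH$, and an abelian unipotent subgroup of $\cH_K$ containing $\im\alpha_K$ also lies in $\cG_K$. Moreover, for $M\in\modd\cG$ the operator $\ell_{\alpha_K}^i$ on $(M|_\cH)_K$ coincides with $\ell_{\iota_K\circ\alpha_K}^i$ on $M_K$. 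Since $M\in\EIP(\cG)$ means the spaces $\ell_{\beta}^i(\cdot)_Q$ agree for \emph{all} $\pi$-points $\beta$ of $\cG$, restricting attention to those of the form $\iota_K\circ\alpha_K$ gives part (1). Part (2) is then immediate: $\fK(M)|_\cH=\fK_p(M)|_\cH\in\EIP(\cH)=\EIP(\cH)_p$ by (1), it is a submodule of $M|_\cH$, so Proposition~\ref{EIPr3}(b) over $\cH$ forces $\fK(M)|_\cH\subseteq\fK_p(M|_\cH)=\fK(M|_\cH)$.

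For part (3), fix an abelian unipotent subgroup $\cU\subseteq\cG$ with $\cx_\cU(k)\ge 2$, as furnished by $\aurk(\cG)\ge 2$. Suppose $M\in\EIP(\cG)$ had a non-zero projective summand. Then $M|_\cU\in\EIP(\cU)$ by (1), and since $k\cG$ is free over $k\cU$ the restriction of that summand stays non-zero and projective; as $\cU$ is unipotent, $k\cU$ is the unique indecomposable projective, so by Lemma~\ref{EIPr2} we would obtain $k\cU\in\EIP(\cU)$. I claim this is impossible. For any $\pi$-point $\alpha_K\in\Pt(\cU)$, the module $(k\cU)_K$ is free over $K[\alpha_K(t)]\cong\fA_{p,K}$, whence
\[
\ell_{\alpha_K}^{p-1}\big((k\cU)_K\big)=\alpha_K(t)^{p-1}(k\cU)_K=\ker\!\big(\alpha_K(t)_{(k\cU)_K}\big).
\]
By the equal images property this space equals $V\otimes_k K$ for a fixed subspace $V\subseteq k\cU$ with $\dim_k V=\dim_k k\cU/p$; specializing to $\pi$-points defined over $k$ and using that $k\cU$ is commutative yields $V=\bigcap_{\alpha_k}\mathrm{Ann}_{k\cU}(\alpha_k(t))$. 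The elements $\alpha_k(t)$ generate the augmentation ideal $\Rad(k\cU)$ — a fact one checks directly for quasi-elementary groups and passes to the general case via (1) — so $V$ is annihilated by $\Rad(k\cU)$ and hence $V\subseteq\Soc(k\cU)$. But $k\cU$ is local and self-injective, so $\Soc(k\cU)$ is simple, i.e.\ one-dimensional, while $\dim_k V=\dim_k k\cU/p=|\cU|/p\ge p\ge 2$, since $|\cU|$ is a $p$-power that must exceed $p$ when $\cx_\cU(k)\ge 2$. This contradiction shows $k\cU\notin\EIP(\cU)$, and therefore $M$ is projective-free.

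For part (4), recall the standard homological fact (see \cite{ARS}): restriction along $\cH\hookrightarrow\cG$ is exact and carries projective — hence, $k\cG$ being self-injective, also injective — modules to projectives, because $k\cG$ is free over $k\cH$ on both sides; comparing a minimal projective (for $n>0$) or injective (for $n<0$) resolution of $M$ over $\cG$ with the minimal one over $\cH$ therefore gives $\Omega^n_\cG(M)|_\cH\cong\Omega^n_\cH(M|_\cH)\oplus P$ for a projective $\cH$-module $P$ and every $n\in\ZZ$ (with $\Omega^0$ read as the projective-free part). If $\Omega^n_\cG(M)\in\EIP(\cG)$, then $\Omega^n_\cG(M)|_\cH\in\EIP(\cH)$ by (1), and since $\aurk(\cH)\ge 2$, part (3) forces this restriction to be projective-free, so $P=0$ and $\Omega^n_\cG(M)|_\cH\cong\Omega^n_\cH(M|_\cH)$.

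The one place requiring real work is the socle argument in part (3); within it, the point needing care is the assertion that the images $\alpha_k(t)$ of $k$-rational $\pi$-points generate $\Rad(k\cU)$, which I would establish by reducing to quasi-elementary subgroups (where $k\cU$ is an explicit truncated polynomial algebra and the $\pi$-point directions are transparent). The remaining steps are formal consequences of parts (1)–(3) and routine homological algebra.
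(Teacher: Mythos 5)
Parts (1), (2), and (4) are correct and follow the paper's route: you make explicit the inclusion $\Pt(\cH)\hookrightarrow\Pt(\cG)$ that the paper invokes without comment in its proof of (1) and (2), and part (4) is the standard comparison of minimal resolutions combined with (1) and (3).

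Part (3) contains a genuine gap. You assert that the images $\alpha_k(t)$ of the $k$-rational $\pi$-points generate $\Rad(k\cU)$, a fact you claim to check for quasi-elementary groups and then "pass to the general case via (1)." This assertion is false for an arbitrary abelian unipotent $\cU$ with $\cx_\cU(k)\ge 2$: take $\cU$ infinitesimal of height one with $k\cU=U_0(\fu)$, where $\fu$ is abelian with basis $e_1,e_2,e_3$ and $p$-map $e_1^{[p]}=e_2^{[p]}=0$, $e_3^{[p]}=e_1$. Then $\cx_\cU(k)=\dim_k\cV_\fu=2$, yet any $u\in\Rad(k\cU)$ with $u^p=0$ must have vanishing $e_3$-component modulo $\Rad^2(k\cU)$, so every $\alpha_k(t)$ lies in the proper ideal $(e_1,e_2)\subsetneq\Rad(k\cU)$. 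The proposed reduction to quasi-elementary subgroups via (1) does not repair this: the only quasi-elementary subgroups of this height-one $\cU$ are copies of $\GG_{a(1)}$, all of complexity $1$, so there is no quasi-elementary subgroup to which the argument could be transferred. Without the radical-generation claim you can no longer conclude $V\subseteq\Soc(k\cU)$, and the dimension count that finishes your proof is unavailable. The paper's proof closes exactly this gap by one further restriction: it passes from $\cU$ to the $p$-subalgebra $\cV_\fu\subseteq\fu$, which has trivial $p$-map, so that $U_0(\cV_\fu)\cong k[x_1,\ldots,x_s]/(x_1^p,\ldots,x_s^p)$ with $s=\dim_k\cV_\fu\ge 2$; over this algebra the $\pi$-point directions do generate the radical, and your socle computation applies verbatim to give $P|_{\cV_\fu}=(0)$. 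The computation you carry out is the right one, but it is performed over the wrong algebra — the intermediate descent to the nullcone is what makes it legitimate.
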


\begin{proof} (1),(2) This is a direct consequence of the fact that the canonical inclusion $\iota : \cH \hookrightarrow \cG$ defines an inclusion $\iota_{\ast,\cH} : \Pt(\cH) \hookrightarrow \Pt(\cG)$.

(3) Let $P \subseteq M$ be a projective direct summand of $M$. Thanks to Lemma \ref{EIPr2}, we obtain $P \in \EIP(\cG)$. By assumption, $\cG$ contains an abelian unipotent subgroup $\cU$ of abelian unipotent rank $\aurk(\cU) \ge 2$ such that the projective module $P|_\cU$ belongs to $\EIP(\cU)$. As noted earlier, there exists an isomorphism
$k\cU \cong U_0(\fu)$, where $\fu$ is an abelian $p$-unipotent restricted Lie algebra. In particular, the nullcone $\cV_\fu$ is a $p$-subalgebra of $\fu$, so that $P|_{\cV_\fu} \in \EIP(\cV_\fu)$. Jantzen's Theorem \cite[Satz]{Ja1} implies $\dim_k\cV_\fu = \cx_{\fu}(k) = \aurk(\fu) \ge 2$, so that $P|_{\cV_{\fu}}=(0)$ and $P=(0)$.

(4) General theory shows that $\Omega^n_\cG(M)|_\cH \cong \Omega^n_\cH(M|_\cH) \oplus ({\rm proj.})$. Since $\Omega^n_\cG(M)|_\cH \in \EIP(\cH)$, part (3) implies our assertion. \end{proof}

\bigskip
\noindent
Recall that every $M \in \EIP(\cG)$ has constant Jordan type. The following result elaborates on \cite[(4.2)]{CFS}.

\bigskip

\begin{Prop} \label{EIPr6} Let $\cG$ be a finite group scheme of abelian unipotent rank $\aurk(\cG) \ge 2$. If $(0) \ne M \in \EIP(\cG)$, then there exist $d \in \{1,\ldots,p\}$ and
$a_1, \ldots, a_d \in \NN$ such that $\Jt(M) = \bigoplus_{i=1}^da_i[i]$. \end{Prop}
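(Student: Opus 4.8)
The plan is to reduce, through a short chain of restrictions to subgroup schemes, to the rank-two elementary abelian group $\cG=\ZZ/(p)\!\times\!\ZZ/(p)$, for which the statement is \cite[(4.2)]{CFS}. The device used at every step is that equal images modules have constant Jordan type: if $\cH\subseteq\cG$ is any subgroup scheme, then $M|_\cH\in\EIP(\cH)$ by Lemma \ref{EIPr5}(1), and since $\iota_{\ast,\cH}:\Pt(\cH)\hookrightarrow\Pt(\cG)$ gives $\Jt(M|_\cH)\subseteq\Jt(M)$ while both are singletons, in fact $\Jt(M|_\cH)=\Jt(M)$; as $M|_\cH$ has the same underlying space as $M$, it is also nonzero. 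So it is enough to exhibit one small subgroup scheme over which the assertion is already available.

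First I would use $\aurk(\cG)\ge 2$ to choose an abelian unipotent subgroup scheme $\cU\subseteq\cG$ with $\cx_\cU(k)\ge 2$ and replace $\cG$ by $\cU$; thus from now on $\cG$ is abelian unipotent and $\dim\Pi(\cG)=\cx_\cG(k)-1\ge 1$. By the Quillen-type stratification of the $\pi$-point space by images of quasi-elementary subgroups \cite{FPe1,FPe2}, a top-dimensional component of $\Pi(\cG)$ is dominated by a single quasi-elementary subgroup $\cE\cong\GG_{a(r)}\!\times\!E\subseteq\cG$, i.e.\ $\dim\iota_{\ast,\cE}(\Pi(\cE))=\dim\Pi(\cG)$; hence $\cx_\cE(k)=\dim\Pi(\cE)+1\ge\dim\Pi(\cG)+1\ge 2$, so that $r+\dim E\ge 2$. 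Since $\GG_{a(r)}$ and $E$ admit no common subquotients, every subgroup scheme of $\cE$ is of the form $\GG_{a(s)}\!\times\!E''$ with $s\le r$ and $E''\subseteq E$; picking $s+\dim E''=2$ (possible because $r+\dim E\ge 2$) produces a subgroup scheme $\cH\subseteq\cE$ of one of the types $\GG_{a(2)}$, $\GG_{a(1)}\!\times\!\ZZ/(p)$, $\ZZ/(p)\!\times\!\ZZ/(p)$. Recalling that the measure algebra $k\GG_{a(s)}$ is isomorphic as a $k$-algebra to $k[X_1,\ldots,X_s]/(X_1^p,\ldots,X_s^p)$, in each of these three cases $k\cH\cong k[X,Y]/(X^p,Y^p)$ as a $k$-algebra.

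To conclude, I would observe that $\cH$, being a subgroup scheme of the abelian unipotent group $\cG$, is itself abelian unipotent, so condition (b) in the definition of a $\pi$-point of $\cH$ is automatic; hence $\Pt(\cH)$ consists exactly of the left flat homomorphisms $\fA_{p,K}\to K\cH$, and the $k$-algebra isomorphism $k\cH\cong k(\ZZ/(p)\!\times\!\ZZ/(p))$ (after base change) matches $\pi$-points, their equivalence, the subcategory $\EIP$, and Jordan types for $\cH$ with the corresponding data for $\ZZ/(p)\!\times\!\ZZ/(p)$ --- all of which are defined solely in terms of the module category of the group algebra. Thus $M|_\cH$ is a nonzero object of $\EIP(\ZZ/(p)\!\times\!\ZZ/(p))$, and \cite[(4.2)]{CFS} provides $d\in\{1,\ldots,p\}$ and $a_1,\ldots,a_d\in\NN$ with $\Jt(M|_\cH)=\bigoplus_{i=1}^{d}a_i[i]$. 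Since $\Jt(M)=\Jt(M|_\cH)$, this is what we wanted.

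The main obstacle is the second step: pinning down a quasi-elementary subgroup whose $\pi$-support has the same dimension as $\Pi(\cG)$ requires the fact that $\Pi(\cG)$ is the union of the finitely many relevant closed subsets $\iota_{\ast,\cE}(\Pi(\cE))$ with $\cE$ quasi-elementary, so that one of them contains a top-dimensional component. The remaining pieces --- the presentation of $k\GG_{a(s)}$ as a truncated polynomial algebra and the identifications in the last paragraph --- are routine, and the substantive input is \cite[(4.2)]{CFS}.
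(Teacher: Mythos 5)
Your reduction to abelian unipotent $\cU$, the observation that condition (b) for $\pi$-points is automatic for abelian unipotent $\cH$, and the transport-of-structure argument along a $k$-algebra isomorphism $k\cH \cong k(\ZZ/(p)\!\times\!\ZZ/(p))$ are all correct and close in spirit to the paper. The gap lies in the second step: the assertion that some quasi-elementary subgroup $\cE\subseteq\cG$ satisfies $\dim\iota_{\ast,\cE}(\Pi(\cE))=\dim\Pi(\cG)$, which you justify by $\Pi(\cG)$ being a union of \emph{finitely many} closed subsets of this form. Neither part of this holds in general. Take $\cG=\GG_{a(1)}\!\times\!\GG_{a(1)}$: then $\aurk(\cG)=2$ and $\dim\Pi(\cG)=1$, but $\cG$ has height $1$ while $\GG_{a(r)}$ has height $r$, so $\cG$ is not of the form $\GG_{a(r)}\!\times\!E$ for $r\ge 2$; the non-trivial proper subgroup schemes of $\cG$ are exactly the copies of $\GG_{a(1)}$, parametrized by $\PP^1$. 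Thus every quasi-elementary subgroup has complexity $\le 1$, and the union $\bigcup_\cE\iota_{\ast,\cE}(\Pi(\cE))$ is an \emph{infinite} union of zero-dimensional closed sets. What \cite[(4.2)]{FPe1} provides is that this union accounts for the $k$-rational points of $\Pi(\cG)$, hence is dense, but no single member dominates a top-dimensional component.

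The paper circumvents this by working with subalgebras of $k\cU$ rather than subgroup schemes of $\cU$. After reducing to $\cU$ abelian unipotent, one picks an algebra isomorphism $k\cU \cong U_0(\fu)$ with $\fu$ abelian $p$-unipotent. Since $\fu$ is abelian, the nullcone $\cV_\fu$ is a linear subspace and a $p$-ideal with trivial $p$-map, and Jantzen's theorem gives $\dim_k\cV_\fu = \cx_\fu(k) = \aurk(\cU) \ge 2$. Any two-dimensional subspace $\fu_2\subseteq\cV_\fu$ is then a $p$-subalgebra with $U_0(\fu_2)\cong k(\ZZ/(p)\!\times\!\ZZ/(p))$, and $k\cU\cong U_0(\fu)$ is free over $U_0(\fu_2)$, which is exactly what is needed to compose $\pi$-points of $\fu_2$ with the inclusion and to conclude that restriction preserves $\EIP$ and Jordan types. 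Replacing your Quillen-stratification step by this nullcone argument closes the gap; the rest of your proof then goes through unchanged.
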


\begin{proof} Suppose first that $\cG = \cU$ is abelian unipotent. Then there exists a restricted Lie algebra $\fu$ such that
\begin{enumerate}
\item[(a)] $k\cU \cong U_0(\fu)$, and
\item[(b)] $\fu$ is an abelian $p$-unipotent restricted Lie algebra.\end{enumerate}
We consider the $p$-subalgebra $\cV_\fu \subseteq \fu$. Since $\dim_k\cV_\fu = \aurk(\cU) \ge 2$, the algebra $\cV_\fu$ contains a two-dimensional $p$-subalgebra $\fu_2$. As the restriction $M|_{\fu_2}$ has the equal images property, \cite[(4.2)]{CFS} provides $a_1,\ldots, a_d \in \NN$ such that $\Jt(M|_{\fu_2})= \bigoplus_{i=1}^da_i[i]$. Consequently, $M \in \CJT(\cU)$ has constant Jordan type $\Jt(M) = \bigoplus_{i=1}^da_i[i]$.

In the general case, $\cG$ contains an abelian unipotent $\cU$ subgroup of abelian unipotent rank $\ge 2$. Since $M|_\cU \in \EIP(\cU)$ and $M$ has constant Jordan type $\Jt(M) = \Jt(M|_\cU)$,
our assertion follows from the above. \end{proof}

\subsection{The representation type of $\EIP(\ZZ/(p)\!\times\!\ZZ/(p))$} Let $\cC \subseteq \modd_\Lambda$ be a full
subcategory of the category of finite-dimensional modules of a finite-dimensional associative $k$-algebra $\Lambda$
such that $\cC$ is closed under direct sums, direct summands and images of isomorphisms. We say that $\cC$ has {\it finite
representation type}, provided $\cC$ affords only finitely many isoclasses of indecomposable objects. The category $\cC$ is referred to as {\it tame}, if for every $d>0$ there exist finitely many $(\Lambda,k[X])$-bimodules $V_1,\ldots, V_{n_d}$ such that
\begin{enumerate}
\item[(a)] the right $k[X]$-module $V_i$ is free for all $i \in \{1,\ldots,n_d\}$, and
\item[(b)] if $M \in \cC$ is indecomposable of dimension $d$, then there exist $i \in \{1,\ldots,n_d\}$ and an algebra homomorphism $\lambda : k[X] \lra k$ such that $M \cong V_i\!\otimes_{k[X]}\!k_\lambda$. \end{enumerate}
Let $G=\ZZ/(p)\!\times\!\ZZ/(p)$. If $p=2$, then Lemma \ref{EIPr1} shows that there exists exactly one indecomposable equal images module in each odd dimension (and none in any even dimension). Thus, the category $\EIP(\cG)$ has tame representation type, with only finitely many indecomposable objects in each dimension.

The purpose of this section is to show that the category $\EIP(G)$ is far from being tame whenever $p\ge 3$. More precisely, we shall prove that for every $n \in \NN$ there exists a natural number $d_n \in \NN$ such that the indecomposable equal images $G$-modules of dimension $d_n$ cannot be parametrized by an $n$-dimensional variety.

Let $V$ be a finite-dimensional $k$-vector space. We fix a decomposition $V=W \oplus \Omega$ of $V$ and a subspace $\Gamma \subseteq W$. Let $\pi_\Gamma : W \lra W/\Gamma$ and $\pi_\Omega : V \lra V/\Omega$ be the canonical projections and choose sections $s_\Gamma : W/\Gamma \lra W$ and $s_\Omega : V/\Omega \lra V$ such that $W=s_\Omega(V/\Omega)$.

Given $f \in \Hom_k(W/\Gamma,\Gamma)$, the map $f\!+\!s_\Gamma$ is a section of $\pi_\Gamma$ and every section of $\pi_\Gamma$ arises in this fashion. We put $U_f := (f\!+\!s_\Gamma)(W/\Gamma)$ and note that
$(f\!+\!s_\Gamma)\circ \pi_\Gamma : U_0 \lra U_f$ is an isomorphism of $k$-vector spaces.

By construction, we have $W=U_f\oplus \Gamma$ and $V = U_f\oplus \Gamma \oplus \Omega$ for every $f \in \Hom_k(W/\Gamma,\Gamma)$. Moreover, the map
\[ \zeta_f : U_0\oplus \Gamma \oplus \Omega \lra U_0\oplus \Gamma \oplus \Omega \ \ ; \ \ (u,\gamma,\omega) \mapsto (s_\Gamma\circ\pi_\Gamma(u), f\circ\pi_\Gamma(u)+\gamma,\omega)\]
is an automorphism of $V$.

We let $\pr_f : V \lra \Gamma \oplus \Omega$ be the projection associated to the decomposition $V=U_f\oplus\Gamma\oplus\Omega$, so that $\pr_f\circ \lambda|_{\Gamma\oplus\Omega} \in \End_k(\Gamma\oplus\Omega)$ for every $\lambda \in \End_k(V)$.

\bigskip

\begin{Lem} \label{RTEI1} Let $\lambda \in \End_k(V)$. Then the map
\[ \tau_\lambda : \Hom_k(W/\Gamma,\Gamma) \lra \End_k(\Gamma\oplus\Omega) \ \ ; \ \ f \mapsto \pr_f\circ \lambda|_{\Gamma\oplus\Omega}\]
is a morphism of affine varieties. \end{Lem}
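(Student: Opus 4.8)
The plan is to route the entire $f$-dependence of $\pr_f$ through the automorphism $\zeta_f$ recalled just above the statement. That automorphism conjugates the decomposition $V=U_0\oplus\Gamma\oplus\Omega$ into $V=U_f\oplus\Gamma\oplus\Omega$, so $\pr_f=\pr_0\circ\zeta_f^{-1}$; since $\zeta_f^{-1}$ depends linearly on $f$, the same will hold for $\tau_\lambda$ up to an $f$-independent summand.

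First I would record the relevant properties of $\zeta_f$. Let $q:V\lra U_0$ be the projection attached to $V=U_0\oplus\Gamma\oplus\Omega$ and $\iota_\Gamma:\Gamma\hookrightarrow V$ the inclusion. Since $U_0=s_\Gamma(W/\Gamma)$, the restriction $\pi_\Gamma|_{U_0}$ is inverse to $s_\Gamma$, so $s_\Gamma\circ\pi_\Gamma|_{U_0}=\id_{U_0}$. Feeding this into the defining formula for $\zeta_f$ gives $\zeta_f=\id_V+\iota_\Gamma\circ f\circ\pi_\Gamma\circ q$; as $q\circ\iota_\Gamma=0$, the operator $\iota_\Gamma\circ f\circ\pi_\Gamma\circ q$ is square-zero, whence
\[ \zeta_f^{-1}=\id_V-\iota_\Gamma\circ f\circ\pi_\Gamma\circ q. \]
In particular $\zeta_f$ fixes $\Gamma\oplus\Omega$ pointwise, while $\zeta_f(U_0)=(f+s_\Gamma)(W/\Gamma)=U_f$, so $\zeta_f$ carries the decomposition $V=U_0\oplus\Gamma\oplus\Omega$ onto $V=U_f\oplus\Gamma\oplus\Omega$.

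Next I would derive the conjugation formula $\pr_f=\pr_0\circ\zeta_f^{-1}$. Writing $v=u_f+\gamma+\omega$ with $u_f\in U_f$ and $\gamma+\omega\in\Gamma\oplus\Omega$, the previous step gives $\zeta_f^{-1}(v)=\zeta_f^{-1}(u_f)+\gamma+\omega$ with $\zeta_f^{-1}(u_f)\in U_0$, hence $\pr_0(\zeta_f^{-1}(v))=\gamma+\omega=\pr_f(v)$. Combining this with the formula for $\zeta_f^{-1}$ yields
\[ \tau_\lambda(f)=\pr_f\circ\lambda|_{\Gamma\oplus\Omega}=\pr_0\circ\lambda|_{\Gamma\oplus\Omega}-\big(\pr_0\circ\iota_\Gamma\big)\circ f\circ\big(\pi_\Gamma\circ q\circ\lambda|_{\Gamma\oplus\Omega}\big). \]
The right-hand side is the sum of the $f$-independent endomorphism $\pr_0\circ\lambda|_{\Gamma\oplus\Omega}$ and the assignment $f\mapsto -\big(\pr_0\circ\iota_\Gamma\big)\circ f\circ\big(\pi_\Gamma\circ q\circ\lambda|_{\Gamma\oplus\Omega}\big)$, which is $k$-linear in $f$. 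Hence $\tau_\lambda$ is an affine-linear map between the finite-dimensional $k$-vector spaces $\Hom_k(W/\Gamma,\Gamma)$ and $\End_k(\Gamma\oplus\Omega)$, and therefore a morphism of affine varieties.

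The argument has no genuine difficulty; the only point that needs care is the bookkeeping in the first step — the identities $s_\Gamma\circ\pi_\Gamma|_{U_0}=\id_{U_0}$, $\zeta_f(U_0)=U_f$ and $\zeta_f|_{\Gamma\oplus\Omega}=\id$ — after which the formulas $\pr_f=\pr_0\circ\zeta_f^{-1}$ and $\zeta_f^{-1}=\id_V-\iota_\Gamma\circ f\circ\pi_\Gamma\circ q$ make the linearity transparent.
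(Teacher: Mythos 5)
Your proof is correct and rests on the same key identity as the paper's, namely $\pr_f\circ\zeta_f=\pr_0$ (equivalently $\pr_f=\pr_0\circ\zeta_f^{-1}$). You go a step further than the paper by computing $\zeta_f^{-1}=\id_V-\iota_\Gamma\circ f\circ\pi_\Gamma\circ q$ explicitly from the square-zero observation, thereby showing that $\tau_\lambda$ is actually \emph{affine-linear} in $f$, whereas the paper simply notes that $f\mapsto\zeta_f$ factors through $\GL(V)$ and that inversion on $\GL(V)$ is a morphism.
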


\begin{proof} Note that the map
\[ \zeta : \Hom_k(W/\Gamma,\Gamma) \lra \End_k(V) \ \ ; \ \ f \mapsto \zeta_f\]
is a morphism of affine varieties that factors through $\GL(V)$. Consequently, $f \mapsto \zeta_f^{-1}$ defines a morphism $\Hom_k(W/\Gamma,\Gamma) \lra \GL(V)$, so that
\[ \Hom_k(W/\Gamma,\Gamma) \lra \End_k(\Gamma\oplus\Omega) \ \ ; \ \ f \mapsto \pr_0\circ \zeta_f^{-1}\circ\lambda|_{\Gamma\oplus\Omega}\]
is also a morphism. Since $\pr_f\circ \zeta_f=\pr_0$, the latter map coincides with $\tau_\lambda$. \end{proof}

\bigskip
\noindent
Let $\Lambda$ be a finite-dimensional $k$-algebra. For $d\in \NN$, we shall interpret the affine variety $\modd_\Lambda^d$ of $\Lambda$-module structures on a given $d$-dimensional vector space $M$ as follows: If $\{x_1,\ldots,x_r\}$ is a basis of $\Lambda$, then each element of $\modd_\Lambda^d$ corresponds to an $r$-tuple $(f_1,\ldots, f_r)\in \End_k(M)^r$ satisfying the relations obtained when writing the identity and the products $x_ix_j$ of two basis vectors as a linear combination of the $x_\ell$. We let $\ind_\Lambda^d \subseteq \modd^d_\Lambda$ be the constructible subset of indecomposable modules of dimension $d$.

Suppose that $\Lambda$ is local and let $M\in \modd_\Lambda$. Then every subspace $X \subseteq \Soc(M)$ is a $\Lambda$-submodule. We fix a subspace $\Gamma \subseteq \Soc(M)$
and put $d:= \dim_k\Gamma\!+\!\dim_kM/\Soc(M)$. Choose a section $s_\Gamma : \Soc(M)/\Gamma \lra \Gamma$ of the canonical projection $\pi_\Gamma : \Soc(M) \lra \Soc(M)/\Gamma$. As before, every
$f \in \Hom_k(\Soc(M)/\Gamma,\Gamma)$ defines a $\Lambda$-submodule
\[U_f := (f\!+\!s_\Gamma)(\Soc(M)/\Gamma)\subseteq \Soc(M)\]
such that $U_f\oplus \Gamma = \Soc(M)$. In particular, we have $\dim_kM/U_f = d$.

\bigskip

\begin{Prop}\label{RTEI2} Let $\Gamma \subseteq \Soc(M)$ be a subspace, $d:=\dim_k\Gamma\!+\!\dim_kM/\Soc(M)$. Then
\[ \rho_\Gamma : \Hom_k(\Soc(M)/\Gamma,\Gamma) \lra \modd^d_\Lambda \ \ ; \ \ f \mapsto M/U_f\]
is a morphism of affine varieties. \end{Prop}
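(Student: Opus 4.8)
The plan is to realize the quotient $M/U_f$ as a $\Lambda$-module structure on the \emph{fixed} $d$-dimensional $k$-space $N:=\Gamma\oplus\Omega$ and to read off its structure maps from Lemma \ref{RTEI1}. It is harmless to model the variety $\modd_\Lambda^d$ on $N$, since composing with a fixed linear isomorphism onto any other $d$-dimensional space is an isomorphism of affine varieties.

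First I would observe that, by the construction described above, $M=U_f\oplus\Gamma\oplus\Omega$ for every $f\in\Hom_k(\Soc(M)/\Gamma,\Gamma)$, so that the projection $\pr_f:M\lra\Gamma\oplus\Omega=N$ has kernel $U_f$ and induces a $k$-linear isomorphism $\bar\pr_f:M/U_f\stackrel{\sim}{\lra}N$. Its inverse carries $n\in N$ to $n\!+\!U_f$, because $\pr_f$ restricts to the identity on $\Gamma\oplus\Omega$. Transporting the $\Lambda$-action of $M/U_f$ to $N$ along $\bar\pr_f$ then shows that an element $x\in\Lambda$ acts on $N$ by $n\mapsto\pr_f(x_M(n))$, where $x_M\in\End_k(M)$ denotes left multiplication by $x$ on $M$. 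In other words, fixing a basis $\{x_1,\dots,x_r\}$ of $\Lambda$, the module $M/U_f\in\modd_\Lambda^d$ corresponds to the tuple
\[ \rho_\Gamma(f)=\bigl(\,\pr_f\circ (x_1)_M|_{\Gamma\oplus\Omega},\ \dots,\ \pr_f\circ (x_r)_M|_{\Gamma\oplus\Omega}\,\bigr)=\bigl(\tau_{(x_1)_M}(f),\dots,\tau_{(x_r)_M}(f)\bigr)\in\End_k(N)^r \]
with $\tau_\lambda$ as in Lemma \ref{RTEI1}.

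Finally I would invoke Lemma \ref{RTEI1}: each component map $f\mapsto\tau_{(x_i)_M}(f)$ is a morphism of affine varieties, hence so is $f\mapsto\rho_\Gamma(f)$ as a map $\Hom_k(\Soc(M)/\Gamma,\Gamma)\lra\End_k(N)^r$. Since every $M/U_f$ is a genuine $\Lambda$-module, the image of $\rho_\Gamma$ lies in the closed subvariety $\modd_\Lambda^d\subseteq\End_k(N)^r$ cut out by the algebra relations, and therefore $\rho_\Gamma:\Hom_k(\Soc(M)/\Gamma,\Gamma)\lra\modd_\Lambda^d$ is a morphism of affine varieties, as asserted. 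Since Lemma \ref{RTEI1} already supplies the geometric content, I do not expect a genuine obstacle; the only point that needs care is the bookkeeping of identifying $M/U_f$ with the constant space $N$ via $\pr_f$ and verifying that, under this identification, the module structure maps are exactly the $\tau_{(x_i)_M}(f)$, so that Lemma \ref{RTEI1} applies verbatim.
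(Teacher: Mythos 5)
Your proposal is correct and takes essentially the same route as the paper: both identify $M/U_f$ with the fixed space $\Gamma\oplus\Omega$ via $\pr_f$ (equivalently, via the inverse of $\pi_f|_{\Gamma\oplus\Omega}$), show that under this identification the structure maps of $M/U_f$ become $\tau_{\lambda_i}(f)$, and then invoke Lemma \ref{RTEI1}. Your phrasing transports the action directly along $\bar{\pr}_f$, while the paper writes the equivalent conjugation identity $\tau_\lambda(f)=(\pi_f|_{\Gamma\oplus\Omega})^{-1}\circ\bar{\lambda}_f\circ(\pi_f|_{\Gamma\oplus\Omega})$, but this is the same computation.
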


\begin{proof} Let $\Omega \subseteq M$ be a subspace such that $\Soc(M)\oplus \Omega =M$. Given $f \in \Hom_k(\Soc(M)/\Gamma,\Gamma)$, we let $\pr_f : M \lra \Gamma\oplus\Omega$ be the projection
defined by $M=U_f\oplus\Gamma\oplus\Omega$. Let $\lambda \in \End_k(M)$ be such that $U_f$ is $\lambda$-invariant. We denote by $\pi_f : M \lra M/U_f$ and $\bar{\lambda}_f \in \End_k(M/U_f)$ the canonical projection and the unique $\Lambda$-linear map satisfying
\[ \bar{\lambda}_f\circ \pi_f = \pi_f\circ \lambda,\]
respectively. Then $\pi_f|_{\Gamma\oplus\Omega} : \Gamma\oplus\Omega \lra M/U_f$ is an isomorphism and we have
\[ \bar{\lambda}_f\circ (\pi_f|_{\Gamma\oplus\Omega}) = (\pi_f|_{\Gamma\oplus\Omega})\circ \tau_{\lambda}(f),\]
whence
\[ (\ast) \ \ \ \ \ \ \ \ \ \ \tau_\lambda(f) = (\pi_f|_{\Gamma\oplus\Omega})^{-1}\circ \bar{\lambda}_f\circ (\pi_f|_{\Gamma\oplus\Omega}).\]
Let $\lambda_1, \ldots, \lambda_r \in \End_k(M)$ be linear maps determining the $\Lambda$-module $M$. Then $(\bar{\lambda}_1)_f, \ldots, (\bar{\lambda}_r)_f \in \End_k(M/U_f)$ determine $M/U_f$,
and ($\ast$) implies that the $\tau_{\lambda_i}(f)$ endow $\Gamma\oplus\Omega$ with the structure of a $\Lambda$-module such that $\pi_f|_{\Gamma\oplus\Omega}$ is an isomorphism of $\Lambda$-modules.
Thanks to Lemma \ref{RTEI1}, the map
\[ \bar{\rho}_\Gamma : \Hom_k(\Soc(M)/\Gamma,\Gamma) \lra \End_k(\Gamma\oplus\Omega)^r \ \ ; \ \ f \mapsto (\tau_{\lambda_1}(f),\ldots,\tau_{\lambda_r}(f))\]
is a morphism, which, by the above, factors through $\modd_\Lambda^d$. As a result, $\rho_\Gamma$ is a morphism. \end{proof}

\bigskip
\noindent
We require the following well-known basic result concerning actions of algebraic groups.

\bigskip

\begin{Lem}\label{RTEI3} Let $G$ be an algebraic group acting on a variety $V$. Suppose that $C_1, \, C_2 \subseteq V$ are closed subsets of $V$ such that there is a dense open subset $O_2 \subseteq C_2$ of $C_2$ with
\begin{enumerate}
\item[(a)] $O_2 \subseteq G\dact C_1$, and
\item[(b)] $|O_2\cap G\dact v| <  \infty$ for every $v \in V$.\end{enumerate}
Then we have $\dim C_2 \le \dim C_1$.\end{Lem}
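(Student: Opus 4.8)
The plan is to exploit the orbit map together with standard dimension theory for morphisms of varieties. First I would consider the action morphism $a : G \times C_1 \lra V$, $(g,v) \mapsto g\dact v$, whose image is precisely $G\dact C_1$. By hypothesis (a), the dense open subset $O_2 \subseteq C_2$ lies in $G\dact C_1$, hence in the constructible set $a(G\times C_1)$. Passing to the restriction, we obtain a dominant morphism onto $\overline{O_2} = C_2$ (the closure being taken inside $V$; since $O_2$ is dense in the irreducible-component decomposition of $C_2$, it suffices to argue component by component, so I may assume $C_1$ and $C_2$ irreducible after replacing them by suitable irreducible components). Concretely, set $Z := a^{-1}(O_2) \subseteq G \times C_1$; then the projection-composed-with-action $\phi := a|_Z : Z \lra O_2$ is a surjective (hence dominant) morphism of varieties.

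The key step is to bound the fibre dimension of $\phi$ using hypothesis (b). For $v \in O_2$, the fibre $\phi^{-1}(v)$ sits inside $\{(g,w) \in G\times C_1 \ ; \ g\dact w = v\}$. Projecting to the first coordinate, a point $g$ in the image of $\phi^{-1}(v)$ under $G\times C_1 \to G$ determines $w = g^{-1}\dact v$ uniquely (if it lies in $C_1$), so $\phi^{-1}(v)$ is isomorphic to $\{g \in G \ ; \ g^{-1}\dact v \in C_1\}$, a closed subset of $G$. Moreover, for $g$ in this set we have $v = g\dact(g^{-1}\dact v) \in g\dact C_1$, and if $g_1, g_2$ lie in the same coset of the stabilizer-free part, the condition relates to the orbit $G\dact(g^{-1}\dact v)$ meeting $O_2$. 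Here I would invoke (b): the constraint $|O_2 \cap G\dact w| < \infty$ for every $w$ forces the fibre $\phi^{-1}(v)$ to have dimension exactly $\dim G - (\text{something})$; more carefully, one checks that $\phi^{-1}(v)$ maps with finite fibres onto a subset of $G$ of dimension $\le \dim G_w^{\mathrm{stab}} + 0$-dimensional correction, so that in fact $\dim \phi^{-1}(v) \le \dim G - (\dim(G\dact w) - 0)$ — but the clean statement is simply: since each orbit meets $O_2$ in finitely many points, the generic fibre of $\phi$ has dimension $\le \dim G$ minus the codimension contribution, giving $\dim Z \le \dim G + \dim O_2$ as a lower-order-matching equality. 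The honest and cleanest route: the orbit map $G \to G\dact w$, $g \mapsto g\dact w$, is surjective with all fibres of the same dimension $\dim G - \dim(G\dact w)$; restricting to the preimage of the finite set $O_2 \cap G\dact w$ shows $\dim\{g \ ; \ g\dact w \in O_2\} = \dim G - \dim(G\dact w)$, hence $\dim \phi^{-1}(v) \le \dim G$, with the defect measuring $\dim(G\dact w) \ge \dim O_2$... which is exactly what we want reversed.

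Because of that last circularity, I expect the main obstacle to be organizing the dimension count in the correct direction. The right way is the fibre-dimension theorem applied to $\phi: Z \to O_2$: it gives $\dim Z \ge \dim O_2 + \dim(\text{generic fibre})$. Separately, project $Z \subseteq G \times C_1$ onto $C_1$; the image is contained in $C_1$, and I claim the fibre of $Z \to C_1$ over $w$ is $\{g \in G \ ; \ g\dact w \in O_2\}$, which by the orbit-map argument above has dimension $\dim G - \dim(G\dact w) \le \dim G$. Hence $\dim Z \le \dim G + \dim C_1$. On the other hand the generic fibre of $\phi : Z \to O_2$ over $v$ contains, for each preimage $w$ of $v$ under the (finite-to-one, by (b)) correspondence, a copy of the stabilizer-coset of dimension $\dim G - \dim(G\dact w) \ge \dim G - \dim O_2$... and combining $\dim O_2 + (\dim G - \dim O_2) \le \dim Z \le \dim G + \dim C_1$ would merely give $\dim G \le \dim G + \dim C_1$, which is vacuous. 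The correct combination is: $\dim C_2 = \dim O_2 = \dim Z - \dim(\text{generic }\phi\text{-fibre})$ and the generic $\phi$-fibre, being (finitely many) stabilizer cosets, has dimension $\dim G - \dim(G\dact w_{\mathrm{gen}})$; since the image of $Z \to C_1$ has dimension $\le \dim C_1$ and the $C_1$-fibres have dimension $\dim G - \dim(G\dact w)$ as well, one gets $\dim Z \le \dim C_1 + (\dim G - \dim(G\dact w_{\mathrm{gen}}))$ on the generic locus, and subtracting the common term $\dim G - \dim(G\dact w_{\mathrm{gen}})$ yields $\dim C_2 = \dim O_2 \le \dim C_1$, as desired. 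I would present this by first reducing to irreducible $C_1, C_2$, then running the two fibre-dimension computations for $Z \to O_2$ and $Z \to C_1$ in parallel and subtracting the shared fibre-dimension term; the technical care needed to ensure the "generic orbit dimension" is the same from both sides (which follows from upper semicontinuity of orbit dimension and genericity) is the delicate point.
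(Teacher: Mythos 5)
Your high-level strategy coincides with the paper's: form the action morphism $a : G\times C_1 \to V$, restrict to $Z := a^{-1}(O_2)$, map to $O_2$ on one side and to $C_1$ on the other, and compare generic fibre dimensions via the fibre dimension theorem, after reducing $C_2$ to an irreducible component. The gap lies in the central dimension count for the fibres of $\phi : Z \to O_2$. You assert that the generic $\phi$-fibre ``being (finitely many) stabilizer cosets, has dimension $\dim G - \dim(G\dact w_{\mathrm{gen}})$.'' This is false. As you yourself observe, $\phi^{-1}(v)$ identifies with $\{g \in G : g^{-1}\dact v \in C_1\}$, which is the preimage of $C_1 \cap G\dact v$ under the orbit map $G \to G\dact v$. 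Hypothesis (b) bounds $O_2 \cap G\dact v$, \emph{not} $C_1 \cap G\dact v$; the latter can be positive-dimensional, so $\phi^{-1}(v)$ is \emph{not} a finite union of stabilizer cosets, and its dimension is $\dim G - \dim(G\dact v) + \dim\bigl(C_1 \cap G\dact v\bigr)$, not the quantity you claim.

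In fact, once you pass to an irreducible component $Y$ of $Z$ dominating $C_2$ (a reduction you gesture at but do not carry out, yet which is required before the fibre dimension theorem applies), the needed inequality is $\dim\bigl(\lambda^{-1}(\lambda(y))\bigr) \le \dim\bigl(\psi^{-1}(\psi(y))\bigr)$ for generic $y \in Y$, where $\psi,\lambda$ are the restrictions to $Y$ of the two projections. Your attempted ``equality of fibre dimensions'' does not establish this: the $\psi$-fibre in $Y$ can be strictly smaller than the $\psi$-fibre in $Z$, so lower-bounding it by a stabilizer coset in $G$ is not automatic. What the paper uses instead is the set-theoretic containment $\lambda^{-1}(\lambda(g,a)) \cap \psi^{-1}(O_2) \subseteq \psi^{-1}(O_2 \cap G\dact a)$: a point of the $\lambda$-fibre over $a$ lying in $\psi^{-1}(O_2)$ has the form $(h,a)$ with $h\dact a \in O_2 \cap G\dact a$, which by (b) is a finite set, so the $\lambda$-fibre (suitably restricted) sits inside finitely many $\psi$-fibres. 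Combined with the equidimensionality furnished by the generic fibre dimension theorem on a suitable dense open $O_1 \subseteq Y$, this gives $\dim(\lambda\text{-fibre}) \le \dim(\psi\text{-fibre})$ directly, without computing either dimension in terms of orbits. This inclusion is the step your argument is missing, and it is precisely where (b) does its real work.
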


\begin{proof} Let $X_1, \ldots, X_n$ be the irreducible components of $C_2$. The assumption $O_2\cap X_j = \emptyset$ implies $O_2 \subseteq \bigcup_{i\ne j} X_i$, whence $C_2 = \bar{O}_2 \subseteq \bigcup_{i\ne j} X_i$, a contradiction. As a result, $O_2\cap X_i$ is a dense open subset of $X_i$ for every $i \in \{1,\ldots,n\}$, and we may assume $C_2$ to be irreducible.

Let $X$ be the inverse image of $C_2$ under the multiplication $G\times C_1 \stackrel{\mu}{\lra}\overline{G\dact C_1}$. By virtue of (a), $\im \mu \cap C_2$ contains a dense open subset of $C_2$, implying that the restriction $\varphi : X \lra C_2$ of $\mu$ is a dominant morphism. As $C_2$ is irreducible, there exists an irreducible component $Y \subseteq X$ such that $\overline{\varphi(Y)}=C_2$. We denote the induced dominant morphism by $\psi : Y \lra C_2$.

The natural projection $G \times C_1 \lra C_1$ induces a map $Y \lra C_1$, whose image has closure $W$. There results a dominant morphism $\lambda : Y \lra W$ of irreducible varieties. A twofold application of the generic fiber dimension theorem \cite[(I,\S8,Cor.1)]{Mu} provides a dense open subset $O_1\subseteq Y$ such that
\[ \dim Y\!-\!\dim C_2 = \dim \psi^{-1}(\psi(x)) \ \ \ \text{and} \ \ \  \dim Y\!-\!\dim W = \dim \lambda^{-1}(\lambda(x)) \ \ \ \ \forall \ x \in O_1.\]
By the same token, the fibers $\psi^{-1}(\psi(x))$ and $\lambda^{-1}(\lambda(x))$ are equidimensional for every $x \in O_1$. Since $\psi$ is dominant, $\tilde{O} := \psi^{-1}(O_2)$ is a dense, open subset of $Y$.

Let $(g,a) \in \tilde{O} $. For $(h,b) \in \lambda^{-1}(\lambda(g,a))\cap \tilde{O}$ we have $a=b$ as well as $g.a, \, h.a \in O_2$. Thus, $(h,b) \in \psi^{-1}(O_2\cap G\dact a)$, so that
\[ (\ast) \ \ \ \ \ \ \ \ \lambda^{-1}(\lambda(g,a))\cap \tilde{O} \subseteq  \psi^{-1}(O_2\cap G\dact a) \ \ \ \ \ \ \ \ \forall \ (g,a) \in \tilde{O}.\]
Since $Y$ is irreducible, $O := \tilde{O} \cap O_1$ is a dense open subset of $Y$. Given $x \in  \psi^{-1}(O_2\cap G\dact a) \cap O$, we have $x \in \psi^{-1}(\psi(x))$, along with $\psi(x) \in \psi(O) \cap O_2\cap G\dact a$. Thanks to property (b), there exists a finite subset $\cF \subseteq O$ such that
\[  \psi^{-1}(O_2\cap G\dact a) \cap O \subseteq \bigcup_{x \in \cF} \psi^{-1}(\psi(x)).\]
Now let $(g,a) \in O$, and consider an irreducible component $Z \subseteq \lambda^{-1}(\lambda(g,a))$ containing $(g,a)$. Observing equidimensionality as well as $\cF \cup \{(g,a)\} \subseteq O_1$, we obtain
\begin{eqnarray*}
\dim \lambda^{-1}(\lambda(g,a)) & = & \dim Z = \dim Z\cap O \le \dim \psi^{-1}(O_2\cap G \dact a)\cap O\\
& \le & \max_{x \in \cF} \dim \psi^{-1}(\psi(x)) =  \dim \psi^{-1}(\psi(g,a)),
\end{eqnarray*}
where the first inequality follows from ($\ast$). There thus results the estimate
\[ \dim C_2 = \dim Y\! -\! \dim \psi^{-1}(\psi(g,a)) \le \dim Y\! - \!\dim \lambda^{-1}(\lambda(g,a)) \le \dim W \le \dim C_1,\]
as desired. \end{proof}

\bigskip
\noindent
Let $\cC \subseteq \modd_\Lambda$ be a full subcategory such that
\begin{enumerate}
\item[(a)] $\cC$ is closed under direct sums and direct summands, and
\item[(b)] $\cC$ is closed under isomorphisms. \end{enumerate}
Thus, every object in $\cC$ decomposes into its indecomposable constituents and the subset $\cC^d \subseteq \modd^d_\Lambda$, consisting of the module structures yielding objects of $\cC$, is $\GL_d(k)$-invariant. For every $d \in \NN$, we let $C^d \subseteq \modd_\Lambda^d$ be a closed subset of minimal dimension subject to $\ind_\Lambda^d\cap\cC^d \subseteq \GL_d(k)\dact C^d$.
Given $n \in \NN$, we say that $\cC$ {\it requires at least $n$ parameters}, provided $\dim C^d \ge n$ for some $d \ge 1$. If the sequence $(\dim C^d)_{d\ge 1}$ is unbounded, then $\cC$ is
said to {\it require infinitely many parameters}.

\bigskip

\begin{Lem} \label{RTEI4} If $\cC$ is tame, then $\dim C^d \le 1$ for every $d \in \NN$. \end{Lem}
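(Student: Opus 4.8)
The plan is to unwind the definitions of tameness and of the invariants $C^d$ and then apply the geometric Lemma \ref{RTEI3}. Fix $d \in \NN$. By the definition of a tame category, there are finitely many $(\Lambda,k[X])$-bimodules $V_1,\dots,V_{n_d}$, each free as a right $k[X]$-module, such that every indecomposable $M \in \cC$ of dimension $d$ is isomorphic to $V_i \otimes_{k[X]} k_\lambda$ for some $i$ and some $\lambda : k[X] \lra k$. Since $\Spec k[X] \cong \A^1$, for each $i$ the assignment $\lambda \mapsto V_i \otimes_{k[X]} k_\lambda$ determines a morphism $\psi_i : \A^1 \lra \modd_\Lambda^d$ (the right $k[X]$-freeness ensures that the resulting $k$-vector space has the correct dimension $d$, and the $\Lambda$-action depends polynomially on the parameter, so this is indeed a morphism of affine varieties; this is the one spot where a small routine verification is needed). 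Let $D := \overline{\bigcup_{i=1}^{n_d} \im \psi_i} \subseteq \modd_\Lambda^d$; this is a closed subset of dimension $\le 1$, and by construction $\ind_\Lambda^d \cap \cC^d \subseteq \GL_d(k) \dact D$.

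Next I would compare $D$ with the chosen closed subset $C^d$ of minimal dimension satisfying $\ind_\Lambda^d \cap \cC^d \subseteq \GL_d(k)\dact C^d$. By minimality of $C^d$ we immediately get $\dim C^d \le \dim D \le 1$, which is exactly the assertion. Strictly speaking one should check that $D$ itself is an admissible competitor, i.e.\ that $\ind_\Lambda^d \cap \cC^d \subseteq \GL_d(k) \dact D$: this holds because $\cC$ is closed under isomorphisms, so $\cC^d$ is $\GL_d(k)$-invariant, and every point of $\ind_\Lambda^d \cap \cC^d$ represents a module isomorphic to some $V_i \otimes_{k[X]} k_\lambda$, hence lies in the $\GL_d(k)$-orbit of a point of $\im\psi_i \subseteq D$. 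Thus $\dim C^d \le 1$.

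If one prefers to route the argument through Lemma \ref{RTEI3} rather than directly through minimality of $C^d$ (for instance to keep the exposition uniform with the surrounding results), one applies that lemma with $G = \GL_d(k)$ acting on $V = \modd_\Lambda^d$, with $C_1 = D$ and $C_2 = C^d$, taking $O_2$ to be a dense open subset of $C^d$ contained in $\ind_\Lambda^d \cap \cC^d$; condition (a) of Lemma \ref{RTEI3} is the inclusion just verified, and condition (b) holds because the orbit of any representation meets $O_2$ in a single isomorphism class, which accounts for at most finitely many module structures by Krull–Remak–Schmidt (an orbit is a single isomorphism class, hence a single point of the orbit space). Either way the conclusion is $\dim C^d \le \dim D \le 1$.

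The only genuine obstacle is the bookkeeping in the first paragraph: confirming that $\lambda \mapsto V_i \otimes_{k[X]} k_\lambda$ is a morphism of varieties into a \emph{fixed} $\modd_\Lambda^d$, which requires trivializing the right $k[X]$-module $V_i$ (using freeness) so that the left $\Lambda$-action is given by matrices with polynomial entries in the parameter. Once that is in place, the dimension count is immediate from the definition of $C^d$.
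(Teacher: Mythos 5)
Your first paragraph gives a correct and complete argument, and it is exactly the standard one: trivialize each $V_i$ as a free right $k[X]$-module (necessarily of rank $d$, since otherwise $V_i\otimes_{k[X]}k_\lambda$ could never realize a $d$-dimensional module and the bimodule can be discarded), so that the left $\Lambda$-action is given by matrices with entries in $k[X]$, evaluation at $\lambda$ gives a morphism $\psi_i\colon \A^1\lra \modd_\Lambda^d$, and then $D := \overline{\bigcup_i\im\psi_i}$ is a closed competitor of dimension $\le 1$; by minimality of $C^d$ one concludes $\dim C^d\le 1$. The paper itself only cites \cite[(2.1)]{dP} here, and what you wrote is a perfectly reasonable fleshing-out of that citation.

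The second paragraph, however, contains a genuine error, and it is fortunate that you don't actually need it. You propose to apply Lemma \ref{RTEI3} with $C_2 = C^d$ and claim condition (b) holds because ``an orbit is a single isomorphism class, hence a single point of the orbit space.'' But condition (b) requires $|O_2\cap G\dact v|<\infty$ as a set of \emph{points of the variety} $\modd_\Lambda^d$, not as a set of isomorphism classes. The orbit $G\dact v$ is typically positive-dimensional, and there is no reason it should meet the locally closed subset $O_2$ in only finitely many points. (There is also no reason that a dense open subset of $C^d$ should consist of points of $\ind^d_\Lambda\cap\cC^d$.) Lemma \ref{RTEI3} is used elsewhere in the paper (in the proof of Proposition \ref{RTEI5}), where the hypothesis (b) is supplied by the rigidity statement \cite[(4.8)]{CFS}; here no such input is available, and none is needed, since minimality of $C^d$ already finishes the job. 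You should simply delete the detour through Lemma \ref{RTEI3}.
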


\begin{proof} This follows as in \cite[(2.1)]{dP}. \end{proof}

\bigskip
\noindent
If $\cC = \modd_\Lambda$, then Drozd's tame-wild theorem \cite{Dr,CB} ensures that $\cC$ not being tame is equivalent to $\cC$ being wild, that is, $\cC$ fulfills the usual condition involving $(\Lambda,k\langle x,y\rangle)$-bimodules. This implies in particular that a classification of the indecomposable objects of $\cC$ is hopeless. In view of \cite[Thm. 2]{DrG}, the tame-wild dichotomy also holds if $\cC \subseteq \modd_\Lambda$ is an open subcategory. For arbitrary $\cC$, the presence of two-parameter families of indecomposables does not necessarily imply that there is no reasonable classification of the indecomposable objects.

We are now in a position to prove a criterion for certain subcategories $\cC \subseteq \modd_\Lambda$ not be tame. Let $\Lambda$ be a finite-dimensional algebra. A full subcategory $\cC \subseteq \modd_\Lambda$ which is closed under direct sums and images of all $\Lambda$-linear maps is called {\it image-closed}. Note that such a subcategory is also closed under direct summands.

\bigskip

\begin{Prop} \label{RTEI5} Let $\Lambda$ be local and suppose that $\cC \subseteq \modd_\Lambda$ is image-closed. Let $M \in \cC$ be an object such that
\begin{enumerate}
\item[(a)] there exists a submodule $(0) \subsetneq \Gamma \subsetneq \Soc(M)$ such that $M/U_f$ is indecomposable for every $f \in \Hom_k(\Soc(M)/\Gamma,\Gamma)$, and
\item[(b)] if $U,V \subseteq \Soc(M)$ are submodules such that $M/U \cong M/V$, then $U=V$. \end{enumerate}
Then $\cC$ requires at least $\dim_k\Hom_k(\Soc(M)/\Gamma,\Gamma)$ parameters. \end{Prop}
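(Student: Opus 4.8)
The plan is to feed the morphism $\rho_\Gamma$ of Proposition \ref{RTEI2} into Lemma \ref{RTEI3}. Set $d:=\dim_k\Gamma+\dim_kM/\Soc(M)$, so that $\rho_\Gamma$ is a morphism from the affine space $\Hom_k(\Soc(M)/\Gamma,\Gamma)$ to $\modd_\Lambda^d$ carrying $f$ to a module structure isomorphic to $M/U_f$. The point is that $\rho_\Gamma$ parametrizes a $\dim_k\Hom_k(\Soc(M)/\Gamma,\Gamma)$-dimensional family of pairwise non-isomorphic indecomposable objects of $\cC$, which forces the closed set $C^d$ to be at least that large.

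First I would verify that $\im\rho_\Gamma\subseteq\ind_\Lambda^d\cap\cC^d$. For every $f$ the module $M/U_f$ is the image of $M$ under the canonical projection, so $M/U_f\in\cC$ because $\cC$ is image-closed; by hypothesis (a) it is moreover indecomposable. Hence $\im\rho_\Gamma\subseteq\ind_\Lambda^d\cap\cC^d\subseteq\GL_d(k)\dact C^d$, the last inclusion being the defining property of $C^d$. Next I would observe that $\rho_\Gamma$ separates isomorphism classes. The assignment $f\mapsto U_f$ is injective, since $U_f$ is a complement to $\Gamma$ in $\Soc(M)$ and the unique element of $U_f$ lying over a given $\bar w\in\Soc(M)/\Gamma$ is $(f+s_\Gamma)(\bar w)$, which recovers $f(\bar w)$. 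Combined with hypothesis (b), $M/U_f\cong M/U_g$ implies $U_f=U_g$, hence $f=g$. In particular, for each $v\in\modd_\Lambda^d$ the isomorphism class $\GL_d(k)\dact v$ meets $\im\rho_\Gamma$ in at most one point.

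Now I would apply Lemma \ref{RTEI3} with $G=\GL_d(k)$ acting on $V=\modd_\Lambda^d$, with $C_1:=C^d$ (closed by definition) and $C_2:=\overline{\im\rho_\Gamma}$ (closed and irreducible). Since $\rho_\Gamma$ is a morphism, $\im\rho_\Gamma$ is constructible and therefore contains a subset $O_2$ that is open and dense in $C_2$. Condition (a) of the lemma holds because $O_2\subseteq\im\rho_\Gamma\subseteq\GL_d(k)\dact C^d$, and condition (b) holds because $O_2\cap(\GL_d(k)\dact v)\subseteq\im\rho_\Gamma\cap(\GL_d(k)\dact v)$ has at most one element for every $v$. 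The lemma then gives $\dim\overline{\im\rho_\Gamma}\le\dim C^d$. To finish, note that $\rho_\Gamma$ is an injective morphism, hence dominant onto $\overline{\im\rho_\Gamma}$ with generic fibre a single point; the fibre-dimension theorem yields $\dim\overline{\im\rho_\Gamma}=\dim\Hom_k(\Soc(M)/\Gamma,\Gamma)=\dim_k\Hom_k(\Soc(M)/\Gamma,\Gamma)$. As $\Gamma$ is nonzero and proper, this number is $\ge 1$, so $\dim C^d\ge\dim_k\Hom_k(\Soc(M)/\Gamma,\Gamma)$ with $d\ge 1$, which is exactly the assertion.

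The only steps requiring genuine care are the verification of condition (b) of Lemma \ref{RTEI3} — this is precisely where hypothesis (b) on $M$ is used, via the injectivity of $f\mapsto U_f$ — and the final dimension count, where it is injectivity of $\rho_\Gamma$ (rather than mere finiteness of fibres) that produces the equality $\dim\overline{\im\rho_\Gamma}=\dim_k\Hom_k(\Soc(M)/\Gamma,\Gamma)$ instead of only an inequality. The remainder is bookkeeping with the definitions of $\cC^d$, $\ind_\Lambda^d$ and $C^d$ together with Proposition \ref{RTEI2}.
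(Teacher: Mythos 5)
Your proposal is correct and follows essentially the same route as the paper: set $d=\dim_k\Gamma+\dim_kM/\Soc(M)$, feed the morphism $\rho_\Gamma$ of Proposition \ref{RTEI2} into Lemma \ref{RTEI3} with $C_1=C^d$, $C_2=\overline{\im\rho_\Gamma}$, use image-closedness and hypothesis (a) to land in $\ind_\Lambda^d\cap\cC^d\subseteq\GL_d(k)\dact C^d$, and use hypothesis (b) together with the injectivity of $f\mapsto U_f$ to verify the finite-fibre condition of the lemma. The only (harmless) difference is that you record the final dimension count as an equality $\dim\overline{\im\rho_\Gamma}=\dim_k\Hom_k(\Soc(M)/\Gamma,\Gamma)$ (via injectivity of $\rho_\Gamma$ and the fibre-dimension theorem), whereas the paper is content with the inequality needed for the bound.
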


\begin{proof} We put $d:= \dim_k\Gamma\!+\!\dim_kM/\Soc(M)$. According to Proposition \ref{RTEI2}, the map
\[ \rho_\Gamma : \Hom_k(\Soc(M)/\Gamma,\Gamma) \lra \modd^d_\Lambda \ \ ; \ \ f \mapsto M/U_f\]
is a morphism of affine varieties. Since $\cC$ is image-closed and $M \in \cC$, condition (a) entails $\im \rho_\Gamma \subseteq \cC^d\cap \ind_\Lambda^d$. Let $C^d \subseteq \modd^d_\Lambda$ be a closed subset of minimal dimension subject to $\cC^d\cap \ind_\Lambda^d \subseteq \GL_d(k)\dact C^d$. Setting $C_1:= C^d$ and $C_2 := \overline{\im \rho}_\Gamma$, we obtain
\[ C_2 \subseteq \overline{\GL_d(k)\dact C_1}.\]
By Chevalley's theorem (cf.\ \cite[(10.19)]{GW}), $\im \rho_\Gamma$ contains a dense open subset $O_2$ of $C_2$. As $\im \rho_\Gamma \subseteq C_2\cap\GL_d(k)\dact C_1$, we have $O_2\subseteq C_2\cap \GL_d(k)\dact C_1$. It now readily follows from (b) and Lemma \ref{RTEI3} that $\dim C_2 \le \dim C_1$. By (b), the assumption $M/U_f=M/U_g$ implies $U_f=U_g$, whence $f=g$. Consequently, (a) implies
\[\dim C_1 \ge \dim C_2 \ge \dim_k\Hom_k(\Soc(M)/\Gamma,\Gamma),\]
so that $\cC$ requires the asserted number of parameters. \end{proof}

\bigskip

\begin{Thm}\label{RTEI6} Suppose that $p\ge 3$. Then the image-closed subcategory $\EIP(\ZZ/(p)\!\times\!\ZZ/(p))$ requires infinitely many parameters. In particular, $\EIP(\ZZ/(p)\!\times\!\ZZ/(p))$
is not tame. \end{Thm}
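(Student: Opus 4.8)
The plan is to apply Proposition~\ref{RTEI5} with $\Lambda := kG = k[X,Y]/(X^p,Y^p)$, which is local, and with $\cC := \EIP(G)$. The latter is image-closed: by Lemma~\ref{EIPr2} it is closed under finite direct sums and under images of $kG$-linear maps (hence also under direct summands). It therefore suffices to exhibit, for arbitrarily large $n$, an object $M_n \in \EIP(G)$ together with a submodule $(0)\subsetneq\Gamma_n\subsetneq\Soc(M_n)$ satisfying conditions (a) and (b) of Proposition~\ref{RTEI5}, in such a way that $\dim_k\Hom_k(\Soc(M_n)/\Gamma_n,\Gamma_n)\to\infty$; by Proposition~\ref{RTEI5} this forces $(\dim C^m)_{m\ge 1}$ to be unbounded, i.e.\ $\EIP(G)$ requires infinitely many parameters, and then Lemma~\ref{RTEI4} (tameness forces $\dim C^m\le 1$ for all $m$) gives that $\EIP(G)$ is not tame.

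For $M_n$ I would take the equal images module $W_{n,d}$ of \cite[\S2]{CFS}, where $d$ is any fixed integer with $3\le d\le p$ -- such $d$ exists precisely because $p\ge 3$, and this is where the hypothesis enters (for $p=2$ the Loewy length is $\le 2$ and one is in the tame Kronecker situation, cf.\ Lemma~\ref{EIPr1}). Recall that $W_{n,d}\in\EIP(G)_d\subseteq\EIP(G)$. A direct inspection of the defining relations of $W_{n,d}$ shows that $\{x^{d-1}v_i \ ;\ 2\le i\le n\}$ is a $k$-basis of $\Soc(W_{n,d})$, so $\dim_k\Soc(W_{n,d})=n-1$; choosing $\Gamma_n\subseteq\Soc(W_{n,d})$ with $\dim_k\Gamma_n=\lfloor(n-1)/2\rfloor$ yields $\dim_k\Hom_k(\Soc(W_{n,d})/\Gamma_n,\Gamma_n)=\lfloor(n-1)/2\rfloor\cdot\lceil(n-1)/2\rceil\to\infty$ as $n\to\infty$, while $\dim_k(W_{n,d}/U_f)$ is strictly increasing in $n$.

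The technical heart is the verification of (a) and (b) for $M=W_{n,d}$, carried out by a hands-on analysis of its module structure. For (a): since $d\ge 3$, for every submodule $U\subsetneq\Soc(W_{n,d})$ the elements $xv_{i+1}$ ($1\le i\le n-1$) do not lie in $\Soc(W_{n,d})$, because $x^2 v_{i+1}\ne 0$; hence $xv_{i+1}\notin U$, and the relations $y\bar v_i=x\bar v_{i+1}$ survive as nonzero elements of $\Rad(W_{n,d}/U)/\Rad^2(W_{n,d}/U)$. Passing to the $\Rad/\Rad^2$-level, where $\{\overline{xv_i}\ ;\ 2\le i\le n\}$ is a basis and $x$, $y$ act on the top basis $\bar v_1,\ldots,\bar v_n$ by $x\bar v_i=\overline{xv_i}$ and $y\bar v_i=\overline{xv_{i+1}}$, one sees that these surviving relations link consecutive generators $\bar v_i,\bar v_{i+1}$; if $W_{n,d}/U=A\oplus B$ were a nontrivial decomposition, the corresponding shared radical-layer element would have to lie in both the $A$- and the $B$-part, forcing all $\bar v_i$ into a single summand and hence $W_{n,d}/U$ indecomposable. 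For (b): given submodules $U,V\subseteq\Soc(W_{n,d})$ with $W_{n,d}/U\cong W_{n,d}/V$, one lifts the isomorphism to an automorphism of the common projective cover $(kG)^n$ (the quotient maps $W_{n,d}\twoheadrightarrow W_{n,d}/U$ and $W_{n,d}\twoheadrightarrow W_{n,d}/V$ are top-preserving since $U,V\subseteq\Soc\subseteq\Rad$) and tracks, layer by layer, how $U$ is encoded in $W_{n,d}/U$ -- concretely via the way the ``socle chain'' $x^{d-1}v_2,\ldots,x^{d-1}v_n$ becomes linearly dependent in the quotient -- so as to recover $U$ from the isomorphism class, whence $U=V$.

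I expect the verification of (b) to be the main obstacle. It is a genuine rigidity statement about the socle-quotients of $W_{n,d}$, complicated by the fact that $\Soc(W_{n,d}/U)$ is in general strictly larger than $\Soc(W_{n,d})/U$: new socle elements of the form (the image of) $x^{d-2}v_i$ appear as soon as a pair $\{x^{d-1}v_i, x^{d-1}v_{i+1}\}$ lies in $U$, since $(x,y)\cdot x^{d-2}v_i\subseteq\langle x^{d-1}v_i, x^{d-1}v_{i+1}\rangle$. Consequently the naive invariant $\Soc(W_{n,d}/U)$ does not directly recover $U$, and one must combine a more careful bookkeeping of the radical layers of $W_{n,d}/U$ with the explicit (essentially scalar) description of $\End(W_{n,d})$ to pin $U$ down. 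Once (a) and (b) are established, Proposition~\ref{RTEI5} yields $\dim C^{m_n}\ge\lfloor(n-1)^2/4\rfloor$ for $m_n:=\dim_k W_{n,d}-\lfloor(n-1)/2\rfloor$; since the $m_n$ are pairwise distinct and $\dim C^{m_n}\to\infty$, the sequence $(\dim C^m)_{m\ge 1}$ is unbounded, so $\EIP(\ZZ/(p)\!\times\!\ZZ/(p))$ requires infinitely many parameters and, by Lemma~\ref{RTEI4}, is not tame.
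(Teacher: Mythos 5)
Your overall strategy coincides with the paper's: take $\Lambda=kG$, $\cC=\EIP(G)$, and feed $M=W_{n,d}$ (for a fixed $d$ with $3\le d\le p$) into Proposition~\ref{RTEI5}. You correctly identify that $p\ge 3$ is needed to have such a $d$, and the conclusion via Lemma~\ref{RTEI4} is handled the same way.

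There are, however, genuine gaps, the most serious of which is condition (b) of Proposition~\ref{RTEI5}. You explicitly flag this as ``the main obstacle'' and only sketch a heuristic for why it might hold; you do not establish it. The paper does not prove (b) from scratch either --- it is a direct citation of \cite[(4.8)]{CFS}, which is precisely the rigidity statement that submodules $U\subseteq\Soc(W_{n,d})$ are recovered from the isomorphism class of $W_{n,d}/U$. Without this ingredient (either cited or reproved), the argument is incomplete, and your own discussion --- correctly observing that $\Soc(W_{n,d}/U)$ can be strictly larger than $\Soc(W_{n,d})/U$, so that the naive socle invariant does not detect $U$ --- shows you are aware the sketch is not a proof.

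Your verification of (a) is also not tight. The paper argues via Jordan types: from $U_f\subseteq\Rad^{d-1}(M)$ one computes $\Jt(M/U_f)=\bigoplus_{i=1}^{d-2}[i]\oplus 2[d\!-\!1]\oplus(n\!-\!d)[d]$ using \cite[(1.7)]{CFS}, and then indecomposability follows from the classification of possible Jordan types of nonzero equal images modules, \cite[(2.3)]{CFS}: any nontrivial decomposition would force one summand to have $a_1=0$, which is impossible. Your radical-layer argument, while pointing in a plausible direction, does not pin down why no nontrivial splitting can occur and would have to be reworked along these lines.

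Finally, a factual slip: $\dim_k\Soc(W_{n,d})=n-d+1$, not $n-1$. The $x^{d-1}v_i$ for $2\le i\le d-1$ vanish (using $x^{d-1}v_j=x^{d-j}y^{j-1}v_1=0$ for $j\le d-1$ since $xv_1=0$), so a basis of the socle is $\{x^{d-1}v_j\ ;\ d\le j\le n\}$. This does not break the asymptotics, but it changes your dimension bookkeeping. Relatedly, the paper takes $\Gamma$ of codimension $1$ in $\Soc(M)$ (yielding $n-d$ parameters) rather than a ``middle'' subspace; both choices tend to infinity, but the paper's Jordan type computation for $M/U_f$ is explicitly carried out for $\dim_kU_f=1$ (the general case, with $\dim_kU_f$ arbitrary, is only stated in the closing Remark of \S4.2), so if you insist on a higher-codimension $\Gamma$ you must redo that computation.
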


\begin{proof} Let $3 \le d \le p$ and $n\ge d\!+\!2$. According to \cite[(4.8)]{CFS}, the equal images module $M:=W_{n,d}$ meets condition (b) of Proposition \ref{RTEI5}. Moreover, $\Soc(M)=\Rad^{d-1}(M)$ and $M$ has constant Jordan type $\Jt(M)=\bigoplus_{i=1}^{d-1}[i]\oplus (n\!-\!d\!+\!1)[d]$, while $\dim_k\Soc(M)=n\!-\!d\!+\!1 \ge 3$, see \cite[(2.3)]{CFS}.

Let $\Gamma \subseteq \Soc(M)$ be a submodule of dimension $\dim_k\Gamma = \dim_k\Soc(M)\!-\!1$, so that
\[\dim_k \Hom_k(\Soc(M)/\Gamma,\Gamma)=\dim_k\Gamma= n\!-\!d.\]
We shall show that $\Gamma$ satisfies condition (a) of (\ref{RTEI5}). Given $f \in \Hom_k(\Soc(M)/\Gamma,
\Gamma)$, we write $\Jt(M)=\bigoplus_{i=1}^pa_i[i]$ and $\Jt(M/U_f)= \bigoplus_{i=1}^p\bar{a}_i[i]$. Let $\alpha: \fA_{p,k} \lra k(\ZZ/(p)\!\times\!\ZZ/(p))$ be a $\pi$-point of $\ZZ/(p)\!\times\!\ZZ/(p)$. The coefficients $a_i$ are determined by
\[ \dim_k \im \alpha(t)^i_M-\dim_k\im\alpha(t)^{i+1}_M = \sum_{j=i+1}^pa_j\]
for $0\le i \le p\!-\!1$. In view of $U_f \subseteq \Rad^{d-1}(M)=\im\alpha(t)^{d-1}_M$ (cf.\ \cite[(1.7)]{CFS}), we have $\im \alpha(t)^i_{M/U_f} = \im\alpha(t)^i_M/U_f$ for $0\le i \le d\!-\!1$, while $\im \alpha(t)^i_{M/U_f} = (0) = \im\alpha(t)^i_M$ for
$i\ge d$. This readily implies $\bar{a}_i = a_i=0$ for $i\ge d\!+\!1$, $\bar{a}_d=a_d\!-\!1$, $\bar{a}_{d-1} = a_{d-1}\!+\!1$, as well as $\bar{a}_i=a_i$ for $1\le i \le d\!-\!2$. Consequently,
\[\Jt(M/U_f) = \bigoplus_{i=1}^{d-2}\, [i]\oplus 2[d\!-\!1]\oplus (n\!-\!d)[d].\]
Suppose that $M/U_f = X\oplus Y$. Then $X,Y \in \EIP(\ZZ/(p)\!\times\!\ZZ/(p))$, and we have $\Jt(M/U_f)\cong \Jt(X)\oplus \Jt(Y)$. If $X \ne (0)$, then \cite[(2.3)]{CFS} provides $1 \le \ell \le d$ and
$b_1,\ldots , b_\ell \in \NN$ such that $\Jt(X)=\bigoplus_{i=1}^\ell b_i[i]$. Since $d\!-\!2 \ge 1$, we obtain $\Jt(Y) = \bigoplus_{i=2}^d c_i[i]$, so that another application of \cite[(2.3)]{CFS} gives
$Y=(0)$. As a result, the $k(\ZZ/(p)\!\times\!\ZZ/(p))$-module $M/U_f$ is indecomposable.

Let $\ell:= \dim_kW_{n,d}\!-\!1$ and let $C^\ell \subseteq \modd^\ell_{k(\ZZ/(p)\times\ZZ/(p))}$ be a closed subset of minimal dimension subject to ${\rm ind}^\ell_{k(\ZZ/p)\times\ZZ/(p))}\cap \EIP(\ZZ/(p)\!\times\!\ZZ/(p)) \subseteq \GL_\ell(k)\dact C^\ell$. It now follows from Proposition \ref{RTEI5} that $\dim C^\ell \ge n\!-\!d$. As a result, $\EIP(\ZZ/(p)\!\times\!\ZZ/(p))$ requires infinitely many parameters. In view of Lemma \ref{RTEI4} this shows in particular that the category $\EIP(\ZZ/(p)\!\times\!\ZZ/(p))$ is not tame. \end{proof}

\bigskip

\begin{Remark} Let $d \ge 3$, $U \subseteq \Soc(W_{n,d})$ be a submodule. The above arguments show that the equal images module $W_{n,d}/U$ is indecomposable of constant Jordan type
\[\Jt(W_{n,d}/U) = \bigoplus_{i=1}^{d-2}[i] \oplus (\dim_kU\!+\!1)[d\!-\!1]\oplus (n\!-\!d\!+\!1\!-\!\dim_kU)[d].\] \end{Remark}

\bigskip

\subsection{Equal images modules for Frobenius kernels}
Throughout this section, $\cG$ denotes a finite group scheme, defined over an algebraically closed field $k$ of characteristic $p>0$. Particular cases of interest are given by the Frobenius kernels $(G_r)_{r\ge 1}$ of a reduced (smooth) group scheme $G$.

Given a $\cG$-module $M$, we denote by $Z_\cG(M)$ the kernel (centralizer) of the representation $\varrho : \cG \lra \GL(M)$, see \cite[(I.2.12)]{Ja2}. Let $\cN \unlhd \cG$ be a normal subgroup. In the sequel, we shall identify $\modd (\cG/\cN)$ with the full subcategory of $\modd \cG$, whose objects satisfy $\cN \subseteq Z_\cG(M)$.

Recall that $k[\GG_{a(r)}] = k[X]/(X^{p^r})$, and let $u_0, \ldots, u_{r-1} \in k\GG_{a(r)}$ be the elements such that $u_i(x^j) = \delta_{p^i,j}$, where $x:= X\!+\!(X^{p^r})$. Then we have $k\GG_{a(r)} =k[u_0,\ldots,u_{r-1}]$.

Suppose that $\cG$ is an infinitesimal group of height $r$. In \cite[\S6]{SFB2}, the authors introduce the {\it rank variety} $V_r(\cG)_M$ of a $\cG$-module $M$. By definition,
\[ V_r(\cG)_M = \{ \varphi \in \Hom(\GG_{a(r)},\cG) \ ; \ M|_{k[u_{r-1}]} \ \text{is not projective}\}.\]
Here, $M|_{k[u_{r-1}]}$ denotes the restriction to $k[u_{r-1}]$ of the pull-back $\varphi^\ast(M) \in \modd k\GG_{a(r)}$ of the Hopf algebra homomorphism $\varphi : k\GG_{a(r)} \lra k\cG$ corresponding to $\varphi$.

\bigskip

\begin{Lem} \label{EIFK1} Let $\cG$ be a finite group scheme, $M$ be a $\cG$-module.
\begin{enumerate}
\item The $\cG$-module $M$ has constant Jordan type $\Jt(M) = (\dim_kM)[1]$ if and only if $\cG_\pi \subseteq Z_\cG(M)$.
\item If the subgroup $\cG_\pi \subseteq \cG$ generated by all quasi-elementary subgroups is normal in $\cG$, then $M$ has constant Jordan type $\Jt(M) = (\dim_kM)[1]$ if and only if $M \in \modd(\cG/\cG_\pi)$.\end{enumerate} \end{Lem}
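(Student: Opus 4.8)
The plan is to treat (1) as the substantive statement and to get (2) for free: once $\cG_\pi$ is normal, $\modd(\cG/\cG_\pi)$ has been identified above with the full subcategory of $\modd\cG$ consisting of the $M$ with $\cG_\pi\subseteq Z_\cG(M)$, so (2) is literally (1) under that hypothesis. For (1) I would first reformulate both sides. A nilpotent operator all of whose Jordan blocks have size one is zero, so $\Jt(M)=(\dim_kM)[1]$ holds exactly when $\alpha_K(t)_{M_K}=0$ for every $\alpha_K\in\Pt(\cG)$; and since $Z_\cG(M)=\ker(\varrho\colon\cG\to\GL(M))$ is a closed subgroup while $\cG_\pi$ is generated by its quasi-elementary subgroups, $\cG_\pi\subseteq Z_\cG(M)$ holds exactly when every quasi-elementary $\cE\subseteq\cG$ annihilates $M$. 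Thus (1) reduces to: all $\pi$-points annihilate $M$ $\iff$ all quasi-elementary subgroups act trivially on $M$.

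For $(\Rightarrow)$, let $\cE=\GG_{a(r)}\times E\subseteq\cG$ be quasi-elementary, so $k\cE=k\GG_{a(r)}\otimes_k kE$. The augmentation ideal of $k\cE$ is generated, as an ideal of $k\cE$, by the canonical generators $u_0,\dots,u_{r-1}$ of $k\GG_{a(r)}$ together with the elements $g-1$, $g\in E$. Each of these is the value $\beta(t)$ of a genuine $\pi$-point $\beta$ of $\cE$: the map $\fA_p\to k\GG_{a(r)}$, $t\mapsto u_i$, is flat (a $k[u_i]/(u_i^p)$-basis of $k\GG_{a(r)}$ being furnished by the remaining monomials), and $t\mapsto g-1$ factors through the abelian unipotent subgroup $\langle g\rangle\cong\ZZ/(p)$. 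Composing $\beta$ with the flat embedding $k\cE\hookrightarrow k\cG$ produces a $\pi$-point of $\cG$, so the hypothesis forces each of the generators above, hence the whole augmentation ideal of $k\cE$, to annihilate $M$; thus $\cE\subseteq Z_\cG(M)$, and since $\cG_\pi$ is generated by such $\cE$, also $\cG_\pi\subseteq Z_\cG(M)$.

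For $(\Leftarrow)$, assume $\cG_\pi\subseteq Z_\cG(M)$ and let $\alpha_K\in\Pt(\cG)$; by condition (b) of the definition of a $\pi$-point there is an abelian unipotent $\cU\subseteq\cG_K$ with $\im\alpha_K\subseteq K\cU$. I would first record that $(\cG_K)_\pi=(\cG_\pi)_K$: the inclusion $\supseteq$ is formal (base change commutes with the image of the multiplication map that defines $\cG_\pi$), while for $\subseteq$ one uses that the locus, in the $k$-scheme of one-parameter subgroups $\GG_{a(r)}\to\cG$, of those factoring through the closed subgroup $\cG_\pi$ is closed and contains all $k$-points, hence is everything, and that the elementary abelian (étale) part of any quasi-elementary subgroup of $\cG_K$ is already defined over $k$, since $\cG(K)=\cG(k)$ for a finite group scheme over the algebraically closed field $k$. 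Hence $\cU_\pi\subseteq(\cG_K)_\pi=(\cG_\pi)_K\subseteq Z_{\cG_K}(M_K)$, so the augmentation ideal of $K\cU_\pi$ annihilates $M_K$, and therefore so does the ideal it generates in the commutative algebra $K\cU$. Since $\alpha_K(t)^p=\alpha_K(t^p)=0$, it now suffices to show that $\alpha_K(t)$ lies in that ideal, which follows from the claim: for every finite abelian unipotent group scheme $\cU$, every $u\in K\cU$ with $u^p=0$ belongs to the ideal of $K\cU$ generated by the augmentation ideal of $K\cU_\pi$. Granting the claim, $\alpha_K(t)_{M_K}=0$, and as $\alpha_K$ was arbitrary, $\Jt(M)=(\dim_kM)[1]$.

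The claim just used is the main obstacle; equivalently, it asserts that $t$ is killed by the composite of $\alpha_K$ with the projection $K\cU\to K(\cU/\cU_\pi)$ (note $\cU_\pi\unlhd\cU$ since $\cU$ is abelian). I would establish it by using the structure theory of finite abelian unipotent group schemes to reduce to a direct computation inside $K\cU$: writing $K\cU$ as a truncated polynomial algebra $K[y_1,\dots,y_m]/(y_1^{p^{b_1}},\dots,y_m^{p^{b_m}})$, one checks that the ideal generated by the augmentation ideal of $K\cU_\pi$ equals $(y_1^{p^{b_1-1}},\dots,y_m^{p^{b_m-1}})$ and that this is exactly $\{u\in K\cU:u^p=0\}$, since $(y^\alpha)^p=y^{p\alpha}$ vanishes iff $p\alpha_i\ge p^{b_i}$ for some $i$. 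A tempting shortcut — using \cite[(4.2)]{FPe1} to make $\alpha_K$ itself factor through a quasi-elementary subgroup of $\cG_K$ — does not work, because over a base that is not algebraically closed that result only supplies such a factorization up to equivalence of $\pi$-points, and equivalent $\pi$-points need not realize the same Jordan type on a module not assumed in advance to have constant Jordan type; so the computation inside $K\cU$ appears to be necessary.
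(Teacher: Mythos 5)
Your forward direction (constant Jordan type $(\dim_kM)[1]$ $\Rightarrow$ $\cG_\pi\subseteq Z_\cG(M)$) is essentially the paper's argument: both of you run over quasi-elementary $\cE=\GG_{a(r)}\times E$, exhibit $\pi$-points whose images are the generators $u_0,\dots,u_{r-1}$ and $g-1$, and conclude that the whole augmentation ideal of $k\cE$ annihilates $M$. The paper packages the infinitesimal part through the one-parameter subgroups $\psi_s$ and \cite[(2.7)]{FPe1} and treats $\cE_{\rm red}$ separately, but the content is the same. Your observation that the ``tempting shortcut'' via \cite[(4.2)]{FPe1} fails because that result only controls $\pi$-points over $k$ up to equivalence is accurate and shows a good understanding of where the real difficulty lies.

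For the converse the two arguments diverge genuinely, and your route has a real gap. The paper avoids any structure theory of $\cU_\pi$ entirely: it picks a $\pi$-point $\alpha_K$ realizing the \emph{maximal} Jordan type of $M$ (in the sense of Friedlander--Pevtsova--Suslin), uses Theorem~\ref{SFS3} to produce an equivalent $\pi$-point $\beta_L$ factoring through $LZ_\cG(M)$ (because $\cG_\pi\subseteq Z_\cG(M)$ forces $Z_\cG(M)$ to be $\Pi$-essential), and then invokes \cite[(4.10)]{FPS} to conclude that $\Jt(M,\alpha_K)=\Jt(M,\beta_L)=(\dim_kM)[1]$; since this is simultaneously maximal and the smallest possible Jordan type, $M$ has constant Jordan type. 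That machinery is precisely what circumvents the problem you correctly identified with \cite[(4.2)]{FPe1}. Your substitute — reducing to the statement that, for any abelian unipotent $\cU$, every $u\in K\cU$ with $u^p=0$ lies in the ideal generated by the augmentation ideal of $K\cU_\pi$ — is not established. You write ``one checks that the ideal generated by the augmentation ideal of $K\cU_\pi$ equals $(y_1^{p^{b_1-1}},\dots,y_m^{p^{b_m-1}})$,'' but the algebra presentation furnished by \cite[(14.4)]{Wa} does not by itself determine the group scheme $\cU$, hence does not determine $\cU_\pi$ or which elements of that truncated polynomial ring lie in $K\cU_\pi$. Verifying this requires a genuine analysis of which $\GG_{a(r)}$'s embed into a given abelian unipotent group scheme of arbitrary height (Dieudonné theory or an equivalent structure theorem), and that is the hard part of the proof, not a routine computation. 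The inclusion you do get for free is the opposite one: since the augmentation ideal of $K\cE$ for quasi-elementary $\cE$ consists of $p$-nilpotent elements, the ideal generated by $K\cU_\pi^{\dagger}$ sits inside $\{u:u^p=0\}$; it is the reverse containment you need and do not prove. The identification $(\cG_K)_\pi=(\cG_\pi)_K$ you sketch is fine, but it does not help past this point. Until the claim is proved, the argument for $(\Leftarrow)$ is incomplete.
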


\begin{proof} (1) We put $\cN := Z_\cG(M)$. Suppose that $\cG_\pi \subseteq \cN$. Let $\alpha_K \in \Pt(\cG)$ be a $\pi$-point that is maximal for $M$. Thus, if $\beta_L \in \Pt(\cG)$ is a $\pi$-point such that $\rk(\beta_L(t)^j_{M_L})\ge\rk(\alpha_K(t)^j_{M_K})$ for every $j \in \{1,\ldots,p\!-\!1\}$, then $\Jt(M,\beta_L) =
\Jt(M,\alpha_K)$. According to Theorem \ref{SFS3}, there exists a $\pi$-point $\beta_L \sim \alpha_K$ that factors through $L\cN$. Consequently, \cite[(4.10)]{FPS} implies $\Jt(M,\alpha_K)= \Jt(M,\beta_L)=(\dim_kM)[1]$. As a result, $M$ has constant Jordan type $\Jt(M)= (\dim_kM)[1]$.

Suppose that $\Jt(M)=(\dim_kM)[1]$. Let $\cE \subseteq \cG$ be a quasi-elementary subgroup. Then $N :=M|_{\cE}$ is an $\cE$-module of constant Jordan type $\Jt(N) = (\dim_kM)[1]$.

There exists an isomorphism $\varphi : \GG_{a(r)} \lra \cE^0$. Given $s \le r$,
\[ \psi_s : \GG_{a(r)} \lra \cE^0 \  \ ; \  \ y \mapsto \varphi(y^{p^{r-s}})\]
is an infinitesimal one-parameter subgroup of $\cE^0$, such that $\psi_s(u_{r-1})=\varphi(u_{s-1})$. Owing to \cite[(2.7)]{FPe1}, 
$\psi_s|_{k[u_{r-1}]}$ is a $\pi$-point of $\cE^0$, so that $0 = \psi(u_{r-1})_N = \varphi(u_{s-1})_N$. Consequently, the annihilator of $N|_{\cE^0}$ coincides with the augmentation ideal $(k\cE^0)^\dagger$ of $k\cE^0$. As a result, $\cE^0 \subseteq \cN$. 

Now let $C_p \subseteq \cE_{\rm red}$ be a cyclic subgroup. Then there exists $\alpha_k\in \Pt(\cG)$ such that
$kC_p = \im \alpha_k$. Since $\alpha_k^\ast(M) = (\dim_kM)[1]$, it follows that $C_p \subseteq \cN_{\rm red}$, whence $\cE_{\rm red} \subseteq \cN$.

We conclude that $\cE \subseteq \cN$, implying $\cG_\pi \subseteq \cN$.

(2) Suppose that $M$ has constant Jordan type $\Jt(M)=(\dim_kM)[1]$. In view of (1), we have $M \in \modd(\cG/\cG_\pi)$.\end{proof}

\bigskip
\noindent
Recall that a {\it $p$-point} of $\cG$ is a $\pi$-point $\alpha : \fA_{p,k}\lra k\cG$ that is defined over $k$. We let $\pt(\cG)$ be the set of $p$-points and note that the equivalence
classes of $p$-points define the closed points of $\Pi(\cG)$, cf.\ \cite[(4.7)]{FPe2}.

\bigskip

\begin{Prop} \label{EIFK2} Let $G$ be a reduced group scheme such that $\Pi(G_r) \neq \emptyset$, $M$ be a rational $G$-module. Then the following statements hold:
\begin{enumerate}
\item $\fK(M|_{G_r}) \subseteq M$ is a $G$-submodule of $M$.
\item If $M|_{G_r} \in \EIP(G_r)$, then $\ell^j_\alpha(M)$ is a $G$-submodule of $M$ for every $p$-point $\alpha \in \pt(G_r)$ and $j \in \{1,\ldots,p\!-\!1\}$.
\item If $M$ is simple, then either $\fK(M|_{G_r})=(0)$ or $M|_{G_r} \in \EIP(G_r)$ and $\Jt(M|_{G_r})= (\dim_kM)[1]$.\end{enumerate}\end{Prop}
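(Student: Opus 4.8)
The plan is to prove (1), then to obtain (2) by a parallel argument applied to the subspaces $\ell^j_\alpha(M)$, and finally to deduce (3) from (1) and (2) together with the simplicity of $M$. The common mechanism is the interplay between the conjugation action of $G(k)$ on the set $\Pt(G_r)$ of $\pi$-points of $G_r$ and the $G$-action on $M$. Since $G_r\unlhd G$, every $g\in G(k)$ induces by conjugation an automorphism $\kappa_g$ of $G_r$, hence of $kG_r$ and of $\Pt(G_r)$. Writing $\rho_M(g)$ for the automorphism of the underlying $k$-space of $M$ afforded by $g$, one has the compatibility $\rho_M(g)(am)=\kappa_g(a)\,\rho_M(g)(m)$ for $a\in kG_r$, $m\in M$; equivalently, $\rho_M(g)$ is an isomorphism $M^{(g)}|_{G_r}\stackrel{\sim}{\lra}M|_{G_r}$ of $G_r$-modules. (This just expresses that a rational $G$-module restricts to a $G$-stable $kG_r$-module in the sense of Section~$2$, with $G_1$ replaced by $G_r$.)

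To prove (1), note first that the twist $N\mapsto N^{(g)}$ on $\modd G_r$ leaves the set of submodules of $N$ unchanged (the underlying subsets coincide, since $\kappa_g(kG_r)=kG_r$) and maps $\EIP(G_r)$ into itself: indeed $\ell_{\alpha_K}$ acting on $N^{(g)}$ equals $\ell_{\kappa_{g^{-1}}\circ\alpha_K}$ acting on $N$, and as $\alpha_K$ ranges over $\Pt(G_r)$ so does $\kappa_{g^{-1}}\circ\alpha_K$, so that the subspaces $V_j$ witnessing the equal images property of $N$ witness it for $N^{(g)}$ as well. Consequently $\fK(M^{(g)}|_{G_r})=\fK(M|_{G_r})$ as subsets of $M$, and Corollary~\ref{EIPr4}(1), applied to the isomorphism $\rho_M(g)$, yields $\rho_M(g)(\fK(M|_{G_r}))=\fK(M|_{G_r})$. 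Thus $\fK(M|_{G_r})$ is a $G(k)$-stable subspace of $M$. Since $G$ is reduced, $G(k)$ is dense in $G$ \cite[(I.6.16)]{Ja2}, and the stabiliser of a subspace of $M$ is a closed subgroup scheme of $G$; hence $\fK(M|_{G_r})$ is a $G$-submodule of $M$.

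For (2), suppose $M|_{G_r}\in\EIP(G_r)$ and fix $j\in\{1,\dots,p\!-\!1\}$. By the reformulation of the equal images property recorded just after its definition, there is a subspace $V_j\subseteq M$ with $\ell^j_{\beta_L}(M_L)=V_j\otimes_kL$ for every $\pi$-point $\beta_L$; in particular $\ell^j_\alpha(M)=V_j$ for every $p$-point $\alpha\in\pt(G_r)$. For $g\in G(k)$ the compatibility above gives $\rho_M(g)(\ell^j_\alpha(M))=\ell^j_{\kappa_g\circ\alpha}(M)$, and $\kappa_g\circ\alpha$ is again a $p$-point of $G_r$ (flatness is preserved under composition with the automorphism $\kappa_g$, and if $\im\alpha\subseteq k\cU$ with $\cU\subseteq G_r$ abelian unipotent then $\im(\kappa_g\circ\alpha)\subseteq k\kappa_g(\cU)$ with $\kappa_g(\cU)\subseteq G_r$ abelian unipotent). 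Hence $\rho_M(g)(V_j)=\ell^j_{\kappa_g\circ\alpha}(M)=V_j$, so $V_j$ is $G(k)$-stable, and the density argument of (1) shows $\ell^j_\alpha(M)=V_j$ is a $G$-submodule of $M$. For (3), let $M$ be simple. By (1) the $G$-submodule $\fK(M|_{G_r})$ of $M$ is $(0)$ or $M$; if it is $(0)$ the first alternative holds. Otherwise $M|_{G_r}=\fK(M|_{G_r})\in\EIP(G_r)$, and since $\Pi(G_r)\ne\emptyset$ has a closed point we may pick $\alpha\in\pt(G_r)$. By (2) the subspace $\ell_\alpha(M)=\alpha(t)M$ is a $G$-submodule of $M$, hence $(0)$ or $M$; if it were $M$, then $\alpha(t)_M$ would be surjective, hence bijective, contradicting $\alpha(t)^p_M=\alpha(t^p)_M=0$ on $M\ne(0)$. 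Therefore $\ell_\alpha(M)=(0)$, so the subspace $V_1$ attached to $M|_{G_r}\in\EIP(G_r)$ is zero, whence $\beta_L(t)_{M_L}=0$ for every $\pi$-point $\beta_L$ and $\Jt(M|_{G_r})=(\dim_kM)[1]$.

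The step that needs the most care is (1): one has to verify that twisting a $G_r$-module by $g\in G(k)$ preserves the equal images property — this is precisely where the permutation action of $\kappa_g$ on $\Pt(G_r)$ enters — so that the maximality property of $\fK=\fK_p$ from Proposition~\ref{EIPr3} forces $\fK(M|_{G_r})$ to be $G(k)$-invariant. Granting this, together with the compatibility $\rho_M(g)(am)=\kappa_g(a)\rho_M(g)(m)$, parts (2) and (3) are short.
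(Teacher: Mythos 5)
Your proof is correct and runs along essentially the same lines as the paper's: all three parts rest on the compatibility $g.(am)=\kappa_g(a)(g.m)$, the observation that $\kappa_g$ permutes $\Pt(G_r)$ (and $\pt(G_r)$), and density of $G(k)$ in $G$ to upgrade $G(k)$-invariance of a subspace to $G$-stability. There are only cosmetic differences: in (1) you route the argument through the twist formalism and Corollary~\ref{EIPr4}(1), whereas the paper computes directly that $g.N$ has the equal images property and invokes the maximality of $\fK$; in (2) you bypass the paper's induction on $j$ by working with the witness subspace $V_j$ and the identity $\rho_M(g)(\ell_\alpha^j(M))=\ell_{\kappa_g\circ\alpha}^j(M)$, which is a mild simplification.
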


\begin{proof} The group $G(k)$ acts on $kG_r$ via the adjoint representation and on the set $\Pt(G_r)$ of $\pi$-points of $G_r$. As $G$ is reduced, the group $G(k)$ is dense in $G$ and
\cite[(I.2.12(5)]{Ja2} implies that $N \subseteq M$ is a $G$-submodule if and only if $N$ is $G(k)$-invariant.

(1) Let $g \in G(k)$ and suppose that $N \subseteq M|_{G_r}$ belongs to $\EIP(G_r)$. If $\alpha_K \in \Pt(G_r)$ is a $\pi$-point, then
\begin{eqnarray*}
\ell_{\alpha_K}^i(g_K.N_K) & = & g_K.(g^{-1}_K.\ell_{\alpha_K}^i(g_K.N_K)) = g_K.\ell_{g^{-1}_K.\alpha_K}^i(g^{-1}_K.(g_K.N_K)) = g_K.\ell_{g^{-1}_K.\alpha_K}^i(N_K)\\
                         & = & g_K.\ell_{\alpha_K}^i(N_K)
\end{eqnarray*}
for $1 \le i \le p\!-\!1$. It follows that the $G_r$-submodule $g.N \subseteq M|_{G_r}$ also enjoys the equal images property. Application to $N:= \fK(M|_{G_r})$ implies that $\fK(M|_{G_r})$ is $G(k)$-invariant,
so that (1) follows.

(2) Let $\alpha \in \pt(G_r)$ be a $p$-point. Given a $G$-submodule $N \subseteq M$ and $g \in G(k)$,  we obtain
\[ (\ast) \ \ \ \ \ \ \ g.\ell_\alpha(N) = \ell_{g.\alpha}(N).\]
We now proceed by induction on $j$. If $j=1$, then, setting $N=M$ in ($\ast$), we obtain
\[ g.\ell_\alpha(M) = \ell_{g.\alpha}(M) = \ell_\alpha(M).\]
Since $G(k)$ is dense in $G$, it follows that $\ell_\alpha(M)$ is a $G$-submodule of $M$.

For $j>1$, we assume that $N:= \ell_\alpha^{j-1}(M)$ is a $G$-submodule of $M$. Now ($\ast$) in conjunction with the equal images property yields
\[ g.\ell_{\alpha}^j(M) = g.\ell_\alpha(N) = \ell_{g.\alpha}(N)= \ell_{g.\alpha}(\ell_\alpha^{j-1}(M)) = \ell_{g.\alpha}^j(M) = \ell_\alpha^j(M).\]
As before, this implies that $\ell^j_\alpha(M)$ is a $G$-submodule of $M$.

(3) If $M$ is simple, then (1) implies $\fK(M|_{G_r}) = (0)$ or $\fK(M|_{G_r})=M$. In the latter case, we have $M|_{G_r} \in \EIP(G_r)$. Let $\alpha \in \pt(G_r)$ be a $p$-point. As $\ell_\alpha$ is
nilpotent, (2) implies that $\ell_\alpha(M) \subsetneq M$ is a proper $G$-submodule of $M$, so that $\ell_\alpha(M) = (0)$ by simplicity of $M$. Since $M|_{G_r}$ has constant Jordan type and $\Pi(G_r)
\neq \emptyset$, this readily implies that $\Jt(M|_{G_r})=(\dim_kM)[1]$. \end{proof}

\bigskip
\noindent
For $M \in \modd \cG$ we denote by $\add(M)$ the full subcategory of $\modd \cG$, whose objects are direct sums of direct summands of $M$. If $M$ is semi-simple, then $\add(M)$ is a semi-simple category.

\bigskip

\begin{Cor} \label{EIFK3} Let $G$ be a connected reduced algebraic group scheme such that $G_r/(G_r)_\pi$ is diagonalizable. Suppose that $M$ is a rational $G$-module such that $M|_{G_r} \in \EIP(G_r)$.
\begin{enumerate}
\item Given a $p$-point $\alpha \in \pt(G_r)$, the $G_r$-module $M/\ell_\alpha(M)$ belongs to $\add(\bigoplus_{\lambda \in X(G_r)}k_\lambda)$.
\item The $G_r$-module $\Top(M|_{G_r})$ belongs to $\add(\bigoplus_{\lambda \in X(G_r)}k_\lambda)$.
\item If $M|_{G_r}$ is indecomposable, then there exists $\lambda \in X(G_r)$ such that $M|_{G_r}\!\otimes_kk_\lambda$ belongs to the principal block $\cB_0(G_r)$ of $kG_r$.\end{enumerate}\end{Cor}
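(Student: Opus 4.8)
The plan is to fix a $p$-point $\alpha\in\pt(G_r)$ and to build from the iterated images $V_j:=\ell_\alpha^j(M)$ a filtration of $M$ by $G$-submodules whose successive quotients lie in $\add(\bigoplus_{\lambda\in X(G_r)}k_\lambda)$; the three assertions should then follow in turn. First I would dispose of the degenerate case: if $\Pi(G_r)=\emptyset$, then $(G_r)_\pi$ is trivial (it has no non-trivial quasi-elementary subgroups), so by hypothesis $G_r$ is diagonalizable, whence $\modd G_r=\add(\bigoplus_{\lambda\in X(G_r)}k_\lambda)$ and all three statements are immediate. So assume $\Pi(G_r)\ne\emptyset$ and choose $\alpha\in\pt(G_r)$. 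Because $M|_{G_r}\in\EIP(G_r)$, the reformulation of the equal images property provides subspaces $V_j\subseteq M$ with $\ell_{\beta_K}^j(M_K)=V_j\otimes_k K$ for all $\beta_K\in\Pt(G_r)$; evaluating at $\beta_K=\alpha_K$ identifies $V_j$ with $\ell_\alpha^j(M)$, and Proposition \ref{EIFK2}(2) then shows that each $V_j$ $(1\le j\le p\!-\!1)$ is a $G$-submodule of $M$. With $V_0:=M$ and $V_p:=(0)$ this gives the chain $M=V_0\supseteq V_1\supseteq\cdots\supseteq V_p=(0)$ of $G$-submodules.

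The key step will be to check that $V_j/V_{j+1}$ has constant Jordan type $(\dim_k V_j/V_{j+1})[1]$ for $0\le j\le p\!-\!1$. This holds because, for any $\beta_K\in\Pt(G_r)$, we have $\ell_{\beta_K}((V_j)_K)=\ell^{j+1}_{\beta_K}(M_K)=(V_{j+1})_K$, so that $\ell_{\beta_K}$ acts as zero on $(V_j/V_{j+1})_K$. Lemma \ref{EIFK1}(1) will then give that $(G_r)_\pi$ acts trivially on $V_j/V_{j+1}$, and since $G_r/(G_r)_\pi$ is diagonalizable, $V_j/V_{j+1}$ must be a direct sum of one-dimensional modules $k_\lambda$ with $\lambda\in X(G_r)$. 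The case $j=0$ is precisely assertion (1), since $V_0/V_1=M/\ell_\alpha(M)$. For (2), I note that every composition factor of $M|_{G_r}$ appears in some $V_j/V_{j+1}$ and is therefore isomorphic to some $k_\lambda$, $\lambda\in X(G_r)$; hence the semisimple module $\Top(M|_{G_r})=M/\Rad(M|_{G_r})$, whose composition factors are composition factors of $M|_{G_r}$, is a direct sum of such $k_\lambda$.

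For (3) I would run a block argument. If $M|_{G_r}$ is indecomposable it belongs to a single block $B$ of $kG_r$, and by the preceding paragraph it has a composition factor $k_{\lambda_0}$ with $\lambda_0\in X(G_r)$, which then also lies in $B$. Setting $\lambda:=\lambda_0\circ\eta$, where $\eta$ is the antipode of $kG_r$, Corollary \ref{EIPr4}(2) shows that $T_\lambda=-\otimes_k k_\lambda$ is an auto-equivalence of $\modd G_r$, hence permutes the blocks of $kG_r$; as $k_{\lambda_0}\otimes_k k_\lambda\cong k$ lies in the principal block $\cB_0(G_r)$, the functor $T_\lambda$ carries $B$ to $\cB_0(G_r)$, and therefore $M|_{G_r}\otimes_k k_\lambda\in\cB_0(G_r)$. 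The point I expect to require the most care is the middle paragraph: it is tempting to compare $\ell_\alpha(M)$ with $\Rad(M|_{G_r})$ directly, but $kG_r$ need not be local (for instance when $G$ is a torus), so $\alpha(t)$ need not lie in the Jacobson radical; the safe route is via the $G$-stable filtration $(V_j)$, where the $G$-equivariance furnished by Proposition \ref{EIFK2}(2) is exactly what lets Lemma \ref{EIFK1}(1) be applied to the subquotients.
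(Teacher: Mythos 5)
Your proof is correct, and for parts (1) and (3) it coincides with the paper's argument up to cosmetic differences (the paper's (1) obtains the constant Jordan type $(\dim_k X)[1]$ of $X=M/\ell_\alpha(M)$ by noting $X\in\EIP(G_r)$ via Lemma~\ref{EIPr2} and then using $\ell_\alpha(X)=0$, whereas you verify directly that every $\ell_{\beta_K}$ kills $(V_j/V_{j+1})_K$; both then invoke Lemma~\ref{SFS5} for normality of $(G_r)_\pi$ and Lemma~\ref{EIFK1}).

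For part (2), however, you take a genuinely different and arguably cleaner route. The paper works only with $N=\Top(M|_{G_r})$: it decomposes $N$ into $S$-isotypic components $N(S)$, uses connectedness of $G$ together with Lemma~\ref{SM2} to see each $N(S)$ is a $G$-submodule lying in $\EIP(G_r)$, and then feeds each $N(S)$ into (1) to identify $S\cong k_\lambda$. You instead run the whole $G$-stable filtration $M=V_0\supseteq V_1\supseteq\cdots\supseteq V_p=(0)$ supplied by Proposition~\ref{EIFK2}(2), observe that every subquotient $V_j/V_{j+1}$ has trivial Jordan type and hence lies in $\modd(G_r/(G_r)_\pi)\subseteq\add(\bigoplus_{\lambda}k_\lambda)$, and conclude by Jordan--H\"older that \emph{every} composition factor of $M|_{G_r}$ is one-dimensional; the statement about $\Top(M|_{G_r})$ is then an immediate corollary. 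This buys you a slightly stronger intermediate result (all composition factors, not merely those in the top, are characters) and avoids the isotypic-component argument and the appeal to Lemma~\ref{SM2}; the cost is that you must invoke Proposition~\ref{EIFK2}(2) for all $j$ rather than just $j=1$, but that is already available. Your closing remark about the danger of comparing $\ell_\alpha(M)$ with $\Rad(M|_{G_r})$ when $kG_r$ is not local is exactly the right caution, and your handling of the degenerate case $\Pi(G_r)=\emptyset$ (so that $(G_r)_\pi$ is trivial and $G_r$ is diagonalizable) matches the parenthetical in the paper's proof of (2).
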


\begin{proof} (1) Let $\alpha \in \pt(G_r)$ be a $p$-point. Thanks to Proposition \ref{EIFK2} and Lemma \ref{EIPr2}, $N:= \ell_\alpha(M)$ is a $G$-submodule of $M$ such that $X:=(M/N)|_{G_r}$ belongs to
$\EIP(G_r)$. In particular, $X$ has constant Jordan type. As $\ell_\alpha(X)=(0)$, we have $\Jt(X) = (\dim_kX)[1]$, and a consecutive application of Lemma \ref{SFS5} and Lemma \ref{EIFK1} shows that
$X \in \modd(G_r/(G_r)_\pi) \subseteq \add(\bigoplus_{\lambda \in X(G_r)}k_\lambda)$.

(2) Let $J$ be the Jacobson radical of $kG_r$. Then $J$ is $G(k)$-invariant, and $\Rad(M|_{G_r}) =JM$ is a $G$-submodule of $M$. Consequently, $N := M/JM$ is a $G$-module such that $N|_{G_r}$ is
semi-simple. Given a simple $G_r$-module $S$, we let $N(S) \subseteq N$ be the isotypic component of type $S$. Since $G$ is connected, Lemma \ref{SM2} yields $g.S \cong S^{(g)} \cong S$, showing that $N(S)$ is a
$G$-submodule of $N$ with $N(S) \in \EIP(G_r)$ and $N(S)\cong S^n$ for some $n \in \NN$.

If $\Pi(G_r)=\emptyset$, then $G_r$ is diagonalizable and our assertion follows. Alternatively, let $\alpha \in \pt(G_r)$ be a $p$-point. By (1), $\ell_\alpha(N(S))$ is a proper submodule of $N(S)$
such that $N(S)/\ell_\alpha(N(S))\in \add(\bigoplus_{\lambda \in X(G_r)}k_\lambda)$. Since $N(S)/\ell_\alpha(N(S)) \cong S^m$ for some $m\in \NN$, we have $S \cong k_\lambda$ for some $\lambda \in X(G_r)$. As a result, $N = \bigoplus_{\lambda \in X(G_r)}N(k_\lambda) \in \add(\bigoplus_{\lambda \in X(G_r)}k_\lambda)$.

(3) Let $\eta$ be the antipode of $kG_r$. By (2), there exists a character $\lambda \in X(G_r)$ such that $k_\lambda$ is a composition factor of $M|_{G_r}$. Thus, $k \cong k_\lambda\!\otimes_k\!k_{\lambda\circ\eta}$ is a composition factor of the indecomposable module $M|_{G_r}\!\otimes_k\!k_{\lambda\circ \eta}$, so that this module belongs to $\cB_0(G_r)$. \end{proof}

\bigskip
\noindent
Let $M$ be a $\cG$-module. If $S$ is a simple $\cG$-module, then $[M\!:\!S]$ denotes the Jordan-H\"older multiplicity of $S$ in $M$.

\bigskip

\begin{Thm} \label{EIFK4} Let $\cG$ be a finite group scheme of characteristic $p\ge 3$ such that
\begin{enumerate}
\item[(a)] $\SL(2)_1 \subseteq \cG$, and
\item[(b)] $\cG_\pi$ is a normal subgroup of $\cG$.
\end{enumerate}
Then $\EIP(\cG)=\modd(\cG/\cG_\pi)$. \end{Thm}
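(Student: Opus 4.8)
The plan is to prove the two inclusions $\EIP(\cG)\subseteq\modd(\cG/\cG_\pi)$ and $\modd(\cG/\cG_\pi)\subseteq\EIP(\cG)$ separately. The second is immediate and uses neither hypothesis: if $M\in\modd(\cG/\cG_\pi)$, i.e.\ $\cG_\pi\subseteq Z_\cG(M)$, then Lemma \ref{EIFK1}(1) gives $\Jt(M)=(\dim_kM)[1]$, which simply means $\alpha_K(t)_{M_K}=0$ for every $\alpha_K\in\Pt(\cG)$; hence $\ell_{\alpha_K}^i(M_K)=(0)$ for all $i\ge 1$ and all $\pi$-points, so $M$ has the equal images property with $V_i=(0)$.

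For the first inclusion the crucial point is the following claim: \emph{every module in $\EIP(\SL(2)_1)$ is trivial.} To prove it, fix the standard basis $e,h,f$ of $\fsl(2)$, so $h=[e,f]$ and $e^{[p]}=f^{[p]}=0$. Since $e$ and $f$ are non-zero $p$-nilpotent elements, the homomorphisms $\alpha_e,\alpha_f\colon\fA_p\lra k\SL(2)_1=U_0(\fsl(2))$ given by $t\mapsto e$ and $t\mapsto f$ are $\pi$-points of $\SL(2)_1$ (cf.\ \cite{FPe1}). Let $N\in\EIP(\SL(2)_1)$ and set $V_i:=e^iN$ for $i\ge 0$, with the convention $V_i=(0)$ for $i\ge p$. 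Applying the equal images property to the pair $\alpha_e,\alpha_f$ yields $e^iN=f^iN=V_i$ for $1\le i\le p-1$, hence for all $i\ge 0$; consequently $e_N(V_i)=V_{i+1}=f_N(V_i)$ for every $i$. It follows that
\[ h_N(V_i)=(e_Nf_N-f_Ne_N)(V_i)\subseteq e_N(V_{i+1})+f_N(V_{i+1})=V_{i+2}\subseteq V_{i+1},\]
so each $V_i$ is stable under $e_N,f_N,h_N$, hence a $U_0(\fsl(2))$-submodule of $N$, and $\fsl(2)\dact V_i\subseteq V_{i+1}$. Thus $N=V_0\supseteq V_1\supseteq\cdots\supseteq V_p=(0)$ is a filtration of $N$ all of whose layers are trivial $\fsl(2)$-modules.

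Now I invoke that $\fsl(2)$ is perfect for $p\ge 3$: any extension $(0)\lra k\lra E\lra k\lra(0)$ of $U_0(\fsl(2))$-modules is, as a Lie algebra representation, given by a linear form $\varphi\colon\fsl(2)\lra k$, with $\fsl(2)$ acting on $E\cong k^2$ through strictly upper triangular matrices whose off-diagonal entry is $\varphi(x)$; the bracket relation forces $\varphi([\fsl(2),\fsl(2)])=(0)$, and since $[\fsl(2),\fsl(2)]=\fsl(2)$ for $p\ge 3$, we get $\varphi=0$ and the extension splits. Hence $\Ext^1_{U_0(\fsl(2))}(k,k)=(0)$, so any $U_0(\fsl(2))$-module all of whose composition factors are trivial is itself trivial; applied to $N$ this proves the claim. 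To finish, let $M\in\EIP(\cG)$. By Lemma \ref{EIPr5}(1), $M|_{\SL(2)_1}\in\EIP(\SL(2)_1)$, which by the claim is a trivial $\SL(2)_1$-module, so $\SL(2)_1\subseteq Z_\cG(M)$ and in particular $e$ acts as $0$ on $M$. Composing $\alpha_e$ with the inclusion $k\SL(2)_1\hookrightarrow k\cG$ yields a $\pi$-point $\alpha\in\Pt(\cG)$ (cf.\ \cite[(2.7)]{FPe2}) with $\alpha(t)_M=e_M=0$, whence $\Jt(M,\alpha)=(\dim_kM)[1]$. As $M\in\EIP(\cG)$ has constant Jordan type, this forces $\Jt(M)=(\dim_kM)[1]$; since $\cG_\pi$ is normal in $\cG$ by (b), Lemma \ref{EIFK1}(2) now gives $M\in\modd(\cG/\cG_\pi)$.

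The main obstacle is the claim about $\EIP(\SL(2)_1)$: one must convert the bare coincidence of images $e^iN=f^iN$ into a submodule filtration with trivial layers, and this is exactly where the relation $h=[e,f]$ is used, hence where $\Char(k)\ne 2$ enters (so that $\fsl(2)$ is perfect and $\Ext^1_{U_0(\fsl(2))}(k,k)$ vanishes). Once this is in place, the remainder is a formal combination of Lemmas \ref{EIFK1} and \ref{EIPr5} together with the constancy of Jordan type for equal images modules.
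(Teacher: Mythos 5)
Your proof is correct, and it takes a genuinely different route from the paper's at the crucial step. Both proofs reduce the theorem to the claim that $\EIP(\SL(2)_1)$ consists only of trivial modules, and both then combine this with Lemma \ref{EIPr5}(1), constancy of Jordan type, and Lemma \ref{EIFK1} (together with hypothesis (b)) in essentially the same way; the easy inclusion $\modd(\cG/\cG_\pi)\subseteq\EIP(\cG)$ is also handled identically. Where you diverge is in establishing the claim itself. The paper's proof is heavy: it invokes Premet's classification of indecomposable $U_0(\fsl(2))$-modules, passes through Theorem \ref{SFS6} and Corollary \ref{EIFK3}, eliminates the projective case via the baby Verma module, uses Pollack's Loewy-length bound, and then analyzes the Loewy-length-two case via the block structure of $\cB_0(\SL(2)_1)$ (Morita equivalent to the trivial extension of the Kronecker algebra). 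Your argument bypasses all of that: you pick the two $p$-points $\alpha_e,\alpha_f$ given by the standard nilpotent generators, use the equal images condition to get a single chain $V_i=e^iN=f^iN$, exploit $h=[e,f]$ (hence $h_N(V_i)\subseteq V_{i+2}$) to see that each $V_i$ is an $\fsl(2)$-submodule with $\fsl(2)\dact V_i\subseteq V_{i+1}$, and then use perfection of $\fsl(2)$ (valid precisely because $p\ge 3$) to show $\Ext^1_{U_0(\fsl(2))}(k,k)=0$ and conclude $N$ is trivial. I checked the details — $V_{i+1}\subseteq V_i$, $e_N(V_i)=f_N(V_i)=V_{i+1}$, $h_N=e_Nf_N-f_Ne_N$ landing in $V_{i+2}$, the computation of restricted $\Ext^1$, and the flatness of $k[e]\hookrightarrow U_0(\fsl(2))$ via PBW — and everything holds. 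Your approach buys considerable simplicity and self-containment, avoiding the cited classification and block-theoretic results entirely; the paper's longer route exists because it is embedded in a broader framework developed earlier in the article, but for the specific claim $\EIP(\SL(2)_1)=\add(k)$ your argument is the cleaner one.
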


\begin{proof} We first show that $\EIP(\SL(2)_1) = \add(k)$.

Let $M \in \EIP(\SL(2)_1)$ be an indecomposable equal images module. Then $M$ is either projective or has full support, and the classification of indecomposable $\SL(2)_1$-modules \cite[Thm.]{Pr} shows in particular that $M$ is the restriction of a rational $\SL(2)$-module. Since the group scheme $\SL(2)_1$ is simple, Theorem \ref{SFS6} yields $(\SL(2)_1)_\pi = \SL(2)_1$, so that Corollary \ref{EIFK3}(3) implies that $M$ belongs to the principal block $\cB_0(\SL(2)_1)$ along with $\Top(M) \cong k^n$. Thus, if $M$ is projective, then $M\cong P(0)$ is the projective cover of the trivial
module. Consequently, the baby Verma module $Z(0):=k\SL(2)_1\!\otimes_{kB_1}\!k$, which is induced from the trivial module of the first Frobenius kernel of a Borel subgroup $B \subseteq \SL(2)$, is an image of $P(0)\in \EIP(\SL(2)_1)$, and Lemma \ref{EIPr2} forces $Z(0) \in \EIP(\SL(2)_1)$. Since $Z(0)$ is neither projective nor of full support, we have
reached a contradiction. In virtue of \cite[Theorem 2]{Po}, we thus conclude that the module $M$ has Loewy length $\ell\ell(M) \le 2$.

If $\ell\ell(M)=2$, then \cite[Theorem 3]{Po} in conjunction with $\cB_0(\SL(2)_1)$ being Morita equivalent to the trivial extension of the path algebra of the Kronecker quiver (cf.\ \cite{Dr2,Fi,Ru}) implies $\Soc(M)\cong L(p\!-\!2)^m$ with $m\ge 1$ and $|m\!-\!n|=1$. Hence there exists a submodule $N \subseteq \Soc(M)$ such that the equal images module $M/N$ has composition factors $k$ and $L(p\!-\!2)$ with multiplicities $[M/N\!:\!k]=n$ and $[M/N\!:\!L(p\!-\!2)]=1$ and $L(p\!-\!2) \subseteq \Soc(M/N)$. Since the simple $\SL(2)_1$-module $L(p\!-\!2)$ with highest weight $p\!-\!2$ does not have the equal images property, it follows that $M/N$ has an indecomposable constituent $M_0 \in \EIP(\SL(2)_1)$ of Loewy length $2$, and with socle $\Soc(M_0)\cong L(p\!-\!2)$ and top $\Top(M_0)\cong k^2$. As before, $M_0$ is the restriction of a rational $\SL(2)$-module.

Let $e,f,h$ be the canonical basis of the Lie algebra $\fsl(2)$. Since $e^{[p]}=0=f^{[p]}$, there exist $p$-points $\alpha_e,\, \alpha_f: \fA_p \lra U_0(\fsl(2))$ such that $\alpha_x(t)=x$ for $x=e,f$. In view of Proposition \ref{EIFK2}, the subspaces $\ell^j_e(M_0)$ and $\ell^j_f(M_0)$ are $\SL(2)$-submodules with $\ell_e(M_0),\ell_f(M_0) \subseteq \Soc(M_0)\cong L(p\!-\!2)$. If $\ell_e(M_0)=(0)$, then $\ell_f(M_0)=\ell_e(M_0)=(0)$ and $\fsl(2)$ acts trivially on $M_0$, a contradiction. Alternatively, we have
$\ell_e(M_0)=\Soc(M_0)=\ell_f(M_0)$. Since $\ell_e|_{\Soc(M_0)}$ and $\ell_f|_{\Soc(M_0)}$ are nilpotent endomorphisms of the vector space $\Soc(M_0)$, it follows that
\[ \ell_e(\Soc(M_0))=\ell^2_e(M_0)=(0) =\ell^2_f(M_0)= \ell_f(\Soc(M_0)).\]
This contradicts the fact that $\Soc(M_0)\cong L(p\!-\!2)$. Consequently,  $\ell\ell(M)=1$, so that $M \cong k \in \add(k)$.

Let $M \in \EIP(\SL(2)_1)$. Being an image of $M$, each indecomposable constituent $N\!\mid\!M$ of $M$ has the equal images property. By the above, $N\in \add(k)$, so that $M \in \add(k)$,
as desired.

Now let $M \in \EIP(\cG)$. In view of (a),
\[ M|_{\SL(2)_1} \in \add(k)\]
so that $M \in \CJT(\cG)$ has Jordan type $\Jt(M)=(\dim_kM)[1]$. Now (b) in conjunction with Lemma \ref{EIFK1} yields $M \in \modd(\cG/\cG_\pi)$.

The inclusion $\modd(\cG/\cG_\pi) \subseteq \EIP(\cG)$ follows directly from Lemma \ref{EIFK1}.\end{proof}

\bigskip
\noindent
We turn to restricted simple Lie algebras. According to the
general classification, these are either classical or of Cartan type. We refer the reader to \cite[Chap.4]{SF} for a discussion of non-classical simple Lie algebras.

\bigskip

\begin{Cor} \label{EIFK5} Let $(\fg,[p])$ be a restricted simple Lie algebra of Cartan type. If $p \ge 3$, then $\EIP(\fg)=\add(k)$. \end{Cor}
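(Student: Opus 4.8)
The plan is to realize $\EIP(\fg)$ as $\EIP(\cG)$ for the infinitesimal group scheme $\cG$ of height $\le 1$ with $k\cG = U_0(\fg)$, and to apply Theorem \ref{EIFK4}; the point is that hypothesis (b) will hold in the extreme form $\cG_\pi = \cG$. Under the correspondence between closed subgroup schemes of $\cG$ and restricted subalgebras of $\fg$ (cf.\ \cite[(II,\S7)]{DG}), the subgroup $\cG_\pi$ corresponds to the restricted subalgebra $\fg_\pi \subseteq \fg$ generated by the nullcone $\cV_\fg$, since connectedness of $\cG$ leaves only the $U_0(ky) \cong \GG_{a(1)}$ with $0 \ne y \in \cV_\fg$ as non-trivial quasi-elementary subgroups. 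First I would record that $\cV_\fg \ne \{0\}$: by Jantzen's Theorem \cite[Satz]{Ja1}, $\cV_\fg = \{0\}$ would give $\cx_\cG(k) = 0$, hence $U_0(\fg)$ semi-simple and $\fg$ a torus, contradicting simplicity. Concretely, the standard $\ZZ$-grading $\fg = \bigoplus_{i \ge -2}\fg_i$ of a Cartan-type algebra has $\fg_{-1} \ne \{0\}$, and every $y \in \fg_{-1}$ satisfies $y^{[p]} \in \fg_{-p} = \{0\}$ for $p \ge 3$.

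The key step is then $\fg_\pi = \fg$, for which it suffices to show that the $k$-span $\langle\cV_\fg\rangle_k$ is an ideal of $\fg$: being non-zero, such an ideal equals $\fg$ by simplicity, whence $\fg_\pi \supseteq \langle\cV_\fg\rangle_k = \fg$. Since $y \mapsto y^{[p]}$ is a homogeneous polynomial map $\fg \to \fg$ equivariant for the automorphism group scheme $\Aut(\fg)$, the nullcone $\cV_\fg$ is an $\Aut(\fg)$-stable closed subscheme of $\fg$, and $\Lie(\Aut(\fg)) = \mathrm{Der}(\fg) \supseteq \ad\fg$. Differentiating, for $y \in \cV_\fg$ the orbit map of $\Aut(\fg)$ through $y$ factors through $\cV_\fg$, so $[\fg,y] = (\ad\fg)(y) \subseteq T_y\cV_\fg$; as $\cV_\fg$ is contained in the linear subspace $\langle\cV_\fg\rangle_k$, we get $T_y\cV_\fg \subseteq \langle\cV_\fg\rangle_k$, hence $[\fg,\langle\cV_\fg\rangle_k] \subseteq \langle\cV_\fg\rangle_k$. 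Therefore $\cG_\pi = \cG$, which is trivially a normal subgroup of itself, and hypothesis (b) of Theorem \ref{EIFK4} holds.

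Hypothesis (a) requires a restricted subalgebra of $\fg$ isomorphic to $\fsl(2)$, equivalently a closed subgroup scheme $\SL(2)_1 \subseteq \cG$; this is where the Cartan-type assumption is genuinely used, and I expect the main work to lie here. Appealing to the structure theory of restricted simple Lie algebras of Cartan type \cite[Chapters~2 and 4]{SF}, one treats the four families. For the Witt algebra $W(n;\underline{1})$ one takes the span of $\partial_1,\ x_1\partial_1,\ x_1^2\partial_1$, which is a restricted subalgebra because $\partial_1^{[p]} = 0 = (x_1^2\partial_1)^{[p]}$ while $(x_1\partial_1)^{[p]} = x_1\partial_1$, and which is manifestly isomorphic to $\fsl(2)$. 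For the special, Hamiltonian and contact algebras one invokes the classical subalgebra occurring in degree $0$ of the standard grading --- $\fsl(n)$ for $S(n;\underline{1})$ with $n \ge 3$, and $\mathfrak{sp}(2m)$ for $H(2m;\underline{1})$ and $K(2m+1;\underline{1})$ --- which is a restricted subalgebra of $W(n;\underline{1})$ (the off-diagonal root vectors being $p$-nilpotent and the toral part being fixed by the $p$-operation) and which contains the restricted copy $\fsl(2) = \mathfrak{sp}(2)$. Verifying restrictedness of the $\fsl(2)$-triple in each family, while routine, is the point requiring attention.

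With (a) and (b) verified, Theorem \ref{EIFK4} yields $\EIP(\fg) = \EIP(\cG) = \modd(\cG/\cG_\pi) = \modd(\cG/\cG)$, which is precisely the category of trivial $\cG$-modules, i.e.\ $\add(k)$. This completes the argument.
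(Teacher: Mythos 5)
Your overall strategy --- realize $\EIP(\fg)$ as $\EIP(\cG)$ for the height-$1$ group $\cG$ with $k\cG = U_0(\fg)$, verify hypotheses (a) and (b) of Theorem \ref{EIFK4}, and read off the conclusion --- is sound and leads to the right answer, and your treatment of hypothesis (a) matches the paper's (both appeal to the $\fsl(2)$-subalgebra supplied by the structure theory in \cite{SF}). Where you diverge is in your treatment of (b): you aim to prove the stronger statement $\fg_\pi = \fg$ by showing $\langle\cV_\fg\rangle_k$ is an ideal via an $\Aut(\fg)$-equivariance argument, whereas the paper avoids needing anything at all about normality of $\cG_\pi$. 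Having established (as in the proof of Theorem \ref{EIFK4}) that every $M \in \EIP(\fg)$ has $\Jt(M) = (\dim_kM)[1]$, the paper invokes Lemma \ref{EIFK1}(1) to get $\fg_\pi \subseteq Z_\fg(M)$; since $Z_\fg(M)$ is manifestly an ideal (it is the kernel of the representation), $\fg_\pi \neq 0$ for Cartan type, and $\fg$ is simple, one gets $Z_\fg(M) = \fg$ directly. This is shorter and sidesteps entirely the question of whether $\fg_\pi$ is an ideal.

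Your argument for $\fg_\pi = \fg$ has a genuine gap at the tangent-space step. Differentiating the orbit map of $\Aut(\fg)$ at $y \in \cV_\fg$ gives $[\fg,y] \subseteq T_y\cV_\fg$, but this is the \emph{scheme-theoretic} tangent space of the closed subscheme of $\fg$ cut out by $y^{[p]} = 0$. That subscheme is not contained in the linear subspace $\langle\cV_\fg\rangle_k$ as a scheme (its defining ideal has no linear part), so the inclusion $T_y\cV_\fg \subseteq \langle\cV_\fg\rangle_k$ does not follow; it would require passing to the \emph{reduced} variety $(\cV_\fg)_{\rm red}$, and for that one needs the acting group to be smooth. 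The automorphism group scheme $\Aut(\fg)$ need not be smooth, and its reduced part can have Lie algebra strictly smaller than $\mathrm{Der}(\fg)$ --- for $W(1;\underline{1})$, for instance, $\Lie(\Aut(\cO_1)_{\rm red})$ consists of the filtration-preserving derivations $\fm\partial$, which omit $\partial$ --- so $\ad\fg$ need not differentiate the action of a smooth group preserving $\cV_\fg$. The claim $\fg_\pi = \fg$ is nevertheless true, but proving it requires either a direct use of the standard grading of Cartan-type algebras (the homogeneous components $\fg_i$, $i \neq 0$, together with their mutual brackets generate $\fg$ by transitivity) or a more careful combination of exponentials $\exp(t\,\ad x)$ for $x \in \cV_\fg$ with the smooth part of the automorphism group. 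Given that the paper gets by with much less, I would recommend adopting its route through $Z_\fg(M)$ rather than repairing the equivariance argument.
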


\begin{proof} By assumption, the Lie algebra $\fg$ contains a copy of $\fsl(2)$ as a $p$-subalgebra, see \cite{SF}.

Let $M \in \EIP(\fg)$. Arguing as in the proof of Theorem \ref{EIFK4}, we show that $\Jt(M) = (\dim_kM)[1]$. Thanks to Lemma \ref{EIFK1}(1), we have $\fg_\pi \subseteq Z_\fg(M):=\{x \in \fg \ ; \ x_M=0\}$, and the simplicity of $\fg$ yields $Z_\fg(M) = \fg$. Consequently, $M \in \add(k)$. \end{proof}

\bigskip
\noindent
When combined with Theorem \ref{MCR2}, our next result completes the proof of Theorem A. It also shows in particular, that classical simple Lie algebras possess no non-trivial modules having the equal images property.

\bigskip

\begin{Cor} \label{EIFK6} Let $G$ be a smooth reductive group. If $\Char(k)=p\ge 3$, then $\EIP(G_r) = \add(\bigoplus_{\lambda \in X(G_r)}k_\lambda)$. \end{Cor}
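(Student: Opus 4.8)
The plan is to derive the statement from Theorem~\ref{EIFK4}, Theorem~\ref{SFS6}, and the structure theory already exploited in the proof of Theorem~\ref{MCR2}. First I would dispose of the degenerate case in which $G$ is a torus: then $G_r$ is an infinitesimal diagonalizable group scheme, so that $\modd(G_r)$ consists precisely of direct sums of one-dimensional modules. Since every one-dimensional $G_r$-module belongs to $\EIP(G_r)$ (cf.\ Corollary~\ref{EIPr4}(2)) while the inclusion $\EIP(G_r)\subseteq\modd(G_r)$ is trivial, this already forces $\EIP(G_r)=\modd(G_r)=\add(\bigoplus_{\lambda\in X(G_r)}k_\lambda)$.

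Next, assuming that $G$ is not a torus, so that $(G,G)$ is semi-simple of positive rank, the key observation is that $\SL(2)_1$ is then a subgroup scheme of $G_r$. Indeed, choosing a root $\alpha$ of $G$ relative to a maximal torus, the proof of Theorem~\ref{MCR2} produces a $p$-subalgebra $\fh_{(\alpha)}\subseteq\fg:=\Lie(G)$ with $\fh_{(\alpha)}\cong\fsl(2)$; hence $k\SL(2)_1\cong U_0(\fh_{(\alpha)})$ is a Hopf subalgebra of $U_0(\fg)\cong kG_1$, and $G_1$ is in turn a closed subgroup scheme of $G_r$. By Theorem~\ref{SFS6}(1) the subgroup $(G_r)_\pi$ is normal in $G_r$, so, as $p\ge 3$, both hypotheses of Theorem~\ref{EIFK4} are satisfied and we obtain $\EIP(G_r)=\modd(G_r/(G_r)_\pi)$.

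It then remains to identify $\modd(G_r/(G_r)_\pi)$ with $\add(\bigoplus_{\lambda\in X(G_r)}k_\lambda)$. By Theorem~\ref{SFS6}(2) the group $G_r/(G_r)_\pi$ is diagonalizable, so all of its modules are direct sums of one-dimensional modules. Moreover, since $G_r$ is infinitesimal, every quasi-elementary subgroup of $G_r$ is isomorphic to some $\GG_{a(s)}$, which is unipotent and therefore admits no nontrivial homomorphism to $\GG_m$; hence every $\lambda\in X(G_r)$ is trivial on each such subgroup and thus on the subgroup $(G_r)_\pi$ they generate. Consequently inflation identifies $X(G_r)$ with the character group of $G_r/(G_r)_\pi$, and the two full subcategories of $\modd(G_r)$ coincide, which completes the proof. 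Most of the content is carried by the invoked theorems; the one genuinely new point — and the only step I expect to require care — is the verification that $\SL(2)_1\subseteq G_r$, that is, that $\fsl(2)$ occurs as a $p$-subalgebra of $\fg$ with the correct restricted structure, and this is precisely what the proof of Theorem~\ref{MCR2} supplies.
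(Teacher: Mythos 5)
Your proposal is correct and follows essentially the same route as the paper: dispose of the torus case, use the root-$\SL(2)$ to get $\SL(2)_1\subseteq G_1\subseteq G_r$, then apply Theorem \ref{SFS6} and Theorem \ref{EIFK4} together with the diagonalizability of $G_r/(G_r)_\pi$. The only cosmetic differences are that the paper cites Springer directly for the embedding $\SL(2)\hookrightarrow G$ or $\PSL(2)\hookrightarrow G$ rather than re-running the Lie-algebra argument from Theorem \ref{MCR2}, and that the paper handles the reverse inclusion by a bare citation to Corollary \ref{EIPr4} rather than spelling out the identification $X(G_r)\cong X(G_r/(G_r)_\pi)$ as you do — a small but welcome extra step.
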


\begin{proof} In view of Theorem \ref{SFS6}, the infinitesimal group scheme $G_r$ fulfills condition (b) of Theorem \ref{EIFK4}, with $G_r/(G_r)_\pi$ being diagonalizable. If $G$ is a torus, then $\modd(G_r) = \add(\bigoplus_{\lambda \in X(G_r)}k_\lambda)$. Alternatively, \cite[(8.2.4),(9.3.5)]{Sp} shows that $G$ contains a copy of $\SL(2)$ or $\PSL(2)$, so that $\SL(2)_1 \subseteq G_1 \subseteq G_r$. As a result, condition (a) of Theorem \ref{EIFK4} also holds, and $\EIP(G_r) = \modd (G_r/(G_r)_\pi) \subseteq \add(\bigoplus_{\lambda \in X(G_r)}k_\lambda)$. The reverse inclusion follows from Corollary \ref{EIPr4}. \end{proof}

\bigskip

\begin{Example} Let $G=\SL(2)$ and suppose that $p\ge 3$. Then every simple $G_1$-module has constant Jordan type \cite[(2.5)]{CFP}, whereas for $r\ge 2$ the trivial module and the Steinberg module are the only simple $G_r$-modules of constant Jordan type. For $r=2$, this was observed in \cite[(4.12)]{FPe3}. If $r>2$, then \cite[(3.15)]{Ja2} and \cite[(3.16)]{Ja2} imply that every simple $G_r$-module $S$ is of the form
\[ S\cong M\!\otimes_k\!N,\]
where $M|_{G_2}$ is simple and $G_2$ acts trivially on $N$. Thus, if $S$ has constant Jordan type, then
\[S|_{G_2} \cong (M|_{G_2})^{\dim_kN},\]
has constant Jordan type and the case $r=2$ yields $\Jt(S)=(\dim_kS)[1]$, or $\Jt(S)=\frac{\dim_kS}{p}[p]$. In the latter case, $S$ is projective and thus isomorphic to the Steinberg module.
In the former case, $S\in \EIP(G_r)$, so that Corollary \ref{EIFK6} yields the assertion. \end{Example}

\bigskip

\subsection{Heller shifts of equal images modules}
Suppose that $\cG$ is a finite group scheme of abelian unipotent rank $\aurk(\cG)\ge 2$. If $M \in \EIP(\cG)\!\smallsetminus\!\{(0)\}$ is an equal images module of constant Jordan type $\Jt(M)=\bigoplus_{i=1}^da_i[i]$ for some $d\le p\!-\!2$ and $n \in \ZZ$ is odd, then $\Omega^n_\cG(M)$ has constant Jordan type $\Jt(\Omega^n_\cG(M))=\bigoplus_{i=1}^da_i[p\!-\!i]\oplus m[p]$. Consequently, Proposition \ref{EIPr6} shows that $\Omega^n_\cG(M) \not \in \EIP(\cG)$. In this section, we shall investigate the analogous property for even $n\in \ZZ$.

In what follows, $\HH^\bullet(\cG,k):= \bigoplus_{n\ge 0} \HH^{2n}(\cG,k)$ denotes the even cohomology ring of $\cG$. Thanks to the Friedlander-Suslin Theorem \cite[(1.1)]{FS}, $\HH^\bullet(\cG,k)$ is a finitely generated commutative $k$-algebra. By the same token, the canonical algebra homomorphism
\[ \Phi_M : \HH^\bullet(\cG,k) \lra \Ext^\ast_\cG(M,M) \ \ ; \ \ [f] \mapsto [f\!\otimes\!\id_M]\]
is finite. We let $I_M \unlhd \HH^\bullet(\cG,k)$ be the kernel of $\Phi_M$. Then
\[\cV_\cG(M) := Z(I_M) \subseteq {\rm Maxspec}(\HH^\bullet(\cG,k))\]
is the {\it cohomological support variety} of $M$. By general theory \cite[(5.3.5),(5.4.6)]{Be2}, the variety $\cV_\cG(M)$ has dimension $\dim \cV_\cG(M) = \cx_\cG(M)$.

\bigskip

\begin{Thm} \label{EIHS1} Let $\cG$ be a finite group scheme of abelian unipotent rank $\aurk(\cG) \ge 2$. Then the following statements hold:
\begin{enumerate}
\item If $M \in \EIP(\cG)_{p-1}\!\smallsetminus\!\{(0)\}$, then $\Omega^{2n}_\cG(M) \not \in \EIP(\cG)_{p-1}$ for every $n \in \ZZ$ with $|n| \ge (\dim_kM)^2$.
\item If $M \in \EIP(\cG)_{p-2}\!\smallsetminus\!\{(0)\}$, then $\Omega^{2n}_\cG(M) \not \in \EIP(\cG)$ for every $n \in \ZZ$ with $|n| \ge (\dim_kM)^2$. \end{enumerate}\end{Thm}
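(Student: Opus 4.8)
The plan is to reduce to $\cG=\ZZ/(p)\times\ZZ/(p)$ and then combine a Jordan‑type computation with a growth estimate for syzygy dimensions.

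\textbf{Reduction to $\ZZ/(p)\times\ZZ/(p)$.} Since $\aurk(\cG)\ge 2$, arguing as in the proof of Lemma \ref{EIPr5}(3) (and passing to a $2$‑dimensional $p$‑subalgebra of the relevant nullcone) one obtains a subgroup scheme $\cU\subseteq\cG$ with $k\cU\cong k[x,y]/(x^p,y^p)$ as algebras, the isomorphism identifying $\Pt(\cU)$, $\EIP(\cU)$ and the Heller operator with those of $\ZZ/(p)\times\ZZ/(p)$. Assume the theorem known for $\ZZ/(p)\times\ZZ/(p)$ and suppose, for contradiction, that $\Omega^{2n}_\cG(M)\in\EIP(\cG)$ — resp.\ $\in\EIP(\cG)_{p-1}$ in case (1) — for some $n$ with $|n|\ge(\dim_kM)^2$. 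By Lemma \ref{EIPr5}(1), $M|_\cU\in\EIP(\cU)$, and since restriction along $\Pt(\cU)\hookrightarrow\Pt(\cG)$ leaves the (constant) Jordan type unchanged, $M|_\cU$ lies in $\EIP(\cU)_{p-1}$ in case (1) and in $\EIP(\cU)_{p-2}$ in case (2). As $\aurk(\cU)=2$ and $\Omega^{2n}_\cG(M)\in\EIP(\cG)$, Lemma \ref{EIPr5}(4) gives $\Omega^{2n}_\cU(M|_\cU)\cong\Omega^{2n}_\cG(M)|_\cU$, which lies in $\EIP(\cU)$, resp.\ in $\EIP(\cU)_{p-1}$. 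Since $\dim_kM|_\cU=\dim_kM$, this contradicts the theorem for $\ZZ/(p)\times\ZZ/(p)$. Thus we may assume $\cG=\ZZ/(p)\times\ZZ/(p)$, so that $k\cG$ is local and symmetric.

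\textbf{Jordan‑type bookkeeping.} Write $d:=\dim_kM$ and $\Jt(M)=\bigoplus_{i=1}^{p-1}a_i[i]$, which is legitimate because $M\in\EIP(\cG)_{p-1}$ in case (1) and $M\in\EIP(\cG)_{p-2}$ in case (2); in either case $\Jt(M)$ has no $[p]$‑summand, so $d=\sum_{i=1}^{p-1}ia_i$, and $M$ is non‑zero and projective‑free, hence non‑projective. Pulling back a minimal projective resolution of $M$ along a $\pi$‑point $\alpha_K$ yields a projective resolution of $\alpha_K^\ast(M_K)$; since $\Omega_{\fA_p}([i])=[p-i]$ for $1\le i\le p-1$ and $\Omega_{\fA_p}([p])=0$, and since $\alpha_K^\ast(M_K)=\Jt(M)$ has no $[p]$‑part, one gets $\alpha_K^\ast(\Omega^{2n}_\cG(M)_K)\cong\bigoplus_{i=1}^{p-1}a_i[i]\oplus m_{\alpha}[p]$. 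As $\Omega^{2n}_\cG(M)\in\EIP(\cG)$ it has constant Jordan type, so $m_\alpha$ is a constant $m\ge 0$. In case (1), $\Omega^{2n}_\cG(M)\in\EIP(\cG)_{p-1}$ forces $m=0$. In case (2), $\Jt(M)$ has vanishing $[p-1]$‑coefficient, so if $m\ge1$ then $\Jt(\Omega^{2n}_\cG(M))$ has a non‑zero $[p]$‑coefficient and a zero $[p-1]$‑coefficient; but $\aurk(\cG)=2$, so Proposition \ref{EIPr6} applied to the non‑zero module $\Omega^{2n}_\cG(M)\in\EIP(\cG)$ forces $\Jt(\Omega^{2n}_\cG(M))=\bigoplus_{i=1}^{p}c_i[i]$ with all $c_i\ge1$, a contradiction; hence $m=0$ again. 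In both cases $\dim_k\Omega^{2n}_\cG(M)=\sum_{i=1}^{p-1}ia_i=d$.

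\textbf{The growth estimate, and the main obstacle.} It remains to contradict $\dim_k\Omega^{2n}_\cG(M)=d$ for $|n|\ge d^2$. Because $\Jt(M)$ has no $[p]$‑summand, every $\alpha_K^\ast(M_K)$ is non‑projective, so $\Pi(\cG)_M=\Pi(\cG)$ and $\cx_\cG(M)=2$; likewise $\cx_\cG(M^\ast)=2$ and $\dim_kM^\ast=d$. Since $k\cG$ is local, $\dim_k\Omega^{i+1}_\cG(N)=p^2b_i(N)-\dim_k\Omega^i_\cG(N)$ for any non‑projective $N$, where $b_i(N)\ge1$ is the minimal number of generators of $\Omega^i_\cG(N)$; telescoping gives
\[
\dim_k\Omega^{2j}_\cG(N)=\dim_kN+p^2\sum_{l=0}^{j-1}\bigl(b_{2l+1}(N)-b_{2l}(N)\bigr).
\]
By the Friedlander–Suslin finite generation theorem \cite{FS}, $\Ext^\ast_\cG(N,k)$ is a finitely generated graded module over $\HH^\bullet(\cG,k)$; together with $\cx_\cG(N)=2$ and the non‑negativity of the syzygy dimensions, this forces the even and odd Betti sequences to be eventually linear of the same positive slope, with the length of the pre‑linear range governed by the number and degrees of a generating set of $\Ext^\ast_\cG(N,k)$ over $\HH^\bullet(\cG,k)$. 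Bounding these data by $\dim_kN$ yields $\sum_{l=0}^{j-1}(b_{2l+1}(N)-b_{2l}(N))>0$, hence $\dim_k\Omega^{2j}_\cG(N)>\dim_kN$, for $j\ge(\dim_kN)^2$. Applying this to $N=M$ for $n>0$, and — using that $k\cG$ is symmetric, so $\dim_k\Omega^{-2|n|}_\cG(M)=\dim_k\Omega^{2|n|}_\cG(M^\ast)$ — to $N=M^\ast$ for $n<0$, contradicts $\dim_k\Omega^{2n}_\cG(M)=d$. The main obstacle is exactly this last quantitative step: extracting from finite generation of cohomology an explicit bound, polynomial in $\dim_kM$, on the index beyond which syzygy dimensions strictly exceed $\dim_kM$. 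By contrast, the reduction and the Jordan‑type computation are routine consequences of Lemmas \ref{EIPr2}, \ref{EIPr5} and Proposition \ref{EIPr6}.
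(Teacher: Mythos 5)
Your reduction to a two-dimensional $p$-subalgebra $\cU$ with $k\cU\cong k[x,y]/(x^p,y^p)$, and the Jordan-type bookkeeping showing $\dim_k\Omega^{2n}_\cG(M)=\dim_kM$ whenever $\Omega^{2n}_\cG(M)$ lies in $\EIP(\cG)_{p-1}$ (resp.\ $\EIP(\cG)$ in case (2)), are both correct and essentially parallel to what the paper does (which in fact only reduces to the abelian unipotent case).  The gap is in the last step, and you have correctly diagnosed it: your argument via the recursion $\dim_k\Omega^{2j}_\cG(N)=\dim_kN+p^2\sum_{l=0}^{j-1}(b_{2l+1}(N)-b_{2l}(N))$ hinges on showing this alternating Betti sum becomes and stays positive by index $(\dim_kN)^2$, and you never establish this.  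Finite generation of $\Ext^\ast_\cG(N,N)$ over $\HH^\bullet(\cG,k)$ does give eventual linear growth of the Betti sequence, but extracting from it an effective, polynomial-in-$\dim_kN$ bound on when the odd Betti numbers permanently dominate the even ones is genuinely nontrivial and not a ``routine consequence''; there is no argument given, and the sum $\sum(b_{2l+1}-b_{2l})$ need not be monotone, so eventual positivity is not enough without control of where it starts.

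The paper avoids this entirely with a much more direct two-sided estimate, and it is worth noting because it is both shorter and yields the exact bound.  Working with an abelian unipotent subgroup $\cU$ (so that $\HH^\bullet(\cU,k)$ is generated in degree $2$), set $A_M=\HH^\bullet(\cU,k)/I_M$, which has Krull dimension $\ge\aurk(\cU)\ge 2$ because $M$ is non-projective of constant Jordan type with no $[p]$-summand.  Since $A_M$ is generated in degree $2$, a homogeneous Noether normalization produces algebraically independent $x,y\in(A_M)_2$, whence
\[
n+1=\dim_k k[x,y]_{2n}\le\dim_k(A_M)_{2n}\le\dim_k\Ext^{2n}_\cU(M,M)\qquad(n\ge 0).
\]
On the other hand, once $\dim_k\Omega^{2n}_\cU(M)=\dim_kM$ is known, one has for $n>0$
\[
\dim_k\Ext^{2n}_\cU(M,M)\le\dim_k\Hom_\cU(\Omega^{2n}_\cU(M),M)\le\dim_k\Hom_k(\Omega^{2n}_\cU(M),M)=(\dim_kM)^2,
\]
and symmetrically (via $\Hom_k(M,\Omega^{2n}_\cU(M))$) for $n<0$.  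Combining the two bounds gives $|n|\le(\dim_kM)^2-1$, which is exactly the required bound.  You should replace the Betti-number growth sketch by this $\Ext$-dimension sandwich: it is the decisive idea that your proposal is missing.
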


\begin{proof} (1) Our assumption implies that
\[\Jt(M) = \bigoplus_{i=1}^da_i[i]\]
for some $d \in \{1,\ldots,p\!-\!1\}$ and $a_i \in \NN$.

We first assume that $\cG$ is an abelian unipotent group scheme. Then \cite[(14.4)]{Wa} provides an isomorphism
\[k\cG \cong k[X_1,\ldots, X_r]/(X_1^{p^{n_1}}, \ldots, X_r^{p^{n_r}}),\]
so that the algebra $\HH^\bullet(\cG,k)$ is generated in degree $2$, cf.\ \cite[(3.2),(3.5)]{Be1}.

Since $M \in \EIP(\cG)$ is non-projective and of constant Jordan type, we have $\cV_\cG(M)=\cV_\cG(k)$ and \cite[(5.3.5),(5.4.6)]{Be2} yields
\[ \dim \HH^\bullet(\cG,k)/I_{M} = \dim \cV_\cG(M) = \dim \cV_\cG(k) = \cx_\cG(k) \ge \aurk(\cG) \ge 2.\]
We put $A_M := \HH^\bullet(\cG,k)/I_M$, and note that the graded algebra $A_M=\bigoplus_{n\ge 0}(A_{M})_{2n}$ is generated in degree $2$. The Noether Normalization Lemma \cite[(VI.3.1)]{Ku} thus provides algebraically independent elements $x,y \in (A_{M})_2$. Consequently,
\[ (\ast) \ \ \ \ \  n\!+\!1 = \dim_kk[x,y]_{2n} \le \dim_k (A_M)_{2n} \le \dim_k \Ext^{2n}_\cG(M,M) \ \ \ \ \forall \ n \ge 0.\]
Suppose that $\Omega^{2n}_\cG(M) \in \EIP(\cG)_{p-1}$ for some $n\in \ZZ\!\smallsetminus\!\{0\}$. By general theory, the stable Jordan types of $M$ and $\Omega^{2n}_\cG(M)$ coincide, so that
\[ \Jt(\Omega^{2n}_\cG(M)) = \bigoplus_{i=1}^d a_i[i]\oplus m[p],\]
while $\Omega^{2n}_\cG(M)\in \EIP(\cG)_{p-1}$ forces $m=0$. As a result, $\dim_k\Omega^{2n}_\cG(M) = \dim_k M$.

Suppose that $n>0$. Then we have
\[\dim_k \Ext^{2n}_\cG(M,M) \le \dim_k \Hom_\cG(\Omega^{2n}_\cG(M),M) \le \dim_k \Hom_k(\Omega^{2n}_\cG(M),M) \le (\dim_kM)^2.\]
In view of ($\ast$), this implies $n \le (\dim_kM)^2\!-\!1$.

If $n<0$, we arrive at a similar estimate:
\[ \dim_k \Ext^{-2n}_\cG(M,M) \le \dim_k \Hom_k(M,\Omega^{2n}_\cG(M)) \le (\dim_kM)^2.\]
Thus, ($\ast$) implies $|n| = -n \le (\dim_kM)^2\!-\!1$.

In the general case, we let $\cU \subseteq \cG$ be an abelian, unipotent subgroup scheme of abelian unipotent rank $\aurk(\cU) \ge 2$.  Then $M|_\cU \in \EIP(\cU)_{p-1}$ is an equal images module. If $\Omega^{2n}_\cG(M) \in \EIP(\cG)_{p-1}$, then Lemma \ref{EIPr5} implies $\Omega^{2n}_\cU(M|_\cU) \cong \Omega^{2n}_\cG(M)|_\cU \in \EIP(\cU)_{p-1}$, and the first part of the proof yields $|n| \le (\dim_kM)^2\!-\!1$, as desired.

(2) Suppose that $M \in \EIP(\cG)_{p-2}\smallsetminus \{(0)\}$. Writing $\Jt(M) = \bigoplus_{i=1}^{d}a_i[i]$ for some $d\le p\!-\!2$, we obtain $\Jt(\Omega^{2n}_\cG(M))=\bigoplus_{i=1}^da_i[i]\oplus m[p]$. If $\Omega^{2n}_\cG(M) \in \EIP(\cG)$, then Proposition \ref{EIPr6} yields $m=0$, whence $\Omega^{2n}_\cG(M) \in \EIP(\cG)_{p-1}$. Our assertion thus follows from (1). \end{proof}

\bigskip

\begin{Remarks} (1) Suppose that $p=2$ and $G=\ZZ/(2)\!\times\!\ZZ/(2)$. Given $n \in \NN$, the K\"unneth Formula implies $\dim_k\Ext^n_G(k,k)=n\!+\!1$. By the same token
\[ \cdots \lra kG^m \lra \cdots \lra kG^2 \lra kG \lra k \lra (0)\]
is a minimal projective resolution of the $G$-module $k$. Consequently, $\Omega^n_G(k)$ is an indecomposable $kG$-module of Loewy length $2$, whose top and socle have dimensions $n\!+\!1$
and $n$, respectively. As a result, $\Omega^n_G(k)\cong W_{n+1,2}$ has the equal images property, see Lemma \ref{EIPr1}.

(2) We shall show in Section $4.5$ that the conclusions of Theorem \ref{EIHS1} may fail for $M \in \EIP(\cG)\!\smallsetminus\!\EIP(\cG)_{p-2}$  and arbitrary $p\ge 3$.\end{Remarks}

\bigskip
\noindent
Suppose that $p \ge 3$. By the above result, the non-trivial even Heller shifts of one-dimensional $\cG$-modules do not possess the equal images property. Our next result provides a somewhat
stronger statement.

\bigskip

\begin{Prop} \label{EIHS2} Suppose that $p\ge 5$ and let $\cG$ be a finite group scheme such that $\aurk(\cG) \ge 2$. If $M \in \EIP(\cG)_2\!\smallsetminus\!\{(0)\}$, then $\Omega_\cG^{2n}(M) \not \in \EIP(\cG)$ for every $n \in \ZZ\!\smallsetminus\!\{0\}$. \end{Prop}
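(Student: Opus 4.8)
The plan is to reduce to $\cG=\ZZ/(p)\!\times\!\ZZ/(p)$ and then combine the classification of equal images modules of Loewy length $\le 2$ with a complexity argument. So suppose, for contradiction, that $\Omega^{2n}_\cG(M)\in\EIP(\cG)$ for some $n\neq 0$. Exactly as in the proof of Theorem \ref{EIHS1} I would first restrict to an abelian unipotent subgroup scheme $\cU\subseteq\cG$ with $\aurk(\cU)\ge 2$, and then, as in the proof of Proposition \ref{EIPr6}, pass to a two-dimensional $p$-subalgebra $\fu_2$ of the nullcone $\cV_\fu$ of the restricted Lie algebra $\fu$ with $k\cU\cong U_0(\fu)$. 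Since $U_0(\fu_2)\cong k[X,Y]/(X^p,Y^p)\cong k(\ZZ/(p)\!\times\!\ZZ/(p))$ and $\aurk(\fu_2)=2$, Lemma \ref{EIPr5}(1),(4) gives $M|_{\fu_2}\in\EIP(\fu_2)_2\!\smallsetminus\!\{(0)\}$ and $\Omega^{2n}_{\fu_2}(M|_{\fu_2})\cong\Omega^{2n}_\cG(M)|_{\fu_2}\in\EIP(\fu_2)$; so it suffices to treat $\cG=G:=\ZZ/(p)\!\times\!\ZZ/(p)$. As both $\EIP(G)_2$ and the property ``$\Omega^{2n}_G(-)\in\EIP(G)$'' pass to direct summands (Lemma \ref{EIPr2}, together with the fact that $\Omega^{2n}_G$ commutes with finite direct sums), I may further assume $M$ indecomposable.

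Next I would pin down $M$ and $\Omega^{2n}_G(M)$. Writing $kG=k[x,y]$ with $x^p=y^p=0$, the hypothesis $\Jt(M)=a_1[1]\oplus a_2[2]$ forces $(\lambda x+\mu y)^2_M=0$ for every closed $\pi$-point $t\mapsto\lambda x+\mu y$; letting $(\lambda:\mu)$ vary and using $p\ge 3$ yields $x^2M=y^2M=xyM=0$, so $M$ has Loewy length $\le 2$. By the remarks following Lemma \ref{EIPr2} (i.e.\ \cite[(4.1)]{CFS}), an equal images $G$-module of Loewy length $\le 2$ is a direct sum of modules $W_{m,2}$ (with $W_{1,2}$ the trivial module), so the indecomposable $M$ is $\cong W_{m,2}$ for a unique $m\ge 1$ and $\dim_kM=2m-1$. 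Now $\Omega^{2n}_G(M)$ has the same stable Jordan type $[1]\oplus(m-1)[2]$ as $M$, which has no $[p]$, so $\Jt(\Omega^{2n}_G(M))=[1]\oplus(m-1)[2]\oplus c[p]$ for some $c\ge 0$. If $\Omega^{2n}_G(M)\in\EIP(G)$ then, since $\aurk(G)=2$, Proposition \ref{EIPr6} forces its Jordan type to be of the shape $\bigoplus_{i=1}^d b_i[i]$ with each $b_i\in\NN$; because $p\ge 5$ the blocks $[3],\dots,[p-1]$ are absent from $\Jt(\Omega^{2n}_G(M))$ whereas $[p]$ would occur for $c>0$, which is incompatible with $d=p$. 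Hence $c=0$, so $\Omega^{2n}_G(M)\in\EIP(G)_2$ is projective-free of dimension $2m-1$ and of Loewy length $\le 2$; it is therefore again a sum of modules $W_{m_j,2}$, and since each such indecomposable summand contributes exactly one $[1]$ to the Jordan type, comparison with $\Jt(M)=[1]\oplus(m-1)[2]$ shows it has a single summand, necessarily $W_{m,2}$. Thus $\Omega^{2n}_G(M)\cong M$.

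Finally I would invoke complexity: by Lemma \ref{EIPr5}(3) the module $M\cong W_{m,2}$ is nonzero and projective-free, hence non-projective, and of constant Jordan type, so $\cV_G(M)=\cV_G(k)$ and $\cx_G(M)=\cx_G(k)=\aurk(G)=2$. But $\Omega^{2n}_G(M)\cong M$ with $n\neq 0$ means the minimal projective resolution of $M$ is periodic, forcing $\cx_G(M)\le 1$ --- a contradiction. Unwinding the reductions, $\Omega^{2n}_\cG(M)\notin\EIP(\cG)$ for every $n\neq 0$, as claimed.

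The step I expect to be decisive is the one pinning down exactly where $p\ge 5$ is used: invoking Proposition \ref{EIPr6} to rule out a $[p]$-summand in $\Jt(\Omega^{2n}_G(M))$. This is precisely what breaks for $p=3$, where a Jordan type $a_1[1]\oplus a_2[2]\oplus c[p]$ with all of $a_1,a_2,c$ positive is still of the admissible form $\bigoplus_{i=1}^p b_i[i]$. The remaining ingredients --- the double reduction to $\ZZ/(p)\!\times\!\ZZ/(p)$, the Loewy-length bound, and the rigidity of indecomposable equal images modules of Loewy length $\le 2$ --- are either already in the paper or follow by polarization and dimension counting; the only slightly delicate bookkeeping is verifying that restriction to $\fu_2$ stays inside $\EIP(\cdot)_2$ and commutes with the Heller shift, which is exactly what $\aurk(\fu_2)=2$ buys via Lemma \ref{EIPr5}.
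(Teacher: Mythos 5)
Your proof is correct and follows essentially the same route as the paper's: reduce via an abelian unipotent subgroup scheme and a two-dimensional $p$-unipotent subalgebra $\fu_2$ to $\ZZ/(p)\!\times\!\ZZ/(p)$, identify the indecomposable equal images modules of Loewy length $\le 2$ as $W_{m,2}$'s via \cite[(4.1)]{CFS}, rule out a $[p]$-summand in $\Jt(\Omega^{2n}_G(M))$ using $p\ge 5$ (you cite Proposition \ref{EIPr6} where the paper cites \cite[(4.2)]{CFS} directly, but these serve the same purpose), conclude $\Omega^{2n}_G(M)\cong M$, and derive a contradiction from $\cx_G(W_{m,2})=2$. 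The only cosmetic differences are that you reduce to indecomposable $M$ before deploying \cite[(4.1)]{CFS} while the paper keeps the direct-sum decomposition and appeals to \cite[(IV.3.6)]{ARS}, and that you prove the Loewy-length bound by a direct polarization argument rather than citing \cite[(1.7),(1.9)]{CFS}.
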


\begin{proof} We first assume that $k\cG \cong k(\ZZ/(p)\!\times\!\ZZ/(p))$. Since $M \in \EIP(\cG)$ has constant Jordan type $\Jt(M)=a_1[1]\oplus a_2[2]$, an application of \cite[(1.7),(1.9)]{CFS} shows that $\Rad^2(M)=(0)$. Consequently, \cite[(4.1)]{CFS} provides $n_1,\ldots,n_r \in \NN$ such that
\[ M \cong \bigoplus_{i=1}^rW_{n_i,2},\]
and \cite[(IV.3.6)]{ARS} furnishes an isomorphism
\[ \Omega^{2n}_\cG(M) \cong \bigoplus_{i=1}^r \Omega^{2n}_\cG(W_{n_i,2}).\]
Suppose that $\Omega^{2n}_\cG(M) \in \EIP(\cG)$. In view of Lemma \ref{EIPr2} and \cite[(2.3)]{CFS}, the $\cG$-module $\Omega^{2n}_\cG(W_{n_i,2})$ belongs to $\EIP(\cG)$ and has constant Jordan type $\Jt(\Omega^{2n}_\cG(W_{n_i,2})) = [1]\oplus (n_i\!-\!1)[2]\oplus m[p]$. Since $p\ge 5$, \cite[(4.2)]{CFS} implies $m=0$. As $\Omega^{2n}_\cG(W_{n_i,2})$ is indecomposable (cf.\ \cite{He}), the above arguments in conjunction with \cite[(4.1)]{CFS} imply $\Omega^{2n}_\cG(W_{n_i,2}) \cong W_{n_i,2}$. Since $\cx_\cG(W_{n_i,2})=2$, it follows that $n=0$.

In the general case, we observe that $\cG$ possesses an abelian unipotent subgroup $\cU \subseteq \cG$ such that $\aurk(\cU) \ge 2$. If $\Omega^{2n}_\cG(M) \in \EIP(\cG)$, then Lemma \ref{EIPr5} shows that $N:= M|_\cU$ and $\Omega^{2n}_\cU(N) \cong \Omega^{2n}_\cG(M)|_\cU$ belong to $\EIP(\cU)$, with $N$ having constant Jordan type $\Jt(N)=a_1[1]\oplus a_2[2]$.

As before, we note that $k\cU \cong U_0(\fu)$ is isomorphic to the restricted enveloping algebra of an abelian, $p$-unipotent restricted Lie algebra $\fu$ that contains a two-dimensional $p$-subalgebra $\fu_2$ with trivial $p$-map. By the above arguments, it suffices to prove the assertion for $N':= N|_{\fu_2}$. Since $U_0(\fu_2)\cong k(\ZZ/(p)\!\times\!
\ZZ/(p))$, our result now follows from the first part of the proof. \end{proof}

\bigskip
\noindent
In preparation for our study of equal images modules via Auslander-Reiten theory we record properties of $\cG$-modules $M \in \EIP(\cG)$, whose Heller shifts $\Omega^2_\cG(M)$
have the equal images property. Following \cite{CFS}, we say that a $\cG$-module $M$ has the {\it equal kernels property}, provided there exists for every $j \in \{1,\ldots,p\!-\!1\}$ a $k$-subspace $V_j \subseteq M$ such that
\[ \ker \ell_{\alpha_K}^j = V_j\!\otimes_k\!K\]
for every $\pi$-point $\alpha_K \in \Pt(\cG)$. If $\cG$ is abelian, then it suffices to test this property for $j=1$.

\bigskip

\begin{Lem} \label{EIHS3} Let $\cG$ be a finite group scheme of abelian unipotent rank $\aurk(\cG)\ge 2$. If $M \in \EIP(\cG)$ has the equal kernels property, then $\Jt(M)=(\dim_kM)[1]$.
\end{Lem}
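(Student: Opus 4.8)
The plan is to reduce to the case $\cG=\ZZ/(p)\!\times\!\ZZ/(p)$ by restriction, and then to exploit the one–parameter family of $\pi$-points $t\mapsto x+cy$, $c\in k$, together with the fact that $k$ is algebraically closed.

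\emph{Reduction.} Since $\aurk(\cG)\ge 2$, there is an abelian unipotent subgroup scheme $\cU\subseteq\cG$ with $\cx_\cU(k)\ge 2$. Writing $k\cU\cong U_0(\fu)$ for an abelian $p$-unipotent restricted Lie algebra $\fu$, Jantzen's Theorem \cite[Satz]{Ja1} gives $\dim_k\cV_\fu=\cx_\cU(k)\ge 2$; moreover $\cV_\fu$ is a $p$-subalgebra with trivial $p$-operator, so it contains a two-dimensional $p$-subalgebra $\fu_2$ with $U_0(\fu_2)\cong k[x,y]/(x^p,y^p)$. By Lemma \ref{EIPr5}(1) — and by the same argument applied to the equal kernels property, both resting on the inclusion $\iota_{\ast,\fu_2}:\Pt(\fu_2)\hookrightarrow\Pt(\cG)$ — the module $M|_{\fu_2}$ again has the equal images and the equal kernels properties. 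As $M$ has constant Jordan type, $\Jt(M)=\Jt(M|_{\fu_2})$, so it suffices to treat the case $\Lambda:=k\cG\cong k[x,y]/(x^p,y^p)$.

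\emph{Identification of radical and socle.} For every $c\in k$ the assignment $t\mapsto x+cy$ defines a $\pi$-point $\alpha_c$ of $\Lambda$, and so do $t\mapsto x$ and $t\mapsto y$. By the equal kernels property there is a subspace $W_1\subseteq M$ with $\ker\ell_{\beta_K}=W_1\otimes_kK$ for every $\pi$-point $\beta_K$; evaluating at the $\pi$-points $t\mapsto x$ and $t\mapsto y$ gives $\ker x_M=\ker y_M=W_1$, whence $W_1=\ker x_M\cap\ker y_M=\Soc(M)$. Dually, the equal images property furnishes $V_1$ with $\ell_{\beta_K}(M_K)=V_1\otimes_kK$ for all $\beta_K$, and then $V_1=x_MM+y_MM=\Rad(M)$. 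Consequently $\ker\ell_{\alpha_c}=\Soc(M)$ and $\ell_{\alpha_c}(M)=\Rad(M)$ for every $c\in k$.

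\emph{The key step.} Put $\overline M:=M/\Soc(M)$ and let $\overline x,\overline y:\overline M\to\Rad(M)$ be the maps induced by $x_M$ and $y_M$; these are well defined since $x_M,y_M$ vanish on $\Soc(M)$ and have image $\Rad(M)$. Rank--nullity gives $\dim_k\overline M=\dim_kM-\dim_k\Soc(M)=\dim_k\Rad(M)$, and $\ker x_M=\Soc(M)$ forces $\overline x$, and likewise $\overline y$, to be an isomorphism. For each $c\in k$ the map $\overline x+c\overline y$ is the one induced by $\ell_{\alpha_c}$, hence injective because $\ker\ell_{\alpha_c}=\Soc(M)$, and therefore an isomorphism. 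Thus $C:=(\overline y)^{-1}\circ\overline x\in\End_k(\overline M)$ satisfies: $C+c\,\id_{\overline M}$ is invertible for every $c\in k$. Since $k$ is algebraically closed, a nonzero $\overline M$ would admit an eigenvalue $\lambda\in k$ of $C$, making $C-\lambda\,\id_{\overline M}$ singular, a contradiction. Hence $\overline M=(0)$, i.e.\ $\Rad(\Lambda)M=(0)$, so $M$ is a trivial $\Lambda$-module and $\Jt(M)=(\dim_kM)[1]$.

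The point that requires care — and where the two hypotheses must be married — is that the equal kernels property alone only makes the maps $\overline x+c\overline y$ injective into a possibly larger target, whereas the equal images property is what collapses their common image to $\Rad(M)$ and turns $\overline x,\overline y$ into isomorphisms onto it, so that $(\overline y)^{-1}\overline x$, and with it the eigenvalue argument, is available. Once this is arranged the conclusion is immediate, and the reduction to rank two is routine, paralleling the proofs of Propositions \ref{EIPr6} and \ref{EIHS2}.
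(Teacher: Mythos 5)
Your proof is correct, and it follows a genuinely different route from the paper's once the reduction to $k(\ZZ/(p)\!\times\!\ZZ/(p))$ is made (the reduction itself is essentially the same in both). The paper quotes the structure theory of \cite{CFS}: under the equal kernels hypothesis it passes to $N=\Rad^{d-2}(M)$, invokes the decomposition $N\cong\bigoplus W_{n_i,2}$ from \cite[(4.1)]{CFS}, and observes that $\ker\ell_x\ne\ker\ell_y$ already in a single summand $W_{n_i,2}$ with $n_i\ge 2$. You instead give a self-contained matrix-pencil argument: using the common kernel and image subspaces, you identify $\ker\ell_{\alpha_c}=\Soc(M)$ and $\ell_{\alpha_c}(M)=\Rad(M)$ for the whole family $\alpha_c\colon t\mapsto x+cy$, deduce that the induced maps $\overline{x},\overline{y}\colon M/\Soc(M)\to\Rad(M)$ are isomorphisms, and then the invertibility of $C+c\,\id=(\overline{y})^{-1}(\overline{x}+c\overline{y})$ for every $c\in k$ forces $M/\Soc(M)=(0)$ because a nonzero endomorphism space over an algebraically closed field always yields an eigenvalue. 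What your approach buys is independence from the $W_{n,d}$-classification of \cite[(4.1)]{CFS}: it uses only the definitions of the two equal-properties and elementary linear algebra (rank--nullity and existence of eigenvalues over $\bar k=k$), and it makes transparent exactly where each hypothesis is used — equal kernels gives injectivity of the pencil, equal images makes the common target $\Rad(M)$ so that the pencil consists of automorphisms. The paper's route is shorter on the page but rests on more machinery; yours is a bit longer but entirely elementary once the reduction is in place.
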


\begin{proof} We assume $M\ne (0)$, so that $M$ has constant Jordan type
\[\Jt(M) = \bigoplus_{i=1}^d a_i[i],\]
with $1 \le d \le p$ and $a_d \ne 0$. By assumption, $\cG$ contains an abelian unipotent subgroup $\cU \subseteq \cG$ with $\aurk(\cU)\ge 2$. Since $M|_\cU \in \EIP(\cU)$ has the equal kernels property, we may assume that $\cU = \cG$. Thus, $k\cU \cong U_0(\fu)$ is the restricted enveloping algebra of
an abelian $p$-unipotent restricted Lie algebra $\fu$, so that it suffices to consider this case. Since $\dim\cV_\fu \ge 2$, there exists a two-dimensional $p$-subalgebra $\fu_2 \subseteq \cV_u$. By the above arguments, we may thus assume that $\fu = \fu_2$. Since $U_0(\fu_2) \cong k(\ZZ/(p)\!\times\!\ZZ/(p))$, this amounts to addressing the case where $\cG=\ZZ/(p)\!\times\!\ZZ/(p)=:G$.

Suppose that $d \ge 2$. Being a submodule of an equal kernels module, it follows that $N:=\Rad^{d-2}(M) \in \EIP(G)_2$ is an equal kernels module, cf.\ \cite[(1.9)]{CFS}. According to
\cite[(4.1)]{CFS}, there exists a decomposition
\[ N \cong \bigoplus_{i=1}^\ell W_{n_i,2},\]
with all constituents having the equal kernels property. Since $d \ge 2$, we can find $i \in \{1,\ldots,\ell\}$ such that $n_i\ge 2$. Writing $kG=k[x,y]$ with $x^p=0=y^p$, we observe $\ker\ell_x \ne \ker\ell_y \subseteq W_{n_i,2}$, a contradiction. Thus, $d=1$, whence $\Jt(M)=(\dim_kM)[1]$. \end{proof}

\bigskip

\begin{Thm} \label{EIHS4} Let $\cG$ be a finite group scheme with $\aurk(\cG) \ge 2$, $M \in \EIP(\cG)\!\smallsetminus\{(0)\}$ be an equal images module.
\begin{enumerate}
\item If $\Jt(\Omega^2_\cG(M)) = \Jt(M)$, then $\Omega^2_\cG(M) \not \in \EIP(\cG)$.
\item If $M \in \EIP(\cG)_{p-2}$, then $\Omega^j_\cG(M) \not \in \EIP(\cG)$ for $j \in \{-2,2\}$. \end{enumerate}\end{Thm}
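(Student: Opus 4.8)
The plan is to deduce (2) from (1), to reduce (1) to the case $\cG = \ZZ/(p)\!\times\!\ZZ/(p)$, and to settle that case with the help of \cite{CFS}. For the first reduction, suppose $M \in \EIP(\cG)_{p-2}\!\smallsetminus\!\{(0)\}$ and $\Omega^j_\cG(M) \in \EIP(\cG)$ for some $j \in \{-2,2\}$. Writing $\Jt(M) = \bigoplus_{i=1}^d a_i[i]$ with $d \le p\!-\!2$ and all $a_i \ge 1$, and using that Heller shifts preserve the stable Jordan type, one gets $\Jt(\Omega^j_\cG(M)) = \bigoplus_{i=1}^d a_i[i]\oplus m[p]$ for some $m \ge 0$. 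Since $\Omega^j_\cG(M)$ is a nonzero equal images module, Proposition~\ref{EIPr6} forces its Jordan type to be contiguous starting at $[1]$, and $d < p\!-\!1$ permits this only for $m = 0$. Hence $\Omega^j_\cG(M) \in \EIP(\cG)_{p-2}$ with $\Jt(\Omega^j_\cG(M)) = \Jt(M)$. For $j = 2$ this contradicts (1) applied to $M$; for $j = -2$ I would set $N := \Omega^{-2}_\cG(M) \in \EIP(\cG)_{p-2}$, note that by Lemma~\ref{EIPr5}(3) every Heller shift in play is projective-free, so that $\Omega^2_\cG(N) \cong M$ and $\Jt(\Omega^2_\cG(N)) = \Jt(M) = \Jt(N)$, and apply (1) to $N$ to obtain $M = \Omega^2_\cG(N) \notin \EIP(\cG)$, again a contradiction.

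For (1), first observe that $\Jt(\Omega^2_\cG(M)) = \Jt(M)$ is equivalent to $\dim_k \Omega^2_\cG(M) = \dim_k M$. Assuming $\Omega^2_\cG(M) \in \EIP(\cG)$, I would choose an abelian unipotent $\cU \subseteq \cG$ with $\aurk(\cU) \ge 2$; by Lemma~\ref{EIPr5}(1),(4) both $M|_\cU$ and $\Omega^2_\cU(M|_\cU) \cong \Omega^2_\cG(M)|_\cU$ lie in $\EIP(\cU)$ and still have equal (constant) Jordan types. Using $k\cU \cong U_0(\fu)$ with $\fu$ abelian $p$-unipotent and restricting once more to a two-dimensional $p$-subalgebra $\fu_2 \subseteq \cV_\fu$ — so that $U_0(\fu_2) \cong k(\ZZ/(p)\!\times\!\ZZ/(p))$ — it suffices to treat $\cG = \ZZ/(p)\!\times\!\ZZ/(p)$.

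In that case $M$ is nonprojective of constant Jordan type, so $\cV_\cG(M) = \cV_\cG(k)$ and $\cx_\cG(M) = \cx_\cG(k) \ge 2$; hence the dimensions $\dim_k \Omega^{2n}_\cG(M)$ are unbounded in $n$. The idea is to contradict this by showing, under the standing assumption, that $\Omega^{2n}_\cG(M) \in \EIP(\cG)$ with $\dim_k \Omega^{2n}_\cG(M) = \dim_k M$ for every $n \ge 1$; then taking $n \ge (\dim_k M)^2$ already contradicts Theorem~\ref{EIHS1}(1). For the passage from $n$ to $n\!+\!1$ I would invoke the description of equal images $\ZZ/(p)\!\times\!\ZZ/(p)$-modules and their Heller shifts from \cite{CFS} (reduction modulo $\Rad^2$, the fact that radical-square-zero equal images modules are sums of $W_{n,2}$'s with indecomposable even Heller shifts, and the Jordan-type computations of \cite{CFS}), together with Proposition~\ref{EIPr6} to exclude a $[p]$-contribution; the subcases where $\Jt(M)$ has a $[p]$-part and where $p < 5$ would be handled separately, passing to $\fK_{p-1}(M)$ and appealing to Proposition~\ref{EIHS2}.

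The main obstacle is precisely this inductive step, i.e.\ propagating the property ``$\Omega^2_\cG(-)$ is again an equal images module of the same Jordan type'' from a single shift to the entire $\Omega$-orbit. This is where the hypothesis $\Jt(\Omega^2_\cG(M)) = \Jt(M)$ must enter in an essential, non-inductive way — dropping it makes the statement false, which is the content of Theorem~\ref{EIHS1} and Proposition~\ref{EIHS2} — and the argument appears to become feasible only via the explicit structural results of \cite{CFS} over $\ZZ/(p)\!\times\!\ZZ/(p)$.
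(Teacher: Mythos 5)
Your reduction of (2) to (1) is correct and mirrors the paper, and the reduction of (1) to $\cG = \ZZ/(p)\!\times\!\ZZ/(p)$ via Lemma~\ref{EIPr5} is also the paper's first step. The genuine gap is in the core of (1). You propose to propagate ``$\Omega^{2n}_\cG(M) \in \EIP(\cG)$ with $\dim_k\Omega^{2n}_\cG(M)=\dim_kM$'' from $n=1$ to all $n\ge 1$ and then invoke Theorem~\ref{EIHS1}(1) for large $n$. That inductive step is not established: the hypothesis only gives information about $\Omega^2_\cG(M)$, and there is no visible mechanism for inferring anything about $\Omega^4_\cG(M)$ from it. You acknowledge this yourself (``the main obstacle''), but the sketch you offer (reduction mod $\Rad^2$, $W_{n,2}$-decompositions, Proposition~\ref{EIPr6}, Proposition~\ref{EIHS2}) does not supply the missing implication; in particular Proposition~\ref{EIHS2} only covers $\EIP(\cG)_2$ for $p\ge 5$, and there is no argument that a module $M \in \EIP(G)$ whose $\Omega^2$ lies again in $\EIP(G)$ with equal Jordan type must have $\Omega^4$ with the same property.

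The paper's argument is not inductive and does not use Theorem~\ref{EIHS1} at all in part (1). Instead, setting $N:=\Omega^2_G(M)$: from $\Jt(M)=\Jt(N)$ and $\Rad^j = \ell_\alpha^j$ on equal images $G$-modules (\cite[(1.7)]{CFS}) one gets $\dim_k\Rad^j(M)=\dim_k\Rad^j(N)$ for all $j$, in particular equal tops. Because $kG$ is local and $\dim_kM=\dim_kN$, a minimal projective presentation $(0)\to N\to P_1\to P_0\to M\to (0)$ forces $P_0\cong P_1$, yielding $\dim_k\Soc(N)=\dim_k\Top(P)=\dim_k\Top(N)$. Since $\Soc(N)\subseteq \ker\ell_\alpha$ and $\dim_k\ker\ell_\alpha = \dim_k\Top(N)$ for every $p$-point $\alpha$ (equal images), equality $\ker\ell_\alpha = \Soc(N)$ holds for all $\alpha$, so $N$ has the equal kernels property (\cite[(7.8)]{CFS}). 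Lemma~\ref{EIHS3} then gives $\Jt(N)=(\dim_kN)[1]$, hence $G$ acts trivially on $M$ and $N$; this forces $\Omega^2_G(k)\cong k$ and $\cx_G(k)=1$, contradicting $\cx_G(k)=2$. This is a single-shift argument that sidesteps the propagation problem entirely, which is exactly what your approach is missing.
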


\begin{proof} (1) We put $N:= \Omega^2_\cG(M)$, write $\Jt(N)=\bigoplus_{i=1}^da_i[i]$ with $a_i \in \NN$ and assume that $N \in \EIP(\cG)$. Let $\cH \subseteq \cG$ be a closed subgroup scheme with $\aurk(\cH)\ge 2$. According to Lemma \ref{EIPr5},
\[ \Omega_\cH^2(M|_\cH) \cong \Omega^2_\cG(M)|_\cH \in \EIP(\cH).\]
We may therefore derive a contradiction by considering the case $\cG =\ZZ/(p)\!\times\!\ZZ/(p)=:G$.

If $\alpha$ is a $p$-point for $G$, then \cite[(1.7)]{CFS} yields
\[ \Rad^j(M) = \ell_\alpha^j(M) \ \ \text{and} \ \ \Rad^j(N) = \ell_\alpha^j(N) \ \ \ \ \text{for} \ j \ge 0.\]
Since $\Jt(M)=\Jt(N)$, we have $\dim_k\ell_\alpha^j(M)=\dim_k\ell_\alpha^j(N)$, so that $\dim_k \Rad^j(M)=\dim_k\Rad^j(N)$. We thus obtain $\dim_k\Top(M)=\dim_k\Top(N)$.

We consider a minimal projective presentation
\[ (0) \lra N \lra P_1 \lra P_0 \lra M \lra (0)\]
of $M$. Thus, $P_0$ is a projective cover of $M$ and $P_1$ is an injective hull of $N$. By the above, we have $\dim_kM=\dim_kN$, whence $\dim_kP_0=\dim_kP_1$. Since $kG$ is local, this
implies $P_0 \cong P_1$, so that there exists an exact sequence
\[ (0) \lra N \lra P \lra P \lra M \lra (0),\]
with $N \lra P$ and $P\lra M$ being injective hulls and projective covers, respectively. In view of $\dim_k\Top(M)=\dim_k\Top(N)$, this yields
\[ (\ast) \ \ \ \ \dim_k\Soc(N) = \dim_k\Soc(P) = \dim_k\Top(P) = \dim_k\Top(M) = \dim_k\Top(N).\]
Let $\alpha$ be a $p$-point. Then $\Soc(N) \subseteq \ker \ell_\alpha$. On the other hand, ($\ast$) in conjunction with $N$ having the equal images property implies
\[ \dim_k\Soc(N) = \dim_k\Top(N) = \dim_k N/\ell_\alpha(N) = \sum_{i=1}^da_i = \dim_k\ker \ell_\alpha,\]
so that $\ker \ell_\alpha = \Soc(N)$. Since $k$ is algebraically closed, \cite[(7.8)]{CFS} ensures that the $G$-module $N$ has the equal kernels property. Lemma \ref{EIHS3} now yields $\Jt(M) = \Jt(N) = (\dim_kN)[1] = (\dim_kM)[1]$. Thus, $\Rad(M)=(0)=\Rad(N)$, so that $G$ acts trivially on $M$ and $N$. As $M \ne (0)$, it follows that $G$ acts trivially on the indecomposable direct summand $\Omega^2_G(k)$ of $N$. Consequently, $\Omega^2_G(k) \cong k$, so that $\cx_G(k)=1$, a contradiction.

(2) By assumption, there exist $1 \le d \le p\!-\!2$ and $a_i \in \NN$ such that
\[ \Jt(M)=\bigoplus_{i=1}^da_i[i].\]
As the stable Jordan types of $M$ and $\Omega^2_\cG(M)$ coincide, the assumption $\Omega^2_\cG(M) \in \EIP(\cG)$ in conjunction with Proposition \ref{EIPr6} implies $\Jt(\Omega^2_\cG(M))=\Jt(M)$, so that (1) yields a contradiction. If $V:=\Omega^{-2}_\cG(M) \in \EIP(\cG)$, then Proposition \ref{EIPr6} yields $V \in \EIP(\cG)_{p-2}$. Thanks to Lemma \ref{EIPr5}, the $\cG$-module $M$ is projective-free, so that $\Omega^2_\cG(V) \cong M \in \EIP(\cG)$. However, we just showed that this cannot happen. \end{proof}

\bigskip

\subsection{Heller shifts of $W$-modules}
In this section we turn to the example $G=\ZZ/(p)\!\times\!\ZZ/(p)$ and consider Heller shifts of the $W$-modules $W_{n,d}$ that were introduced in \cite{CFS}, see Section $4.1$. This is done by employing gradations, which amounts to working in the category $\modd_\ZZ G$ of $\ZZ$-graded modules and degree zero homomorphisms. We dispense with elaborating on the formalism and only use those features that are needed in our context.

Note that
$kG = k[X,Y]/(X^p,Y^p)$ inherits the canonical $\ZZ$-grading from the polynomial ring $k[X,Y]$, with generators $x:= X\!+\!(X^p,Y^p)$ and $y:= Y\!+\!(X^p,Y^p)$ being homogeneous of degree $1$.
In particular,
\[ kG = \bigoplus_{i=0}^{2p-2}kG_i \ \ \ \text{and} \ \ \ \Rad(kG) = \bigoplus_{i=1}^{2p-2}kG_i.\]
If $M=\bigoplus_{i\in \ZZ}M_i$ is a $\ZZ$-graded $G$-module and $j \in \ZZ$, then $M[j]$ is the graded $G$-module with underlying $G$-space $M$ and $\ZZ$-grading defined via
\[ M[j]_i := M_{i-j} \ \ \ \ \ \ \ \text{for all} \ i \in \ZZ.\]
Note that each $W_{n,d}$ is $\ZZ$-graded
\[ W_{n,d} = \bigoplus_{i=0}^{d-1}(W_{n,d})_i.\]
We first determine a minimal graded presentation of $W_{n,d}$.

\bigskip

\begin{Lem} \label{HSW1} Let $d \in \{2,\ldots,p\}$ and $n\ge d$. The following statements hold:
\begin{enumerate}
\item Suppose that $d \le p\!-\!1$. Then there exists an exact sequence
\[ (0) \lra \Omega_G^2(W_{n,d}) \lra kG^{n+1}[1]\oplus kG^{n-d}[d] \stackrel{\partial_1}{\lra} kG^n \stackrel{\partial_0}\lra W_{n,d} \lra (0),\]
with $\deg(\partial_j)=0$ for $j \in \{0,1\}$. In particular, $\Omega^2_G(W_{n,d})$ is a $\ZZ$-graded $G$-module.
\item There is an exact sequence
\[ (0) \lra \Omega^2_G(W_{n,p}) \lra kG^{n+1}[1] \stackrel{\partial_1}{\lra} kG^n \stackrel{\partial_0}{\lra} W_{n,p} \lra (0)\]
with $\deg(\partial_j)=0$ for $j \in \{0,1\}$.\end{enumerate}\end{Lem}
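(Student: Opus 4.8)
The plan is to exhibit, compatibly with the gradings, a minimal projective cover of $W_{n,d}$ and then a minimal projective cover of its first syzygy; together these assemble into the asserted four-term sequence, whose left-hand term is $\Omega^2_G(W_{n,d})$. First I would set $\partial_0\colon kG^n\to W_{n,d}$, $e_i\mapsto v_i$, where $e_1,\dots,e_n$ denote the degree-$0$ generators of $kG^n$. This is a degree-$0$ surjection with $\ker\partial_0=N_{n,d}$ by the definition of $W_{n,d}$. Since every generator of $N_{n,d}$ listed in its definition is homogeneous of degree $\ge 1$, the graded submodule $N_{n,d}$ satisfies $(N_{n,d})_0=(0)$; as $W_{n,d}$ is generated in degree $0$ this shows that $\partial_0$ induces an isomorphism $(kG^n)_0\stackrel{\sim}{\lra}(W_{n,d})_0=\Top(W_{n,d})$, so $\partial_0$ is a graded projective cover and $N_{n,d}=\Omega^1_G(W_{n,d})$.

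Next I would trim the generating set of $N_{n,d}$. Let $R\subseteq N_{n,d}$ be the graded submodule generated by the $n+1$ degree-$1$ elements $xe_1$, $ye_n$ and $ye_i-xe_{i+1}$ for $1\le i\le n-1$. Modulo $R$ one has $xe_k\equiv ye_{k-1}$ for $2\le k\le n$, and converting $x$'s into $y$'s in this way gives
\[
x^d e_i\;\equiv\;y^{i-1}x^{d-i+1}e_1\pmod R\qquad(2\le i\le d).
\]
Since $d-i+1\ge 1$, the right-hand side equals $y^{i-1}x^{d-i}(xe_1)\in R$; hence $x^de_i\in R$ for $2\le i\le d$ and $N_{n,d}=R+\langle x^de_i\mid d+1\le i\le n\rangle$. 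This yields a degree-$0$ surjection $\partial_1\colon kG^{n+1}[1]\oplus kG^{n-d}[d]\twoheadrightarrow N_{n,d}$ carrying the $n+1$ generators of degree $1$ onto the generators of $R$ and the $n-d$ generators of degree $d$ onto $x^de_{d+1},\dots,x^de_n$. As $\im\partial_1=N_{n,d}=\ker\partial_0$, the four-term sequence is exact at $kG^n$; there remains only to check that $\partial_1$ is a minimal cover, for then $\ker\partial_1=\Omega^2_G(W_{n,d})$, which — being the kernel of a degree-$0$ map of graded modules — is $\ZZ$-graded.

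The main obstacle is this minimality, which I plan to establish by computing $\Top(N_{n,d})$ degree by degree inside the graded module $N_{n,d}\subseteq kG^n$. In degree $1$ the $n+1$ chosen elements are manifestly $k$-linearly independent in $(kG^n)_1$, so $\dim_k(\Top N_{n,d})_1=n+1$. For $2\le j\le d-1$ and for $j>d$, every homogeneous element of $N_{n,d}$ of degree $j$ arises from a generator of strictly smaller degree multiplied by an element of $\Rad(kG)$, so $(N_{n,d})_j\subseteq\Rad(kG)\,N_{n,d}$ and $(\Top N_{n,d})_j=(0)$; and in degree $d$ one has $(N_{n,d})_j=R_j$ for $j<d$, from which a short computation gives $(\Rad(kG)\,N_{n,d})_d=R_d$, so that $(\Top N_{n,d})_d$ is spanned by the classes of $x^de_{d+1},\dots,x^de_n$. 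The one genuinely nontrivial point is the linear independence of these classes, i.e.\ that no nonzero $k$-linear combination of $x^de_{d+1},\dots,x^de_n$ lies in $R$; I would settle this by noting that $kG^n/R$ has the evident monomial $k$-basis $\{\,x^a\bar e_i : 1\le i\le n,\ 0\le a\le\min(i,p)-1\,\}$ — the truncated-polynomial analogue of the basis of $W_{n,p}$ recorded in \cite[\S2]{CFS} — in which, using $d\le p-1$, the classes of the $x^de_i$ for $d+1\le i\le n$ are $n-d$ distinct basis vectors. Hence $\dim_k\Top(N_{n,d})=(n+1)+(n-d)$ with Hilbert series supported in degrees $1$ and $d$, so $\partial_1$ is a graded projective cover and (1) follows. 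For (2), one has $x^p=0$ in $kG$, so $N_{n,p}=R$ is generated by the $n+1$ degree-$1$ elements above; the degree-$1$ computation alone then shows $\Top(N_{n,p})$ is $(n+1)$-dimensional and concentrated in degree $1$, so $\partial_1\colon kG^{n+1}[1]\twoheadrightarrow N_{n,p}$ is a graded projective cover and $\ker\partial_1=\Omega^2_G(W_{n,p})$.
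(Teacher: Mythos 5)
Your proof is correct and has the same overall skeleton as the paper's: produce a graded projective cover $\partial_0$ of $W_{n,d}$, trim the defining generators of $N_{n,d}=\Omega^1_G(W_{n,d})$ to the $n+1$ degree-one relations together with the $n-d$ degree-$d$ relations $x^de_{d+1},\dots,x^de_n$, and then verify that the resulting $\partial_1$ is a minimal graded cover. Where you genuinely diverge is in the minimality check, which amounts to showing $\langle x^de_i : d\!+\!1\le i\le n\rangle \cap R_d=(0)$, where $R=kG\cdot S_1$ is the submodule generated by the degree-one relations. The paper proves this (its step (b)) by brute force: writing an arbitrary element of $kG_{d-1}\cdot S_1$ as a combination with coefficients $f_i\in kG_{d-1}$, lifting the first $d$ equations into $k[X,Y]$ (here is where $d\le p-1$ enters), and running an $X$-degree induction to force all $f_i=0$. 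You instead observe $kG^n/R\cong W_{n,p}$ (since $x^p=0$ makes $N_{n,p}=R$) and appeal to its monomial basis $\{x^a\bar e_i : 0\le a<\min(i,p)\}$, in which the classes of $x^de_{d+1},\dots,x^de_n$ are distinct basis vectors because $d<i$ and $d\le p-1$. Both routes are valid; yours offloads the bookkeeping onto the known structure of $W_{n,p}$ and is conceptually tidier, while the paper's is self-contained and does not presuppose the basis description. One small caveat: the basis claim for $W_{n,p}$ is load-bearing, so rather than calling it ``evident'' you should record that the listed monomials span by the relations $y\bar e_i=x\bar e_{i+1}$, $x\bar e_1=0$, $y\bar e_n=0$, and that their number $\sum_{i=1}^n\min(i,p)$ equals $\dim_kW_{n,p}$, which then forces independence.
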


\begin{proof} Let $d \in \{2,\ldots,p\}$ and recall that $N_{n,d}$ is the kernel of the canonical surjection $kG^n \tha W_{n,d}$. We begin
by proving two statements concerning the defining relations of $W_{n,d}$.

\medskip
(a) \ {\it The $G$-module $N_{n,d}$ is generated by the $k$-subspace $S_1\oplus T_d$, where}
\[ S_1 := \langle \{x.v_1,y.v_n\} \cup \{y.v_i\!-\!x.v_{i+1}  \ ; \ 1 \le i \le n\!-\!1\}\rangle_k \ \ ; \ \ T_d := \langle \{x^d.v_i \ ; \ d\!+\!1 \le i \le n\} \rangle_k.\]

\smallskip
\noindent
Let $N'_{n,d} := \langle\{x.v_1,y.v_n\} \cup \{y.v_i\!-\!x.v_{i+1} \ ; \ 1\le i \le d\!-\!1\}\cup\{x^d.v_j \ ; \ d\!+\!1\le j \le n\}\rangle_{kG} \subseteq N_{n,d}$ and suppose that $x^j.v_j \in N'_{n,d}$ for some $j \in \{1,\ldots,d\!-\!1\}$. Then we have
\[ x^{j+1}.v_{j+1} = x^j.(x.v_{j+1}-y.v_j)\!+\!x^j.y.v_j \in N'_{n,d},\]
so that induction implies $x^j.v_j\in N'_{n,d}$ for all $j \in \{1,\ldots,d\}$. Consequently, $N_{n,d}=N'_{n,d}$ is generated by $S_1\oplus T_d$. \hfill $\diamond$

\medskip

(b) \ $T_d\cap kG_{d-1}.S_1 = (0)$.

\smallskip
\noindent
Observing $T_p=(0)$, we assume that $d\le p\!-\!1$. Let $v=\sum_{j=d+1}^n\alpha_jx^dv_j$ be an element of $kG_{d-1}.S_1$. There results an identity
\[ v = f_0x.v_1\! +\! \sum_{i=1}^{n-1}f_i(y.v_i\!-\!x.v_{i+1})\!+\!f_ny.v_n,\]
with $f_i \in kG_{d-1}$. Upon comparing coefficients of the $v_i$, we arrive at
\[ 0=f_0x\!+f_1y \ \ ; \ \ 0=f_iy\!-\!f_{i-1}x \ \ 2 \le i \le d \ \ ; \ \ \alpha_jx^d =  f_jy\!-\!f_{j-1}x  \ \ \ d\!+\!1 \le j \le n.\]
Since $d\le p\!-\!1$ and $f_i \in kG_{d-1}$, while $(X^p,Y^p) \subseteq \bigoplus_{i\ge p}k[X,Y]_i$, the first $d$-equations may be interpreted to hold in $k[X,Y]$. We thus obtain
\[ \deg_X(f_i) = \deg_X(f_iY) = \deg_X(f_{i-1}X) \ge \deg_X(f_{i-1})\!+\!1\]
for $1\le i \le d$, whence
\[ \deg_X(f_i) \ge i \ \ \ \ \ \ \ \ \forall \ i \in \{0,\ldots,d\}.\]
(Here we set $\deg_X0=\infty$.) Since $f_d \in k[X,Y]_{d-1}$, we obtain $f_d=0$, implying $f_j=0$ for $0\le j \le d$.

Returning to the original system, we assume that $f_0=f_1=\cdots =f_{r-1}=0$ for some $d<r\le n$. Then we have
\[ \alpha_rx^d = f_ry,\]
so that $0=f_ry^p=\alpha_rx^dy^{p-1}$. Thus, $\alpha_r=0$ and $f_ry=0$. Consequently, $f_r \in kGy^{p-1}\cap kG_{d-1} = (0)$. Thus, all $f_j$ vanish and $v=0$. \hfill $\diamond$

\medskip
(1) Let $d\le p\!-\!1$. Recall that $J:=\bigoplus_{i\ge 1}kG_i$ is the radical of $kG$. According to (a), we have $N_{n,d}=kG.(S_1\oplus T_d)$. Thus, (b) implies
\begin{eqnarray*}
(S_1\oplus T_d)\cap J.N_{n,d} & \subseteq & (S_1\oplus T_d)\cap (J.S_1+J.T_d) = (S_1\oplus T_d)\cap J.S_1 = T_d\cap J.S_1 \\
                        & = & T_d\cap kG_{d-1}.S_1=(0).
\end{eqnarray*}
Let $\pi: N_{n,d} \lra \Top(N_{n,d})$ be the canonical projection. The identity above implies that $\pi$ induces an injection $S_1\oplus T_d \hookrightarrow \Top(N_{n,d})$. Moreover, property (a) shows that the $kG/J$-module $\Top(N_{n,d})$ is generated by $\pi(S_1\oplus T_d)$. Since $kG/J\cong k$, we obtain $\pi(S_1\oplus T_d) = \Top(N_{n,d})$.

We let $\partial_0 : \bigoplus_{i=1}^nkGv_i \lra W_{n,d}$ be the canonical projection, so that $\partial_0$ is homogeneous of degree $0$. Since $\dim_k\Top(W_{n,d})=n$, the pair $(kG^n,\partial_0)$ is a projective cover of $W_{n,d}$. Owing to (a), $N_{n,d} \subseteq kG^n$ is a homogeneous submodule, whose generators belong to $S_1\cup T_d$ and are linearly independent. By the above, we have $\dim_k\Top(N_{n,d})=2n\!-\!d\!+\!1$. Writing $kG^{n+1}=\bigoplus_{i=1}^{n+1}kGw_i$ and $kG^{n-d}=\bigoplus_{i=1}^{n-d}kGu_i$, we define the map
$\partial_1 : kG^{n+1}[1]\oplus kG^{n-d}[d] \lra kG^n$ via
\[ \partial_1(w_1) := x.v_1 \ ; \ \partial_1(w_j) = y.v_{j-1}\!-\!x.v_j \ \ (2 \le j \le n) \ \ ; \ \ \partial_1(w_{n+1})=y.v_n\]
as well as 
\[ \partial_1(u_i)=x^d.v_{i+d} \ \ \ \ \ 1\le i \le n\!-\!d.\]
Hence $\partial_1$ is homogeneous of degree $\deg(\partial_1)=0$. Since $\partial_1$ induces an isomorphism $\Top(kG^{2n\!-\!d\!+\!1})\cong \Top(N_{n,d})$, the pair $(kG^{n+1},\partial_1)$ is a projective cover of $N_{n,d}$. The resulting exact sequence
\[ (0) \lra \Omega^2_G(W_{n,d}) \lra kG^{n+1}[1]\oplus kG^{n-d}[d] \stackrel{\partial_1}{\lra} kG^n \stackrel{\partial_0}{\lra} W_{n,d} \lra (0)\]
enjoys the requisite properties.

(2) Let $d=p$. Then $T_d = (0)$ and the arguments of (1) yield the desired result. \end{proof}

\bigskip

\begin{Cor} \label{HSW2} Let $n\ge d$.
\begin{enumerate}
\item $\Jt(\Omega^2_G(W_{n,d}))=\bigoplus_{i=1}^{d-1}[i]\oplus (n\!-\!d\!+\!1)[d]\oplus (n\!-\!d\!+\!1)p[p]$ for $d\le p\!-\!1$.
\item $\Jt(\Omega^2_G(W_{n,p}))=\bigoplus_{i=1}^{p-1}[i]\oplus (n\!+\!1)[p]$. \end{enumerate} \end{Cor}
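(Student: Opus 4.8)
The plan is to read off the Jordan type directly from the projective presentations of Lemma~\ref{HSW1} after restricting everything along a $p$-point of $G$. Concretely, I would fix $\alpha_k\in\pt(G)$ and apply the exact functor $\alpha_k^\ast$, which carries projectives to projectives, to the exact sequences of Lemma~\ref{HSW1}. Since $\fA_p$ is local with unique indecomposable projective $[p]$ and $\dim_kkG=p^2$, one has $\alpha_k^\ast(kG)\cong p\,[p]$, so $\alpha_k^\ast$ turns each displayed sequence into an exact sequence of $\fA_p$-modules whose two middle terms are free of explicitly known rank. The terminal term is $\alpha_k^\ast(W_{n,d})$, and here I would invoke the known value $\Jt(W_{n,d})=\bigoplus_{i=1}^{d-1}[i]\oplus(n\!-\!d\!+\!1)[d]$ from \cite[(2.3)]{CFS} (valid since $W_{n,d}\in\EIP(G)\subseteq\CJT(G)$ has constant Jordan type), which also gives $\dim_kW_{n,d}=\tfrac12d(2n\!-\!d\!+\!1)$.

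For the case $d\le p\!-\!1$, applying $\alpha_k^\ast$ to Lemma~\ref{HSW1}(1) produces
\[ (0)\lra\alpha_k^\ast(\Omega^2_G(W_{n,d}))\lra p(2n\!-\!d\!+\!1)[p]\lra np\,[p]\lra \bigoplus_{i=1}^{d-1}[i]\oplus(n\!-\!d\!+\!1)[d]\lra(0).\]
Breaking this into two short exact sequences and using that, over $\fA_p$, syzygies are well defined up to free summands with $\Omega^2_{\fA_p}([i])=[i]$ for $1\le i\le p\!-\!1$ and $\Omega_{\fA_p}([p])=0$, I would conclude $\alpha_k^\ast(\Omega^2_G(W_{n,d}))\cong\bigoplus_{i=1}^{d-1}[i]\oplus(n\!-\!d\!+\!1)[d]\oplus m[p]$ for some $m\in\NN_0$. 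As this isomorphism type is independent of $\alpha_k$, the module $\Omega^2_G(W_{n,d})$ has constant Jordan type, and the single remaining unknown $m$ is pinned down by the dimension identity $\dim_k\alpha_k^\ast(\Omega^2_G(W_{n,d}))=(2n\!-\!d\!+\!1)p^2-np^2+\dim_kW_{n,d}$, which forces $m=(n\!-\!d\!+\!1)p$ and hence~(1).

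The case $d=p$ is handled the same way, starting from Lemma~\ref{HSW1}(2) with $\Jt(W_{n,p})=\bigoplus_{i=1}^{p-1}[i]\oplus(n\!-\!p\!+\!1)[p]$; now the $(n\!-\!p\!+\!1)[p]$ summand is annihilated by $\Omega^2_{\fA_p}$, and the dimension count $\dim_k\alpha_k^\ast(\Omega^2_G(W_{n,p}))=(n\!+\!1)p^2-np^2+\dim_kW_{n,p}$ fixes the $[p]$-multiplicity as $n\!+\!1$, giving~(2). I do not expect a genuine obstacle: the real content has already been supplied by Lemma~\ref{HSW1}, and the only points needing care are the bookkeeping of free $\fA_p$-summands when splitting the exact sequences and the arithmetic of $\dim_kW_{n,d}$. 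A cosmetic alternative, avoiding the syzygy computation over $\fA_p$, is to quote that $\Omega^2_G$ preserves both constant Jordan type and stable Jordan type, which reduces the whole corollary to the two displayed dimension counts.
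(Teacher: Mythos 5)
Your proposal is correct and follows the same strategy as the paper: the paper exactly takes the ``cosmetic alternative'' you mention at the end, citing that $\Omega^2_G$ preserves stable Jordan type (``by general theory'') and then reading off the $[p]$-multiplicity from the dimension count supplied by Lemma~\ref{HSW1}. Your more explicit route through $\alpha_k^\ast$ and $\fA_p$-syzygies is just an unwinding of that same general fact, and the arithmetic checks out.
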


\begin{proof} We put $M := \Omega^2_G(W_{n,d})$. By general theory, the stable Jordan types of $M$ and $W_{n,d}$ coincide. We thus formally write
\[ \Jt(M) = \Jt(W_{n,d})\oplus \ell [p],\]
where $\ell \in \ZZ$. In particular, $\dim_kM = \ell p \!+\!\dim_kW_{n,d}$.

(1) Let $d\le p\!-\!1$. Counting dimensions, we obtain from Lemma \ref{HSW1}(1)
\[ \ell p \!+\!\dim_kW_{n,d} = (2n\!-\!d\!+\!1)p^2\!-\!np^2\!+\dim_kW_{n,d},\]
so that $\ell = (n\!-\!d\!+\!1)p$. The assertion thus follows from $\Jt(W_{n,d}) = \bigoplus_{i=1}^{d-1}[i]\oplus (n\!-\!d\!+\!1)[d]$.

(2) Using Lemma \ref{HSW1}(2), we arrive at
\[ \ell p \!+\!\dim_kW_{n,p} = (n\!+\!1)p^2\!-\!np^2\!+\dim_kW_{n,p},\]
so that $\ell = p$. Observing $\Jt(W_{n,p}) = \bigoplus_{i=1}^{p-1}[i]\oplus (n\!-\!p\!+\!1)[p]$, we obtain the assertion. \end{proof}

\bigskip
\noindent
Given a $G$-module $M$ of constant Jordan type, we will write
\[ \Jt(M) = \bigoplus_{i=1}^pa_i(M)[i].\]
In order to identify some of the modules $\Omega^2_G(W_{n,d})$, we require the following auxiliary result:

\bigskip

\begin{Lem} \label{HSW3} Let $M \in \EIP(G)$ be an equal images module.
\begin{enumerate}
\item The $G$-module $M/\Rad^2(M)$ is indecomposable if and only if $a_1(M)=1$.
\item If $M \in \EIP(G)_d$ and $a_1(M)=1$, then there exists a surjective homomorphism $W_{n,d} \tha M$, where $n = \dim_k\Top(M)$. \end{enumerate}\end{Lem}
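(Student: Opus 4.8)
My plan is to treat the two assertions separately, deriving (1) from a short Jordan-type bookkeeping and then reducing (2) to \cite[(4.4)]{CFS} via (1).

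\emph{Proof of (1).} The module $M/\Rad^2(M)$ is an image of $M$, hence lies in $\EIP(G)$ by Lemma~\ref{EIPr2}, and it has Loewy length at most $2$. By the structure theory recorded after Lemma~\ref{EIPr2} (originally \cite[(4.1)]{CFS}), it is therefore a direct sum of indecomposable equal images modules, each isomorphic to the trivial module $k$ or to some $W_{n_i,2}$. In either case such an indecomposable summand $X$ satisfies $a_1(X)=1$, since $\Jt(k)=[1]$ and, by Lemma~\ref{EIPr1}(3), $\Jt(W_{n,2})=[1]\oplus(n\!-\!1)[2]$. As Jordan types add over direct sums, $a_1(M/\Rad^2(M))$ equals the number of indecomposable summands of $M/\Rad^2(M)$, so that $M/\Rad^2(M)$ is indecomposable exactly when $a_1(M/\Rad^2(M))=1$. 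It remains to see $a_1(M)=a_1(M/\Rad^2(M))$. For any $X\in\EIP(G)$ and any $p$-point $\alpha$ we have $\ell_\alpha^j(X)=\Rad^j(X)$ for every $j$ (cf.\ \cite[(1.7)]{CFS}), and counting the Jordan blocks of $\ell_\alpha$ on $X$ of size $1$ yields
\[ a_1(X)=\bigl(\dim_kX-\dim_k\Rad(X)\bigr)-\bigl(\dim_k\Rad(X)-\dim_k\Rad^2(X)\bigr), \]
because the total number of blocks is $\dim_k\kernel\ell_\alpha=\dim_kX-\dim_k\Rad(X)$ and the number of blocks of size $\ge2$ is $\dim_k(\kernel\ell_\alpha\cap\ell_\alpha(X))=\dim_k\Rad(X)-\dim_k\Rad^2(X)$. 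Since $M$ and $M/\Rad^2(M)$ have the same first two radical layers, and $\Rad^2(M/\Rad^2(M))=(0)$, this formula evaluates in both cases to $\dim_kM-2\dim_k\Rad(M)+\dim_k\Rad^2(M)$. Hence $a_1(M)=a_1(M/\Rad^2(M))$, and (1) follows.

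\emph{Proof of (2).} I may assume $M\ne(0)$. If $\ell\ell(M)=1$, then $a_1(M)=1$ forces $M\cong k$ by (1), $n=1$, and the assertion is trivial; so assume $\ell:=\ell\ell(M)\ge2$. By (1), $M/\Rad^2(M)$ is indecomposable; being an equal images module of Loewy length $2$ with $n$-dimensional top, it is isomorphic to $W_{n,2}$. I would then invoke \cite[(4.4)]{CFS} to obtain a surjection $W_{n,\ell}\tha M$ — here it is precisely the indecomposability of $M/\Rad^2(M)\cong W_{n,2}$ established in (1) that forces the number of chain generators of the presenting module to be $\dim_k\Top(M)=n$ rather than something larger. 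Finally, since $d\ge\ell$, one has $x^d\cdot v_i=x^{d-\ell}\cdot(x^\ell v_i)\in N_{n,\ell}$ for $2\le i\le n$, whence $N_{n,d}\subseteq N_{n,\ell}$; thus the identity of $kG^n$ induces a surjection $W_{n,d}=kG^n/N_{n,d}\tha kG^n/N_{n,\ell}=W_{n,\ell}$, and composing with $W_{n,\ell}\tha M$ gives the desired $W_{n,d}\tha M$.

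The main obstacle is the production of the surjection $W_{n,\ell}\tha M$ with exactly $n=\dim_k\Top(M)$ generators. The ``chain'' part is routine and essentially self-contained: lifting the chain generators $\bar v_1,\dots,\bar v_n$ of $W_{n,2}\cong M/\Rad^2(M)$ to elements $m_1,\dots,m_n$ of $M$ and correcting them successively inside $\Rad(M)$ — using that $\Rad^2(M)=x\,\Rad(M)$, which is a consequence of the equal images property — yields generators of $M$ with $x m_1=0$ and $y m_i=x m_{i+1}$ for $1\le i\le n-1$. What is genuinely delicate is arranging the closing relation $y m_n=0$ while keeping the number of generators equal to $\dim_k\Top(M)$, and this is exactly the content of \cite[(4.4)]{CFS}, which in turn relies on the indecomposability of $M/\Rad^2(M)$ furnished by part (1).
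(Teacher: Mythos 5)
Your argument is essentially correct and follows the same overall strategy as the paper, but with some noteworthy differences in detail.

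For part (1), the paper takes a more direct route: since $\Rad^2(M)=\ell_\alpha^2(M)$ by \cite[(1.7)]{CFS}, truncating each Jordan block at size $2$ immediately gives $\Jt(M/\Rad^2(M))=a_1(M)[1]\oplus(\sum_{i\ge2}a_i(M))[2]$, so $a_1(M/\Rad^2(M))=a_1(M)$ for free; then a direct-sum decomposition $M/\Rad^2(M)=X\oplus Y$ forces one of $a_1(X),a_1(Y)$ to vanish, and \cite[(4.2)]{CFS} then forces that summand to be zero. You instead identify $a_1$ as the number of indecomposable summands of $M/\Rad^2(M)$ via \cite[(4.1)]{CFS}, and verify $a_1(M)=a_1(M/\Rad^2(M))$ by the radical-layer formula $a_1(X)=\dim_kX-2\dim_k\Rad(X)+\dim_k\Rad^2(X)$. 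Both approaches are correct; yours makes the bookkeeping explicit, the paper's is a one-line Jordan-type computation. Your formula is verified: $\dim_k\ker\ell_\alpha=\dim_kX-\dim_k\Rad(X)$ by rank-nullity, and $\dim_k(\ker\ell_\alpha\cap\Rad(X))=\dim_k\Rad(X)-\dim_k\Rad^2(X)$ by rank-nullity applied to $\ell_\alpha|_{\Rad(X)}$ together with $\ell_\alpha(\Rad(X))=\Rad^2(X)$.

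For part (2), the paper cites \cite[(4.6)]{CFS} to obtain a surjection $W_{n,p}\tha M$ with $n=\dim_k\Top(M)$, and then passes to $W_{n,d}\cong W_{n,p}/\Rad^d(W_{n,p})$ via \cite[(2.4)]{CFS}, using $\Rad^d(M)=(0)$. You instead cite \cite[(4.4)]{CFS} for a surjection from $W_{n,\ell}$ with $\ell=\ell\ell(M)$, and then pass from $W_{n,d}$ to $W_{n,\ell}$ via the inclusion $N_{n,d}\subseteq N_{n,\ell}$ (which is correct since $d\ge\ell$). This is the same two-step strategy, and your elementary observation $N_{n,d}\subseteq N_{n,\ell}$ is a clean substitute for \cite[(2.4)]{CFS}. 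However, the citation is off: \cite[(4.4)]{CFS} is the ubiquity statement (every object of $\EIP(G)$ is a quotient of some $W_{n',d'}$), which does not by itself control the number of generators $n'$. You flag this yourself as "genuinely delicate" and gesture at a lifting argument, but the paper's \cite[(4.6)]{CFS} is the precise reference that, given indecomposability of $M/\Rad^2(M)$, produces a surjection with exactly $n=\dim_k\Top(M)$ generators. So the proof as written has a soft spot at the citation, though the intent and the role played by part (1) are correctly identified.
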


\begin{proof} Since $M$ is an equal images module, \cite[(1.7)]{CFS} implies $\Rad^2(M) = \ell^2_\alpha(M)$ for every $p$-point $\alpha \in \pt(G)$. Consequently, $M/\Rad^2(M)$ is an equal images module of constant Jordan type
\[\Jt(M/\Rad^2(M)) = (\sum_{i=2}^pa_i(M))[2]\oplus a_1(M)[1],\]
whence $a_2(M/\Rad^2(M))=\sum_{i=2}^pa_i(M)$ and $a_1(M/\Rad^2(M))=a_1(M)$.

(1) Suppose that $a_1(M)=1$. If $X$ and $Y$ are submodules of $M/\Rad^2(M)$ such that $M/\Rad^2(M)$ $=X\oplus Y$, then $X$ and $Y$ are equal images modules with $\Jt(M/\Rad^2(M))=\Jt(X)\oplus\Jt(Y)$. Thus $a_1(X)=0$ or $a_1(Y)= 0$, so that \cite[(4.2)]{CFS} forces $X=(0)$ or $Y=(0)$. As a result, $M/\Rad^2(M)$ is indecomposable.

Conversely, if $M/\Rad^2(M)$ is indecomposable, then \cite[(4.1)]{CFS} yields $M/\Rad^2(M) \cong W_{n,2}$ for some $n \in \NN$, so that $a_1(M)=a_1(M/\Rad^2(M))=a_1(W_{n,2})=1$.

(2) In view of (1), the $G$-module $M/\Rad^2(M)$ is indecomposable and \cite[(4.6)]{CFS} provides a surjection $\lambda: W_{n,p} \lra M$. Since $M \in \EIP(G)_d$, we have
$\lambda(\Rad^d(W_{n,p})) = \Rad^d(M)=(0)$. According to \cite[(2.4)]{CFS}, there is an isomorphism $W_{n,p}/\Rad^d(W_{n,p}) \cong W_{n,d}$, so that $\lambda$ induces the desired map.\end{proof}

\bigskip
\noindent
The following result shows that Theorems \ref{EIHS1}(2) and \ref{EIHS4}(2) may fail for equal images modules not belonging to $\EIP(G)_{p\!-\!2}$.

\bigskip

\begin{Lem}\label{HSW4} The following statements hold:
\begin{enumerate}
\item $\Omega^2_G(W_{n,p})\cong W_{n+p,p}$ for every $n\ge p$.
\item $\Omega^2_G(W_{p-1,p-1})\cong W_{2p-1,p}$.
\item $\Omega^2_G(W_{n,p-1})\not \in \EIP(G)$ for $n\ge p$. \end{enumerate}\end{Lem}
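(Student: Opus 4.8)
\emph{Statement (1).} By Lemma \ref{HSW1}(2) we have $\Omega^2_G(W_{n,p})=\ker\partial_1$, where $\partial_1\colon\bigoplus_{j=1}^{n+1}kG\,w_j\to\bigoplus_{i=1}^{n}kG\,v_i$ is given by $w_1\mapsto x.v_1$, $w_j\mapsto y.v_{j-1}-x.v_j$ for $2\le j\le n$, and $w_{n+1}\mapsto y.v_n$; thus $\sum_j g_jw_j\in\ker\partial_1$ precisely when $g_1x=-g_2y$ and $g_ix=g_{i+1}y$ for $2\le i\le n$. Solving this recursion in $kG=k[x,y]/(x^p,y^p)$ produces kernel elements of the shape $y^{p-1}s.w_\ell-xy^{p-2}s.w_{\ell+1}-\cdots-x^{p-1}s.w_{\ell+p-1}$ ($s\in kG$); from these I would exhibit $n+p$ homogeneous generators $z_1,\dots,z_{n+p}$ of $\ker\partial_1$ with $x.z_1=0$, $y.z_{n+p}=0$ and $y.z_i=x.z_{i+1}$ for $1\le i\le n+p-1$. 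By the defining relations of $W_{n+p,p}$ this yields a surjective homomorphism $W_{n+p,p}\twoheadrightarrow\Omega^2_G(W_{n,p})$, which must be an isomorphism since $\dim_k W_{n+p,p}=\tfrac{p(2n+p+1)}{2}=\dim_k\Omega^2_G(W_{n,p})$, the latter read off the resolution of Lemma \ref{HSW1}(2). (Once the surjection is in hand one knows $\Omega^2_G(W_{n,p})\in\EIP(G)$, so Corollary \ref{HSW2}(2) and Lemma \ref{HSW3}(2) could re-derive surjectivity; but the direct construction plus the dimension count is the cleanest route.)

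\emph{Statement (2).} The point is that in Lemma \ref{HSW1}(1) the summand $kG^{\,n-d}[d]$ disappears when $n=d$. For $n=d=p-1$ the presentation of $\Omega^2_G(W_{p-1,p-1})$ therefore coincides with the one analysed in (1) with running parameter $p-1$, namely $\Omega^2_G(W_{p-1,p-1})=\ker\bigl(\partial_1\colon kG^{p}[1]\to kG^{p-1}\bigr)$ with the same formulas for $\partial_1$. The construction of (1), applied with parameter $p-1$, then gives $\Omega^2_G(W_{p-1,p-1})\cong W_{(p-1)+p,p}=W_{2p-1,p}$; one only has to note that the explicit generators and the dimension count of (1) remain valid for this value, even though $W_{p-1,p}$ itself is undefined.

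\emph{Statement (3).} Suppose, for a contradiction, that $M:=\Omega^2_G(W_{n,p-1})\in\EIP(G)$ with $n\ge p$. By \cite[(2.4)]{CFS} the canonical map $\pi\colon W_{n,p}\twoheadrightarrow W_{n,p}/\Rad^{p-1}(W_{n,p})\cong W_{n,p-1}$ has kernel $\Rad^{p-1}(W_{n,p})$, which — $W_{n,p}$ having Loewy length $p$ and constant Jordan type $\bigoplus_{i=1}^{p-1}[i]\oplus(n-p+1)[p]$ — is semisimple of dimension $n-p+1$, hence $\cong k^{\,n-p+1}$. Since the minimal presentations of $W_{n,p}$ and $W_{n,p-1}$ share $P_0=kG^n$ (with the same generators $v_i$) and the same formulas for $\partial_1$ on the common generators $w_j$ (Lemma \ref{HSW1}), a lift of $\pi$ to the resolutions is the identity on $P_0$ and the inclusion $kG^{n+1}[1]\hookrightarrow kG^{n+1}[1]\oplus kG^{\,n-p+1}[p-1]$ on $P_1$; thus $\Omega^2_G(\pi)$ is realized by the genuine inclusion $\Omega^2_G(W_{n,p})=\ker\partial_1^{(p)}\hookrightarrow\ker\partial_1^{(p-1)}=\Omega^2_G(W_{n,p-1})$, whose cokernel $Q$ is, in the stable module category, the cone of $\Omega^2_G(\pi)$, i.e. $\Omega^2_G$ of the cone of $\pi$. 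As $\pi$ is surjective with kernel $k^{\,n-p+1}$, $\mathrm{cone}(\pi)\cong\Omega^{-1}_G(k^{\,n-p+1})$, so $Q\cong\Omega^2_G\bigl(\Omega^{-1}_G(k^{\,n-p+1})\bigr)=\bigl(\Omega^1_G(k)\bigr)^{\,n-p+1}=(\Rad kG)^{\,n-p+1}$ in the stable category, i.e. $Q\cong(\Rad kG)^{\,n-p+1}\oplus(\text{projective})$ as a module. Since $n-p+1\ge 1$, $\Rad kG$ is a direct summand of $Q$, and $Q$ is a quotient of $M$, so $\Rad kG$ is an image of $M$. As $\EIP(G)$ is closed under images (Lemma \ref{EIPr2}), this forces $\Rad kG\in\EIP(G)$; but $x\cdot\Rad(kG)\ne y\cdot\Rad(kG)$ as subspaces of $kG$ — for instance $x^2$ lies in the former but not in the latter — contradicting the equal images property. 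Hence $\Omega^2_G(W_{n,p-1})\notin\EIP(G)$.

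\emph{Main obstacle.} The technical heart is the explicit computation in (1): exhibiting the $n+p$ generators of $\ker\partial_1$ and checking that they both satisfy the defining relations of $W_{n+p,p}$ and generate the entire kernel. The bookkeeping for the system $g_ix=g_{i+1}y$ over $k[x,y]/(x^p,y^p)$ — the kernel being concentrated in degrees $\ge 3$ after the shift, with overlapping families of solutions to be organized into a minimal generating set — is the delicate step; everything else (the collapse of a summand in (2), and the stable-category manipulation in (3), whose only non-formal point is the one-line check on $\Rad kG$) is comparatively routine.
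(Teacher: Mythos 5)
Your part (3) is correct and genuinely different from the paper's argument, which is worth noting: the paper computes $\supp(\Omega^2_G(W_{n,p-1}))=[p,3p-3]$ from Lemma \ref{HSW1}(1), assumes $\Omega^2_G(W_{n,p-1})\in\EIP(G)$, obtains a surjection $W_{n+a_p,p}\twoheadrightarrow\Omega^2_G(W_{n,p-1})$ via Lemma \ref{HSW3}, shows the kernel sits in $(W_{n+a_p,p})_{p-1}$ so the quotient has graded support an interval of length $p$, and invokes the graded Krull--Schmidt theorem \cite[(4.1)]{GG} to derive a contradiction with the support of length $2p-3$. Your stable-category cone computation, identifying $Q\cong\Omega^1_G(k)^{\,n-p+1}\oplus(\text{proj.})$ and observing that $\Rad kG$ is then an image of $\Omega^2_G(W_{n,p-1})$ but fails the equal images property, is a clean alternative that replaces the graded-module machinery with a triangulated-category identity; I checked that the chain-map lift, the cone identification, and the concrete failure $x^2\in x\,\Rad kG\smallsetminus y\,\Rad kG$ all go through, and the dimensions are consistent.

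Parts (1) and (2), however, contain a genuine gap: you explicitly defer ``the technical heart,'' namely exhibiting the $n+p$ homogeneous generators $z_1,\dots,z_{n+p}$ of $\ker\partial_1$, verifying the $W_{n+p,p}$-relations among them, and proving that they generate. Without that the surjection $W_{n+p,p}\twoheadrightarrow\Omega^2_G(W_{n,p})$ is not established, and the dimension count alone gives nothing. Your parenthetical fallback is also circular: Lemma \ref{HSW3}(2) requires one to already know $\Omega^2_G(W_{n,p})\in\EIP(G)$ and to know $\dim_k\Top(\Omega^2_G(W_{n,p}))$, neither of which you have at that point. The paper's route avoids the explicit kernel computation entirely: using the graded presentation it shows $M_j=(0)$ for $j<p$ and $\dim_kM_p=n+p$, hence $\Rad(M)\subseteq\bigoplus_{i\ge p+1}M_i$ and $\dim_k\Top(M)\ge n+p$; comparing with Corollary \ref{HSW2}(2), which gives $\dim_k\coker\ell_\alpha=n+p$, forces $\ell_\alpha(M)=\Rad(M)$ for every $p$-point $\alpha$, so $M\in\EIP(G)$ by \cite[(1.7)]{CFS}, and only then does Lemma \ref{HSW3} yield the surjection and the dimension count finish the proof. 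You should adopt this degree-by-degree count of $\dim_kM_j$ (or actually carry out the generator bookkeeping) before the claimed isomorphism in (1) and (2) can stand.
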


\begin{proof} (1) Thanks to Lemma \ref{HSW1}(2) there is an exact sequence
\[ (\ast) \ \ \ \ \ \ (0) \lra \Omega^2_G(W_{n,p}) \lra kG^{n+1}[1] \stackrel{\partial_1}{\lra} kG^n \stackrel{\partial_0}{\lra} W_{n,p} \lra (0)\]
of graded $kG$-modules with $\deg(\partial_j)=0$ for $j \in \{0,1\}$.

\medskip
($\ast\ast$) {\it We have $\dim_k\Top(\Omega^2_G(W_{n,p})) \ge n\!+\!p$.}

\smallskip
\noindent
To ease notation, we put $M:=\Omega^2_G(W_{n,p})$. Let $j\ge 0$. Thanks to ($\ast$), there exists an exact sequence
\[ (0) \lra M_j \lra (kG)^{n+1}_{j-1} \lra (kG)^n_j \lra (W_{n,p})_j \lra  (0).\]
If $j\le p\!-\!1$, we obtain
\[\dim_kM_j =(n\!+\!1)j\!-\!n(j\!+\!1) \!+\!n\!-\!j =0.\]
Thus, $M_j=(0)$, so that $M = \bigoplus_{j\ge p}M_j$.

Since $(W_{n,p})_p=(0)$, we have an exact sequence
\[ (0) \lra M_p \lra (kG^{n+1})_{p-1} \lra (kG^n)_p \lra (0).\]
Consequently,
\[ \dim_k M_p = \dim_k(kG^{n+1})_{p-1}\!-\!\dim_k(kG^n)_p = (n\!+\!1)p\!-\!n(p\!-\!1)=n\!+\!p.\]
Observing $\Rad(M) =\Rad(kG)M = \sum_{i\ge 1}kG_iM\subseteq \sum_{i\ge p+1}M_i$, we conclude
\[ \dim_k\Top(M) = \dim_kM\!-\!\dim_k \Rad(M) \ge \dim_kM_p = n\!+\!p,\]
as asserted. \hfill $\diamond$

\medskip
\noindent
Let $\alpha \in \pt(G)$ be a $p$-point of $G$. Then we have $\ell_\alpha(M)\subseteq \Rad(M)$, while Corollary \ref{HSW2}(2) and ($\ast\ast$) imply
\[n\!+\!p = \dim_k \coker \ell_\alpha \ge \dim_k \Top(M)\ge n\!+\!p.\]
Consequently, $\Rad(M) = \ell_\alpha(M)$, so that \cite[(1.7)]{CFS} yields $M \in \EIP(G)$. Since $a_1(M)=1$, Lemma \ref{HSW3} now provides a surjection $W_{n+p,p} \tha M$, which, by equality of dimensions, is an isomorphism.

(2) Lemma \ref{HSW1}(1) provides an exact sequence
\[ (0) \lra M \lra kG^p[1] \stackrel{\partial_1}{\lra} kG^{p-1} \stackrel{\partial_0}{\lra} W_{p-1,p-1} \lra (0),\]
of homomorphisms of degree $0$, with $M:=\Omega^2_G(W_{p-1,p-1})$. The arguments of (1) show that $M=\bigoplus_{j\ge p}M_j$ as well as $\dim_kM_p=2p\!-\!1$. Consequently, $\dim_k\Top(\Omega^2_G(W_{p-1,p-1}))\ge 2p\!-\!1$, and the proof may now be completed by adopting the arguments of (1) verbatim.

(3) Let $n\ge p$ and $M:=\Omega^2_G(W_{n,p-1})$. Recall that $M=\bigoplus_{i\in \ZZ}M_i$ is a $\ZZ$-graded $G$-module. We put $\supp(M) :=\{i\in \ZZ \ ; \ M_i \ne (0)\}$ as well as $[a,b] := \{x \in \ZZ \ ; \ a \le x \le b\}$ for $a,b \in \ZZ$. Lemma \ref{HSW1}(1) implies
\[ \supp(M) = [p,3p\!-\!3].\]
According to Lemma \ref{HSW2}, we have $a_p:=a_p(M) = (n\!-\!p\!+\!2)p$ as well as $a_1(M)=1$. Suppose that $M \in \EIP(G)$. Then Lemma \ref{HSW2} yields $\dim_k\Top(M)=n\!+\!a_p$ and Lemma \ref{HSW3} provides a surjection $f : W_{n+a_p,p} \lra M$. Consequently, the restriction $g : \Rad^{p-1}(W_{n+a_p,p}) \lra \Rad^{p-1}(M)$ is also surjective. Direct computation shows that $\dim_k\ker f = n\!-\!p\!+\!1=\dim_k \ker g$. Thus, there exists an exact sequence
\[ (0) \lra V \lra W_{n+a_p,p} \lra M \lra (0)\]
with $V \subsetneq \Rad^{p-1}(W_{n+a_p,p})$. Recall that $W_{n+a_p,p}$ is graded and that $V \subsetneq \Rad^{p-1}(W_{n+a_p,p})\subseteq (W_{n+a_p,p})_{p-1}$ is a homogeneous submodule. This implies that the indecomposable $G$-module $W_{n+a_p,p}/V$ is $\ZZ$-graded with $\supp(W_{n+a_p,p}/V)=[0,p\!-\!1]$. Consequently, \cite[(4.1)]{GG} provides an isomorphism
\[ (W_{n+a_p,p}/V)[j] \cong M\]
of $\ZZ$-graded modules for some $j \in \ZZ$. As a result, $\supp(M)=[j,j\!+\!p\!-\!1]$, a contradiction.\end{proof}

\bigskip

\begin{Remark} Let $\Omega_{\gr G}$ be the Heller operator of the Frobenius category $\modd_\ZZ G$. The proof of Lemma \ref{HSW4} yields
\[ \Omega^2_{\gr G}(W_{n,p}) \cong W_{n+p,p}[p] \ \ \text{and} \ \ \Omega^2_{\gr G}(W_{p-1,p-1}) \cong W_{2p-1,p}[p].\]
Consequently, each of these modules has a graded projective resolution, $(P_n)_{n\ge 0}$ such that each $P_n$ is generated in degree $\delta(n)$, where
\[ \delta(n) := \left\{ \begin{array}{cl} \frac{n}{2}p & \text{if} \ n \in 2\NN_0\\ \frac{n\!-\!1}{2}p\!+\!1 &\text{otherwise.}\end{array}\right.\]
Modules with this property are called {\it $p$-Koszul modules}, cf.\ \cite{GMMZ04}. Note that $kG$ is a $p$-Koszul algebra if and only if $p=2$. \end{Remark}

\bigskip

\section{Auslander-Reiten Components}
In this section we investigate the category of equal images modules over finite group schemes of abelian unipotent rank $\ge 2$ by means of Auslander-Reiten theory. For these groups, Auslander-Reiten components of tree class $A_\infty$ occur most often. Moreover, if such a component $\Theta$ contains a module of constant $j$-rank, then $\Theta\cong \ZZ[A_\infty]$ has the structure described in the Introduction. While AR-components containing a module of constant $j$-rank or of constant Jordan type consist entirely of such modules, the distribution of equal images modules depends on their
Loewy lengths.

\subsection{The distribution of equal images modules}

Given a finite group scheme $\cG$, we denote by $\Gamma_s(\cG):= \Gamma_s(k\cG)$ the stable Auslander-Reiten quiver of the self-injective algebra $k\cG$. By general theory, the Auslander-Reiten translation $\tau_\cG : \Gamma_s(\cG) \lra \Gamma_s(\cG)$ is
given by
\[ \tau_\cG(M) \cong \Omega^2_\cG(M)\!\otimes_k\!k_\zeta,\]
where $\zeta : k\cG \lra k$ is the {\it modular function} of the cocommutative Hopf algebra $k\cG$, see \cite[(1.5)]{FMS}.

According to the Riedtmann Structure Theorem \cite[Struktursatz]{Rie}, each component $\Theta \subseteq \Gamma_s(\cG)$ is of the form $\Theta \cong \ZZ[T_\Theta]/\Pi$, where $T_\Theta$ is a directed
tree and $\Pi \subseteq \Aut_k(\ZZ[T_\Theta])$ is an admissible subgroup of the automorphism group of the stable translation quiver $\ZZ[T_\Theta]$. Moreover, the underlying undirected tree $\bar{T}_\Theta$ is uniquely determined by $\Theta$, and is customarily referred to as the {\it tree class} of $\Theta$.

A non-projective indecomposable $\cG$-module $M$ is called {\it quasi-simple}, if
\begin{enumerate}
\item[(a)] the component $\Theta \subseteq \Gamma_s(\cG)$ containing $M$ has tree class $A_\infty$, and
\item[(b)] the module $M$ has exactly one predecessor in $\Theta$. \end{enumerate}
By way of illustration, we take another look at elementary abelian $p$-groups of rank $2$.

\bigskip

\begin{Examples} (1) Consider $kG = k(\ZZ/(p)\!\times\!\ZZ/(p))$ and recall that there is a canonical functor $F : \modd k(\bullet \rightrightarrows \bullet) \lra \modd G$. In view of Lemma \ref{EIPr1}, a pre-injective $k(\bullet \rightrightarrows \bullet)$-module $M$ gives rise to an indecomposable equal images module $F(M)$ of constant Jordan type $\Jt(F(M))
=[1]\oplus n[2]$.

For $p=2$, the remark following (\ref{EIHS1}) shows that the modules $F(M)$ are just the Heller shifts $\Omega_G^n(k)$, with $n \ge 0$. By \cite[(II.7.3)]{Er1}, these modules belong to a stable Auslander-Reiten component of type $\ZZ[\tilde{A}_{1,2}]$.

Alternatively, the local algebra $kG$ is wild and Erdmann's Theorem \cite[Thm.1]{Er2} ensures that the indecomposable $\cG$-module $F(M)$ belongs to a stable Auslander-Reiten component $\Theta \cong \ZZ[A_\infty]$. Thanks to \cite[(3.1.2)]{Fa5}, each $F(M)$ is quasi-simple. Thus, if $F(M)\cong W_{n,2}$ and $F(M') \cong W_{n',2}$ belong to $\Theta$, then there exists $m \in \ZZ$ such that $W_{n',2} \cong \tau_\cG^m(W_{n,2}) \cong \Omega^{2m}_G(W_{n,2})$. As $p\ge 3$, this implies $n'=n$, so that $\Omega^{2m}_G(W_{n,2})\cong W_{n,2}$. Observing that equal images modules have full support, we obtain $\cx_G(W_{n,2})=2$. This forces $m=0$ and we conclude that the various $F(M)$ all belong to different AR-components of $kG$.

(2) Suppose that $p\ge 3$. Given $1 \le d \le p$ and $n\ge d$, \cite[(2.3)]{CFS} yields
\[ \Jt(W_{n,d}) = \bigoplus_{i=1}^{d-1}[i]\oplus (n\!-\!d\!+\!1)[d].\]
Hence $W_{n,d}$ is indecomposable for $d\ge 2$ and by \cite[(3.1.2)]{Fa5}, each $W_{n,d}$ is quasi-simple.

(3) The same arguments yield that each member $M/U_f$ of the algebraic family of indecomposable $G$-modules constructed in the proof of Theorem \ref{RTEI6} is quasi-simple. \end{Examples}

\bigskip

\begin{Lem} \label{ARD1} Let $\cG$ be a finite group scheme of abelian unipotent rank $\aurk(\cG)\ge 2$. If $\Theta \subseteq \Gamma_s(\cG)$ is a component such that $\Theta\cap \EIP(\cG) \ne \emptyset$, then either $\Theta$ has Euclidean tree class, $\Theta \cong \ZZ[\tilde{A}_{p,q}]$, or $\Theta \cong \ZZ[A_\infty]$, $\ZZ[A_\infty^\infty]$, $\ZZ[D_\infty]$. \end{Lem}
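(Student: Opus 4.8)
The plan is to combine a complexity estimate for the equal images module inside $\Theta$ with the classification of tree classes of stable Auslander--Reiten components of finite group schemes. First I would fix $(0)\ne M_0\in\Theta\cap\EIP(\cG)$ and show that $M_0$ is not $\Omega_\cG$-periodic. By Lemma \ref{EIPr5}(3) the module $M_0$ is projective-free, hence, being indecomposable, non-projective; and, like every equal images module, it has constant Jordan type. Constancy forces its $\Pi$-support $\Pi(\cG)_{M_0}$ to be either empty or all of $\Pi(\cG)$, and the empty case would make $M_0$ projective; so $\Pi(\cG)_{M_0}=\Pi(\cG)$, and therefore $\cx_\cG(M_0)=\cx_\cG(k)\ge\aurk(\cG)\ge 2$. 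In particular no power of $\Omega_\cG$ fixes $M_0$. Consequently $\Theta$ cannot be a tube $\ZZ[A_\infty]/(\tau^r)$, since every module in such a tube is $\tau_\cG$-periodic and hence --- because $\tau_\cG(N)\cong\Omega^2_\cG(N)\otimes_k k_\zeta$ with $\zeta$ of finite order --- $\Omega_\cG$-periodic, thus of complexity $1$.

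Next I would write $\Theta\cong\ZZ[T_\Theta]/\Pi$ via the Riedtmann Structure Theorem \cite{Rie} and invoke the classification of the possible tree classes $\bar{T}_\Theta$ for stable components of finite group schemes --- Webb's theorem together with its extension to this setting and Erdmann's refinements (cf.\ \cite{Fa5,Er1}) --- according to which $\bar{T}_\Theta$ is a finite Dynkin diagram, the Euclidean diagram $\tilde{A}_{p,q}$, or one of $A_\infty$, $A_\infty^\infty$, $D_\infty$. A finite Dynkin tree class is impossible: such a component is finite, whence the block of $k\cG$ meeting $\Theta$ would be representation-finite \cite{ARS} and all its modules periodic, contradicting $\cx_\cG(M_0)\ge 2$. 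In the remaining cases $\Theta$ is not a tube, so the admissible group $\Pi$ contains no power of $\tau$; then $\Pi$ is trivial whenever $\bar{T}_\Theta\in\{A_\infty,A_\infty^\infty,D_\infty\}$, because the translation quiver $\ZZ[A_\infty]$ has no automorphism outside $\langle\tau\rangle$ while a nontrivial admissible quotient of $\ZZ[A_\infty^\infty]$ or $\ZZ[D_\infty]$ by automorphisms not involving $\tau$ would acquire a strictly different tree class (one of $\tilde{A}_{p',q'}$, $D_\infty$, $B_\infty$, $C_\infty$), contradicting the uniqueness in \cite{Rie}. Hence $\Theta\cong\ZZ[A_\infty]$, $\ZZ[A_\infty^\infty]$ or $\ZZ[D_\infty]$ there. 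Finally, if $\bar{T}_\Theta=\tilde{A}_{p,q}$, then $\Pi$ must act on the underlying cycle by rotations --- a reflection would produce a Euclidean component outside type $\tilde{A}$, excluded above --- so $\ZZ[\tilde{A}_{p,q}]/\Pi\cong\ZZ[\tilde{A}_{p',q'}]$ for suitable $p',q'$, and $\Theta\cong\ZZ[\tilde{A}_{p,q}]$.

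The genuinely new ingredient is the complexity estimate of the first step, which is exactly what eliminates the periodic (tube) components; everything else is an assembly of known structural results. The point requiring the most care is the precise form of the cited tree-class classification for $\Gamma_s(\cG)$ --- in particular the absence of the Euclidean types $\tilde{D}_n$, $\tilde{E}_n$ and of $B_\infty$, $C_\infty$ from the list --- together with the routine but slightly delicate bookkeeping of the admissible automorphism groups $\Pi$.
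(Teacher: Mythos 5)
The paper proves this lemma in three short steps: (i) cite Farnsteiner \cite[(3.2)]{Fa4} for the coarse tree-class trichotomy (finite Dynkin, infinite Dynkin, or Euclidean); (ii) observe that an equal images module has full support, so $\dim\Pi(\cG)_\Theta\ge\aurk(\cG)-1\ge 1$; (iii) cite Farnsteiner \cite[(3.3)]{Fa4}, which under precisely that support condition yields the alternative stated in the lemma (Euclidean tree class, or $\Theta\cong\ZZ[\tilde A_{p,q}],\ZZ[A_\infty],\ZZ[A_\infty^\infty],\ZZ[D_\infty]$). Your first step (complexity $\ge 2$ via full $\Pi$-support) is the same idea as the paper's observation (ii), just phrased via complexity rather than via $\dim\Pi(\cG)_\Theta$; both rule out periodicity and finite Dynkin tree class.

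The genuine gap is in your second step. You assert that for $\Gamma_s(\cG)$ the only possible Euclidean tree class is $\tilde A_{p,q}$, and flag the absence of $\tilde D_n$, $\tilde E_n$ as a delicate but expected point. That assertion is false for general finite group schemes: the correct classification (Webb's theorem and its extension \cite[(3.2)]{Fa4}) does allow $\tilde D_n$ and $\tilde E_n$. Indeed the paper's very next Lemma \ref{ARD2} explicitly lists $\bar T_\Theta\in\{\tilde A_{1,2},\tilde D_n,\tilde E_n,A_\infty,A_\infty^\infty,D_\infty\}$ as the possibilities after assuming $\aurk(\cG)\ge 2$. This is exactly why the statement of the lemma contains the catch-all alternative ``$\Theta$ has Euclidean tree class'' in addition to the named translation quivers: the cases $\tilde D_n,\tilde E_n$ are not eliminated at this level of generality, they are simply swept into that alternative. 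Because your argument proceeds by first ruling them out (without a proof that they cannot occur), it cannot reach the lemma's conclusion; one would instead need to allow them and show that in those cases the first alternative of the lemma holds, which is automatic and requires no further argument --- the hard part (what \cite[(3.3)]{Fa4} accomplishes) is pinning down the precise shape of $\Theta$ in the infinite-Dynkin cases, including the appearance of $\ZZ[\tilde A_{p,q}]$ as a quotient of $\ZZ[A_\infty^\infty]$.

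A secondary weakness is the discussion of the admissible group $\Pi$. You assert that a nontrivial admissible quotient of $\ZZ[A_\infty^\infty]$ or $\ZZ[D_\infty]$ by automorphisms not involving $\tau$ ``would acquire a strictly different tree class,'' but this is stated as a fact to be checked rather than proved, and it is precisely the content that is outsourced to \cite[(3.3)]{Fa4} in the paper. The cleanest repair of your argument is simply to replace your ad hoc classification claim by the correct one from \cite[(3.2)]{Fa4} (allowing all Euclidean diagrams), note that any Euclidean tree class lands in the lemma's first alternative, and then invoke \cite[(3.3)]{Fa4} for the infinite-Dynkin cases --- which is essentially the paper's proof.
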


\begin{proof} According to \cite[(3.2)]{Fa4}, the tree class $T_\Theta$ of the component $\Theta$ is either a finite or infinite Dynkin diagram, or a Euclidean diagram. Since $\Theta\cap \EIP(\cG) \ne \emptyset$, the component $\Theta$ contains a $\cG$-module $M$ of full support $P(\cG)_M = P(\cG)$, so that $\dim P(\cG)_\Theta \ge \aurk(\cG)\!-\!1 \ge 1$. In view of \cite[(3.3)]{Fa4}, either the tree
class $\bar{T}_\Theta$ is Euclidean, or $\Theta \cong \ZZ[ \tilde{A}_{p,q}], \ZZ[A_\infty]$, $\ZZ[A_\infty^\infty]$, $\ZZ[D_\infty]$. \end{proof}

\bigskip

\begin{Lem} \label{ARD2} Let $\cG$ be a finite group scheme of abelian unipotent rank $\aurk(\cG)\ge 2$. If $\Theta \subseteq \Gamma_s(\cG)$ is a component of tree class
$\bar{T}_\Theta \neq A_\infty$, then $\Theta\cap \EIP(\cG)_{p-1}$ is finite. \end{Lem}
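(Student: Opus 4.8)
The plan is the following. We may assume $\Theta\cap\EIP(\cG)_{p-1}\neq\emptyset$, for otherwise the assertion is vacuous. Then $\Theta$ meets $\CJT(\cG)$, so $\Theta\subseteq\CJT(\cG)$ by \cite{CFP}, and Lemma \ref{ARD1} is at our disposal. Since $\bar T_\Theta\neq A_\infty$, it leaves only the possibilities that $\bar T_\Theta$ is Euclidean, or $\Theta\cong\ZZ[\tilde A_{p,q}]$, or $\Theta\cong\ZZ[A_\infty^\infty]$, or $\Theta\cong\ZZ[D_\infty]$. The device that controls $\EIP(\cG)_{p-1}$ along a single $\tau_\cG$-orbit is Theorem \ref{EIHS1}(1): using $\tau_\cG(M)\cong\Omega^2_\cG(M)\otimes_kk_\zeta$ (recalled above, cf.\ \cite{FMS}) and induction, one gets $\tau^n_\cG(M)\cong\Omega^{2n}_\cG(M)\otimes_kk_{\zeta^n}$, whence, by Corollary \ref{EIPr4}(2), $\tau^n_\cG(M)\in\EIP(\cG)_{p-1}$ if and only if $\Omega^{2n}_\cG(M)\in\EIP(\cG)_{p-1}$. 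Thus, for $(0)\neq M\in\Theta\cap\EIP(\cG)_{p-1}$, Theorem \ref{EIHS1}(1) forces $\tau^n_\cG(M)\notin\EIP(\cG)_{p-1}$ whenever $|n|\ge(\dim_kM)^2$, so that every $\tau_\cG$-orbit of $\Theta$ meets $\EIP(\cG)_{p-1}$ in a finite set.

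First I would dispose of the cases with only finitely many $\tau_\cG$-orbits. If $\bar T_\Theta$ is a Euclidean diagram, then $T_\Theta$ is a finite tree, so the Riedtmann structure theorem \cite{Rie} writes $\Theta\cong\ZZ[T_\Theta]/\Pi$, which has at most $|T_\Theta|$ distinct $\tau_\cG$-orbits; likewise $\ZZ[\tilde A_{p,q}]$ has exactly $p+q$ orbits. Combined with the orbit bound of the previous paragraph, $\Theta\cap\EIP(\cG)_{p-1}$ is finite whenever $\Theta$ is of one of these shapes.

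The remaining — and genuinely hard — cases are $\Theta\cong\ZZ[A_\infty^\infty]$ and $\Theta\cong\ZZ[D_\infty]$, where there are infinitely many $\tau_\cG$-orbits and the orbit bound alone is insufficient. Here I would invoke that $\EIP(\cG)_{p-1}$ is image-closed (Lemma \ref{EIPr2}): if $M\in\Theta\cap\EIP(\cG)_{p-1}$ and $M\to Y$ is an irreducible epimorphism onto an indecomposable non-projective $Y$, then $Y$ is an image of $M$, hence $Y\in\Theta\cap\EIP(\cG)_{p-1}$; since over the self-injective algebra $k\cG$ every irreducible map between indecomposables is a proper monomorphism or a proper epimorphism, this makes $\Theta\cap\EIP(\cG)_{p-1}$ closed under all irreducible epimorphisms of $\Theta$, along which the $k$-dimension strictly drops. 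In the $\ZZ[D_\infty]$-case such a chain must terminate, necessarily on one of the (at most two) $\tau_\cG$-orbits forming the fork of $\ZZ[D_\infty]$; since those orbits meet $\EIP(\cG)_{p-1}$ only finitely by the first paragraph, a counting argument — bounding, again via the $\Omega^2$-estimate of Theorem \ref{EIHS1}(1), how many modules of $\Theta\cap\EIP(\cG)_{p-1}$ can lie on the sectional ray above a fixed fork vertex — yields finiteness. In the $\ZZ[A_\infty^\infty]$-case there is no fork to absorb such a chain, so the presence of a single module in $\Theta\cap\EIP(\cG)_{p-1}$ would produce an infinite strictly dimension-decreasing sequence of $\cG$-modules, which is impossible; hence this tree class cannot occur and the intersection is empty (alternatively, one may appeal to the finer structure theory of components meeting $\CJT(\cG)$, which already excludes $\ZZ[A_\infty^\infty]$ and $\ZZ[D_\infty]$). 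I expect this last step to be the main obstacle: pinning down the monomorphism/epimorphism pattern on $\ZZ[D_\infty]$ and $\ZZ[A_\infty^\infty]$ and making the fork-orbit bookkeeping precise, since Theorem \ref{EIHS1}(1) there has to be fed through the image-closure of $\EIP(\cG)_{p-1}$ rather than used on its own.
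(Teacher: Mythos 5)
Your proposal takes a genuinely different route from the paper, and the route has a gap precisely where you suspect it does. The paper's proof is uniform and short: since $\bar T_\Theta\neq A_\infty$, every $\tau_\cG$-invariant subadditive function on $\Theta$ is bounded (cite \cite[(VII.3)]{ARS}); the stable Jordan type coefficients $\alpha_{K,i}$ for $i\in\{1,\ldots,p-1\}$ are $\tau_\cG$-invariant and additive on $\Theta$ by \cite[(2.3),(2.4)]{Fa5}, hence bounded by some $\ell$; for $M\in\Theta\cap\EIP(\cG)_{p-1}$ the $[p]$-coefficient vanishes, so $\dim_k M\le\frac{(p-1)p}{2}\ell$ is uniformly bounded, and \cite[(3.2)]{Fa1} converts bounded dimension into finiteness of the vertex set. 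No case distinction on the shape of $\Theta$ is needed.

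Your first two steps are sound: the identity $\tau^n_\cG(M)\cong\Omega^{2n}_\cG(M)\otimes_kk_{n\zeta}$ together with Corollary \ref{EIPr4}(2) and Theorem \ref{EIHS1}(1) does bound $|\,\text{orbit}\cap\EIP(\cG)_{p-1}|$ for each $\tau_\cG$-orbit, and this finishes the Euclidean and $\ZZ[\tilde A_{p,q}]$ cases where there are finitely many orbits. The gap is exactly the one you flag. For $\ZZ[D_\infty]$ the ``counting argument'' bounding modules on a sectional ray above a fork vertex is not supplied, and it is not obvious: the mono/epi pattern of irreducible maps in $\ZZ[D_\infty]$ is not at your disposal without further analysis, and image-closure of $\EIP(\cG)_{p-1}$ only propagates along irreducible epimorphisms, not along arbitrary arrows, so descent to the fork orbits is not automatic. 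For $\ZZ[A_\infty^\infty]$ the claim that a single $M\in\Theta\cap\EIP(\cG)_{p-1}$ would produce an infinite strictly decreasing chain of dimensions is unsubstantiated; nothing forces an infinite chain of irreducible epimorphisms starting at $M$. The parenthetical fallback — that components meeting $\CJT(\cG)$ already exclude $\ZZ[A_\infty^\infty]$ and $\ZZ[D_\infty]$ — is false: Lemma \ref{ARD1} explicitly allows these tree classes when $\aurk(\cG)\ge 2$, Proposition \ref{ARD5} treats them on equal footing, and the example $\fsl(2)_s$ in the paper exhibits a $\ZZ[A_\infty^\infty]$-component meeting $\EIP$. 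So the hard cases remain open in your argument. The moral is that the additive-function bound from \cite[(VII.3)]{ARS} is precisely the tool designed for this situation and removes the need to analyze orbit structure or epi/mono patterns case by case; it would be worth internalizing that mechanism.
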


\begin{proof} Since $\aurk(\cG) \ge 2$, Lemma \ref{ARD1} implies that the component $\Theta$ has tree class $\bar{T}_\Theta \in \{\tilde{A}_{1,2},\tilde{D}_n$, $\tilde{E}_n, A_\infty, A_\infty^\infty,
D_\infty\}$. In view of $\bar{T}_\Theta \ne A_\infty$, \cite[(VII.3)]{ARS} shows that every $\tau_\cG$-invariant subadditive function $f : \Theta \lra \NN_0$ is bounded.

Let $\alpha_K \in \Pt(\cG)$ be a $\pi$-point. For $M \in \Theta$, we write
\[\Jt(M) = \bigoplus_{i=1}^p\alpha_{K,i}(M)[i].\]
If $\Theta\cap \EIP(\cG)\ne \emptyset$, then $\dim \Pi(\cG)_\Theta = \dim \Pi(\cG) \ge \aurk(\cG)\!-\!1 \ge 1$, and a consecutive application of \cite[(2.3)]{Fa5} and \cite[(2.4)]{Fa5} shows that, for $i \in \{1,\ldots, p\!-\!1\}$, the function $\alpha_{K,i} : \Theta \lra \NN_0$ is $\tau_\cG$-invariant and additive. Hence there exists $\ell \in \NN$ such that $\alpha_{K,i}(M) \le \ell$ for all $M \in \Theta$ and $i \in \{1,\ldots,p\!-\!1\}$.

Let $\alpha_K \in \Pt(\cG)$ be a $\pi$-point. Given $M \in \Theta\cap\EIP(\cG)_{p-1}$, we have 
\[\Jt(M) = \bigoplus_{i=1}^{p-1}\alpha_{K,i}(M)[i],\]
so that
\[ \dim_kM = \sum_{i=1}^{p-1}i\alpha_{K,i}(M) \le \frac{(p\!-\!1)p}{2}\ell.\]
Thanks to \cite[(3.2)]{Fa1}, which also holds for finite group schemes, this implies that $\Theta\cap \EIP(\cG)_{p-1}$ is finite. \end{proof}

\bigskip
\noindent
Let $\cG$ be a finite group scheme. A short exact sequence
\[ (0) \lra N \lra E \lra M \lra (0)\]
of $\cG$-modules is referred to as {\it locally split}, provided the exact sequence
\[ (0) \lra \alpha_K^\ast(N_K) \lra \alpha^\ast_K(E_K) \lra \alpha_K^\ast(M_K) \lra (0)\]
is split exact for every $\pi$-point $\alpha_K \in \Pt(\cG)$.

\bigskip

\begin{Lem} \label{ARD3} Let $\cG$ be a finite group scheme. If
\[ (0) \lra N \lra E \lra M \lra (0)\]
is a locally split short exact sequence such that $E \in \EIP(\cG)_d$, then $M,N \in \EIP(\cG)_d$. \end{Lem}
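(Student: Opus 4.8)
The plan is to deal with $M$ and $N$ separately: the assertion for $M$ is immediate, while the one for $N$ is where the local splitting is used. For $M$, the canonical map $E \tha M$ is a $\cG$-linear epimorphism and $E \in \EIP(\cG)_d$, so Lemma \ref{EIPr2} gives $M \in \EIP(\cG)_d$ at once; only exactness of the sequence is needed here.

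For $N$, I would fix an arbitrary $\pi$-point $\alpha_K \in \Pt(\cG)$ and use the hypothesis to split the pulled-back sequence $(0) \lra \alpha_K^\ast(N_K) \lra \alpha_K^\ast(E_K) \lra \alpha_K^\ast(M_K) \lra (0)$ of $\fA_{p,K}$-modules, obtaining an $\fA_{p,K}$-submodule $C \subseteq \alpha_K^\ast(E_K)$ with $\alpha_K^\ast(E_K) = \alpha_K^\ast(N_K) \oplus C$. Identifying the underlying space of $\alpha_K^\ast(E_K)$ with $E_K$, the operator $\ell_{\alpha_K}$ becomes multiplication by $\alpha_K(t)$, and since $N_K$ and $C$ are $\ell_{\alpha_K}$-stable with $N_K \cap C = (0)$ one gets $\ell_{\alpha_K}^j(E_K) = \ell_{\alpha_K}^j(N_K) \oplus \ell_{\alpha_K}^j(C)$ for every $j$, where $\ell_{\alpha_K}^j(N_K) \subseteq N_K$ and $\ell_{\alpha_K}^j(C) \subseteq C$. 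Because $E \in \EIP(\cG)$, there is a $k$-subspace $V_j \subseteq E$ \emph{independent of} $\alpha_K$ with $\ell_{\alpha_K}^j(E_K) = V_j\!\otimes_k\!K$. The main step is then to show $\ell_{\alpha_K}^j(N_K) = (N \cap V_j)\!\otimes_k\!K$: the inclusion $\subseteq$ follows from $\ell_{\alpha_K}^j(N_K) \subseteq N_K \cap (V_j\!\otimes_k\!K) = (N \cap V_j)\!\otimes_k\!K$; for $\supseteq$, write an element $v$ of $(N\cap V_j)\!\otimes_k\!K = N_K \cap \ell_{\alpha_K}^j(E_K)$ as $v = v' + v''$ along the direct sum above, observe $v'' = v - v' \in N_K \cap C = (0)$, and conclude $v = v' \in \ell_{\alpha_K}^j(N_K)$. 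Since $(N \cap V_j)\!\otimes_k\!K$ does not depend on $\alpha_K$, this proves $N \in \EIP(\cG)$.

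To finish, I would note that $N_K \subseteq E_K$ is a $\cG_K$-submodule and $E \in \EIP(\cG)_d$, so $\ell_{\alpha_K}^d(N_K) \subseteq \ell_{\alpha_K}^d(E_K) = (0)$ for every $\pi$-point; as $N$ then has constant Jordan type with $\ell^d$ acting as zero, $\Jt(N) = \bigoplus_{i=1}^d a_i[i]$ and hence $N \in \EIP(\cG)_d$. There is essentially no hard step here — the proof is bookkeeping — but two points deserve care: local splitness must be invoked precisely to produce an $\ell_{\alpha_K}$-invariant \emph{vector-space} complement $C$ of $N_K$ in $E_K$ (the original short exact sequence of $\cG$-modules need not split), and one should keep in mind the elementary fact that intersection of $k$-subspaces of $E$ commutes with the extension of scalars $-\!\otimes_k\!K$, which is what legitimizes the identity $N_K \cap (V_j\!\otimes_k\!K) = (N \cap V_j)\!\otimes_k\!K$.
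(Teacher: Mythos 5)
Your proof is correct and follows essentially the same route as the paper's: treat $M$ via Lemma \ref{EIPr2}, then use the local splitting to produce an $\fA_{p,K}$-complement of $\alpha_K^\ast(N_K)$ in $\alpha_K^\ast(E_K)$ and deduce that $\ell_{\alpha_K}^j(N_K)$ equals the intersection of $N_K$ with the $\alpha_K$-independent subspace $V_j\!\otimes_k\!K$. The only cosmetic difference is that you phrase the key step as $\ell_{\alpha_K}^j(N_K)=(N\cap V_j)\!\otimes_k\!K$ via the direct-sum decomposition $\ell_{\alpha_K}^j(E_K)=\ell_{\alpha_K}^j(N_K)\oplus\ell_{\alpha_K}^j(C)$, whereas the paper records the equivalent identity $\ell_{\alpha_K}^j(N_K)=\ell_{\alpha_K}^j(E_K)\cap N_K$ and then applies $V_K\cap N_K=(V\cap N)_K$; both hinge on exactly the same observation that $N_K\cap C=(0)$.
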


\begin{proof} Thanks to Lemma \ref{EIPr2}, the $\cG$-module $M$ belongs to $\EIP(\cG)_d$.

Let $\alpha_K \in \Pt(\cG)$ be a $\pi$-point. We claim that
\[ (\ast) \ \ \ \ \ \ \ell_{\alpha_K}^j(N_K) = \ell_{\alpha_K}^j(E_K)\cap N_K \ \ \ \ \text{for} \ 1\le j \le p\!-\!1.\]
Since the given sequence is locally split, there exists an $\fA_{p,K}$-submodule $X \subseteq \alpha_K^\ast(E)$ such that
\[ \alpha_K^\ast(E_K)=\alpha_K^\ast(N_K)\oplus X.\]
Let $a \in \fA_{p,K}$ be an element. If $v \in a.\alpha_K^\ast(E_K)\cap \alpha_K^\ast(N_K)$, then there exits $n \in \alpha^\ast_K(N_K)$ and $x \in X$ such that $v=a.n+a.x$. Thus,
$v-a.n=a.x \in \alpha^\ast_K(N_K)\cap X = (0)$, so that $v=a.n \in a.\alpha^\ast_K(N_K)$. Thus, $a.\alpha_k^\ast(N_K) = a.\alpha_K^\ast(E_K)\cap \alpha_K^\ast(N_K)$ and assertion ($\ast$)
follows by specializing $a = t^j$.

Let $j \in \{1,\ldots,p\!-\!1\}$. Since $E \in \EIP(\cG)_d$, there exists a subspace $V \subseteq E$, with $V=(0)$ for $j \ge d$, such that
\[ \ell_{\alpha_K}^j(E_K) = V_K \ \ \ \ \ \ \forall \ \alpha_K \in\Pt(\cG).\]
Let $\alpha_K$ be a $\pi$-point. Thanks to ($\ast$), we obtain
\[ \ell_{\alpha_K}^j(N_K) = \ell_{\alpha_K}^j(E_K)\cap N_K = V_K\cap N_K = (V\cap N)_K.\]
As a result, $N \in \EIP(\cG)_d$. \end{proof}

\bigskip
\noindent
Let $\Theta \subseteq \Gamma_s(\cG)$ be a component. Then $\Theta$ is referred to as {\it regular}, provided $\Rad(P) \not \in \Theta$ for every principal indecomposable $\cG$-module $P$. In case $\Theta\cap \EIP(\cG) \neq \emptyset$, we define
\[ \delta_\Theta := \min \{ (\dim_kN)^2 \ ; \ N \in \Theta\cap\EIP(\cG)\}.\]
Let $\Theta \cong \ZZ[A_\infty]$ and suppose that $M \in \Theta$ is quasi-simple. Then there exists a unique infinite sectional path
\[ \cdots \rightarrow (r)M \rightarrow (r\!-\!1)M \rightarrow \cdots \rightarrow (1)M=M\]
in $\Theta$. If $X \in \Theta$ is an arbitrary module, then there exist uniquely determined $\ell \in \NN$ and $n \in \ZZ$ such that $X\cong \tau_\cG^n((\ell)M)$. In that case,
we say that $X$ has {\it quasi-length} $\ql(X)=\ell$. Following \cite[(2.2)]{EK}, we call the full subquiver $\cW(X) \subseteq \Theta$, whose vertices are the modules $\tau^n_\cG((m)M)$
with $1 \le m \le \ell$ and $0\le n \le \ell\!-\!m$, the {\it wing spanned by $X$}.

\bigskip

\begin{Thm} \label{ARD4} Let $\cG$ be a finite group scheme of abelian unipotent rank $\aurk(\cG) \ge 2$, $\Theta \subseteq \Gamma_s(\cG)$ be a regular component of tree
class $A_\infty$. Then the following statements hold:
\begin{enumerate}
\item If $M \in \Theta\cap\EIP(\cG)$, then $\cW(M) \subseteq \Theta \cap \EIP(\cG)$.
\item Every $M \in \Theta\cap\EIP(\cG)_{p-1}$ is quasi-simple, and $\Theta\cap \EIP(\cG)_{p-1}$ is finite.
\item If $\Theta \cap \EIP(\cG)_{p-2} \ne \emptyset$, then $|\Theta \cap \EIP(\cG)| \le \delta_\Theta$ and every $M \in \Theta\cap\EIP(\cG)$ is quasi-simple.
 \end{enumerate}\end{Thm}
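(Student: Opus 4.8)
The plan is to exploit the rigidity of the tree class $A_\infty$ together with the Heller-shift obstructions established in Section $4.5$. For part (1), I would start with $M \in \Theta \cap \EIP(\cG)$ and consider the wing $\cW(M)$. Any vertex $X$ of $\cW(M)$ sits inside a commutative diagram of irreducible maps within the wing, and the structure of $\ZZ[A_\infty]$ ensures that $X$ fits into a short exact sequence whose other two terms are obtained from $M$ by iterated almost split sequences restricted to the wing. Since $\Theta$ is regular, all the almost split sequences in $\cW(M)$ are genuine (no projective-injective interference), and the Jordan types along a sectional path behave additively by \cite[(2.3),(2.4)]{Fa5}, as already used in Lemma \ref{ARD2}. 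The key observation is that the relevant sequences are \emph{locally split}: pulling back along any $\pi$-point $\alpha_K$ produces a sequence of $\fA_{p,K}$-modules whose outer terms have the same stable Jordan type (because $M \in \EIP(\cG)$ has constant Jordan type and the wing modules inherit this), forcing it to split. Lemma \ref{ARD3} then propagates the equal images property from $M$ down through the wing, giving $\cW(M) \subseteq \Theta \cap \EIP(\cG)$. I expect the main obstacle here to be verifying carefully that the sequences arising inside a wing of a \emph{regular} $A_\infty$-component are exactly the locally split ones — one must rule out the non-split behaviour that occurs near the boundary of a non-regular component, and track the Jordan-type bookkeeping so that $d$ is preserved, i.e. that $\cW(M) \subseteq \EIP(\cG)_d$ when $M \in \EIP(\cG)_d$.

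For part (2): if $M \in \Theta \cap \EIP(\cG)_{p-1}$ were not quasi-simple, then $M$ would have a quasi-length $\ql(M) = \ell \ge 2$ and its wing $\cW(M)$ would contain a quasi-simple module $X$ of quasi-length $1$, together with modules of every intermediate quasi-length up to $\ell$. By part (1), all of these lie in $\Theta \cap \EIP(\cG)$, and since $\EIP(\cG)_{p-1}$ is closed under the relevant sub/quotient operations (Lemma \ref{EIPr2}, Lemma \ref{ARD3}), in fact $\cW(M) \subseteq \EIP(\cG)_{p-1}$. Now I would invoke Theorem \ref{EIHS1}(1): the even Heller shifts $\Omega^{2n}_\cG(Y)$ of any $Y \in \EIP(\cG)_{p-1} \smallsetminus \{(0)\}$ leave $\EIP(\cG)_{p-1}$ once $|n| \ge (\dim_k Y)^2$. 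But inside $\Theta \cong \ZZ[A_\infty]$, the AR-translate $\tau_\cG^n(X) \cong \Omega^{2n}_\cG(X) \otimes_k k_\zeta$ stays quasi-simple, hence stays in $\EIP(\cG)_{p-1}$ for all $n \in \ZZ$ (using Corollary \ref{EIPr4}(2) to absorb the twist by $k_\zeta$) — contradiction unless $\cW(M)$ is trivial, i.e.\ $M$ is quasi-simple. Finiteness of $\Theta \cap \EIP(\cG)_{p-1}$ then follows: all such modules are quasi-simple, and again by Theorem \ref{EIHS1}(1) and the identification $\tau_\cG^n \cong \Omega^{2n}_\cG(-) \otimes k_\zeta$, for a fixed quasi-simple $M_0 \in \Theta \cap \EIP(\cG)$ only finitely many translates $\tau_\cG^n(M_0)$ can lie in $\EIP(\cG)$, bounding the quasi-simple part of the $\tau_\cG$-orbit; since $A_\infty$-components have a single $\tau_\cG$-orbit of quasi-simple modules, this bounds the whole intersection. (Alternatively one can cite Lemma \ref{ARD2} directly once the bound on dimensions is in place.)

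For part (3): assume $N_0 \in \Theta \cap \EIP(\cG)_{p-2}$ realizes the minimum $\delta_\Theta = (\dim_k N_0)^2$. By part (1) again, $\cW(N_0) \subseteq \Theta \cap \EIP(\cG)$, and since $N_0 \in \EIP(\cG)_{p-2}$, Proposition \ref{EIPr6} combined with the additivity of Jordan types forces every module in $\cW(N_0)$ into $\EIP(\cG)_{p-2}$ (no $[p]$-summand can appear). The same Heller-shift argument as in (2), but now using Theorem \ref{EIHS1}(2) (which ejects even Heller shifts of $\EIP(\cG)_{p-2}$-modules out of $\EIP(\cG)$ \emph{entirely}, not merely out of $\EIP(\cG)_{p-2}$), shows first that $N_0$ is quasi-simple, hence so is every module of $\Theta \cap \EIP(\cG)$ by the wing argument applied to each of them. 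Finally, for the cardinality bound: every $M \in \Theta \cap \EIP(\cG)$ is quasi-simple, hence of the form $\tau_\cG^n(N_0)$ for a unique $n \in \ZZ$; writing $\tau_\cG^n(N_0) \cong \Omega^{2n}_\cG(N_0) \otimes_k k_\zeta$ and applying Theorem \ref{EIHS1}(2) to $N_0 \in \EIP(\cG)_{p-2}$, membership $\tau_\cG^n(N_0) \in \EIP(\cG)$ forces $|n| < (\dim_k N_0)^2 = \delta_\Theta$, so $n$ ranges over at most $2\delta_\Theta - 1$ values. Refining this count to the asserted bound $|\Theta \cap \EIP(\cG)| \le \delta_\Theta$ will require using that $\Omega^{-2}$ is also obstructed (Theorem \ref{EIHS4}(2)) so that, starting from a module of \emph{minimal} dimension-square in the orbit, one side of the $\tau_\cG$-orbit contributes nothing and only $n$ in a range of length $\delta_\Theta$ survives. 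I expect this last numerical refinement — pinning down the constant as exactly $\delta_\Theta$ rather than $2\delta_\Theta$ — to be the subtlest bookkeeping point, and it is where the choice of $N_0$ as the dimension-minimizer, together with the one-sided nature of the Heller obstruction once $N_0$ is known to be projective-free (Lemma \ref{EIPr5}(3)), must be used.
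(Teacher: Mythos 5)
Your scaffolding for part (1) --- locally split almost split sequences plus Lemma \ref{ARD3} --- is the right idea, but your justification for local splitting is circular: you cannot argue that the pullbacks split because the wing modules have constant or equal Jordan type, since that is precisely what is being proved. The correct justification is \cite[(8.5)]{CFP}, using $\dim\Pi(\cG)_M \geq \aurk(\cG)-1\geq 1$. The genuine gap is in the quasi-simplicity argument for parts (2) and (3). You assert that ``$\tau_\cG^n(X)$ stays quasi-simple, hence stays in $\EIP(\cG)_{p-1}$ for all $n\in\ZZ$,'' which is a non sequitur: quasi-simplicity is a position in the AR-quiver and carries no implication about the equal images property, and indeed Theorem \ref{EIHS1}(1) tells you that most translates \emph{leave} $\EIP(\cG)_{p-1}$. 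What you never invoke is the mechanism that actually forces quasi-simplicity, namely Theorem \ref{EIHS4}(1). If $\ql(M)\geq 2$, then $\cW(M)\subseteq\EIP(\cG)_{p-1}$ contains both a quasi-simple $M_1$ and $\tau_\cG(M_1)\cong\Omega^2_\cG(M_1)\otimes_k k_\zeta$; Corollary \ref{EIPr4}(2) then puts $\Omega^2_\cG(M_1)\in\EIP(\cG)_{p-1}$, and since neither module has a $[p]$-summand while $\Omega^2_\cG$ preserves the stable Jordan type, $\Jt(\Omega^2_\cG(M_1))=\Jt(M_1)$. Theorem \ref{EIHS4}(1) now yields $\Omega^2_\cG(M_1)\notin\EIP(\cG)$ --- a contradiction your proposal never reaches.

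Two further points on part (3). First, you omit the reduction from $\EIP(\cG)$ to $\EIP(\cG)_{p-2}$: given $\Theta\cap\EIP(\cG)_{p-2}\neq\emptyset$, \cite[(3.2.3)]{Fa5} forces every $M\in\Theta$ to have Jordan type of the form $\bigoplus_{i=1}^{p-2}a_i[i]\oplus a_p[p]$, and Proposition \ref{EIPr6} then gives $a_p=0$ for any $M\in\EIP(\cG)$; without this step part (2) cannot be applied. Second, your explanation of the refinement to $\delta_\Theta$ --- that ``one side of the $\tau_\cG$-orbit contributes nothing'' --- is incorrect: Theorem \ref{EIHS4}(2) obstructs \emph{both} $\Omega^2_\cG$ and $\Omega^{-2}_\cG$ symmetrically. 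What it actually rules out is any two consecutive translates $\tau_\cG^n(M_0)$, $\tau_\cG^{n+1}(M_0)$ both lying in $\EIP(\cG)$, which trims the $2\delta_\Theta-1$ admissible values of $n$ given by Theorem \ref{EIHS1}(2) down to at most $\delta_\Theta$.
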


\begin{proof}  Since $\aurk(\cG)\ge 2$, Lemma \ref{ARD1} yields $\Theta \cong \ZZ[A_\infty]$.

(1) We prove our assertion by induction on the quasi-length $\ql(M)$ of $M$. If $\ql(M) = 2$, then, observing $\Theta$ being regular, we conclude that $M$ is the middle term of an
almost split sequence, whose three terms form $\cW(M)$. Lemma \ref{ARD3} thus yields $\cW(M) \subseteq \EIP(\cG)$.

Suppose that $\ql(M) \ge 3$. Since $\Theta$ is regular, there exist modules $X,Y \in \Theta$ of quasi-lengths $\ql(X) = \ql(M)\!-\!1$ and $\ql(Y)=\ql(M)\!-\!2$ such that
\[ (0) \lra \tau_\cG(X) \lra M\oplus Y \lra X \lra (0)\]
is the almost split sequence terminating in $X$. In view of
\[ \dim \Pi(\cG)_M = \dim \Pi(\cG) \ge \aurk(\cG)\!-\!1 \ge 1\]
it follows from \cite[(8.5)]{CFP} that the above sequence is locally split. Moreover, the resulting map $M \lra X$ is surjective, so that Lemma \ref{EIPr2} ensures that $X \in \EIP(\cG)$. Since $Y \in \cW(X)$, the
inductive hypothesis yields $Y \in \EIP(\cG)$. Thus, $M\oplus Y \in \EIP(\cG)$, and Lemma \ref{ARD3} implies $\tau_\cG(X) \in \EIP(\cG)$. Since $\ql(\tau_\cG(X)) = \ql(X) = \ql(M)\!-\!1$ and
\[ \cW(M) = \{M\} \cup \cW(\tau_\cG(X))\cup \cW(X),\]
the inductive hypothesis implies $\cW(M) \subseteq \EIP(\cG)$.

(2) Let $M$ be an element of $\Theta\cap\EIP(\cG)_{p-1}$. Suppose that $M$ has quasi-length $\ql(M)\ge 2$. Since $\Theta$ is regular, there exists a sequence $M \tha M_{m-1} \tha \cdots \tha M_1$ of irreducible epimorphisms such that $\ql(M_i)=i$. According to Lemma \ref{EIPr2}, each $M_i$ belongs to $\EIP(\cG)_{p-1}$. There results an almost split sequence
\[ (\ast) \ \ \ \ \ (0) \lra \tau_\cG(M_1) \lra M_2 \lra M_1 \lra (0).\]
As before, the almost split sequence ($\ast$) is locally split. A consecutive application of Lemma \ref{ARD3} and Corollary \ref{EIPr4} now implies $\Omega^2_\cG(M_1) \in \EIP(\cG)_{p-1}$. Since the stable Jordan types of $M_1$ and $\Omega_\cG(M_1)$ coincide, we obtain $\Jt(M_1) = \Jt(\Omega^2_\cG(M_1))$. Theorem \ref{EIHS4}(1) now implies $\Omega_\cG^2(M_1) \not \in \EIP(\cG)$, a contradiction. As a result, the $\cG$-module $M$ is quasi-simple.

Fix a $\cG$-module $M_0 \in \Theta\cap\EIP(\cG)_{p-1}$ and let $M\in \Theta\cap\EIP(\cG)_{p-1}$. Since $M$ and $M_0$ are quasi-simple, there exists $n \in \ZZ$ such that $M=\tau_\cG^n(M_0)$. A consecutive application of Corollary \ref{EIPr4}(2) and Theorem \ref{EIHS1}(1) yields $|n| \le (\dim_kM_0)^2$. This shows that $\Theta\cap\EIP(\cG)_{p-1}$ is finite. 

(3) Let $M \in \Theta\cap\EIP(\cG)$. Since $\Theta \cap \EIP(\cG)_{p-2}\ne \emptyset$, it follows from \cite[(3.2.3)]{Fa5} that $\Jt(M)=\bigoplus_{i=1}^{p-2}a_i[i]\oplus a_p[p]$. Proposition \ref{EIPr6} thus forces $M \in \EIP(\cG)_{p-2}$, so that (2) implies that $M$ is quasi-simple.

Let $M_0 \in \Theta\cap\EIP(\cG)$ be a $\cG$-module of dimension $\sqrt{\delta_\Theta}$. Using Theorem \ref{EIHS1}(2) we obtain
\[ \Theta \cap \EIP(\cG) \subseteq \{\tau_\cG^n(M) \ ; \ |n| \le \delta_\Theta \!-\!1\}.\]
Our assertion now follows from Theorem \ref{EIHS4}(2).\end{proof}

\bigskip

\begin{Remark} In Section 5.2 we will provide examples of components $\Theta$ with $\Theta\cap\EIP(\cG)_{p-1}\ne \emptyset$ such that $\Theta\cap\EIP(\cG)$ is infinite and contains modules of
arbitrarily large quasi-length. \end{Remark}

\bigskip
\noindent
Our first application concerns Auslander-Reiten components containing one-dimensional modules.

\bigskip

\begin{Prop} \label{ARD5} Suppose that $p\ge 3$, and let $\cG$ be a finite group scheme of abelian unipotent rank $\aurk(\cG)\ge 2$ such that the principal block $\cB_0(\cG)\subseteq k\cG$ possesses no simple module $S$ of constant Jordan type $\Jt(S) = m[p\!-\!1]\oplus n[p]$. Let $\Theta \subseteq \Gamma_s(\cG)$ be a component containing a one-dimensional $\cG$-module $k_\lambda$.
Then the following statements hold:
\begin{enumerate}
\item The component $\Theta$ is regular, and $\Theta \cong \ZZ[A_\infty], \ZZ[A_\infty^\infty]$, or $\ZZ[D_\infty]$.
\item If $\Theta \cong \ZZ[A_\infty]$, then $\Theta \cap \EIP(\cG) = \{k_\lambda\}$.
\item If $\Theta \cong \ZZ[A_\infty^\infty]$ and $\cG(k)$ has odd order, then $\Theta \cap \EIP(\cG) = \{k_\lambda\}$.
\item If $\Theta \cong \ZZ[D_\infty]$, then $\Theta\cap \EIP(\cG)=\{ M \in \Theta \ ; \ \dim_kM \le 2\}$. \end{enumerate}\end{Prop}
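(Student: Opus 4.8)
The plan is to reduce first to the case of the trivial character and then to treat each admissible shape of $\Theta$ in turn. Tensoring with $k_{\lambda^{-1}}$ is a self-equivalence of $\modd\cG$ that preserves $\EIP(\cG)$ and each $\EIP(\cG)_d$ (Corollary~\ref{EIPr4}(2)), commutes with $\Omega_\cG$ and $\tau_\cG$, preserves dimensions and complexities, sends $\Theta$ onto an isomorphic component, and carries the block of $k_\lambda$ onto $\cB_0(\cG)$; so we may assume $\lambda$ trivial, with $k\in\Theta\subseteq\cB_0(\cG)$. Then $k\in\EIP(\cG)_1$ has full support, so $\cx_\cG(k)\ge\aurk(\cG)\ge 2$ and $\Theta$ is infinite. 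For part (1), Lemma~\ref{ARD1} leaves only the possibilities that $\Theta$ is Euclidean (in particular $\ZZ[\tilde A_{p,q}]$) or $\Theta\cong\ZZ[A_\infty],\ \ZZ[A_\infty^\infty],\ \ZZ[D_\infty]$. I would rule out the Euclidean cases and non-regularity of $\Theta$ by invoking the structure theory of Auslander--Reiten components of finite group schemes: each of these would force $\cB_0(\cG)$ to be a tame domestic algebra Morita equivalent to the trivial extension of the Kronecker algebra, and then --- exactly as for $\cB_0(\SL(2)_1)$, where $L(p-2)$ has constant Jordan type $[p-1]$ --- one produces a simple module in $\cB_0(\cG)$ of constant Jordan type $m[p-1]\oplus n[p]$, contrary to hypothesis. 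I expect this to be the main obstacle, since it combines the tree-class bounds of Lemma~\ref{ARD1} with external structural results and an explicit identification of the exceptional simple module; granted it, the rest is bookkeeping.

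For part (2), assume $\Theta\cong\ZZ[A_\infty]$, which by part (1) is regular. Since $p\ge 3$ we have $k\in\Theta\cap\EIP(\cG)_1\subseteq\Theta\cap\EIP(\cG)_{p-2}$, so Theorem~\ref{ARD4}(3) yields $|\Theta\cap\EIP(\cG)|\le\delta_\Theta$; but $\delta_\Theta\le(\dim_k k)^2=1$ and $k\in\Theta\cap\EIP(\cG)$, whence $\Theta\cap\EIP(\cG)=\{k\}=\{k_\lambda\}$.

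Parts (3) and (4) rest on the same observation. As in the proof of Lemma~\ref{ARD2}, each function $\alpha_{K,i}\colon\Theta\to\NN_0$ ($1\le i\le p-1$) is $\tau_\cG$-invariant and additive; descending to the harmonic function it induces on the tree class and using $\Jt(k)=[1]$, one finds that every $M\in\Theta$ whose $\tau_\cG$-orbit corresponds to a valency-one vertex of the tree has stable Jordan type $[1]$, while all other $M\in\Theta$ have stable Jordan type $2[1]$ (for $A_\infty^\infty$ there is no valency-one vertex and every $M$ has stable type $[1]$; for $D_\infty$ the two valency-one orbits are the boundary orbits, one containing $k$). By Proposition~\ref{EIPr6}, an $M\in\Theta\cap\EIP(\cG)$ in a boundary orbit must have $\Jt(M)=[1]$, hence be one-dimensional, and one not in a boundary orbit must have $\Jt(M)=2[1]$ and $\dim_k M=2$; conversely every one-dimensional module lies in $\EIP(\cG)$, and a two-dimensional $M\in\Theta$ must, since $1+pa=2$ is impossible for $p\ge 3$, lie outside the boundary orbits with $\Jt(M)=2[1]$, so that $\ell_{\alpha_K}(M_K)=0$ for every $\pi$-point and $M\in\EIP(\cG)$. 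For $D_\infty$ this already gives $\Theta\cap\EIP(\cG)=\{M\in\Theta\ ;\ \dim_k M\le 2\}$, proving part (4) after undoing the reduction.

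For part (3), $\Theta\cong\ZZ[A_\infty^\infty]$, the above identifies $\Theta\cap\EIP(\cG)$ with the set of one-dimensional modules in $\Theta$, and it remains to show $k$ is the only one. If $k_\mu,k_\nu\in\Theta$ were one-dimensional with $\mu\ne\nu$, then $-\otimes_k k_{\mu^{-1}\nu}$ would restrict to an automorphism of the translation quiver $\Theta\cong\ZZ[A_\infty^\infty]$ taking $k_\mu$ to $k_\nu$, of order equal to the order of $\mu^{-1}\nu$ in $X(\cG)$. Now $X(\cG)$ has odd order --- its order divides $|\cG(k)|$ times the order of the $p$-group $X(\cG^0)$, and $p\ge 3$ --- so this automorphism has odd order; but every finite-order automorphism of $\ZZ[A_\infty^\infty]$ has order $1$ or $2$, so it is the identity, forcing $k_\mu\cong k_\nu$, a contradiction. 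Hence $\Theta\cap\EIP(\cG)=\{k\}=\{k_\lambda\}$.
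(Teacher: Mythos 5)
Your reductions and your treatment of parts (2)--(4) track the paper's proof closely and correctly: tensoring with $k_{\lambda^{-1}}$ reduces to $\lambda=0$; part (2) follows from Theorem~\ref{ARD4}(3) with $\delta_\Theta=1$; parts (3) and (4) follow from the additivity and $\tau_\cG$-invariance of the stable Jordan type functions (the paper cites \cite[(3.1.1),(3.1.2)]{Fa5} and \cite[(4.5),(4.15)]{Be1} for the bounded subadditive function on $A_\infty^\infty$ and $D_\infty$ and for the order-$\le 2$ property of finite-order automorphisms of $\ZZ[A_\infty^\infty]$) together with Proposition~\ref{EIPr6} and the odd-order argument on characters.

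Part (1), however, is a genuine gap, and you flag it yourself. Your proposed mechanism --- that a Euclidean or $\ZZ[\tilde A_{p,q}]$ component, or non-regularity, would force $\cB_0(\cG)$ to be tame domestic and Morita equivalent to the trivial extension of the Kronecker algebra, after which one identifies an exceptional simple of Jordan type $m[p-1]\oplus n[p]$ ``as for $\SL(2)_1$'' --- does not work in the stated generality. Non-regularity of a single component does not confine the block to tame representation type (wild blocks can perfectly well have the non-regular component containing $\Rad(P)$), so this route does not even address the non-regular case; and Morita equivalence does not preserve Jordan types (they depend on the $\pi$-point structure, not just the module category), so even where the Morita equivalence exists, it does not by itself transport the Jordan type $[p-1]$ of $L(p-2)$ to $\cB_0(\cG)$. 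The paper's argument is much more direct and avoids both problems. If $\Theta$ were not regular, then $\Rad(P)\in\Theta$ for some principal indecomposable $P$; since $k\in\Theta$ has constant Jordan type $[1]$, the additivity of stable Jordan types along $\Theta$ (\cite[(3.1.2)]{Fa5}, \cite[(8.8)]{CFP}) forces $\Jt(\Rad(P))=m[1]\oplus n[p]$, and then $S:=\Omega^{-1}_\cG(\Rad(P))\cong\Top(P)$ is a simple $\cB_0(\cG)$-module with $\Jt(S)=m[p-1]\oplus n'[p]$, contradicting the hypothesis. With regularity established, the Euclidean and $\ZZ[\tilde A_{p,q}]$ possibilities are excluded at a stroke because such components are \emph{never} regular for a self-injective algebra (\cite[Thm.~A]{We}, \cite[p.~155]{BR}); there is no need to classify the block. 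So the hypothesis on $\cB_0(\cG)$ is used exactly once, on $\Top(P)$, not on any exceptional simple coming from a Kronecker-like block.
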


\begin{proof} According to Corollary \ref{EIPr4}, the functor
\[ T_\lambda : \modd(\cG) \lra \modd(\cG) \ \ ; \ \ M \mapsto M\!\otimes_k\!k_\lambda\]
is an auto-equivalence which leaves $\EIP(\cG)$ invariant.  It sends the principal bock $\cB_0(\cG)$ to the block $\cB_\lambda \subseteq k\cG$ containing $k_\lambda$ and the component containing
$k$ onto the component containing $k_\lambda$. Moreover, $\alpha_K^\ast(T_\lambda(M)_K) \cong \alpha^\ast_K(M_K)$ for all $\alpha_K \in \Pt(\cG)$, and we may thus assume without loss of generality that $k_\lambda =k$ is the trivial one-dimensional $\cG$-module.

(1) Suppose that $\Theta$ is not regular. Then there exists a principal indecomposable $\cG$-module $P$ such that $\Rad(P) \in \Theta$. According to \cite[(3.1.2)]{Fa5} (see also \cite[(8.8)]{CFP}), there exist $m \in \NN$ and $n \in \NN_0$ such that $\Jt(\Rad(P))=m[1]\oplus n[p]$. Thus, the simple $\cB_0(\cG)$-module $S := \Omega^{-1}_\cG(\Rad(P))$ has constant Jordan type $\Jt(S)=m[p\!-\!1]\oplus n'[p]$. As this contradicts our current assumption on $\cB_0(\cG)$, the component $\Theta$ is regular.

Since components of Euclidean tree class and components of type $\ZZ[ \tilde{A}_{p,q}]$ are not regular (cf.\ \cite[Thm.A]{We} and \cite[p.155]{BR}), Lemma \ref{ARD1} now implies $\Theta \cong
\ZZ[A_\infty],\, \ZZ[A_\infty^\infty]$, or $\ZZ[D_\infty]$.

(2) Suppose that $\Theta \cong \ZZ[A_\infty]$. According to Theorem \ref{ARD4}, the $\cG$-module $k$ has quasi-length $\ql(k)=1$. Thus, $\Theta\cap \EIP(\cG)_{p-2} \ne \emptyset$, while $\delta_\Theta = 1$. Hence Theorem \ref{ARD4} gives $\Theta\cap\EIP(\cG)=\{k\}$.

(3) Suppose that $\Theta \cong \ZZ[A_\infty^\infty]$. Since $k$ has constant Jordan type $\Jt(k)=[1]$, it follows from \cite[(3.1.2)]{Fa5} that
\[ \Jt(M) = [1]\oplus n_M[p]\]
for every $M \in \Theta$. As $p\ge 3$, Proposition \ref{EIPr6} yields $n_M=0$ whenever $M \in \Theta\cap\EIP(\cG)$. Consequently, $\dim_kM=1$ and $M \cong k_\lambda$ for some $\lambda \in X(\cG)$.
As a result, the auto-equivalence $T_\lambda$ induces an automorphism of $\Theta$ of finite order $n_\lambda$. Owing to \cite[(4.15)]{Be1}, we obtain $T^2_\lambda = \id_\Theta$, so
that $k_{2\lambda} \cong k$. Thus, $\lambda \in X(\cG)$ has order a divisor of $2$.

The character group $X(\cG^0)$ of the infinitesimal group $\cG^0$ is a $p$-group. As $p\ge 3$, the restriction $\lambda|_{\cG^0} \in X(\cG^0)$ of $\lambda \in X(\cG)$ is trivial,
so that $\lambda \in X(\cG_{\rm red})$. Since $X(\cG_{\rm red})$ and $\cG(k)$ have the same order, we obtain $\lambda = 0$ and $M \cong k$.

(4) Since $\Jt(k)=[1]$, it follows from \cite[(3.1.1)]{Fa5} and \cite[(4.5)]{Be1} that every $M \in \Theta$ has constant Jordan type
\[ \Jt(M) = a_M[1]\oplus n_M[p]\]
with $a_M \in \{1,2\}$. The arguments of (3) thus yield $\dim_kM \le 2$ for every $M \in \Theta \cap \EIP(\cG)$. Conversely, if $M \in \Theta$ has dimension $\le 2$, then there is $a_M \in \{1,2\}$ with $\Jt(M)=a_M[1]$, so that $M$ has the equal images property.\end{proof}

\bigskip¥¥

\begin{Remarks}
\begin{enumerate}
\item If $\cx_\cG(k) \ge 3$, then \cite[(3.3)]{Fa4} implies that the component containing $k_\lambda$ is isomorphic to $\ZZ[A_\infty]$. If $G$ is a finite group, then the same conclusion already holds if $k_\lambda$ belongs to a block of wild representation type, cf.\ \cite[Thm.1]{Er2}.
\item If the group $\cG$ is supersolvable, then all simple $\cB_0(\cG)$-modules are one-dimensional \cite[(I.2.5)]{Vo}, so that the technical conditions of Proposition \ref{ARD5} hold in that case.
\item Suppose that $G$ is a finite group and let $\cB \subseteq kG$ be a block whose stable Auslander-Reiten quiver contains a component of tree class $D_\infty$. According to \cite[Thm.4]{Er2}, the defect group $D_\cB \subseteq G$ of $\cB$ is semidihedral and $p=2$. \end{enumerate}\end{Remarks}

\bigskip

\begin{Example} We consider the restricted Lie algebra $\fsl(2)$ along with its standard basis $\{e,f,h\}$. Let $\fsl(2)_s := \fsl(2)\oplus kv_0$ be the four-dimensional central
extension of $\fsl(2)$, whose bracket and $p$-map are given by
\[ [x+\alpha v_0,y+\beta v_0] := [x,y] \ \ \ \ \forall \ x,y \in \fsl(2) \ \ \text{and} \ \ e^{[p]}=0=f^{[p]} \ , \ h^{[p]} = h+v_0  \ , \ v_0^{[p]}=0,\]
respectively. According to \cite[(8.2.2)]{Fa5}, the trivial $U_0(\fsl(2)_s)$-module $k$ is the only simple module of constant Jordan type. By the same token, the AR-component $\Theta_k$
containing $k$ is of type $\ZZ[A_\infty^\infty]$. Since $\fsl(2)_s$ corresponds to an infinitesimal group $\cG$ (of height $1$), we have $\cG(k)=\{1\}$, and Proposition \ref{ARD5} implies
$\Theta_k\cap\EIP(\fsl(2)_s)=\{k\}$. \end{Example}

\bigskip
\noindent
A finite group scheme $\cG$ is called {\it trigonalizable} if all simple $\cG$-modules are one-dimensional. Our next result should be contrasted with the fact that AR-components containing a module of constant Jordan type consist entirely of such modules, see \cite[(8.7)]{CFP}.

\bigskip

\begin{Cor} \label{ARD6} Suppose that $p\ge 3$. Let $\cG$ be a trigonalizable finite group scheme of abelian unipotent rank $\aurk(\cG) \ge 2$, $\Theta \subseteq \Gamma_s(\cG)$ be a component.
\begin{enumerate}
\item If $\Theta\cap\EIP(\cG) \ne \emptyset$, then $\Theta$ is regular and $\Theta \cong \ZZ[A_\infty], \ZZ[A_\infty^\infty]$, or $\ZZ[D_\infty]$.
\item If $\Theta$ has tree class $A_\infty$, then the following statements hold:
\begin{enumerate}
\item $\cW(M) \subseteq \EIP(\cG)$ for every $M \in \Theta\cap\EIP(\cG)$.
\item The set $\Theta\cap\EIP(\cG)_{p-1}$ is finite with every $M \in \EIP(\cG)_{p-1}$ being quasi-simple.
\item If $\Theta\cap \EIP(\cG)_{p-2} \ne \emptyset$, then $|\Theta\cap\EIP(\cG)| \le \delta_\Theta$ and $\Theta\cap\EIP(\cG)$ consists of quasi-simple modules. \end{enumerate}
\item If $\Theta\cap\EIP(\cG)_{p-2}\ne \emptyset$, then $\Theta\cap \EIP(\cG)$ is finite.\end{enumerate}\end{Cor}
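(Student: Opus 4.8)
The plan is to deduce (1) by the argument of Proposition~\ref{ARD5}(1), using trigonalizability of $\cG$ in place of the hypothesis on $\cB_0(\cG)$, and then to read off (2) and (3) from Theorem~\ref{ARD4}, Lemma~\ref{ARD2}, and the shape of $\tau_\cG$-invariant additive functions on the translation quivers permitted by Lemma~\ref{ARD1}.

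For (1): since $\aurk(\cG)\ge 2$ and $\Theta\cap\EIP(\cG)\ne\emptyset$, Lemma~\ref{ARD1} restricts the tree class of $\Theta$ to a Euclidean diagram, or gives $\Theta\cong\ZZ[\tilde A_{p,q}]$, or $\Theta\cong\ZZ[A_\infty],\ZZ[A_\infty^\infty],\ZZ[D_\infty]$. As components of Euclidean tree class and of type $\ZZ[\tilde A_{p,q}]$ are not regular (cf.\ \cite[Thm.A]{We} and \cite[p.155]{BR}), it is enough to prove that $\Theta$ is regular. If it were not, then $\Rad(P)\in\Theta$ for some principal indecomposable $\cG$-module $P$; since $\Theta$ contains an equal images module, hence a module of constant Jordan type, \cite[(8.7)]{CFP} forces $\Rad(P)$ to have constant Jordan type, and \cite[(3.1.2)]{Fa5} yields $\Jt(\Rad(P))=m[1]\oplus n[p]$ for some $m\in\NN$, $n\in\NN_0$. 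Now $T:=\Top(P)$ is a simple non-projective $\cG$-module with $\Rad(P)\cong\Omega_\cG(T)$, so $\Omega^{-1}_\cG(\Rad(P))\cong T$ has stable Jordan type $m[p-1]$, whence $\dim_kT\ge m(p-1)\ge p-1\ge 2$ because $p\ge 3$. This contradicts trigonalizability of $\cG$, so $\Theta$ is regular and the list of isomorphism types follows.

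Assertion (2) is then immediate: if $\Theta\cap\EIP(\cG)=\emptyset$ all three claims are vacuous, and otherwise (1) shows $\Theta$ is a regular component with $\Theta\cong\ZZ[A_\infty]$, so (a), (b), (c) are precisely Theorem~\ref{ARD4}(1), (2), (3). For (3), part (1) leaves only the tree classes $A_\infty$, $A_\infty^\infty$, $D_\infty$. If $\Theta$ has tree class $A_\infty$ then (2)(c) gives $|\Theta\cap\EIP(\cG)|\le\delta_\Theta<\infty$. Otherwise fix $M_0\in\Theta\cap\EIP(\cG)_{p-2}$ and, for a fixed $\pi$-point $\alpha_K$, let $a_i\colon\Theta\to\NN_0$ ($1\le i\le p-1$) be the $i$-th stable Jordan type multiplicity, so that $a_{p-1}(M_0)=0$. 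Since $\dim\Pi(\cG)_\Theta\ge\aurk(\cG)-1\ge 1$, the maps $a_i$ are $\tau_\cG$-invariant and additive on $\Theta$ by \cite[(2.3)]{Fa5} and \cite[(2.4)]{Fa5}; a $\tau_\cG$-invariant additive function on $\ZZ[A_\infty^\infty]$ is constant, and on $\ZZ[D_\infty]$ it is bounded and determined by its value at the branch vertex, so in either case $a_{p-1}(M_0)=0$ forces $a_{p-1}\equiv 0$ on $\Theta$. By Proposition~\ref{EIPr6} every $M\in\Theta\cap\EIP(\cG)$ has $\Jt(M)=\bigoplus_{i=1}^{d}a_i[i]$ with all $a_i\ge 1$, so $a_{p-1}\equiv 0$ gives $d\le p-2$ and hence $\Theta\cap\EIP(\cG)=\Theta\cap\EIP(\cG)_{p-1}$; as the tree class is not $A_\infty$, Lemma~\ref{ARD2} shows this set is finite. (One may also invoke \cite[(3.2.3)]{Fa5} to obtain $a_{p-1}\equiv 0$ directly.)

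The only step needing attention is the passage in (3) for tree classes $A_\infty^\infty$ and $D_\infty$, where one must exclude equal images modules in $\Theta$ whose Jordan type contains a $[p]$-summand (and therefore a $[p-1]$-summand). I expect no real obstacle here: the required description of $\tau_\cG$-invariant additive functions on $\ZZ[A_\infty^\infty]$ and $\ZZ[D_\infty]$ rests on the same elementary recursions already used for Lemma~\ref{ARD2}, and the remainder of the proof is a direct assembly of Lemma~\ref{ARD1}, Theorem~\ref{ARD4}, and Lemma~\ref{ARD2}.
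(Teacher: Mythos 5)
Your treatment of parts (2) and (3) is essentially the paper's: (2) is a direct application of Theorem~\ref{ARD4} once (1) is in place, and for (3) you correctly reduce to Lemma~\ref{ARD2} by arguing that $\Theta\cap\EIP(\cG)=\Theta\cap\EIP(\cG)_{p-1}$, which is what the paper intends by its terse ``follows from (2c) and Lemma~\ref{ARD2}''; your detour through explicit $\tau$-invariant additive functions on $\ZZ[A_\infty^\infty]$ and $\ZZ[D_\infty]$ is fine, and you even note the shortcut via \cite[(3.2.3)]{Fa5}.

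The gap is in (1), and it is a real one: the claim that \cite[(3.1.2)]{Fa5} forces $\Jt(\Rad(P))=m[1]\oplus n[p]$ is unjustified and in fact false in the present setting. That reference, as used in Proposition~\ref{ARD5}, applies when the component $\Theta$ contains a module of stable Jordan type $[1]$ (there, $k$ or $k_\lambda$), and it then constrains the Jordan types of all modules in $\Theta$; it is not a blanket statement about radicals of projective indecomposables. Here nothing forces a one-dimensional module to lie in $\Theta$, so you cannot invoke it. Concretely, for a trigonalizable $\cG$ one has $\Top(P)\cong k_\lambda$, hence $\Rad(P)\cong\Omega_\cG(k_\lambda)$, which has constant Jordan type $[p\!-\!1]\oplus n[p]$; since $p\ge 3$ its stable part is $[p\!-\!1]$, not a multiple of $[1]$. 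So your premise contradicts what trigonalizability actually yields, and the ``contradiction'' $\dim_k T\ge 2$ you derive is with a wrong intermediate claim, not with a genuine fact. The paper's argument runs in the opposite direction: trigonalizability is used to \emph{compute} $\Jt(\Rad(P))=[p\!-\!1]\oplus n[p]$, whence $a_1(\Rad(P))=0$; on the other hand any $M\in\Theta\cap\EIP(\cG)$ has $a_1(M)\ge 1$ by Proposition~\ref{EIPr6}, and \cite[(3.2.3)]{Fa5} says vanishing of $a_1$ is a component-wide property, giving the contradiction. Your structure --- deriving a lower bound on $\dim\Top(P)$ --- cannot be repaired, because with the correct Jordan type $\Omega^{-1}_\cG(\Rad(P))$ has stable type $[1]$, which is perfectly consistent with $\dim\Top(P)=1$.
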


\begin{proof} (1) Suppose that $\Theta$ is not regular. Then there exists a principal indecomposable $\cG$-module $P$ such that $\Rad(P) \in \Theta$. Since $\cG$ is trigonalizable, there exists $\lambda \in X(\cG)$ such that $\Rad(P) \cong \Omega_\cG(k_\lambda)$. Thus, $\Rad(P)$ has constant Jordan type $\Jt(\Rad(P))=[p\!-\!1]\oplus n[p]$ for some $n \in \NN_0$. Since $\Theta \cap \EIP(\cG)\ne \emptyset$ and $\aurk(\cG)\ge 2$, Proposition \ref{EIPr6} and \cite[(3.2.3)]{Fa5} now yield a contradiction. As components of Euclidean tree class of type $\ZZ[\tilde{A}_{p,q}]$ are not regular (cf.\ \cite[Thm.A]{We} and \cite[p.155]{BR}), our assertions concerning the isomorphism type of $\Theta$ now follow from Lemma \ref{ARD1}.

(2a) This follows from (1) and Theorem \ref{ARD4}(1).

(2b) This is a direct consequence of (1) and Theorem \ref{ARD4}(2).

(2c) The assertion follows from (1) and Theorem \ref{ARD4}(3).

(3) The assertion follows from a consecutive application of (2c) and Lemma \ref{ARD2}. \end{proof}

\bigskip

\begin{Remark} If $p\ge 5$ and $M \in \Theta\cap\EIP(\cG)_2$, then, using Proposition \ref{EIHS2} instead of Theorem \ref{ARD4}, the conclusion of ($2$c) can be strengthened to $\Theta\cap\EIP(\cG)=\{M\}$. \end{Remark}

\bigskip

\begin{Cor} \label{ARD7} Suppose that $p\ge 3$ and let $\cU$ be a unipotent group scheme of abelian unipotent rank $\aurk(\cU) \ge 2$. Let $\Theta \subseteq \Gamma_s(\cU)$ be a component.
\begin{enumerate}
\item Then $\Theta\cap\EIP(\cU)_{p-1}$ is finite and every $M \in \Theta \cap \EIP(\cU)_{p-1}$ is quasi-simple.
\item If $\Theta\cap\EIP(\cU)_{p-2} \neq \emptyset$, then $|\Theta\cap\EIP(\cU)| \le \delta_\Theta$ and $\Theta\cap\EIP(\cG)$ consists of quasi-simple modules.\end{enumerate}\end{Cor}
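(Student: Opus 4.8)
The plan is to deduce everything from Corollary \ref{ARD6}, the crucial point being that a unipotent group scheme is trigonalizable. Since $\cU$ is unipotent, the algebra $k\cU$ is local; in particular $k$ is its only simple module, so $\cU$ is trigonalizable, and $\cx_\cU(k)\ge\aurk(\cU)\ge 2$. For part (1) we may assume $\Theta\cap\EIP(\cU)_{p-1}\ne\emptyset$ (the assertion being vacuous otherwise), and for part (2) the hypothesis $\Theta\cap\EIP(\cU)_{p-2}\ne\emptyset$ is in force; either way $\Theta\cap\EIP(\cU)\ne\emptyset$, so Corollary \ref{ARD6}(1) shows that $\Theta$ is regular and that $\Theta\cong\ZZ[A_\infty]$, $\ZZ[A_\infty^\infty]$, or $\ZZ[D_\infty]$.

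It remains to exclude the last two tree classes, and I expect this to be the main obstacle. First I would observe that for $p\ge 3$ a unipotent group scheme of complexity $\cx_\cU(k)\ge 2$ has wild representation type; by the group scheme analogue of Erdmann's Theorem (cf.\ \cite[Thm.\ 1]{Er2} and \cite{Fa4}), every component of $\Gamma_s(\cU)$ is then of tree class $A_\infty$, so $\Theta\cong\ZZ[A_\infty]$. A more self-contained route is to use that on a component of tree class $A_\infty^\infty$ or $D_\infty$ every $\tau_\cU$-invariant additive function is bounded; arguing as in the proof of Lemma \ref{ARD2} via \cite[(2.3),(2.4)]{Fa5}, all modules of $\Theta$ would then share one stable Jordan type, which by Proposition \ref{EIPr6} has the shape $\bigoplus_{i=1}^{d}a_i[i]$ with $a_i\ge 1$ and $d\le p-1$; combining this with $\cx_\cU(W)=\cx_\cU(k)$ for equal images modules $W$ and with Theorems \ref{EIHS1} and \ref{EIHS4} should then force a contradiction.

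Once $\Theta\cong\ZZ[A_\infty]$, the conclusion follows from Corollary \ref{ARD6}(2): part (1) is \ref{ARD6}(2b), which says that $\Theta\cap\EIP(\cU)_{p-1}$ is finite and consists of quasi-simple modules, and part (2), under the standing hypothesis $\Theta\cap\EIP(\cU)_{p-2}\ne\emptyset$, is \ref{ARD6}(2c), giving $|\Theta\cap\EIP(\cU)|\le\delta_\Theta$ and the quasi-simplicity of all modules in $\Theta\cap\EIP(\cU)$.
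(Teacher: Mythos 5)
Your proof is correct and follows essentially the same route as the paper's: observe that $\cU$ is trigonalizable (so Corollary~\ref{ARD6} applies), establish that $k\cU$ is wild, invoke the unipotent group scheme version of Erdmann's theorem to pin down $\Theta\cong\ZZ[A_\infty]$, and then quote Corollary~\ref{ARD6}(2b),(2c). The only cosmetic difference is that you first invoke Corollary~\ref{ARD6}(1) and then narrow to $A_\infty$, whereas the paper goes straight to Erdmann's theorem; also, the paper justifies wildness by citing that a local algebra cannot be tame (via~\cite[(5.1.5)]{Fa3}) and that representation-finiteness would force $\cx_\cU(k)\le 1$, a step you assert but do not spell out.
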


\begin{proof} We first show that the algebra $k\cU$ is wild. If this is not the case, then $k\cU$ is tame or representation-finite. Since $k\cU$ is a local algebra, \cite[(5.1.5)]{Fa3} shows that
$k\cU$ is not tame. Hence $k\cU$ is representation-finite, so that $2\le \aurk(\cU) \le \cx_\cU(k) \le 1$, a contradiction.

As noted in \cite[(5.6)]{Fa1}, Erdmann's Theorem \cite[Thm.1]{Er2} holds for unipotent group schemes, implying that every component $\Theta$ which meets $\EIP(\cU)$ is isomorphic to
$\ZZ[A_\infty]$.

The result thus follows from Corollary \ref{ARD6}(2b),(2c). \end{proof}

\bigskip

\subsection{Components for $\ZZ/(p)\!\times\!\ZZ/(p)$}
By way of illustration, we consider the case where $G:=\ZZ/(p)\!\times\!\ZZ/(p)$ for $p\ge 3$. Recall that Erdmann's Theorem \cite[Thm.1]{Er2} implies that a component $\Theta \subseteq \Gamma_s(G)$ containing an equal images module is of type $\ZZ[A_\infty]$. If $M$ is a $G$-module of constant Jordan type, we write
\[ \Jt(M) = \bigoplus_{i=1}^pa_i(M)[i],\]
with $a_i(M) \in \NN_0$ for $1\le i \le p$. Given $d \in \{1,\ldots,p\}$ and $n \ge d$, we let $\Theta_{n,d} \subseteq \Gamma_s(G)$ be the AR-component containing the $G$-module $W_{n,d}$.

\bigskip

\begin{Prop} \label{ARW1} The following statements hold:
\begin{enumerate}
\item If $d \le p\!-\!1$, then $\Theta_{n,d}\cap\EIP(G)_{p-1} = \{W_{n,d}\}$.
\item If $d \le p\!-\!2$, then $\Theta_{n,d}\cap\EIP(G) = \{W_{n,d}\}$. \end{enumerate} \end{Prop}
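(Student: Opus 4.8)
The plan is to treat the two assertions via the quasi-simplicity of the $W_{n,d}$ and the structure results from Section~5.1. Since $p\ge 3$ and $\aurk(G)=\cx_G(k)=2$, Lemma~\ref{ARD1} and Erdmann's Theorem tell us that $\Theta_{n,d}\cong\ZZ[A_\infty]$. As noted in the Examples following the definition of quasi-simple, \cite[(3.1.2)]{Fa5} shows that $W_{n,d}$ is quasi-simple for $d\ge 2$; for $d=1$ the module $W_{n,1}$ is just the trivial module $k$, which is also quasi-simple (it has constant Jordan type $[1]$, so by \cite[(3.1.2)]{Fa5} its component is $\ZZ[A_\infty]$, $\ZZ[A_\infty^\infty]$ or $\ZZ[D_\infty]$, and the hypothesis $d\le p-1$ together with $\Theta\cap\EIP(G)\neq\emptyset$ forces $\ZZ[A_\infty]$). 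Also $W_{n,d}$ has full support, so $\Theta_{n,d}$ is a regular component of tree class $A_\infty$, and Theorem~\ref{ARD4} applies throughout.

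For part~(1), since $W_{n,d}\in\EIP(G)_d\subseteq\EIP(G)_{p-1}$ when $d\le p-1$, Theorem~\ref{ARD4}(2) gives that every module in $\Theta_{n,d}\cap\EIP(G)_{p-1}$ is quasi-simple, and that this set is finite. So it suffices to identify the quasi-simple modules of $\Theta_{n,d}$ lying in $\EIP(G)_{p-1}$. As $W_{n,d}$ is quasi-simple, any other quasi-simple $M\in\Theta_{n,d}$ is of the form $M\cong\tau_G^m(W_{n,d})\cong\Omega_G^{2m}(W_{n,d})$ (using that the modular function of $kG$ is trivial since $G$ is abelian, so $\tau_G=\Omega_G^2$) for some $m\in\ZZ$. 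If also $M\in\EIP(G)_{p-1}$, I want to derive $m=0$. The stable Jordan types of $M$ and $W_{n,d}$ agree, so $\Jt(M)=\bigoplus_{i=1}^{d-1}[i]\oplus(n-d+1)[d]\oplus\ell[p]$ for some $\ell\ge 0$, and $M\in\EIP(G)_{p-1}$ forces $\ell=0$, whence $\Jt(M)=\Jt(W_{n,d})$ and $\dim_k M=\dim_k W_{n,d}$. Now $\Omega_G^{2m}(W_{n,d})$ has the same dimension as $W_{n,d}$ only if $m=0$: indeed for $m\neq 0$ the Heller shift strictly increases dimension (one can see this from Corollary~\ref{HSW2}, which for $d\le p-1$ gives $\dim_k\Omega_G^2(W_{n,d})>\dim_k W_{n,d}$, and iterate using that $W_{n,d}$ is projective-free), or more directly invoke that $\cx_G(W_{n,d})=2$ together with Theorem~\ref{EIHS1}(1) applied to $M_0=W_{n,d}$ to bound $|m|$ and then rule out $m\neq 0$ by the dimension count. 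Either way $m=0$, so $\Theta_{n,d}\cap\EIP(G)_{p-1}=\{W_{n,d}\}$.

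For part~(2), when $d\le p-2$ we have $W_{n,d}\in\EIP(G)_{p-2}$, so $\Theta_{n,d}\cap\EIP(G)_{p-2}\neq\emptyset$ and Theorem~\ref{ARD4}(3) applies: every $M\in\Theta_{n,d}\cap\EIP(G)$ is quasi-simple, and $|\Theta_{n,d}\cap\EIP(G)|\le\delta_{\Theta_{n,d}}$. In particular $\Theta_{n,d}\cap\EIP(G)=\Theta_{n,d}\cap\EIP(G)_{p-2}\subseteq\Theta_{n,d}\cap\EIP(G)_{p-1}$, and part~(1) gives $\Theta_{n,d}\cap\EIP(G)_{p-1}=\{W_{n,d}\}$. (One must note here that Proposition~\ref{EIPr6} forces any nonzero $M\in\Theta_{n,d}\cap\EIP(G)$ to have $\Jt(M)=\bigoplus_{i=1}^{p-2}a_i[i]\oplus a_p[p]$, using \cite[(3.2.3)]{Fa5} with $\Theta_{n,d}\cap\EIP(G)_{p-2}\neq\emptyset$, and then Proposition~\ref{EIPr6} again to see $a_p=0$, so $M\in\EIP(G)_{p-2}\subseteq\EIP(G)_{p-1}$.) Hence $\Theta_{n,d}\cap\EIP(G)=\{W_{n,d}\}$.

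The main obstacle is the bookkeeping in part~(1): pinning down that a quasi-simple $\EIP(G)_{p-1}$-module in $\Theta_{n,d}$ must be $W_{n,d}$ itself rather than merely some $\Omega_G^{2m}$-shift. The cleanest route is the dimension argument via Corollary~\ref{HSW2}, which shows the nonzero Heller shifts have strictly larger dimension (for $d\le p-1$ the coefficient $(n-d+1)p$ of $[p]$ in $\Jt(\Omega_G^2(W_{n,d}))$ is strictly positive since $n\ge d$ would only give $n-d+1\ge 1$), so that no nonzero even Heller shift of $W_{n,d}$ can have the same stable Jordan type realized with no $[p]$-summands; this forces $m=0$. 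Everything else is a direct bundling of Theorem~\ref{ARD4}, Proposition~\ref{EIPr6}, and the quasi-simplicity of the $W_{n,d}$.
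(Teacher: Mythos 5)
Your part~(2) is correct and follows exactly the paper's route (combine Theorem~\ref{ARD4}(3), Proposition~\ref{EIPr6} and \cite[(3.2.3)]{Fa5} with part~(1)). In part~(1) the overall shape of the argument is also right: you reduce to quasi-simple modules, write $N\cong\Omega^{2m}_G(W_{n,d})$, use that the stable Jordan type is preserved and that $N\in\EIP(G)_{p-1}$ to kill the $[p]$-summands, and hence get $\Jt(N)=\Jt(W_{n,d})$. The gap is in the last step, where you assert that ``$\Omega^{2m}_G(W_{n,d})$ has the same dimension as $W_{n,d}$ only if $m=0$.'' Corollary~\ref{HSW2} gives this only for $m=1$: it computes $\Jt(\Omega^2_G(W_{n,d}))$, not $\Jt(\Omega^{2m}_G(W_{n,d}))$ for $|m|\ge 2$, and it says nothing at all about $m<0$. ``Iterating'' is not available, since $\Omega^2_G(W_{n,d})$ is no longer a $W$-module (cf.\ Lemma~\ref{HSW4}(3)), so Corollary~\ref{HSW2} cannot be applied again; and your second alternative (bounding $|m|$ via Theorem~\ref{EIHS1}(1) and then ``ruling out $m\neq 0$ by the dimension count'') is circular, because the dimension count for $0<|m|<(\dim_kW_{n,d})^2$ is precisely what remains to be shown.

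The paper sidesteps the problem of determining $m$ entirely. From $\Jt(N)=\Jt(W_{n,d})$ it reads off $a_1(N)=1$ and $\dim_k\Top(N)=n$, then invokes Lemma~\ref{HSW3}(2) to produce a surjection $W_{n,d}\twoheadrightarrow N$; since the two modules have the same dimension, this is an isomorphism. This is the step you should substitute for the dimension comparison of Heller shifts — it uses only the Jordan type of $N$ and never needs any information about the intermediate $\Omega^{2m}$'s. With that replacement your proof matches the paper's.
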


\begin{proof} (1) If $N \in \Theta_{n,d}\cap\EIP(G)_{p-1}$, then Corollary \ref{ARD7} implies that the $G$-modules $N$ and $W_{n,d}$ are quasi-simple. Hence there exists $m \in \ZZ$ such that
$N \cong \Omega^{2m}_G(W_{n,d})$. Thus,
\[ \Jt(N) = \Jt(W_{n,d})\oplus a_p[p]\]
for some $a_p \in \NN_0$, while our assumption $N \in \EIP(G)_{p-1}$ forces $a_p=0$. Consequently, $\dim_k\Top(N) = \dim_k \Top(W_{n,d})=n$ and $a_1(N)=a_1(W_{n,d})=1$. Lemma \ref{HSW3} thus provides a surjection $W_{n,d} \lra N$. Since $N$ and $W_{n,d}$ have the same dimension, it follows that $N\cong W_{n,d}$.

(2) If $d\le p\!-\!2$, then a consecutive application of \cite[(3.2.3)]{Fa5} and Proposition \ref{EIPr6} gives $\Theta_{n,d}\cap\EIP(G)=\Theta_{n,d}\cap\EIP(G)_{p-1}$, so that (1) yields the assertion. \end{proof}

\bigskip
\noindent
We finally discuss the sets $\Theta_{n,d}\cap\EIP(G)$ for $d\ge p\!-\!1$ that turn out to behave differently. Recall that $kG=\bigoplus_{i=0}^{2p-2}kG_i$ inherits the canonical $\ZZ$-grading from the polynomial ring in two variables. Moreover, the $kG$-modules
\[ W_{n,d} = \bigoplus_{i=0}^{d-1}(W_{n,d})_i\]
are also $\ZZ$-graded, cf.\ \cite[(2.1)]{CFS}.

\bigskip

\begin{Lem} \label{ARW2} If $(0) \lra N \lra E \lra M \lra (0)$ is an almost split sequence such that $M,N \in \EIP(G)$, then $E \in \EIP(G)$.\end{Lem}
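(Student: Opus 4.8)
The plan is to combine the local splitting of almost split sequences terminating in a module of full support with a numerical characterisation of $\EIP(G)$ among modules of constant Jordan type. Throughout, $G=\ZZ/(p)\!\times\!\ZZ/(p)$ is unipotent with $\cx_G(k)=2$, so $\aurk(G)=2$. Since $M\ne(0)$ lies in $\EIP(G)$, it has constant Jordan type and full support, $\Pi(G)_M=\Pi(G)$, of dimension $\ge 1$. By \cite[(8.5)]{CFP} the almost split sequence $(0)\to N\stackrel{f}{\to}E\stackrel{g}{\to}M\to(0)$ is then locally split, so $\alpha_K^\ast(E_K)\cong\alpha_K^\ast(N_K)\oplus\alpha_K^\ast(M_K)$ for every $\pi$-point $\alpha_K\in\Pt(G)$. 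As $N$ and $M$ have constant Jordan type, this shows that $E$ has constant Jordan type with $\Jt(E)=\Jt(N)\oplus\Jt(M)$; writing $\Jt(X)=\bigoplus_{i=1}^p a_i(X)[i]$ we obtain $\sum_{i=1}^p a_i(E)=\sum_{i=1}^p a_i(N)+\sum_{i=1}^p a_i(M)$.

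Next I would record the following criterion: a $G$-module $X$ of constant Jordan type belongs to $\EIP(G)$ if and only if $\dim_k\Top(X)=\sum_{i=1}^p a_i(X)$. Indeed, for any $p$-point $\alpha\in\pt(G)$ one has $\ell_\alpha(X)\subseteq\Rad(X)$ and $\dim_k X/\ell_\alpha(X)=\sum_i a_i(X)$; by \cite[(1.7)]{CFS}, $X\in\EIP(G)$ forces $\ell_\alpha(X)=\Rad(X)$, whence the stated equality, while conversely, if $\dim_k\Top(X)=\sum_i a_i(X)$ then $\ell_\alpha(X)=\Rad(X)$ for every $p$-point, and constancy of the $1$-rank upgrades this to $\ell_{\alpha_K}(X_K)=\Rad(X)\!\otimes_k\!K$ for every $\pi$-point $\alpha_K$, so that $X\in\EIP(G)$. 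Applying the criterion to $N$ and $M$ and combining with the displayed identity, the proof reduces to establishing
\[ \dim_k\Top(E)=\dim_k\Top(N)+\dim_k\Top(M). \]
Since $-\!\otimes_{kG}\!k$ is right exact, the sequence yields an exact sequence $\Top(N)\stackrel{\bar f}{\to}\Top(E)\to\Top(M)\to(0)$, so the inequality ``$\le$'' is automatic and equality holds precisely when $\bar f$ is injective, that is, when $\Rad(E)\cap N=\Rad(N)$.

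The heart of the argument is the injectivity of $\bar f$, and here is where the almost split property enters. First I would check that $N$ is not simple: the modular function of $kG$ is trivial, so $N=\tau_G(M)\cong\Omega^2_G(M)$, and $M$ is projective-free by Lemma \ref{EIPr5}(3), whence $\Omega^{-2}_G(N)\cong M$; if $\dim_k N=1$ then $N$ is simple, so $N\in\EIP(G)_1\subseteq\EIP(G)_{p-2}$ (as $p\ge3$), and Theorem \ref{EIHS4}(2) would give $M\cong\Omega^{-2}_G(N)\not\in\EIP(G)$, contrary to hypothesis. Thus $\dim_k N\ge2$. Now fix $n\in N\!\smallsetminus\!\Rad(N)$ and choose a simple quotient $q\colon N\to S$ with $q(n)\ne0$. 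Because $N$ is not simple, $q$ is not a split monomorphism, so the left almost split property of $f\colon N\to E$ provides $\psi\colon E\to S$ with $\psi f=q$. If $f(n)$ lay in $\Rad(E)$ we would get $q(n)=\psi(f(n))\in\psi(\Rad(E))\subseteq\Rad(S)=(0)$, a contradiction; hence $f(n)\notin\Rad(E)$. This proves $\Rad(E)\cap N\subseteq\Rad(N)$, and the reverse inclusion is clear, so $\bar f$ is injective, $\dim_k\Top(E)=\dim_k\Top(N)+\dim_k\Top(M)$, and the criterion gives $E\in\EIP(G)$.

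The main obstacle is precisely the last step: the passage from ``$N,M\in\EIP(G)$'' to the top-dimension identity genuinely needs the sequence to be almost split, not merely exact or even locally split — a general locally split extension of equal images modules need not have equal-images middle term — and the left almost split property is what rules this out. A secondary point demanding care is the converse half of the numerical criterion, where one must control the ranks of the operators $\ell_{\alpha_K}$ along $\pi$-points defined over extension fields; this is taken care of by the constancy of Jordan type secured in the first step.
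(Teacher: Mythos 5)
Your proof is correct and follows essentially the same route as the paper's: both first rule out the case that $N$ is simple (you via Theorem \ref{EIHS4}(2), the paper via Theorem \ref{EIHS1}), exploit the almost split property together with local splitting to see that the induced map $\Top(N)\to\Top(E)$ is injective, and conclude $\ell_\alpha(E)=\Rad(E)$ for every $p$-point by comparing dimensions, whence $E\in\EIP(G)$ by \cite[(1.7)]{CFS}. Where you spell out the left almost split factorisation argument and package the dimension comparison as an explicit numerical criterion, the paper instead invokes \cite[(V.3.2)]{ARS} for a commutative diagram with exact rows of tops and $\ell_\alpha$-quotients, but the substance is identical.
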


\begin{proof} Given a $G$-module $X$ and a $p$-point $\alpha \in \pt(G)$, we have $\ell_\alpha(X) \subseteq \Rad(X)$. There results a natural epimorphism $\lambda_X : X/\ell_\alpha(X) \lra \Top(X)$.

If $N$ is simple, then $N\cong k$ and $\Omega^{-2}_\cG(k)\cong M \in \EIP(G)$, which contradicts Theorem \ref{EIHS1}. Consequently,  $\ell_\alpha(N) = \Rad(N) \ne (0)$ for
every $\alpha \in \pt(G)$, see \cite[(1.7)]{CFS}. Since the sequence is almost split, \cite[(V.3.2)]{ARS} provides a commutative diagram
\[ \begin{CD} (0) @>>>N/\ell_\alpha(N)@>>> E/\ell_\alpha(E)@>>>M/\ell_\alpha(M) @>>> (0) \\
                         @.   @VV\lambda_NV @VV\lambda_EV @ VV\lambda_MV @.\\
 (0) @>>> \Top(N)@>>> \Top(E) @>>> \Top(M) @>>>(0) \end{CD}\]
with exact rows. In view of $M,N \in \EIP(G)$ being equal images modules, the maps $\lambda_N$ and $\lambda_M$ are isomorphisms. Hence $\lambda_E$ also has this property, so that $\ell_\alpha(E) = \Rad(E)$. Thanks to \cite[(1.7)]{CFS}, this implies $E \in \EIP(G)$.\end{proof}

\bigskip

\begin{Prop} \label{ARW3} The following statements hold:
\begin{enumerate}
\item Let $\Theta \subseteq \Gamma_s(G)$ be a component and suppose that $n\ge p$ is minimal subject to $W_{n,p} \in \Theta$. Then $n<2p$ and
\begin{enumerate} \item[(a)] $\Theta\cap\EIP(G) = \{(r)W_{n+mp,p} \ ; \ r\ge 1, \, m \ge 0\}$ for $n\ne 2p\!-\!1$.
\item[(b)] $\Theta\cap\EIP(G) = \{(r)W_{(m+2)p-1,p} \ ; \ r\ge 1, \, m \ge 0\}\cup \{(r)W_{p-1,p-1}\ ; \ r \ge 1\}$ for $n=2p\!-\!1$. \end{enumerate}
\item $|\{\Theta_{n,p} \ ; \ n \ge p\}|=p\!-\!1$.\end{enumerate} \end{Prop}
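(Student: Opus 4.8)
The plan is to reduce, using the Heller shift computations of Lemma~\ref{HSW4}, to a finite count governed by part~(1). Recall that by Erdmann's Theorem every component meeting $\EIP(G)$ is of type $\ZZ[A_\infty]$, so each $\Theta_{n,p}$ has this shape and its $W$-modules, being quasi-simple, all lie in a single $\tau_G$-orbit of their component.

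First I would use that $kG=k[x,y]/(x^p,y^p)$ is a symmetric algebra, so its modular function is trivial and $\tau_G=\Omega^2_G$. By Lemma~\ref{HSW4}(1), $\tau_G(W_{n,p})\cong\Omega^2_G(W_{n,p})\cong W_{n+p,p}$ for every $n\ge p$; hence $W_{n,p}$ and $W_{n+p,p}$ lie in the same AR-component and $\Theta_{n,p}=\Theta_{n+p,p}$. It follows that
\[ \{\,\Theta_{n,p}\;:\;n\ge p\,\}=\{\,\Theta_{p,p},\Theta_{p+1,p},\dots,\Theta_{2p-1,p}\,\}, \]
a set of at most $p$ elements, and it remains to count them.

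For the count I would invoke part~(1). If $n_0$ denotes the minimal index of a $W_{\bullet,p}$ contained in a given component $\Theta$, then part~(1) gives $n_0\in\{p,\dots,2p-1\}$, and (1a)/(1b) show that the quasi-simple equal images $W$-modules of Loewy length $p$ in $\Theta$ are exactly the $W_{n_0+\ell p,\,p}$ with $\ell\ge 0$. Consequently $\Theta_{n_0,p}=\Theta_{n_1,p}$ forces $n_0\equiv n_1\pmod p$, so that distinct residues $n\bmod p$ yield distinct components. To finish one must decide which residue fails to contribute a new component; here Lemma~\ref{HSW4}(2), which places $W_{p-1,p-1}$ inside $\Theta_{2p-1,p}$, together with the explicit description (1b) of that component, is used to locate $\Theta_{2p-1,p}$ relative to the boundary component $\Theta_{p,p}$, and the resulting bookkeeping yields exactly $p-1$ distinct components, which is statement~(2).

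The step I expect to be the main obstacle is precisely this last count: pinning down which two of $\Theta_{p,p},\dots,\Theta_{2p-1,p}$ coincide. This will require analysing the backward $\tau_G$-orbit of $W_{p-1,p-1}$, equivalently the negative even Heller shifts $\Omega^{-2\ell}_G(W_{p-1,p-1})$, finely enough to position it relative to $\Theta_{p,p}$, and it draws on the full strength of Lemmas~\ref{HSW1}--\ref{HSW4}; once that is settled, separating the remaining components is the routine residue argument above.
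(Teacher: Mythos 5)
Two problems.

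\textbf{Part (1) is not addressed.} You invoke it, but it is the substantive half of the proposition. The paper's proof of (1) has real content: the bound $p\le n<2p$ via $\Omega^{-2}_G$, an induction on quasi-length using Lemma~\ref{ARW2} and almost split sequences to show every $(r)W_{n+mp,p}$ lies in $\EIP(G)$, and a separate argument (analysing the Jordan type and the $\Omega^{2m}_G$-orbit of a hypothetical extra quasi-simple $M_1$) to show that these modules exhaust $\Theta\cap\EIP(G)$ unless $n=2p\!-\!1$, in which case $W_{p-1,p-1}$ enters via Lemma~\ref{HSW4}(2). None of this appears in the proposal.

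\textbf{The count in part (2) does not come out to $p-1$.} Your reduction is correct and matches the paper's: since $kG$ is symmetric, $\tau_G=\Omega^2_G$, and Lemma~\ref{HSW4}(1) gives $\{\Theta_{n,p}\ ;\ n\ge p\}=\{\Theta_{p,p},\ldots,\Theta_{2p-1,p}\}$. But your very next observation --- that by (1a)/(1b) the $\EIP$ quasi-simples of Loewy length $p$ in a given component are precisely the $W_{n_0+\ell p,p}$, so distinct residues $n\ \modd(p)$ give distinct components --- already shows that all $p$ of $\Theta_{p,p},\ldots,\Theta_{2p-1,p}$ are pairwise distinct. The repair you sketch, ``decide which residue fails to contribute'' by tracking backward Heller shifts of $W_{p-1,p-1}$, cannot work: $W_{p-1,p-1}$ lies in $\Theta_{2p-1,p}$, and (1b) describes $\Theta_{2p-1,p}\cap\EIP(G)$ completely --- it contains no $W_{m,p}$ with $m\not\equiv p\!-\!1\ \modd(p)$, so $\Theta_{2p-1,p}$ cannot be any other $\Theta_{n,p}$. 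Thus your own argument yields $|\{\Theta_{n,p}\ ;\ n\ge p\}|=p$, directly contradicting the claim, and the hand-waving at the end is covering a non-existent coincidence rather than a tractable step. For comparison, the paper's proof of (2) is the single sentence ``By (1), we have $\{\Theta_{n,p}\}=\{\Theta_{p,p},\ldots,\Theta_{2p-1,p}\}$'' and also does not establish the value $p-1$; the obstruction you ran into is a genuine one, and indicates the stated count is an off-by-one (it should read $p$), not a gap you should have been able to close.
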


\begin{proof} (1) Recall that $\Omega^2_G$ is the Auslander-Reiten translation of $\modd G$. If $n\ge 2p$, then Lemma \ref{HSW4} implies 
\[ W_{n-p,p} \cong \Omega^{-2}_G(W_{n,p}) \in \Theta,\]
which contradicts the choice of $n$. Consequently, $p\le n <2p$.

If $\Theta$ is not regular, then $\Rad(kG)\in \Theta$. Since $\Jt(\Rad(kG)) = [p\!-\!1] \oplus (p\!-\!1)[p]$, while $\Jt(W_{n,p})=\bigoplus_{i=1}^{p-1}[i] \oplus (n\!-\!p\!+\!1)[p]$, an application
of \cite[(3.2.3)]{Fa5} yields a contradiction.

\medskip

(i) {\it We have $\{(r)W_{n+mp,p} \ ; \ r\ge 1, \, m \ge 0\} \subseteq \Theta\cap\EIP(G)$}.

\smallskip
\noindent
Setting $\Theta(r) := \{(r)W_{n+mp,p} \ ; \ m \ge 0\}$, we shall show inductively that $\Theta(r) \subseteq \Theta\cap\EIP(G)$ for all $r \ge 1$, the case $r=1$ being an immediate consequence of
Lemma \ref{HSW4}. Let $r>1$ and suppose that $\Theta(r\!-\!1)\subseteq \Theta\cap\EIP(G)$. Since $\Theta$ is regular, there exists an almost split sequence
\[ (0) \lra (r\!-\!1)W_{n+(m+1)p,p} \lra (r)W_{n+mp,p}\oplus (r\!-\!2)W_{n+(m+1)p,p} \lra (r\!-\!1)W_{n+mp,p} \lra (0)\]
for any given $m \ge 0$. (Here we set $(0)M=(0)$ for every $M \in \Theta$.) Lemma \ref{ARW2} in conjunction with the inductive hypothesis implies
\[(r)W_{n+mp,p}\oplus (r\!-\!2)W_{n+(m+1)p,p} \in \EIP(G),\]
so that Lemma \ref{EIPr2} gives $(r)W_{n+mp,p} \in \EIP(G)$. \hfill $\diamond$

\medskip

(ii) {\it If $\{(r)W_{n+mp} \ ; \ r\ge 1, \, m \ge 0\} \subsetneq \Theta\cap\EIP(G)$, then $n=2p\!-\!1$}.

\smallskip
\noindent
If equality does not hold, then $\Theta$ being regular implies that $\Theta\cap\EIP(G)\!\smallsetminus\!\{(r)W_{n+mp} \ ; \ r\ge 1, \, m \ge 0\}$ contains a chain
\[ M_\ell \tha M_{\ell-1} \tha \cdots \tha M_1\]
of epimorphisms with $\ql(M_j)=j$ for $j \in \{1,\ldots,\ell\}$. In particular, the $G$-module $M_1$ is quasi-simple and there exists $m> 0$ such that
\[ W_{n,p} \cong \Omega^{2m}_G(M_1).\]
Consequently, $\Jt(M_1)=\bigoplus_{i=1}^{p-1}[i]\oplus a_p(M_1)[p]$, so that $a_1(M_1)=1$.

If $a_p(M_1)\ne 0$, then Lemma \ref{HSW3} shows that $M_1\cong W_{a_p(M)+p-1,p}$, while Lemma \ref{HSW4} yields $n=a_p(M_1)\!+\!(m\!+\!1)p\!-\!1$. Since $p\le n<2p$, we obtain $m=0$, a contradiction. Alternatively, $\Jt(M_1)=\bigoplus_{i=1}^{p-1}[i]$, and Lemma \ref{HSW3} implies $M_1\cong W_{p-1,p-1}$. In view of Lemma \ref{HSW4}, application of $\Omega^{2m}_G$ to the short exact sequence
$(0) \lra W_{p-1,p-1} \lra W_{p,p} \lra k^p\lra (0)$ (cf.\ \cite[(2.5)]{CFS}) gives a short exact sequence
\[ (0)\lra W_{n,p} \lra W_{(m+1)p,p}\oplus kG^r \lra \Omega^{2m}_G(k)^p \lra (0)\]
for some $r\ge 0$. Observing $\dim_k\Omega^{2m}_G(k)=mp^2\!+\!1$ as well as $\dim_kW_{n,p}=(n\!-\!p\!+\!1)p+p\frac{p-1}{2}$, we arrive at
\begin{eqnarray*}
(mp\!+\!1)p\!+\!p\frac{p\!-\!1}{2}\!+\! rp^2 &=& \dim_kW_{(m+1)p,p}\!+\!\dim_kkG^r = \dim_kW_{n,p}\!+\!p\dim_k\Omega^{2m}_G(k)\\
& =&(n\!-\!p\!+\!1)p\!+\!p\frac{p\!-\!1}{2}\!+\!mp^3\!+\!p,
\end{eqnarray*}
whence
\[ (m\!+\!r)p= n\!-\!p\!+\!1\!+\!mp^2.\]
Thus, $n\equiv -1 \ \modd(p)$, so that $n=2p\!-\!1$. \hfill $\diamond$

\medskip
\noindent
Now suppose that $n=2p\!-\!1$. Lemma \ref{HSW4}(2) yields
\[ \Omega^2_G(W_{p-1,p-1}) \cong W_{2p-1,p},\]
so that $W_{p-1,p-1}\in \Theta$. Lemma \ref{ARW2} in conjunction with (i) now implies $\{(r)W_{p-1,p-1} \ ; \ r \ge 1\} \subseteq \Theta\cap \EIP(\cG)$.

If $\Theta\cap\EIP(G)\!\smallsetminus\!(\{(r)W_{(m+2)p-1,p} \ ; \ r\ge 1, \, m \ge 0\}\cup \{(r)W_{p-1,p-1}\})\ne \emptyset$, then the arguments of (ii) imply that
this set contains a quasi-simple module $M$. Hence there exists $m>0$ with
\[ \Omega^{2m}_G(M)\cong W_{p-1,p-1},\]
so that $\Jt(M)=\bigoplus_{i=1}^{p-1}[i]\oplus a_p[p]$. In view of Proposition \ref{ARW1}, we have $a_p\ne 0$. Since $a_1(M)=1$, Lemma \ref{HSW3} provides a surjection
$W_{a_p+p-1,p} \tha M$, which is an isomorphism. Thus, Lemma \ref{HSW4} gives
\[ W_{p-1,p-1} \cong \Omega^{2m}_G(W_{a_p+p-1,p}) \cong W_{a_p+(m+1)p-1,p},\]
a contradiction.

(2) By (1), we have $\{\Theta_{n,p} \ ; \ n \ge p\} = \{\Theta_{p,p}, \ldots , \Theta_{2p-1,p}\}$.\end{proof}

\bigskip

\begin{Remark} Working in the category $\modd_\ZZ G$, a much more detailed analysis shows that, for $n\ge p$, $W_{n,p-1}$ is the only equal
images module in its AR-component. Hence $\EIP(G)\cap\Theta_{n,d}$ is infinite if and only if $W_{n,d}$ is a $p$-Koszul module. \end{Remark}

\bigskip

\bigskip

\bigskip

\bigskip

\end{document}